\newcommand{\newcom}{\newcommand}
\newcom{\al}{\alpha}
\newcom{\be}{\beta}
\newcom{\eps}{\epsilon}
\newcom{\del}{\delta}
\newcom{\Del}{\Delta}
\newcom{\ga}{\gamma}
\newcom{\Ga}{\Gamma}
\newcom{\ka}{\kappa}
\newcom{\Lam}{\Lambda}
\newcom{\lam}{\lambda}
\newcom{\Om}{\Omega}
\newcom{\om}{\omega}
\newcom{\Si}{\Sigma}
\newcom{\si}{\sigma}
\newcom{\tht}{\theta}
\newcom{\dtri}{\nabla}
\newcom{\td}{\tilde}
\newcom{\tri}{\triangle}
\newcom{\oo}{\infty}
\newcom{\vphi}{\varphi}
\newcom{\calb}{{\mathcal B}}
\newcom{\calc}{{\mathcal C}}
\newcom{\cD}{{\mathcal D}}
\newcom{\cF}{{\mathcal F}}
\newcom{\cG}{{\mathcal G}}
\newcom{\cI}{{\mathcal I}}
\newcom{\cL}{{\mathcal L}}
\newcom{\cM}{{\mathcal M}}
\newcom{\cP}{{\mathcal P}}
\newcom{\cR}{{\mathcal R}}
\newcom{\cS}{{\mathcal S}}
\newcom{\cQ}{{\mathcal Q}}
\newcom{\caly}{{\mathcal Y}}
\newcom{\calz}{{\mathcal Z}}
\newcom{\bfz}{{\bf Z}}
\newcom{\R}{\mathbb R}
\newcom{\N}{\mathbb N}
\newcom{\Z}{\mathbb Z}
\newcom{\C}{\mathbb C}
\newcom{\E}{\mathbb E}
\newcom{\f}{\frac}
\newcom{\di}{\displaystyle\int}
\newcom{\ds}{\displaystyle\sum}
\newcom{\dl}{\displaystyle\lim}
\newcom{\ov}{\overline}
\newcom{\sset}{\subset}
\newcom{\wt}{\widetilde}
\newcom{\p}{\partial}
\newcom\na{\nabla}
\newcom{\co}{\cdot}
\newcom{\suml}{\sum\limits}
\newcom{\supl}{\sup\limits}
\newcom{\intl}{\int\limits}
\newcom{\infl}{\inf\limits}
\newcom{\disp}{\displaystyle}
\newcom{\non}{\nonumber}
\newcom{\no}{\noindent}
\newcom{\QED}{$\square$}
\def\ef{\hphantom{MM}\hfill\llap{$\square$}\goodbreak}
\newtheorem{athm}{\bf \t}[section]
\newenvironment{thm} [1] {\def\t{#1}\begin{athm} \bf \rm} {\end {athm}}
\newcom{\bthm}{\begin{thm}}\newcom{\ethm}{\end{thm}}
\newcom{\beq}{\begin{equation}}
\newcom{\eeq}{\end{equation}}
\newcom{\ben}{\begin{eqnarray}}
\newcom{\een}{\end{eqnarray}}
\newcom{\beno}{\begin{eqnarray*}}
\newcom{\eeno}{\end{eqnarray*}}
\newtheorem{prop}{Proposition}
\newtheorem{theoreme}[prop]{Theorem}
\newtheorem{lem}[prop]{Lemma}
\newtheorem{cor}[prop]{Corollary}
\newtheorem{rem}[prop]{Remark}
\numberwithin{equation}{section}
\numberwithin{prop}{section}
\numberwithin{equation}{section}
\begin{document}

\title[Multi-solitons water-waves]{Multi-solitons and related solutions \\ for the water-waves system}

\author{Mei Ming}

\author{Frederic Rousset}

\author{Nikolay Tzvetkov}

\begin{abstract}
The main result of this work is the construction of  multi-solitons solutions that is to say solutions that are time asymptotics to a sum of decoupling solitary waves 
 for the full water waves system with surface tension.
\end{abstract}
\maketitle
\section{Introduction}

We  consider the motion of an  irrotational,  incompressible
fluid with constant density. We consider the situation
 where the fluid domain is a strip with a rigid bottom and a free surface:
$$
\Omega_t=\{Y=(X,z)\in\R^{d+1}\,:\, -H <z<\eta(t,X)\},
$$
where $t$ is the time, $d=1,2$ is the horizontal dimension,  $H$  is a parameter defining the fixed  bottom $z=-H$  and
 $z= \eta(t,X)$  is the equation of the unknown free surface at time $t$.
 We shall say that we are in the one dimensional case when  $X=x \in \mathbb{R}$
 and in the two-dimensional case when $X= (x,y) \in \mathbb{R}^2$. 
 A large part of the paper will be devoted to the one-dimensional situation. We denote by $u$ the speed of the fluid, since the motion is irrotational, it is given by
 $u=\nabla_{Y}\Phi= (\nabla_{X}\Phi,\partial_{z}\Phi)$ for some scalar
function $\Phi$ and hence we find that  inside the fluid domain $\Omega_{t}$, 
\beq
\label{inside}
\nabla_{Y}\cdot u=  \Delta_{Y} \Phi = 
(\Delta_X+\partial_{z}^2)\Phi=0\,.
\eeq 
On the boundaries  of $\Omega_{t}$, we make the usual 
assumption that no fluid particles cross the boundary.
At  the bottom of the fluid this reads 
\beq
\label{fond}
\partial_{z}\Phi(t,X,-H)=0
\eeq
and on the free surface,  this yields the kinematic condition 
\beq
\label{surface1}
\partial_{t}\eta(t,X)+\nabla_{X} \Phi(t,X,\eta(t,X))\cdot\nabla_{X} \eta(t,X)-\partial_{z}\Phi(t,X,\eta(t,X))=0\,.
\eeq
On the free surface, we also need to impose the pressure,  taking into account the surface tension and using the Bernouilli law to eliminate the pressure, 
we find that:
 \beq
\label{surface2}
\partial_{t}\Phi(t,X,\eta(t,X))+\frac{1}{2}|\nabla_{Y}\Phi(t,X,\eta(t,X))|^{2}+g\eta(t,X)=
b\nabla_{X} \cdot\frac{\nabla_{X} \eta(t,X)}{\sqrt{1+|\nabla_{X} \eta(t,X)|^{2}}}\,.
\eeq
The number $b$ is the surface tension coefficient and $g$ is the gravitational constant.
The term $g\eta(t,X)$ is the trace of the gravitational force $gz$ on the free surface.

It is classical  to rewrite the system \eqref{inside}, \eqref{surface1}, \eqref{surface2}
as a system  involving unknowns defined on  the free surface only \cite{Zakharov}. For that purpose,
let us define the following   Dirichlet-Neumann  operator:  for given $\eta(X)$  
$\varphi(X)$, we define $\Phi(X,z)$ as the (well-defined) solution of the elliptic boundary value problem
$$
(\Delta_X+\partial_{z}^2)\Phi=0,\quad{\rm in}\quad
\{(X,z)\,:\,-H<z<\eta(X)\},
\quad
\Phi(X,\eta(X))=\varphi(X),\quad \partial_{z}\Phi(X,-H)=0, 
$$
and we define the Dirichlet-Neumann operator as
\beno
(G[\eta]\vphi)(X)&:=&(\p_z\Phi-\na_X\eta\cdot\na_X\Phi)|_{z=\eta(X)}\\
&=&\sqrt{1+|\na_X\eta|^2}(\na_{X,z}\Phi\cdot {\bf n})|_{z=\eta(X)},
 \eeno 
 where $\bf n$ is the unit outward normal vector on the free
 surface at the point $z=\eta(X)$.

This allows to rewrite the system only in terms of the unknowns 
$$
(\eta(t,X), \varphi(t,X)):=(\eta(t,X),\Phi(t,X,\eta(t,X)))\,.
$$
In the one-dimensional case,  the 1D water-wave problem can thus  be written as \beq\label{water-wave
system}\left\{\begin{array}{ll} 
\displaystyle{\p_t\eta=G[\eta]\varphi }\\
 \displaystyle{\p_t\varphi=-\f12|\p_x\varphi|^2+\f12\f{(G[\eta]\varphi+\p_x\varphi\p_x\eta)^2}{1+|\p_x\eta|^2}
-g\eta+b\p_x\Big(\f{\p_x\eta}{\sqrt{1+|\p_x\eta|^2}}\Big)}
\end{array}\right.\eeq 
By introducing the notations $U=(\eta,\varphi)^t$ and 
\[ \cF(U)=
\Big(G[\eta]\varphi,\quad
-\f12|\p_x\varphi|^2+\f12\f{(G[\eta]\varphi+\p_x\varphi\p_x\eta)^2}{1+|\p_x\eta|^2}
-g\eta+b\p_x\Big(\f{\p_x\eta}{\sqrt{1+|\p_x\eta|^2}}\Big)\Big)^t,\]
we shall write  the water-wave system (\ref{water-wave system}) in the abstract form
\beq\label{simple form for WW system} 
\p_tU=\cF(U). 
\eeq

We know from \cite{AK} that  for suitable  parameters $g$, $b$ and $h$,  there exist
solitary  wave solutions $Q_c(x-ct)=(\eta_c(x-ct),\varphi_c(x-ct))^t$ at speed $c\sim\sqrt{gH}$. Here is a precise statement.
 \begin{theoreme}[Amick-Kirchg\"assner \cite{AK}]\label{theoOS}
Suppose that 
\begin{equation}\label{restr}
\alpha=\frac{gH}{c^2}=1+\varepsilon^2,\quad \beta= \frac{b}{H c^2}>\frac{1}{3}\,.
\end{equation}
Then there exists $\varepsilon_0$ such that for every
$\varepsilon\in (0,\varepsilon_0)$  (which fixes the speed) there is a solution of (\ref{water-wave system}) under  the form
$$
Q_c(x-ct)=\big(\eta_{c}(x-ct), \varphi_{c}(x-ct)\big)^t=\Big(H\eta_{\varepsilon}(H^{-1}(x-ct)),\, cH\varphi_{\varepsilon}(H^{-1}(x-ct))\Big)^t
$$ 
with
$$
\eta_{\varepsilon}(x)=\varepsilon^2 \Theta_1(\varepsilon x,\varepsilon),
\quad
\varphi_{\varepsilon}(x)=\varepsilon \Theta_2(\varepsilon x,\varepsilon),
$$
where $\Theta_1$ and $\Theta_2$  satisfy:
$$
\exists\, d>0,\quad \forall\,\alpha\geq 0 ,\quad \exists\, C_{\alpha}>0,\quad 
\forall\, (x,\varepsilon)\in\R\times (0,\varepsilon_0),\quad
|(\partial^{\alpha}_{x}\Theta_1)(x,\varepsilon)|\leq C_{\alpha} e^{-d|x|}  
$$
and
$$
\exists\, d>0,\quad \forall\,\alpha\geq 1 ,\quad
\exists\, C_{\alpha}>0,\quad 
\forall\, (x,\varepsilon)\in\R\times (0,\varepsilon_0),\quad
|(\partial^{\alpha}_{x}\Theta_2)(x,\varepsilon)|\leq C_{\alpha} e^{-d|x|}  \,.
$$
Moreover $\Theta_1$ is even and $\Theta_2$ is odd.
\end{theoreme}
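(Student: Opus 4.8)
\medskip
\noindent\textbf{Plan of proof.} The idea is to turn the profile equation, after a long-wave rescaling, into a small perturbation of the Korteweg--de Vries solitary-wave equation and then to solve it by the implicit function theorem. First I would insert the travelling-wave ansatz $(\eta,\varphi)(t,x)=(\eta_c,\varphi_c)(x-ct)$ into \eqref{water-wave system} and use the homogeneity of the Dirichlet--Neumann operator under $(\eta_c,\varphi_c)(x)=\big(H\eta_\varepsilon(H^{-1}x),\,cH\varphi_\varepsilon(H^{-1}x)\big)$ to reduce to the dimensionless profile system on the unit-depth strip,
\[
-\partial_x\eta_\varepsilon=G[\eta_\varepsilon]\varphi_\varepsilon,\qquad
-\partial_x\varphi_\varepsilon=-\tfrac12|\partial_x\varphi_\varepsilon|^2+\tfrac12\frac{\big(G[\eta_\varepsilon]\varphi_\varepsilon+\partial_x\varphi_\varepsilon\,\partial_x\eta_\varepsilon\big)^2}{1+|\partial_x\eta_\varepsilon|^2}-\alpha\,\eta_\varepsilon+\beta\,\partial_x\!\Big(\frac{\partial_x\eta_\varepsilon}{\sqrt{1+|\partial_x\eta_\varepsilon|^2}}\Big),
\]
with $\alpha=1+\varepsilon^2$. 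I would then perform the long-wave change of unknowns $\eta_\varepsilon(x)=\varepsilon^2\Theta_1(\varepsilon x,\varepsilon)$, $\varphi_\varepsilon(x)=\varepsilon\,\Theta_2(\varepsilon x,\varepsilon)$ and use the shallow-water expansion $G[\eta]\psi=-\partial_x\big((1+\eta)\partial_x\psi\big)-\tfrac13\partial_x^4\psi+\cdots$ together with a quantitative bound on the remainder. Collecting powers of $\varepsilon$, the system takes the form $\mathcal{G}(\Theta,\varepsilon)=0$ with $\Theta=(\Theta_1,\Theta_2)$; at $\varepsilon=0$ it degenerates to $\Theta_1=\partial_x\Theta_2$ together with, after one integration in $x$,
\beq\label{sketch-KdV}
\big(\beta-\tfrac13\big)\,\partial_x^2\Theta_1-\Theta_1+\tfrac32\,\Theta_1^2=0 .
\eeq
Under the hypothesis $\beta>\tfrac13$, equation \eqref{sketch-KdV} has an explicit even, exponentially decaying ground state $\Theta_1^{0}$ of $\operatorname{sech}^2$ type, and $\Theta_2^0$ is its odd primitive.

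Next I would set up a Lyapunov--Schmidt / implicit function argument in exponentially weighted Sobolev spaces $\{\Theta:\ e^{\mu|x|}\Theta\in H^s\}$ with $\mu\in\big(0,(\beta-\tfrac13)^{-1/2}\big)$ and $s$ large, restricted to the reversible subspace ($\Theta_1$ even, $\Theta_2$ odd), which is preserved by $\mathcal{G}$ because of the symmetry $x\mapsto -x$, $(\eta,\varphi)\mapsto(\eta,-\varphi)$ of \eqref{water-wave system}. The linearization $D_\Theta\mathcal{G}(\Theta^0,0)$, after eliminating $\Theta_2$, is the Schr\"odinger-type operator $(\beta-\tfrac13)\partial_x^2-1+3\Theta_1^0$, whose $L^2$-kernel is spanned by the odd function $\partial_x\Theta_1^0$ coming from translation invariance; hence it is an isomorphism on the even subspace -- this is the classical nondegeneracy of the KdV soliton. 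The key analytic input is then that $\mathcal{G}$ and $D_\Theta\mathcal{G}$ be well defined and Lipschitz, uniformly for $\varepsilon\in[0,\varepsilon_0)$, on these weighted spaces; granting this, the implicit function theorem produces a solution $\Theta(\cdot,\varepsilon)$ of $\mathcal{G}(\Theta,\varepsilon)=0$ close to $(\Theta_1^0,\Theta_2^0)$, with the stated parities.

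The profile so obtained solves an equation of the form $\big((\beta-\tfrac13)\partial_x^2-1\big)\Theta_1=N(\Theta,\varepsilon)$ with $N$ exponentially localized; convolving with the Green's function of $(\beta-\tfrac13)\partial_x^2-1$, which decays like $e^{-|x|/\sqrt{\beta-1/3}}$, gives $|\Theta_1(x,\varepsilon)|\le C\,e^{-d|x|}$ with $d$ independent of $\varepsilon$, and differentiating the equation and iterating yields the bounds on all derivatives of $\Theta_1$; the bounds on the derivatives of $\Theta_2$ follow from $\partial_x\Theta_2$ being controlled by $\Theta_1$, which is why for $\Theta_2$ the decay is claimed only for derivatives of order $\geq 1$. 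Undoing the two rescalings then yields Theorem~\ref{theoOS}.

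I expect the main difficulty to be the uniform-in-$\varepsilon$ analysis of the nonlocal operator $G[\eta_\varepsilon]$ in the exponentially weighted spaces: one needs the shallow-water expansion with a remainder that is simultaneously small and exponentially localized as $\varepsilon\to0$, and the elliptic estimates for the velocity potential on the strip must be compatible with the exponential weights. A classical alternative, the route of Amick and Kirchg\"assner, avoids this by spatial dynamics: treat $x$ as an evolution variable, recast the elliptic problem as an infinite-dimensional (ill-posed) reversible dynamical system, observe that at $\alpha=1$, $\beta>\tfrac13$ the equilibrium undergoes a reversible $0^2$ bifurcation with $0$ the only spectrum on the imaginary axis, reduce to a two-dimensional centre manifold whose normal form is \eqref{sketch-KdV}, and read off the homoclinic orbit; the condition $\beta>\tfrac13$ is precisely what rules out an oscillatory tail, i.e. a generalized solitary wave.
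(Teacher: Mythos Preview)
The paper does not prove this theorem: it is quoted verbatim as a result of Amick and Kirchg\"assner \cite{AK} and used as a black box throughout (for the existence, smoothness, exponential decay, and parity of the solitary waves). There is therefore no ``paper's own proof'' to compare against.

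That said, your sketch is a sound outline of a proof, and you in fact describe \emph{two} viable routes. The second one you mention at the end --- spatial dynamics, reversible $0^2$ bifurcation at $\alpha=1$, $\beta>\tfrac13$, centre-manifold reduction to the KdV profile equation, persistence of the homoclinic --- is precisely the strategy of \cite{AK}. Your first route (long-wave rescaling plus implicit function theorem around the KdV soliton in exponentially weighted spaces, using nondegeneracy of the KdV ground state on the even subspace) is a legitimate alternative that has been carried out in related settings; as you correctly anticipate, its technical heart is proving that the Dirichlet--Neumann map admits a shallow-water expansion with remainder estimates that are uniform in $\varepsilon$ \emph{and} compatible with the exponential weights. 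This is doable but not trivial, and is exactly the step the centre-manifold approach sidesteps by working locally near the equilibrium in the spatial-dynamics phase space. Either approach yields the stated decay with a rate $d$ independent of $\varepsilon$ and the parity $\Theta_1$ even, $\Theta_2$ odd, coming from the reversibility $(x,\eta,\varphi)\mapsto(-x,\eta,-\varphi)$.
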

The main aim of  this paper is to construct multi-solitons type solutions for the water-waves system
 \eqref{water-wave system}. This means that we want to construct a solution of \eqref{water-wave system}  that
 tends  to a sum of solitary waves with different speeds when $t$ goes to infinity.
 For the sake of readability of the paper, we shall only consider the case of  two solitary waves, 
 the extension to an arbitrary numbers is straightforward (our proof will not use any particular symmetry or specificity related to
  the 2-solitons case).
 The construction of such solutions for semi-linear equations like the KdV equation or the Nonlinear-Schr\"odinger equation
 has been intensively studied recently, we refer for example to \cite{Martel, Martel-Merle, Krieger-Martel-Raphael, Cote-Martel-Merle, Cote-LeCoz, Mizu}.
 We also refer to an earlier closely related work by Merle \cite{Merle} which seems to initiate this line of research. 

 In this paper, we shall show that the construction can be also performed for quasilinear equations (or even fully nonlinear equations) by focusing on the physically interesting
 example of the water-waves system \eqref{water-wave system}. The new difficulty that arises  in the case of  quasilinear or fully nonlinear  equations like \eqref{water-wave system}
  is that the only well-posedness result which is known in the vicinity of the solitary wave  is a local well-posedness result  in $H^s$ for $s$ sufficiently large which comes from the high
  order energy method.  Note that the global or almost global existence results  like the ones of  \cite{Wu1,Wu2}, \cite{Germain-Masmoudi-Shatah1,Germain-Masmoudi-Shatah2} are obtained in a regime where solitary waves do not exist and that the local existence results for rough data as obtained in   \cite{Alazard-Burq-Zuily} still require
   a regularity much higher than the one which is controlled by the Hamiltonian.
  
  This makes the perturbative analysis more delicate and requires new ideas with respect to the semi-linear setting.
  
We thus consider 
two solitons  $Q_{c_1}(x-c_1t)$ and $Q_{c_2}(x-h-c_2t)$ 
of  (\ref{water-wave system}) with $c_1<  c_2$.
We suppose that $c_1$ and $c_2$ satisfy \eqref{restr} with suitable choices of the small parameters $\varepsilon_{1, 2}$.  
We also suppose that $h>0$ is large
enough. 
We define
\begin{equation}\label{M}
M(t,x):=Q_{c_1}(x-c_1t)+Q_{c_2}(x-h-c_2t)
\end{equation}
as the two-soliton function. 
We will focus on the case where each  solitary wave is stable in the following sense. 
Under our assumptions  \eqref{restr} on the speed $c$ of a solitary wave, 
 it was  proven in  \cite{Mielke}  that for sufficiently small  corresponding  parameter $\varepsilon$,   the solitary wave $Q_{c}$   is stable in the sense that the second  derivative of the Hamiltonian at the solitary wave  restricted to a natural co-dimension $2$ subspace is positive. 
We shall assume that the  speeds  $c_{1}, c_{2}$ are such  that this property is verified (see Proposition \ref{Propstab1d}).
Our main result reads:
\begin{theoreme}\label{multi-soliton theorem}  
Let us fix $s\geq 0$.
Suppose that the speeds $c_1< c_2$ satisfy \eqref{restr} with  parameters $\varepsilon_1$, $\varepsilon_2$. 
Define $M$ by \eqref{M}.
Then there exists $\varepsilon^*$ such that for $\varepsilon_{1},\varepsilon_2 \in (0, \varepsilon^*]$ and
 $h$ sufficiently large, we have  that there
exists a (semi) 
global solution $U(t)=(\eta,\varphi)^t$  to the water-wave system (\ref{water-wave system}) satisfying 
$$
U-M\in \mathcal{C}_{b}([0,\infty); \,H^s(\R)\times
H^s(\R))
$$
and
\[
\lim_{t\rightarrow +\infty} \|U(t)-M(t)\|_{H^s}=0.
\]
\end{theoreme}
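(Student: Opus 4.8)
The plan is to follow the compactness/energy strategy of Martel--Merle for the semilinear theory, but with the technical modifications forced by the quasilinear nature of \eqref{water-wave system}. We construct the multi-soliton as a limit of backward-in-time solutions: fix an increasing sequence of times $T_n\to+\infty$, and for each $n$ let $U_n(t)$ be the solution of \eqref{water-wave system} with ``final'' data $U_n(T_n)=M(T_n)$, solved backward on $[0,T_n]$. The heart of the matter is a \emph{uniform} (in $n$ and in $t\in[0,T_n]$) a priori estimate
\[
\|U_n(t)-M(t)\|_{H^\sigma}\leq C\, e^{-\delta t}
\]
for a high Sobolev index $\sigma$ dictated by the local well-posedness theory, with $C,\delta$ independent of $n$. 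Granting this, one first checks that the $U_n$ are in fact defined on all of $[0,T_n]$ (the uniform bound prevents blow-up via the continuation criterion), then extracts a weak limit $U_n\rightharpoonup U$ (say, weak-$*$ in $L^\infty_t H^\sigma$ and strongly in $C_{\mathrm{loc}}$ in lower norms, using Aubin--Lions and the equation for the time derivative); $U$ solves \eqref{water-wave system}, inherits the bound $\|U(t)-M(t)\|_{H^\sigma}\le C e^{-\delta t}$, and hence converges to $M$. Interpolation with the (uniformly bounded) $H^\sigma$ norm then upgrades convergence to $H^s$ for the prescribed $s\ge 0$, and downgrades the regularity statement to the stated class; note $U-M$ lies in $\mathcal C_b([0,\infty);H^s)$ automatically.

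\textbf{The core bootstrap.} The uniform estimate is proved by a bootstrap/continuity argument running backward from $T_n$: assume $\|U_n(t)-M(t)\|_{H^\sigma}\le K e^{-\delta t}$ on $[t^*,T_n]$ and improve the constant. This rests on a coercive modulated energy functional. Because a single soliton $Q_c$ is only a critical point of the Hamiltonian under two constraints (fixed $L^2$-type invariants), the right object is a localized Weinstein-type functional
\[
\mathcal{H}(t) \;=\; E\big(U_n(t)\big) \;-\; \sum_{j=1,2} c_j\, \mathcal{I}_j^{\mathrm{loc}}\big(U_n(t)\big),
\]
where $\mathcal I_j^{\mathrm{loc}}$ are the momentum-type conserved quantities cut off by smooth weights $\chi_j(t,x)$ that travel with soliton $j$ (a partition of unity separating the two wave packets, legitimate since $c_1<c_2$ and $h$ is large). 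Expanding $\mathcal H$ around $M$ and writing $V=U_n-M$, the quadratic part is, up to small errors, $\sum_j \langle \mathcal L_{c_j} \chi_j V,\chi_j V\rangle$, where $\mathcal L_{c_j}$ is the Hessian of the constrained Hamiltonian at $Q_{c_j}$. By the Mielke stability hypothesis (Proposition \ref{Propstab1d}) and our standing assumption on $c_1,c_2$, each $\mathcal L_{c_j}$ is positive definite on the codimension-2 subspace orthogonal to the two-dimensional kernel/negative directions; these directions are handled by a modulation: we do not fix the soliton centers and phases but let them evolve, $M=M(t;\mathbf a(t))$, choosing $\mathbf a(t)$ so that $V$ satisfies the orthogonality conditions killing the bad modes, and deriving the modulation ODEs $|\dot{\mathbf a}|\lesssim \|V\|^2 + e^{-\delta t}$ (the exponential coming from soliton interaction). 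Coercivity then gives $\|V(t)\|_{H^1}^2 \lesssim \mathcal H(t) + (\text{modulation corrections})$.

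\textbf{Propagating the estimate; the quasilinear obstacle.} One controls $\frac{d}{dt}\mathcal H(t)$: the genuinely conserved Hamiltonian contributes nothing, the soliton-interaction terms and the errors from the moving cutoffs $\partial_t\chi_j$ are exponentially small because of the spatial separation (this is the place where $h$ large and $c_2>c_1$ enter quantitatively, via the exponential decay of $\Theta_1,\Theta_2$ from Theorem \ref{theoOS}), so $|\mathcal H(T_n)-\mathcal H(t)|\lesssim \int_t^{T_n} e^{-\delta \tau}\,d\tau + (\text{quadratic-in-}V\text{ terms})\lesssim e^{-\delta t}$, and since $\mathcal H(T_n)=0$ (as $U_n(T_n)=M(T_n)$), we get $\|V(t)\|_{H^1}\lesssim e^{-\delta t/2}$. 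The serious difficulty --- the genuinely new point compared to KdV/NLS --- is that this $H^1$ control is far below the regularity $\sigma$ at which the Cauchy problem is well posed, so it does not by itself close the bootstrap. We therefore run a \emph{second}, high-order energy estimate in parallel: using the Alazard--Burq--Zuily / Lannes type symmetrizer and paradifferential good-unknown structure for the water-waves equations, linearized around the (exponentially accurate) approximate solution $M$, one shows that $\|V(t)\|_{H^\sigma}$ satisfies a differential inequality $\frac{d}{dt}\|V\|_{H^\sigma}^2 \lesssim \Lambda(\|V\|_{H^\sigma})(\|V\|_{H^\sigma}^2 + e^{-\delta t})$ with $\Lambda$ locally bounded --- crucially with \emph{no loss of derivatives}, which is the technical crux and requires the Dirichlet--Neumann operator estimates and the ``positivity of the surface-tension part'' to build the energy. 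Feeding in the already-proved low-regularity bound $\|V(t)\|_{H^1}\lesssim e^{-\delta t/2}$ to absorb the quadratic nonlinear terms that would otherwise be only bounded, a Gronwall argument backward from $T_n$ (where $\|V(T_n)\|_{H^\sigma}=0$) yields $\|V(t)\|_{H^\sigma}\le C e^{-\delta t/4}$ uniformly in $n$. This is exactly the bootstrap assumption with an improved constant once $h$ (hence the interaction smallness) and $\varepsilon_1,\varepsilon_2$ are taken small enough, closing the argument; combining the two scales of estimate and passing to the limit as above completes the proof.
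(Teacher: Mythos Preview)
Your overall architecture---backward Cauchy problems from times $T_n\to\infty$ followed by compactness---matches the paper, and your localized Lyapunov functional at the energy level is in the right spirit (the paper carries out a closely related computation, without modulation, in Proposition~\ref{prop low order}). The genuine gap is in your high-order step, and it is exactly the quasilinear obstruction the paper is built to overcome.

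The Alazard--Burq--Zuily / Lannes symmetrizer estimate you invoke yields, schematically,
\[
-\frac{d}{dt}\|V\|_{H^\sigma}^2 \;\le\; \omega\big(\|M\|_{W^{m,\infty}}+\|V\|_{H^\sigma}\big)\,\|V\|_{H^\sigma}^2 \;+\; \|R_M\|_{H^\sigma}^2,
\]
where $\omega(\cdot)$ is an $O(1)$ constant governed by the \emph{size of the background} $M$, not by any smallness of $V$. Feeding in your low-norm bound $\|V\|_{H^1}\lesssim e^{-\delta t/2}$ can only shrink the nonlinear perturbation of $\omega$; it does nothing to its $O(1)$ linear part coming from $\|M\|_{W^{m,\infty}}$. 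Backward Gronwall from $T_n$ then produces a factor $e^{\omega\,(T_n-t)}$ against a source $\|R_M\|_{H^\sigma}^2\sim e^{-2\eps_0(c_2-c_1)t}$ (Proposition~\ref{M(t,x) system}). Since $\eps_0(c_2-c_1)$ is a fixed small number while $\omega$ is a fixed $O(1)$ number, the estimate blows up as $T_n\to\infty$ and the bootstrap does not close.

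The paper resolves this by inserting a step you have omitted: before solving backward, it constructs a \emph{refined} approximate solution $U^a=M+\sum_{l=1}^{N}\delta^{l}V_{l}$ (Proposition~\ref{approximate solution M+V}) whose residual $R_{ap}$ decays like $\delta^{N+1}e^{-(N+1)\eps_0(c_2-c_1)t}$ with $N$ arbitrary. The correctors $V_l$ are obtained by inverting the linearization about $M$ on exponentially decaying sources; this is where the stability information is actually spent, and it requires first proving (Theorem~\ref{theo homogeneous}) that the linearized evolution around $M$ grows only like $e^{\eps_0(c_2-c_1)|t-\tau|/2}$ in \emph{every} $X^k$, not just at the energy level. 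Only then does one run the crude $O(1)$-growth backward energy estimate on the tiny remainder $U^R=U_n-U^a$, choosing $N$ so large that $2(N+1)\eps_0(c_2-c_1)>\omega$, which is exactly the condition that makes backward Gronwall close (see the proof of Proposition~\ref{existence of U^n}). No modulation is used anywhere. In short: your scheme is missing the Grenier-type iterated approximate solution that trades the slow interaction decay $e^{-\eps_0(c_2-c_1)t}$ for an arbitrarily fast one, and without it the high-order estimate cannot be made uniform in $n$.
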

The assumption that $\varepsilon_{1,2}$ are sufficiently small will be  only used 
 in order to ensure the existence of  smooth exponentially decreasing  solitary waves (given by Theorem~\ref{theoOS} for example) and  that Proposition~\ref{Propstab1d} below
holds true.  Indeed,  for  the construction of multi-solitary waves that we shall perform, the  main ingredients that are required are besides   the existence
of  smooth localized solitary waves, the stability property given by  Proposition~\ref{Propstab1d}  of each solitary wave  and  the local well-posedness
 in $H^s$ for $s$  sufficiently large of the nonlinear system with the existence of a quasilinear hyperbolic system type energy estimate. 
 Instead of the smallness assumption on $\varepsilon$ in Theorem~\ref{multi-soliton theorem}, we could assume the existence of the solitary waves
  (solitary waves can  also be obtained by variational methods for example,  \cite{Buffoni}) and that each of them satisfies the stability property of
   Proposition~\ref{Propstab1d}. 

We have focused on water waves with surface tension, nevertheless, 
 since  the existence of   solitary waves is also known  (see \cite{Iooss-Kirchgassner} for example) and  since some of them  are linearly stable,  \cite{Pego-Sun}
 (note that nevertheless an estimate like the one of Proposition~\ref{Propstab1d} does not hold in this case, the number of negative directions of $L$ would be infinite), 
 it  could be possible  to  perform a related construction  for water waves without surface tension.
 
  Finally, let us point out that  the assumption that $h$ is sufficiently large is only used in order to get a solution on $[0,+\infty[$, an equivalent statement would be to take 
  $h=0$  and to get a multi-soliton solution on the interval $[T_{0}, +\infty[$ with $T_{0}$ sufficiently large.

There are two main steps in the   proof of  Theorem \ref{multi-soliton theorem}. The first step is to  construct a smooth  approximate solution
 of \eqref{water-wave system} that tends exponentially fast  to $M$ as $t$ goes to infinity.  This approximate solution $U^a$ is under the form:
 $$ U^a = M + \sum_{l=1}^N V^l$$
  where each $V^l$ is  smooth and verifies the crucial property that it is  decaying  in $H^s$ like $e^{-l \eps_{0}(c_{2}- c_{1})t}$ for some $\eps_{0}$. Each $V^l$ solves
   a linear problem with  source  term. The existence of some $V^l$ with this decay property will be proven by using the spectral properties
    of the linearization of the water waves system \eqref{water-wave system}  about each solitary wave.
 
  Once the approximate solution is  sufficiently accurate (i.e for $N$ sufficiently large)
 which basically means that the remainder term in the equation has a sufficiently fast decay in time, we shall construct an exact solution
 as a sum of the approximate solution and remainder term that solves a nonlinear equation.   The main difficulty is to prove  the existence of a solution on $[0,  + \infty) $
  for this problem having at hand only  a  local Cauchy Theory in $H^s$ for $s$  large. 
   
Note that this kind of iterated constructions is related to  Grenier's argument \cite{Grenier} in order to prove that linear instability implies nonlinear 
instability for quasilinear equations that has been used  in \cite{RT}.  
    
The main arguments that we shall use to prove Theorem~\ref{multi-soliton theorem} can also be used in order to sharpen the transverse instability result 
proven in \cite{RT}  and construct  for the two-dimensional  water-waves  system that is to say  when the  fluid domain is
     $$
      \Omega_{t}= \{ (X, z) \in \mathbb{R}^3, \, -H <z<\eta(t,X) \},
     $$
     a solution on $[0, + \infty)$ of the system which is different from the solitary wave (and all its translates) and converge to the solitary wave
    as time goes to infinity.  The result that we shall prove is the following.
    \begin{theoreme}
\label{Instability Thm} 
Let us fix $s\geq 0$.
Suppose that $c$ satisfies \eqref{restr}. 
For $\varepsilon$ sufficiently small  there exists a global solution $U$ of the 2-D water waves system
 with initial data $U_0$ satisfying $U-Q_{c}\in
\mathcal{C}_{b}([0,\infty);H^s(\R^2)\times H^s(\R^2))$.
Moreover, one has 
\beq
\label{conddy}
\p_yU_0\neq 0\eeq
 and
\[
\lim_{t\rightarrow +\infty}\|U(t,x,y)-Q_{c}(x-c t)\|_{H^s}=0\,.
\]
  \end{theoreme}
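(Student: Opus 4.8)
The plan is to mimic the two-step strategy outlined for Theorem \ref{multi-soliton theorem}, specialized to the single soliton $Q_c$ of the two-dimensional system, using the transverse variable $y$ to build a perturbation that is linearly growing (in negative time) but decaying as $t\to+\infty$. First I would Fourier-transform in $y$, writing a perturbation $U-Q_c$ as a superposition over transverse frequencies $k$; the linearization of the 2-D water-waves system about $Q_c(x-ct)$ then decouples into a family of 1-D-type linearized operators $L_k$ acting in the moving frame, with $L_0$ being the 1-D linearization. From \cite{RT} one knows that for some band of frequencies $k$ there is an unstable eigenvalue, i.e.\ an eigenfunction with exponential growth $e^{\sigma_k t}$, $\sigma_k>0$; equivalently, running time backwards, there is a solution of the linearized equation decaying like $e^{-\sigma_k t}$ as $t\to+\infty$ and carrying a nontrivial $\p_y$-component, which guarantees \eqref{conddy}. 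I would fix one such frequency and take $V^1$ to be (the real part of) the corresponding decaying solution mode, smooth and exponentially localized in $x$ thanks to the spectral theory.

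Next I would construct the approximate solution $U^a = Q_c + \sum_{l=1}^N V^l$ by iterating: having chosen $V^1$ decaying like $e^{-\sigma t}$ in $H^s$ (all $s$), the quadratic and higher interactions produce a source term decaying like $e^{-2\sigma t}$; I solve the linearized equation about $Q_c$ with that source to obtain $V^2$ decaying like $e^{-2\sigma t}$, and so on, so that $V^l \sim e^{-l\sigma t}$ and $U^a$ solves \eqref{water-wave system} up to a remainder $R^a$ with arbitrarily fast exponential decay in time once $N$ is large. The key point here, as in the multi-soliton case, is that each linear problem "linearized water waves about $Q_c$ with prescribed source" can be solved with the claimed decay because the only obstructions are the finitely many unstable/neutral spectral directions, and those can be absorbed by the choice of the homogeneous part (the $V^1$ mode) or handled by modulation; I would need the spectral decomposition of $L_k$ to be sufficiently explicit for this, which is inherited from the stability analysis behind Proposition \ref{Propstab1d} together with \cite{RT}.

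Finally, I would write the exact solution as $U = U^a + W$ and derive the nonlinear equation for $W$: it has the form $\p_t W = DF(U^a) W + \mathcal{N}(W) + R^a$, a quasilinear hyperbolic-type system with a source $R^a$ decaying exponentially. Using the high-order energy estimate for the water-waves system (the local well-posedness machinery in $H^s$, $s$ large, together with a Gr\"onwall/bootstrap argument) I would show that on any interval $[0,T]$ one controls $\|W(t)\|_{H^s}$ by the integral of $\|R^a\|_{H^s}$ plus the contribution of the (bounded number of) bad spectral directions of $DF(U^a)\approx DF(Q_c)$; the latter I kill by a final modulation/topological-shooting argument choosing the initial data in the finite-dimensional unstable subspace — exactly Grenier's argument \cite{Grenier} as used in \cite{RT}. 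This yields $W$ globally on $[0,+\infty)$ with $\|W(t)\|_{H^s}\to 0$, hence $U-Q_c = \sum V^l + W \to 0$ in $H^s$, while $\p_y U_0 \neq 0$ because the leading term $V^1$ genuinely depends on $y$. The main obstacle I expect is the last step: closing the $[0,+\infty)$ energy estimate for the quasilinear perturbation equation with only a local Cauchy theory in hand, i.e.\ preventing the finitely many growing linear modes from destroying the global bound, which is precisely where the shooting argument on the unstable subspace and the fast decay of $R^a$ (large $N$) must be balanced against the loss of derivatives in the quasilinear estimate.
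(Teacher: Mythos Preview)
Your outline is in the right spirit but has two concrete gaps.

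First, on the seed $V^1$: a single transverse frequency $k_0$ gives $e^{ik_0 y}V(x)$, which is periodic in $y$ and not in $H^s(\R^2)$. The paper (Proposition~\ref{construct U^0}) instead integrates the eigenmode over a small interval $I=I_0\cup(-I_0)$ of frequencies near $k_0$,
\[
V^0(t,x,y)=\int_I e^{-\sigma(k)t}e^{iky}V(k)\,dk,
\]
which is real, lies in $H^s(\R^2)$, and decays like $e^{-(\sigma_0-\eps_0)t}$ rather than exactly $e^{-\sigma_0 t}$. Relatedly, the unstable subspace of the 2-D linearization is \emph{not} finite-dimensional: there is one simple unstable eigenvalue $\sigma(k)$ for each $k$ in an open interval (Lemmas~\ref{eigenvalue}--\ref{Analicity of eigenmode}), so a shooting argument over ``finitely many'' bad directions does not apply as you state it.

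Second, and this is the main point, the paper bypasses shooting and modulation entirely by going \emph{backward in time}. For the intermediate linear problems (your $V^2,V^3,\dots$), instead of solving forward and projecting off unstable directions, one takes the particular solution
\[
U^j(t) = -\int_t^\infty e^{(t-\tau)JL(k)} F^j(\tau)\,d\tau,
\]
which converges because the group satisfies $\|e^{sJL(k)}\|\lesssim e^{\bar\sigma|s|}$ for any $\bar\sigma>\sigma_0$ and $F^j$ decays like $e^{-(j+1)(\sigma_0-\eps_0)t}$; no spectral projection is needed (Proposition~\ref{estimate for U^j}). For the nonlinear remainder, the paper again avoids shooting: one solves \eqref{eqn for U^n} backward from time $T_n$ with zero final data, obtains uniform-in-$n$ energy bounds (Propositions~\ref{estimate for U^n}--\ref{existence of U^n}) by taking $M$ large enough that the decay rate $(M+3)(\sigma_0-\eps_0)$ of $R^{ap}$ beats the Gronwall constant $\omega(C)$, and extracts a limit $U^R$ by compactness as $T_n\to\infty$. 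The reversibility of the water-waves system makes the backward Cauchy problem as well-posed as the forward one. Your instinct that the last step is the crux is correct, but the resolution is not a shooting argument in the unstable subspace---it is to solve backward so that the forward-unstable modes are harmless, and no selection of initial data is required at all; $U^R(0)$ is simply whatever the compactness limit hands you.
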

  We shall recall the formulation of the 2D water-waves system in Section~\ref{section2D}.
 \begin{rem} 
 By the remark after \cite[Theorem 1.5]{RT2} we know that this theorem implies the transverse instability of the solitary wave.  
\end{rem}
This result  can  be compared to classical results about the existence of  strongly stable manifolds for
ordinary differential equations or semilinear partial differential equations. Results as in Theorem~\ref{Instability Thm}  were in particular, 
obtained for semilinear partial differential equations  in \cite{Duyckaerts-Merle,Combet} for example and also in \cite{RT2} for the KP-I
equation.
 As previously, the main difficulty for the proof of this result
comes from the fact that the water-waves system is not semilinear.
The proof of Theorem~\ref{Instability Thm} also relies on the construction of a well-chosen
approximate solution and of a remainder that solves a nonlinear system. The construction
 of the approximate solution   relies  on  spectral information and semi-group
 estimates that  are contained in \cite{RT}. Consequently, we shall first
 present the proof of Theorem~\ref{Instability Thm} in Section~\ref{section2D}, this allows
  us to essentially focus on the construction of a remainder that is defined on the whole time
  interval $[0,+\infty[$. These arguments will be also useful for the proof of Theorem~\ref{multi-soliton theorem}.
  
  \bigskip
  
The paper is organized as follows.  
The next section will be devoted to the various steps of the proof of Theorem~\ref{Instability Thm}.
In Section~\ref{added}, we collect some useful bounds for the Dirichlet-Neumann map and we prove the key coercivity property of
Proposition~\ref{Propstab1d}. 
We shall study in Section~\ref{section error} the error that produces in the equation
 the sum  $M$ of two solitary waves. Then in Section~\ref{roximate}, we shall
 construct a suitable approximate solution. This will be the most difficult part in the proof,
 the crucial step will be to  prove that the fundamental solution of the linearized equation
 about $M$ has a sufficiently small exponential growth. The final step of the proof of Theorem~\ref{multi-soliton theorem}
  is given in Section~\ref{section end of the proof}.
 \section{Proof of Theorem \ref{Instability Thm}}\label{section2D}
 In this section we consider the two-dimensional water-waves system. The fluid domain at time  $t$ is defined by
\[
\Omega_t=\{(X,z)\in\R^3\,|\,-H<z<\eta(t,X)\}
\] 
where $X=(x,y)$, $H>0$ is a constant and $\eta(t,X)$ is the free surface at time $t$.
We use the  Zakharov formulation recalled in the introduction  \cite{Zakharov,L} to write the system  for the unknowns $\eta(t,X)$ which is the free-surface and $\varphi(t,X)$ which
is the trace on the free surface of the velocity potential as:
\beq\label{original WW} 
\left\{\begin{array}{l}
\displaystyle{\p_t\eta=G[\eta]\vphi,}\\
\displaystyle{\p_t\vphi=-\f12|\na_X\vphi|^2+\f12\f{(G[\eta]\vphi+\na_X\vphi\cdot\na_X\eta)^2}{1+|\na_X\eta|^2}
-g\eta+b\na_X\cdot\f{\na_X\eta}{\sqrt{1+|\na_X\eta|^2}}}
\end{array}\right. 
\eeq 
where again $b$ is the coefficient of surface tension, $g$ is the gravity coefficient and $G$ is the Dirichlet-Neumann operator.

Since we study  here a single solitary wave with speed $c$, we change frame
$(x,y,z)$ to $(x-ct,y,z)$. This leads to a new system \beq\label{WW}
\left\{\begin{array}{l}
\p_t\eta=c\p_x\eta+G[\eta]\vphi,\\
\displaystyle{\p_t\vphi=c\p_x\vphi-\f12|\na_X\vphi|^2+\f12\f{(G[\eta]\vphi+\na_X\vphi\cdot\na_X\eta)^2}{1+|\na_X\eta|^2}
-g\eta+b\na_X\cdot\f{\na_X\eta}{\sqrt{1+|\na_X\eta|^2}}.}
\end{array}\right. \eeq 
We also  perform the following change of
variables
\begin{equation}\label{rescale}
\eta(t,X)=H\tilde \eta(\f cHt,\f1HX),\qquad\vphi(t,X,z)=cH\tilde
\vphi(\f cHt,\f1HX)
\end{equation}
and simply note $(\tilde \eta,\tilde\vphi)$ again as
$(\eta,\vphi)$ to have the dimensionless water-waves system
\beq\label{scaled WW} 
\left\{\begin{array}{l}
\p_t\eta=\p_x\eta+G[\eta]\vphi,\\
\displaystyle{\p_t\vphi=\p_x\vphi-\f12|\na_X\vphi|^2+\f12\f{(G[\eta]\vphi+\na_X\vphi\cdot\na_X\eta)^2}{1+|\na_X\eta|^2}
-\al\eta+\be\na_X\cdot\f{\na_X\eta}{\sqrt{1+|\na_X\eta|^2}}}
\end{array}\right. \eeq with 
\[\al=\f{gH}{c^2},\qquad\be=\f
b{Hc^2}.
\] 
Observe that there is a slight abuse of notation, since in \eqref{scaled WW} the map $G[\eta]$ is defined as above but with $H=1$.
If we note $U=(\eta,\vphi)^t$,  the system (\ref{scaled WW})
can be rewritten as 
\beq\label{simple form for WW}
\p_t U=\cF(U)
\eeq 
with
\[
\cF(U)=\left(\begin{array}{l}
\p_x\eta+G[\eta]\vphi\\
\displaystyle{\p_x\vphi-\f12|\na_X\vphi|^2+\f12\f{(G[\eta]\vphi+\na_X\vphi\cdot\na_X\eta)^2}{1+|\na_X\eta|^2}
-\al\eta+\be\na_X\cdot\f{\na_X\eta}{\sqrt{1+|\na_X\eta|^2}}}
\end{array}\right).
\]
 The solitary wave $Q_{c}$ of the original system \eqref{original WW} becomes a stationary wave
solution for the water-waves system (\ref{scaled WW}) that we shall denote in this section by 
$Q_\varepsilon(x)=(\eta_\varepsilon(x),\vphi_\varepsilon(x))^t$ or simply as $Q(x)$. We want to show that there exists
a global solution of system (\ref{scaled WW}) near the solitary wave.
\subsection{The linearized operator}
\label{sectionlin1}
As in \cite{RT}, we linearize the water-waves system (\ref{scaled WW}) around the solitary wave $Q_\varepsilon=(\eta_\varepsilon,\vphi_\varepsilon)^t$.  Let us set 
\[
Z_\varepsilon:=Z[Q_\varepsilon]=\f{G[\eta_\varepsilon]\vphi_\varepsilon+\partial_x \eta_\varepsilon \partial_x \vphi_\varepsilon}
{1+|\partial_x \eta_\varepsilon|^2},\quad
v_\varepsilon:=\p_x\vphi_\varepsilon-Z_\varepsilon\p_x\eta_\varepsilon,
\] and 
\[
P_\varepsilon\eta:=
\be\na_X\cdot\left[\f{\na_X\eta}{(1+(\partial_x \eta_\varepsilon)^2)^\f12}
-\f{(\na_X\eta_\varepsilon\cdot\na_X\eta)\na_X\eta_\varepsilon}{(1+(\partial_x \eta_\varepsilon)^2)^\f32}\right].
\] 
Note that since $G[\eta_\varepsilon]\vphi_\varepsilon=-\partial_x \eta_\varepsilon$, we obtain that $Z_\varepsilon$ and $v_\varepsilon$ are decaying exponentially together with all their derivatives  thanks
 to Theorem \ref{theoOS}
($\varphi_\varepsilon$ occurs with a derivative in $v_\varepsilon$).
The linearization of (\ref{scaled WW}) about $Q_\varepsilon$ reads
\beq\label{linearized system 1}\p_t U=J\Lam U,\eeq where
$U=(\eta,\vphi)^t$, 
$
J=\left(\begin{matrix}0&1\\-1&0\end{matrix}\right)
$ 
is a skew-symmetric matrix and
\[
\Lam= \left(\begin{matrix} -P_\varepsilon+\al+Z_\varepsilon
G[\eta_\varepsilon](Z_\varepsilon\cdot)+Z_\varepsilon\p_x v_\varepsilon & (v_\varepsilon-1)\p_x-Z_\varepsilon
G[\eta_\varepsilon]\\
-\p_x((v_\varepsilon-1)\cdot)-G[\eta_\varepsilon](Z_\varepsilon\cdot) & G[\eta_\varepsilon]
\end{matrix}\right)
\] is a symmetric operator on $L^2 \times L^2$. 
The formula for the differential of the Dirichlet-Neumann operator with respect to $\eta$ which allows to obtain the above
expression is recalled in Proposition \ref{propDN1} (5) below.
 As in \cite{L}, we can get a simpler form of the linearized system by the change of unknowns
\beq
\label{good-unknown}V_1=\eta,\quad V_2=\vphi-Z_\varepsilon\eta.
\eeq
We get  for $V=(V_1,V_2)^t$ the system
\beq\label{linearized system 2} \p_tV=JLV, \eeq where $L$  is still a   symmetric
defined  by 
\[
L=\left(\begin{matrix}-P_\varepsilon+\al+(v_\varepsilon-1)\p_x Z_\varepsilon &
(v_\varepsilon-1)\p_x\\
-\p_x((v_\varepsilon-1)\cdot) & G[\eta_\varepsilon]
\end{matrix}\right).
\] 
Notice that  since $Q_\varepsilon$ does not depend on $y$, the study of system (\ref{linearized system 2}) can be simplified by using Fourier transform in $y$. In fact, if
for some $k\in \R$,
\[V(x,y)=e^{iky}W(x),\] then \[
LV=e^{iky} L(k) W \] 
with a 
symmetric operator $L(k)$   defined  by 
\beq\label{operator L(k)}
L(k)=\left(\begin{matrix}-P_{\varepsilon,k}+\al+(v_\varepsilon-1)\p_x Z_\varepsilon &
(v_\varepsilon-1)\p_x\\
-\p_x((v_\varepsilon-1)\cdot) & G_{\varepsilon,k}
\end{matrix}\right).
\eeq Here \[
P_{\varepsilon,k}u=\be\big\{\p_x\big[(1+(\p_x\eta_\varepsilon)^2)^{-\f32}\p_x
u\big]-k^2(1+(\p_x\eta_\varepsilon)^2)^{-\f12}u\big\},
\] and $G_{\varepsilon,k}$ is such that
\[
G[\eta_\varepsilon](f(x)e^{iky})=e^{iky}G_{\varepsilon,k}(f(x)).
\]
\subsection{The eigenvalue problem $JL(k)U=\sigma U$}
We shall need some results about the spectrum and  the  eigenvalues of $JL(k)$  seen as an unbounded operator on $L^2(\mathbb{R}) \times L^2(\mathbb{R})$
 with domain $H^2(\mathbb{R}) \times H^1(\mathbb{R})$ which are essentially contained in \cite{RT}.
\begin{lem}\label{eigenvalue} 
We have the following spectral properties of the operators $JL(k)$.
\begin{itemize}
\item There exists $\varepsilon^*$ such that for every $\varepsilon \in (0, \varepsilon^*],$ the solitary wave $Q_{\varepsilon}$
 is spectrally stable against one-dimensional perturbation:  $\si(JL(0))\subset i\R$ where $\sigma(\cdot)$ denotes the spectrum.
 
 \item For
any $k\neq 0$, the essential spectrum of $JL(k)$ is such that  $\sigma_{ess}( J L(k))\subset i \mathbb{R}$ and $JL(k)$  has at most one   simple eigenvalue of positive real part. 
 \item If $\sigma$ is an eigenvalue of $JL(k)$ then so is $-\sigma$.
\item If $\sigma$ is an eigenvalue of $JL(k)$, $k\neq 0$, with non-zero real part   then $\si\in \R$.
\end{itemize}
\end{lem}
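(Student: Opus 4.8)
The plan is to deduce items (3) and (4) from the Hamiltonian and real structure of $JL(k)$, to settle the essential-spectrum half of (2) by an asymptotic-operator computation, and to invoke \cite{RT} for the two genuinely spectral statements, namely item (1) and the bound on the point spectrum in (2).

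First I would dispose of (3) and (4). Since $J$ is skew-adjoint and $L(k)$ self-adjoint on $L^2\times L^2$, conjugation by the isometry $J$ gives $J^{-1}(JL(k))J=L(k)J=-(JL(k))^*$, so $JL(k)$ is similar to $-(JL(k))^*$; hence $\si(JL(k))=-\overline{\si(JL(k))}$, i.e. the spectrum is invariant under $\si\mapsto-\overline\si$. On the other hand $J$ and $L(k)$ map real-valued pairs to real-valued pairs — the operators $P_{\eps,k}$ and $G_{\eps,k}$ have real coefficients — so $JL(k)U=\si U$ forces $JL(k)\overline U=\overline\si\,\overline U$, whence the spectrum is also invariant under $\si\mapsto\overline\si$. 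Composing the two reflections yields invariance under $\si\mapsto-\si$, which is (3). For (4), suppose $k\neq0$ and $\si$ is an eigenvalue with $\mathrm{Re}\,\si\neq0$ and $\mathrm{Im}\,\si\neq0$; replacing $\si$ by $-\si$ if necessary we may assume $\mathrm{Re}\,\si>0$, and then $\si$ and $\overline\si$ are two distinct eigenvalues of $JL(k)$ with positive real part, contradicting the bound in (2). Hence $\si\in\R$.

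Next the essential spectrum in (2). Since $\eta_\eps$, $Z_\eps$, $v_\eps$ and all their derivatives decay exponentially (cf. Theorem~\ref{theoOS}), $L(k)$ differs from the constant-coefficient operator $L_\infty(k)$ obtained by setting $\eta_\eps=v_\eps=Z_\eps=0$ only by terms with exponentially decaying coefficients, which form a relatively compact perturbation; by Weyl's theorem $\si_{ess}(JL(k))=\si_{ess}(JL_\infty(k))$. The operator $JL_\infty(k)$ is a Fourier multiplier in $x$ whose symbol $Jm(\xi,k)$, with $m(\xi,k)$ the Hermitian matrix having real diagonal entries $\al+\be(\xi^2+k^2)>0$ and $\lambda(\xi,k)\geq0$ (the symbol of $G_{0,k}$) and off-diagonal entries $\mp i\xi$, has eigenvalues $\si=i\big(\xi\pm\sqrt{(\al+\be(\xi^2+k^2))\,\lambda(\xi,k)}\,\big)$; these are purely imaginary, so $\si(JL_\infty(k))\subset i\R$ and therefore $\si_{ess}(JL(k))\subset i\R$.

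Finally, item (1) and the point-spectrum part of (2) are exactly the spectral analysis carried out in \cite{RT}, which I would transcribe. For $k=0$: one uses that $\ker L(0)=\mathrm{span}\{\p_xQ_\eps\}$, that $L(0)$ has exactly one negative direction, and the coercivity of $L(0)$ on the natural codimension-$2$ subspace supplied by Proposition~\ref{Propstab1d}; a standard energy--momentum (Grillakis--Shatah--Strauss) argument then excludes eigenvalues of $JL(0)$ with nonzero real part. For $k\neq0$: the additional term $-\be k^2(1+(\p_x\eta_\eps)^2)^{-1/2}$ in $L(k)$ is a positive (``mass-type'') perturbation; tracking the generalized kernel of $JL(0)$ under it shows that for $k$ near $0$ precisely one eigenvalue leaves the origin onto the positive real axis and one onto the negative real axis, while for $k$ large $L(k)$ is positive definite so $JL(k)$ is spectrally stable there, and a Krein-signature / constrained-variational instability count shows the number of eigenvalues of positive real part never exceeds one, the surviving one being simple. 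The hard part is precisely this last count: it rests on the full perturbation-theoretic and instability-index machinery of \cite{RT}, and the only genuinely new inputs needed here are the essential-spectrum statement above and the symmetry reductions giving (3)--(4).
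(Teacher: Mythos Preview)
Your proposal is correct and follows essentially the same route as the paper: the hard spectral inputs (item (1) and the ``at most one simple eigenvalue of positive real part'' in (2)) are imported from \cite{RT}, and (4) is deduced from (2) together with the conjugation symmetry $\si\mapsto\bar\si$. The only notable difference is in (3): you obtain $\si\mapsto-\si$ by combining the Hamiltonian similarity $JL(k)\sim -(JL(k))^*$ with reality, whereas the paper uses the reversibility symmetry of the water-waves system and the parity of $Q_\eps$ (if $U(x)=(\eta(x),\varphi(x))^t$ is an eigenvector for $\si$, then $(\eta(-x),-\varphi(-x))^t$ is one for $-\si$), explicitly noting the Hamiltonian route as an alternative; both arguments are equally short. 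Your explicit Weyl/Fourier-multiplier computation for the essential spectrum is more detailed than the paper, which simply defers that point to \cite{RT}.
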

Note that the combination of above properties of the eigenvalues also yield that there is at most one eigenvalue of negative real part for $JL(k)$, $k \neq 0$.
\begin{proof} 
These statements are already contained in  Lemma 5.1 in \cite{RT}.
Note that the first statement in the above lemma was obtained as a consequence of the work of Mielke \cite{Mielke}.
The only additional point is to notice that thanks to the reversibility symmetry of the water waves system and  the symmetry of  the solitary wave  one has that if $U(x)=(\eta(x), \varphi(x))$ is an eigenfunction of $JL(k)$
corresponding to an eigenvalue $\sigma$
then $\tilde U(x)= (\eta(-x), -\varphi(-x))^t$ is an eigenfunction corresponding to the eigenvalue $-\sigma$ (this symmetry of the spectrum could also be obtained by using the Hamiltonian structure).
The last point is a consequence of the symmetry of the spectrum and the fact that there is at most one simple eigenvalue of  positive real part.
\end{proof}
Thanks to Lemma~\ref{eigenvalue} and in particular the symmetry of the spectrum, we have the following counterparts of \cite[Proposition~5.2] {RT} and \cite[Theorem 5.3]{RT} respectively.
\begin{lem}\label{location of eigenmode} 
Consider the eigenvalue problem
\begin{equation}\label{eigen}
JL(k)U=\sigma U,\quad U\in H^2(\R)\times H^1(\R).
\end{equation}
Then :

\noindent(1) $\exists K>0$ such that if $|k|>K$,  there is no solution of \eqref{eigen} satisfying $\si<0$.

\noindent(2) $\exists M>0$ such that if $ |k|\leq K$, there is no solution of \eqref{eigen} satisfying $\si\leq -M$.
 \end{lem}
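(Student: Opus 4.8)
The plan is to establish both statements by contradiction, exploiting the compactness of the interval $[-K,K]$ of transverse frequencies together with uniform bounds on eigenfunctions, and to view Lemma~\ref{eigenvalue} as telling us that for $k\neq 0$ there is \emph{at most one} eigenvalue $\sigma$ with $\sigma<0$ and that it is automatically real. First I would treat point (1): suppose, for contradiction, that there is a sequence $|k_n|\to\infty$ and eigenpairs $(\sigma_n,U_n)$ of $JL(k_n)$ with $\sigma_n<0$ and $\|U_n\|_{L^2\times L^2}=1$. Writing $U_n=(\eta_n,\vphi_n)^t$ and unwinding $JL(k_n)U_n=\sigma_n U_n$, the second component reads $G_{\varepsilon,k_n}\vphi_n + (\text{lower order in }\eta_n) = -\sigma_n\,\eta_n$; the key analytic input is that $G_{\varepsilon,k}$ behaves like $|k|$ on the relevant scales (more precisely $\langle G_{\varepsilon,k}u,u\rangle \gtrsim |k|\,\|u\|_{L^2}^2$ for $|k|$ large, a standard coercivity property of the Dirichlet--Neumann operator which follows from the bounds collected in Section~\ref{added}), and similarly $-P_{\varepsilon,k}$ contributes a term $\gtrsim \beta k^2\|\eta\|^2$. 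Pairing the system against $(\eta_n,\vphi_n)$ and using the symmetry of $L(k)$ one gets $\langle L(k_n)U_n,U_n\rangle = \langle JL(k_n)U_n, JU_n\rangle$, and a short computation shows $\langle L(k_n)U_n,U_n\rangle \gtrsim k_n^2\|\eta_n\|^2 + |k_n|\|\vphi_n\|^2 - C\|U_n\|^2 \to +\infty$ while, since $J$ is an isometry, the right side stays bounded by $|\sigma_n|\cdot\|U_n\|^2$ — but we must also control $|\sigma_n|$. For that, note $|\sigma_n|\,\|U_n\| = \|JL(k_n)U_n\| = \|L(k_n)U_n\|$, and test against $U_n$: $|\sigma_n| \geq \langle L(k_n)U_n,U_n\rangle$, so one in fact derives $|\sigma_n|\to\infty$ is consistent; the contradiction instead comes from combining the \emph{lower} bound on $\langle L(k_n)U_n,U_n\rangle$ with the structure of the eigenvalue equation restricted to the $\vphi$-component together with the first equation $(\sigma_n - \p_x)\eta_n = (v_\varepsilon-1)\p_x\vphi_n$, which forces $\|\eta_n\|$ to be small relative to $\|\vphi_n\|$ when $|\sigma_n|$ is large, and then the $\beta k_n^2\|\eta_n\|^2$ and $|k_n|\|\vphi_n\|^2$ terms cannot balance a negative $\sigma_n$. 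This is essentially the argument of \cite[Proposition~5.2]{RT} and I would simply adapt it verbatim.

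For point (2), I would again argue by contradiction: suppose there are $k_n\in[-K,K]$ and eigenpairs $(\sigma_n,U_n)$ with $\sigma_n\leq -n$ and $\|U_n\|=1$. After passing to a subsequence, $k_n\to k_\infty\in[-K,K]$. The operator $L(k)$ depends continuously (even analytically) on $k$ in a suitable sense, and $JL(k)$ has the form ``leading-order constant-coefficient Fourier multiplier plus a relatively compact exponentially localized perturbation'' (the coefficients $Z_\varepsilon, v_\varepsilon$ and their derivatives decay exponentially by the remark following \eqref{operator L(k)}). The standard way to get a lower bound on $|\sigma|$ over a eigenfunction uniformly in $k\in[-K,K]$ is to observe that $|\sigma_n| = \|JL(k_n)U_n\|/\|U_n\| \geq \|L(k_n)U_n\|$, and then to use that $L(k_n)$ is, modulo the bounded localized part, the elliptic-type operator $\mathrm{diag}(-P_{\varepsilon,k_n}+\al, G_{\varepsilon,k_n})$ whose inverse is bounded on $L^2\times L^2$ uniformly for $k$ in a compact set (since $-P_{\varepsilon,k}+\al \gtrsim \al>0$ and $G_{\varepsilon,k}$ is nonnegative with $G_{\varepsilon,k}+1$ invertible — one uses the $\eta$-equation to trade the kernel of $G_{\varepsilon,0}$); hence $\|L(k_n)U_n\| \geq c\,\|U_n\|_{H^2\times H^1} - C\|U_n\|_{L^2\times L^2}$. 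If $|\sigma_n|\to\infty$ this forces $\|U_n\|_{H^2\times H^1}\to\infty$ while $\|U_n\|_{L^2\times L^2}=1$, and then dividing the eigenvalue equation by $\sigma_n$ and extracting limits (the localized perturbation is compact, so it drops out in the limit after rescaling) yields that the normalized high-frequency part of $U_n$ solves the constant-coefficient limiting equation with $\sigma=\infty$, which has no nontrivial solution — the desired contradiction. Again this mirrors \cite[Theorem~5.3]{RT}.

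The step I expect to be the main obstacle is making precise and uniform (in $k\in[-K,K]$, and separately in the regime $|k|\to\infty$) the coercivity/invertibility of the ``principal part'' of $L(k)$, i.e. that $G_{\varepsilon,k}$ together with $-P_{\varepsilon,k}+\al$ controls the full $H^2\times H^1$ norm up to lower-order and exponentially-localized terms. Concretely one needs: (a) $\langle G_{\varepsilon,k}u,u\rangle \geq c\,\|u\|_{\dot H^{1/2}}^2$ and, for $|k|$ large, $\geq c|k|\|u\|_{L^2}^2$, with constants independent of $k$ on the relevant range; and (b) elliptic regularity for $-P_{\varepsilon,k}$ giving $\|\p_x^2 u\| + k^2\|u\| \lesssim \|P_{\varepsilon,k}u\| + \|u\|$, again uniformly. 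Both follow from the bounds for the Dirichlet--Neumann map proved in Section~\ref{added} (the coefficient $(1+(\p_x\eta_\varepsilon)^2)^{-3/2}$ is bounded above and below since $\eta_\varepsilon\in L^\infty$), but assembling them into the clean statement ``$|\sigma|$ bounded below on eigenfunctions'' requires care with the off-diagonal first-order terms $(v_\varepsilon-1)\p_x$, which one handles by absorbing $\p_x$ into the $G_{\varepsilon,k}$-coercivity (half a derivative from each side) plus a small-constant Young inequality. Once this is in place, both (1) and (2) are immediate, and the overall structure is a routine transcription of the corresponding results in \cite{RT} with the transverse Fourier variable $k$ playing the same role as there.
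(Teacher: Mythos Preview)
Your sketch is essentially an attempt to reprove \cite[Proposition~5.2]{RT} and \cite[Theorem~5.3]{RT} directly for negative eigenvalues, and that is a legitimate route; the paper, however, takes a much shorter path. Since Lemma~\ref{eigenvalue} already records that $\sigma$ is an eigenvalue of $JL(k)$ if and only if $-\sigma$ is, the statements for $\sigma<0$ are \emph{immediate} corollaries of the $\sigma>0$ statements proved in \cite{RT}: one simply reflects. No new estimates are needed. You do invoke Lemma~\ref{eigenvalue} at the outset, but only for the simplicity/reality of the eigenvalue; you overlook that the $\sigma\leftrightarrow -\sigma$ symmetry already does all the work.

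Regarding your direct argument for (1): the key identity you are circling is that for a \emph{real} eigenvalue $\sigma$ one has $\langle L(k)U,U\rangle = -\sigma\langle JU,U\rangle = 0$ by skew-symmetry of $J$. Combined with the coercivity $\langle L(k)U,U\rangle \gtrsim \beta k^2\|\eta\|^2 + |k|\|\vphi\|^2 - C\|U\|^2$ this gives the contradiction directly for $|k|$ large, without any need to control $|\sigma_n|$ or to analyse the relative sizes of $\|\eta_n\|$ and $\|\vphi_n\|$. Your presentation instead bounds $\langle JL(k_n)U_n, JU_n\rangle$ by $|\sigma_n|\|U_n\|^2$ (a correct but very loose bound, since the quantity is in fact zero) and then launches into a secondary argument that is vague as written. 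Your treatment of (2) is closer to what \cite{RT} actually does, though the ``$\sigma=\infty$ limiting equation'' step would need to be made precise. None of this is fatal, but the symmetry shortcut makes the whole discussion unnecessary.
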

 
 From now on, we shall only consider solitary waves $Q_{\varepsilon}$  with $ \varepsilon \in (0, \varepsilon^*]$ 
 as stated in Theorem~\ref{Instability Thm}
 so that the above spectral
 properties are matched. 
\begin{prop}\label{existence of negative eigenvalue} 
For $\varepsilon \in (0,\varepsilon^*]$, 
there exists
$\si>0$, $k\neq 0$ and a non-trivial $U\in H^2\times H^1$ such
that\[ JL(k)U=-\si U.\] 
\end{prop}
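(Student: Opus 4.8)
The plan is to exploit the monotonicity of the family $k\mapsto L(k)$ together with the symmetries of $\sigma(JL(k))$ recorded in Lemma~\ref{eigenvalue}. The only $k$-dependent parts of $L(k)$ are the term $\be k^2(1+(\p_x\eta_\varepsilon)^2)^{-1/2}$, which enters the $(1,1)$-entry through $-P_{\varepsilon,k}$, and the Dirichlet--Neumann block $G_{\varepsilon,k}$; both are nonnegative and increasing in $k^2$ as quadratic forms (for $G_{\varepsilon,k}$ this is immediate from its variational characterization). By the min--max principle, every eigenvalue of $L(k)$ lying below its essential spectrum is nondecreasing in $|k|$, so the number $n(L(k))$ of negative eigenvalues of $L(k)$ is nonincreasing in $|k|$. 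For $|k|$ large the first term above dominates, giving $L(k)>0$ and hence $n(L(k))=0$; at $k=0$ one has $n(L(0))=1$ and $\ker L(0)=\R\,\p_xQ_\varepsilon$ is one-dimensional. The value $n(L(0))=1$ is inherited, by a perturbation argument, from the long-wave limit $\varepsilon\to0$ in which $L(0)$ (after rescaling and elimination of the second unknown) converges to the linearisation of KdV about its soliton --- a P\"oschl--Teller operator with exactly one negative eigenvalue --- and it is the only reason $\varepsilon^*$ must be taken small, which is anyway assumed. (Were $n(L(0))$ to equal $2$, the argument below applies on whichever $k$-range $n(L(k))=1$, which is nonempty for $\varepsilon$ small, $n(L(k))$ then dropping by one at a time.)

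Next I would locate the critical wavenumber. Since $\p_xQ_\varepsilon$ acquires a strictly positive contribution from the term $\be k^2(1+(\p_x\eta_\varepsilon)^2)^{-1/2}$ as soon as $k\neq0$, the second eigenvalue $\mu_2(k)$ of $L(k)$ stays $>0$ for every $k\neq0$; combining this with $\mu_1(0)<0$ and $\mu_1(k)>0$ for $|k|$ large yields a smallest $k_1>0$ with $\mu_1(k_1)=0$. Hence $n(L(k))=1$ and $\ker L(k)=\{0\}$ for every $k\in(0,k_1)$, and in particular $0\notin\sigma(JL(k))$ --- that $0\notin\sigma_{ess}(JL(k))$ for $k\neq0$ follows by inspecting the constant-coefficient operator at spatial infinity, whose Fourier symbol is a $2\times2$ matrix with purely imaginary, nonzero eigenvalues when $\al>1$ and $\be>1/3$. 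Fix any such $k$. By Lemma~\ref{eigenvalue}, $\sigma_{ess}(JL(k))\subset i\R$, there are no complex quadruplets (eigenvalues off $i\R$ are real), and at most one eigenvalue has positive real part. The Hamiltonian--Krein index identity $k_r+2k_c+2k_i^-=n(L(k))$, applicable since $0\notin\sigma(JL(k))$, then reads $k_r+2k_i^-=1$ because $k_c=0$, which forces $k_r=1$: $JL(k)$ has exactly one real eigenvalue $\sigma>0$. By the spectral symmetry from Lemma~\ref{eigenvalue}, $-\sigma$ is then also an eigenvalue, with eigenfunction in the domain $H^2(\R)\times H^1(\R)$ --- this is the assertion.

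The step I expect to be the main obstacle is establishing $n(L(0))=1$: the coercivity known for the one-dimensional problem gives only $n(L(0))\le2$, so the sharp value genuinely requires the spectral convergence to KdV in the long-wave scaling (or the detailed analysis of \cite{RT}). A secondary technical point is the careful use of the Hamiltonian--Krein identity for an operator with essential spectrum. An alternative, close to the route of \cite{RT}, is fully perturbative: letting $\varepsilon\to0$ with $k$ rescaled in the appropriate long-wave (KP-type) regime, $JL(k)$ converges to the linearisation of a KP-type model about its line soliton, whose transverse instability (a real unstable eigenvalue over a band of transverse wavenumbers) is classical, and one continues this eigenvalue to $\varepsilon>0$ small by a Lyapunov--Schmidt argument; there the delicate point is the justification of the singular limit at the level of the spectrum together with the nondegeneracy of the limiting eigenvalue. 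In either approach the monotonicity of $L(k)$ and the symmetries of $\sigma(JL(k))$ from Lemma~\ref{eigenvalue} provide the remaining scaffolding.
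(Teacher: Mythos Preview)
Your proposal is sound. The paper does not prove this proposition; it simply records it as the counterpart of \cite[Theorem~5.3]{RT}. Both routes you outline are valid, but your attribution is reversed: the argument in \cite{RT} follows your \emph{first} approach --- monotonicity of $k\mapsto L(k)$ together with $n(L(0))=1$ and a crossing/index argument --- rather than the perturbative KP-limit continuation you label ``close to the route of \cite{RT}''. The Hamiltonian--Krein identity you invoke is a clean abstract packaging of the last step; \cite{RT} argues somewhat more directly, tracking the crossing $\mu_1(k_1)=0$ and the resulting bifurcation of eigenvalues of $JL(k)$ off zero.

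One refinement: you need not appeal to the KdV limit as a separate step to obtain $n(L(0))=1$. Proposition~\ref{Propstab1d} below (coercivity of $L_c$ under two orthogonality conditions, read in the quotient by $0_{X^0}$) already gives $n(L(0))\le1$, and the negative direction is supplied by $R_c\partial_cQ_c$, since by \eqref{difff} and Lemma~\ref{B} one has $(L_cR_c\partial_cQ_c,R_c\partial_cQ_c)=(\Lambda_c\partial_cQ_c,\partial_cQ_c)=(\partial_cQ_c,J\partial_xQ_c)<0$. The KdV limit sits one step upstream, inside Mielke's coercivity, which is what forces $\varepsilon$ small. The genuine technical work --- which you correctly flag as your secondary point --- lies in making the index count rigorous across the change of energy space from $k=0$ (where one must quotient by constants in the second component) to $k\ne0$ (where $|\cdot|_{\dot H^{1/2}_k}$ is an honest norm), so that a min--max comparison of $\mu_j(k)$ with $\mu_j(0)$ is not available on a single common space; this is precisely where \cite{RT} does the analysis.
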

Finally, we also have as a consequence of the above results:
\begin{lem}\label{Analicity of eigenmode} 
For every   $(\si_0,k_0)$
satisfying $k_0\neq 0$, $\mbox{Re } \si_0<0$ and $\si_0\in \si(JL(k_0))$,
the set $\{(\si,k)\,|\,\si\in \si(JL(k))\,\}$ near $(\si_0,k_0)$ is
the graph of an analytic curve $k\mapsto \si(k)$ with $\si(k)$ a real 
 simple eigenvalue of $JL(k)$. Moreover, we can select an eigenvector $V(k)$ of $JL(k)$ depending analytically on $k$.
\end{lem}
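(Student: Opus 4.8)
The plan is to deduce the statement from the analytic perturbation theory of closed operators (Kato's theory of analytic families of type (A)), applied to the family $k\mapsto JL(k)$ on the $k$-independent domain $H^2(\R)\times H^1(\R)$. The first step is to check that this family is analytic in $k$. Inspecting \eqref{operator L(k)}, one sees that $P_{\varepsilon,k}$ depends polynomially on $k$ (only through $k^2$), that $(v_\varepsilon-1)\p_x$, $\p_x((v_\varepsilon-1)\cdot)$ and $(v_\varepsilon-1)\p_xZ_\varepsilon$ are independent of $k$, and that $k\mapsto G_{\varepsilon,k}\in\mathcal L(H^1(\R),L^2(\R))$ is analytic: after flattening the (fixed) strip, the elliptic boundary value problem defining $G_{\varepsilon,k}$ has coefficients depending polynomially on $k$, so its solution operator, and hence $G_{\varepsilon,k}$, is analytic in $k$ --- this is already part of the analysis of \cite{RT}. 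Consequently $(z,k)\mapsto(z-JL(k))^{-1}$ is jointly analytic on its domain of definition.

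The second step is to record that $\si_0$ is an isolated, algebraically simple eigenvalue of $JL(k_0)$. Since $\mathrm{Re}\,\si_0<0$ while $\sigma_{ess}(JL(k_0))\subset i\R$ by Lemma~\ref{eigenvalue}, the point $\si_0$ lies at a positive distance from the essential spectrum; and since, again by Lemma~\ref{eigenvalue} and the remark following it, $JL(k_0)$ has at most one eigenvalue of negative real part and it is simple, $\si_0$ is also isolated from the rest of the point spectrum and simple. I would then fix a small circle $\Ga\subset\{\mathrm{Re}\,z<0\}$ enclosing $\si_0$ and no other point of $\sigma(JL(k_0))$. By the analyticity of the first step together with upper semicontinuity of the spectrum, there is a neighborhood $\mathcal O$ of $k_0$ in $\C$ such that for $k\in\mathcal O$ the Riesz projection $P(k)=\f1{2\pi i}\oint_\Ga(z-JL(k))^{-1}\,dz$ is well defined, analytic in $k$, and of constant rank $\mathrm{rank}\,P(k_0)=1$. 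Hence, for $k\in\mathcal O$, the portion of $\sigma(JL(k))$ enclosed by $\Ga$ reduces to a single, algebraically simple eigenvalue $\si(k)$ --- the one attached to the one-dimensional invariant subspace $\mathrm{Range}\,P(k)$ --- depending analytically on $k$ with $\si(k_0)=\si_0$; and $V(k):=P(k)V_0$, where $V_0$ is fixed with $P(k_0)V_0\neq0$, is an eigenvector of $JL(k)$ that depends analytically on $k$ and is non-vanishing on a possibly smaller neighborhood.

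It then remains to identify $\si(k)$ and to show that the curve exhausts the local spectrum. For real $k$ close to $k_0$ one has $\mathrm{Re}\,\si(k)$ close to $\mathrm{Re}\,\si_0<0$, so $\si(k)$ has nonzero real part and $k\neq0$; the last item of Lemma~\ref{eigenvalue} then forces $\si(k)\in\R$, so $\si(k)$ is a real simple eigenvalue. Finally, taking the target neighborhood of $\si_0$ to be a disk $D$ with $\overline{D}$ contained in the interior of $\Ga$ and shrinking $\mathcal O$ so that $\si(k)\in D$ for $k\in\mathcal O$, one gets that for $k\in\mathcal O$ the only point of $\sigma(JL(k))$ in $D$ is $\si(k)$; hence near $(\si_0,k_0)$ the set $\{(\si,k)\,|\,\si\in\sigma(JL(k))\}$ coincides with the graph $\{(\si(k),k)\}$, which is the asserted analytic curve.

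I expect the main obstacle to be the first step, namely the rigorous verification that $k\mapsto JL(k)$ is a genuine analytic family of type (A), which amounts to the analyticity in $k$ of the Dirichlet--Neumann family $G_{\varepsilon,k}$; everything else is routine once that is in hand. If one prefers not to reprove it here, the statement is exactly the counterpart of the corresponding result in \cite{RT} and may simply be quoted from there.
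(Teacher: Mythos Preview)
Your proof is correct but follows a different route from the paper. The paper applies the Lyapunov--Schmidt/Implicit Function Theorem directly to the map $F(V,k,\sigma)=JL(k)V-\sigma V$: it checks that $D_{V,\sigma}F(V_0,k_0,\sigma_0)$ is Fredholm of index zero with one-dimensional kernel spanned by $(V_0,0)$ and is surjective (both consequences of the algebraic simplicity of $\sigma_0$ from Lemma~\ref{eigenvalue}), and then reads off the analytic curve $k\mapsto(\sigma(k),V(k))$. Your argument instead invokes Kato's analytic perturbation theory: you verify that $k\mapsto JL(k)$ is an analytic family of type (A), build the Riesz projection $P(k)$ around $\sigma_0$, and use rank constancy to obtain the simple eigenvalue $\sigma(k)$ and eigenvector $V(k)=P(k)V_0$. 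Both approaches rest on the same spectral input (isolated simple eigenvalue, away from the essential spectrum) and on the analyticity of $k\mapsto G_{\varepsilon,k}$, which, as you note, is contained in \cite{RT}. Your Riesz-projection route is the more ``off-the-shelf'' argument and makes the local exhaustion of the spectrum transparent; the paper's IFT argument is slightly more hands-on but avoids any appeal to the general Kato machinery. Either way the reality of $\sigma(k)$ for real $k$ is obtained exactly as you do, from the last item of Lemma~\ref{eigenvalue}.
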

\begin{proof}
Suppose that $\si_0\in \si(JL(k_0))$  with $k_0\neq 0$, $\mbox{Re } \si_0<0$. From Lemma \ref{eigenvalue}, $\sigma_{0}$ is necessarily a simple real eigenvalue.  Let $V_0\in H^2\times H^1$  an  eigenvector,   $JL(k_0)V_0=\sigma_0 V_0$.
Consider the map $F(V,k,\sigma)\equiv JL(k)V-\sigma V$. Then $D_{V,\sigma}F(V_0,k_0,\sigma_0)(U,\mu)=JL(k_0)U-\sigma_0 U-\mu V_0$.
Thanks to Lemma \ref{eigenvalue},  $JL(k_0)-\sigma_0$ is Fredholm with index zero and  its kernel is one-dimensional.
This implies that the kernel of $D_{V,\sigma}F(V_0,k_0,\sigma_0)$ is also one dimensional.
Indeed, if $(U,\mu)$ is  such that $D_{V,\sigma}F(U_0,k_0,\sigma_0)(U,\mu)=0$ then $(JL(k_0)-\sigma_0)^2U=0$ which, thanks to Lemma~\ref{eigenvalue}, implies that
$U=\lambda V_0$, $\lambda\in\R$. This in turn implies that $\mu=0$ and thus the kernel of  $D_{V,\sigma}F(V_0,k_0,\sigma_0)$ is spanned by $(V_0,0)$.
Once again, thanks to Lemma~\ref{eigenvalue}, we obtain that $V_0$ is not in the image of $JL(k_0)-\sigma_0$ 
(otherwise $\sigma_0$ would not be a simple eigenvalue of $JL(k_0)$). Therefore $D_{V,\sigma}F(V_0,k_0,\sigma_0)$ is surjective. We are in position to apply the simplest form of the Lyapounov-Schmidt method. This ends  the proof of Lemma~\ref{Analicity of eigenmode}.
\end{proof}
\subsection{Construction of the approximate solution}
Let us define for  $U=(\eta,\vphi)^t$,  the norms 
\[
\|U(t)\|_{E^s}=\sum_{0\leq \al+\be+\ga\leq
s}\|\p^\al_t\p^\be_x\p^\ga_yU(t,\cdot)\|_{L^2(\R^2)}\,.
\] 
\begin{prop}\label{construct U^0} There
exists $U^0(t,x,y)\in \cap_{s\ge0}E^s$ satisfying\beq\label{U^0 eqn}
\p_tU^0=J\Lam U^0 \eeq 
 such that for every $s\ge 0$, every $\eps_0>0$ one has
\[
\|U^0(t)\|_{E^s}\leq c_{s,\eps_0} e^{-(\si_0-\eps_0)
t},\qquad t\ge 0
\] 
where  $-\si_0$ is the smallest possible eigenvalue of
$JL(k)$ (and thus $\sigma_0$ is the largest possible eigenvalue of $JL(k)$). 
\end{prop}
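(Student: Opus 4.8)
The plan is to build $U^0$ as an (infinite) superposition, over a band of frequencies $k$, of the exponentially growing/decaying eigenmodes of $JL(k)$ furnished by Proposition~\ref{existence of negative eigenvalue} and Lemma~\ref{Analicity of eigenmode}. Concretely: Proposition~\ref{existence of negative eigenvalue} gives a pair $(\sigma_1,k_1)$ with $\sigma_1>0$, $k_1\neq0$, and $-\sigma_1\in\sigma(JL(k_1))$; by Lemma~\ref{Analicity of eigenmode} there is an analytic curve $k\mapsto \sigma(k)\in\R$ of simple eigenvalues near $k_1$ with analytic eigenvectors $V(k)\in H^2\times H^1$. Choose a small interval $I\ni k_1$ on which this branch is defined, a nonnegative weight $\chi\in C_c^\infty(I)$ with $\chi(k_1)\neq0$, and set
\[
U^0(t,x,y)=\int_I \chi(k)\, e^{-\sigma(k) t}\, e^{iky}\, V(k)(x)\, dk .
\]
Because each $e^{iky}V(k)(x)$ solves $\p_t(\,\cdot\,)=J\Lam(\,\cdot\,)$ with growth rate $-\sigma(k)$ — recall $L$ acts on $e^{iky}W(x)$ as $e^{iky}L(k)W$ — the superposition solves \eqref{U^0 eqn} by linearity and dominated convergence (the $k$-integral is over a compact set, the integrand and its $t,x,y$-derivatives are continuous). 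To get a \emph{real} solution one replaces the integrand by its real part, using that $\sigma(k)$ is real and the spectrum is symmetric; alternatively work with the complexified system throughout, which is what $E^s$ already allows. One may also, if desired, normalize so that $U^0$ is nontrivial by noting $\widehat{U^0}(t,\cdot,k)=\chi(k)e^{-\sigma(k)t}V(k)$ in $y$-frequency, which is $\not\equiv0$.

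For the decay estimate in $E^s$: fix $s$ and $\eps_0>0$. Differentiating under the integral sign, $\p_t^a\p_x^b\p_y^c U^0$ equals $\int_I \chi(k)(-\sigma(k))^a(ik)^c e^{-\sigma(k)t}\,e^{iky}\,(\p_x^b V(k))(x)\,dk$, using $\p_t^a$ turns into powers of $-\sigma(k)$ (since each mode is an exact exponential solution) and $JL(k)$ maps the domain into itself with the needed regularity — here one invokes the elliptic regularity of $G_{\varepsilon,k}$ (the bounds collected in Section~\ref{added}/Proposition~\ref{propDN1}) and the exponential decay and smoothness of $\eta_\varepsilon,Z_\varepsilon,v_\varepsilon$ from Theorem~\ref{theoOS} to see that $V(k)\in H^\infty$ with $k$-uniform bounds on $I$. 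Then, since $\sigma$ is continuous on the compact set $\overline I$ and $\sigma(k_1)=\sigma_1\leq \sigma_0$, shrinking $I$ around $k_1$ if necessary we may assume $\sigma(k)\geq \sigma_1-\eps_0\geq$ (a number $\geq \sigma_0-\eps_0$ after relabeling, or simply $\geq \sigma_0-\eps_0$ by choosing the branch realizing the infimum) for all $k\in I$; in any case $\min_{k\in\overline I}\sigma(k)=:\sigma_*>0$, and since $-\sigma_0$ is the smallest eigenvalue we have $\sigma_*\leq \sigma_0$, while by continuity we can arrange $\sigma_*\geq \sigma_0-\eps_0$ \emph{provided} the branch through the extremal eigenvalue is the one chosen (see below). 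Taking absolute values inside the integral yields
\[
\|\p_t^a\p_x^b\p_y^c U^0(t)\|_{L^2(\R^2)}\leq e^{-\sigma_* t}\int_I |\chi(k)|\,|\sigma(k)|^a|k|^c\,\|\p_x^b V(k)\|_{L^2(\R)}\,dk \leq C_{s}\, e^{-(\sigma_0-\eps_0)t},
\]
and summing over $a+b+c\leq s$ gives the claim with $c_{s,\eps_0}=C_s$.

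The main subtlety — not a hard computation but the point that needs care — is matching the rate to $\sigma_0$, \emph{the} supremum of eigenvalues over all $k$. Proposition~\ref{existence of negative eigenvalue} only produces \emph{some} growing mode; to realize a rate arbitrarily close to $\sigma_0$ one argues: for any $\eps_0>0$ pick $k_1$ with the eigenvalue $\sigma(k_1)>\sigma_0-\eps_0/2$ (possible by definition of $\sigma_0$ as a sup; Lemma~\ref{location of eigenmode} guarantees this sup is finite and attained on a compact $k$-range, and Lemma~\ref{Analicity of eigenmode} gives local analyticity so such $k_1$ with real simple eigenvalue exist), then localize $\chi$ so tightly around $k_1$ that $\sigma(k)>\sigma_0-\eps_0$ on $\mathrm{supp}\,\chi$ by continuity. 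This is legitimate because $\eps_0$ is allowed to depend on the construction (the statement quantifies "for every $\eps_0$"), so $U^0$ itself may be chosen depending on $\eps_0$; if one instead wants a single $U^0$ working for all $\eps_0$, take the branch at a genuine maximizer $k_*$ of $\sigma$, where $\sigma(k)=\sigma_0-O((k-k_*)^2)$, and the same estimate goes through with $\sigma_*=\min_{\mathrm{supp}\chi}\sigma \to \sigma_0$ as the support shrinks — either reading suffices for \eqref{U^0 eqn} and the bound. Everything else (solving \eqref{U^0 eqn}, $E^s$ membership) is routine given the smoothness and $k$-uniform bounds established above.
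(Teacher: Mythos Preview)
Your approach is the paper's: build the solution as a wave packet $\int_I \chi(k)\,e^{-\sigma(k)t}e^{iky}V(k)(x)\,dk$ over a short interval of transverse frequencies near a maximizer of $\sigma(\cdot)$, appealing to Lemma~\ref{Analicity of eigenmode} for the analytic branch and to elliptic regularity for $V(k)\in H^\infty$. Two slips, however, need repair.

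\medskip
\textbf{(i) Conflation of $\Lambda$ and $L$.} The eigenvectors $V(k)$ coming from Lemma~\ref{Analicity of eigenmode} satisfy $JL(k)V(k)=-\sigma(k)V(k)$, so the packet you wrote solves $\partial_t V^0=JLV^0$, not \eqref{U^0 eqn}. Your justification ``recall $L$ acts on $e^{iky}W$ as $e^{iky}L(k)W$'' is a statement about $L$, not $\Lambda$; it does not yield $\partial_t U^0=J\Lambda U^0$. The fix is exactly what the paper does: set $U^0=PV^0$ with $P=\left(\begin{smallmatrix}1&0\\ Z_\varepsilon&1\end{smallmatrix}\right)$, using the change of unknowns \eqref{good-unknown}. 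Since $P$ is smooth with bounded coefficients, the $E^s$ estimates for $V^0$ transfer to $U^0$.

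\medskip
\textbf{(ii) The $L^2(\R^2)$ bound as written is false.} ``Taking absolute values inside the integral'' gives
\[
|U^0(t,x,y)|\le \int_I |\chi(k)|\,e^{-\sigma(k)t}|V(k)(x)|\,dk,
\]
and the right-hand side is independent of $y$, hence has infinite $L^2(\R^2)$ norm. Minkowski's inequality does not help either, since $\|e^{iky}V(k)(x)\|_{L^2(\R^2)}=\infty$ for each fixed $k$. The correct route is Plancherel in $y$: the $y$-Fourier transform of your packet is $2\pi\,\chi(k)e^{-\sigma(k)t}V(k)(x)\mathbf{1}_{k\in I}$, whence
\[
\|\p_t^a\p_x^b\p_y^c U^0(t)\|_{L^2(\R^2)}^2
= 2\pi\int_I |\chi(k)|^2|\sigma(k)|^{2a}|k|^{2c}\,e^{-2\sigma(k)t}\,\|\p_x^bV(k)\|_{L^2(\R)}^2\,dk,
\]
which is finite and bounded by $C_{s,\eps_0}\,e^{-2(\sigma_0-\eps_0)t}$ once $I$ is chosen so that $\sigma(k)\ge\sigma_0-\eps_0$ on $I$. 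This is precisely the paper's computation. For reality, the paper symmetrizes the interval as $I=I_0\cup(-I_0)$ using $\sigma(-k)=\sigma(k)$; your ``take the real part'' works too, since $J\Lambda$ has real coefficients.
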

Note that we use the notations $\eps_{0}$ and $\varepsilon_{0}$ for different parameters.

\begin{proof}[Proof of Proposition~\ref{construct U^0}] 
The construction is close to the one of  Proposition~6.1 in \cite{RT}.
The situation is even simpler here since we need less precise information on the asymptotic behavior. 
We already know from \eqref{good-unknown} that  it is equivalent to solve  (\ref{U^0
eqn}) for $U^0$  and   \beq\label{V^0
eqn}\p_tV^0=JL V^0\eeq with
\[
U^0=PV^0,\quad P=\left(\begin{matrix}1 & 0\\
Z_\varepsilon & 1\end{matrix}\right).
\] 
 for $V^0$. 

First of all, one should locate some negative eigenvalue of $JL(k)$. We
already know from Proposition \ref{existence of negative eigenvalue} that there exists $k\neq 0$ such that $JL(k)$ has an
eigenvalue $-\del<0$. Moreover, thanks to Lemma \ref{location of
eigenmode}, we know that the negative eigenvalues $-\si$ of $JL(k)$ can only be
found when $|k|<K$ and $\si<M$.

We try to find out the largest $\si$ such that $-\si$ is still a
negative eigenvalue of $JL(k)$. Define the set
$\Omega=\{\,k\,|\,\exists\, -\si \in \si(JL(k)), \,-\si<-\del/2 \}$.
One can see that $\Omega$ is a bounded non-empty open set and that 
$k\mapsto -\si(k)$ is continuous in $\bar \Omega$. We fix $k_0,
\si_0$ such that \[ -\si_0=-\si(k_0)=\inf
\{-\si(k)|\,k\in\bar\Omega\}<-\del/2<0.
\]  Let us  fix $\eps_0>0$
and  an  interval $I_0 \subset \Omega$ small
enough with  $k_0\in I_0$
so  that $-\si_0\leq -\si(k)\leq -\si_0+\eps_0$ in
$I_0$ by the continuity of $k\mapsto -\si(k)$. Thanks to  Lemma~\ref{Analicity of eigenmode}, we can can choose  an eigenvector  $ V(k)$ 
depending analytically on $k$ on  $I_0$. 
By elliptic regularity, we have that $V(k) \in H^\infty$.  Finally 
 since one has $\si(k)=\si(-k)$ and $ V(k)=V(-k)$, 
let  us set $I=I_0\cup
-I_0$ and 
\[
V^0(t,x,y)=\int_I e^{-\si(k)t}e^{iky}V(k)dk,\qquad t\ge 0.
\]  Then  $V^0$ is real and is a solution of (\ref{V^0 eqn}). We have for any $s,\al\in\N$ that
\[
\|\p^\al_t V^0(t,\cdot)\|^2_{H^s(\R^2)}=C\int_I
e^{-2\si(k)t}\sum_{s_1+s_2\leq s} |\si(k)|^{2\al}k^{2s_2}|\p^{s_1}_x
V(k)|^2_{L^2(\R)} dk.
\]  and hence,  there exist numbers  $c_{s,\al, \eps_{0}}$ such
that for any $t\ge 0$ \[  \|\p^\al_t
V^0(t,\cdot)\|_{H^s(\R^2)}\leq c_{s,\al,\eps_0} e^{- (\si_0-\eps_0) t}.
\] 
This yields similar estimates for  $U^0=P V^0$. This ends  the proof of Proposition~\ref{construct U^0}.
\end{proof}
\begin{prop}\label{construct U^a} For any $M>0$, there exists
\[
U^a=U^0+\sum^{M+1}_{j=1}\del^jU^j,\qquad U^j\in C^\infty
(\R_+,H^\infty(\R^2)),\quad \del\in\R
\] such that for every $j$, one has $U^j(0)=0$ and the estimates\[
\|U^j(t)\|_{E^s}\leq C_{s,j}e^{-(j+1)(\si_0-\eps_0)t},\qquad \forall
t\ge0
\]
for $\eps_{0}$ sufficiently small ($\eps_{0}<\sigma_{0}/2$) for  some numbers $C_{s,j}$ independent of $t$  and $\si_0$ the eigenvalue chosen in Proposition~\ref{construct U^0}.
 Moreover, define $V^a=Q+\del U^a$.  Then $V^a$ is an approximate solution of
(\ref{simple form for WW}) in the sense that\[
\p_tV^a-\cF(V^a)=R^{ap}
\] where  $R^{ap}$ satisfying the estimate
\[
\|R^{ap}\|_{E^s}\leq C_{M,s}\del^{M+3}e^{-(M+3)(\si_0-\eps_0)
t},\qquad t\ge 0.
\]
\end{prop}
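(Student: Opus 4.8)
The plan is to build $U^a$ by a standard iterative ("ansatz expansion") scheme in powers of $\delta$, where $U^0$ is already given by Proposition~\ref{construct U^0}. Writing $V^a = Q + \delta U^a$ and plugging into $\partial_t V - \cF(V) = R^{ap}$, I would Taylor-expand $\cF$ around $Q$: since $Q$ is a stationary solution, $\cF(Q)=0$, and $D\cF(Q) = J\Lam$ is the linearized operator appearing in \eqref{linearized system 1}. So
\[
\partial_t V^a - \cF(V^a) = \delta\big(\partial_t U^a - J\Lam U^a\big) - \big(\cF(Q+\delta U^a) - \cF(Q) - \delta D\cF(Q)U^a\big),
\]
and the second, quadratic-and-higher, term is a smooth nonlinear functional of $\delta U^a$ whose $E^s$ norm is controlled by $\delta^2 \|U^a\|^2$ times a polynomial in $\|U^a\|$ (using the smoothness and tame estimates for $\cF$, in particular the Dirichlet--Neumann bounds of Proposition~\ref{propDN1}, and the exponential spatial localization of $Q$). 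I would then organize the cancellation order by order: $U^0$ kills the $\delta^1$ part, and for $j\ge 1$ one chooses $U^j$ to solve the linear inhomogeneous problem
\[
\partial_t U^j - J\Lam U^j = F^j, \qquad U^j(0) = 0,
\]
where $F^j$ is the coefficient of $\delta^{j+1}$ in the Taylor expansion of $\cF(Q+\delta U^a)$, hence a (universal) polynomial expression in $U^0,\dots,U^{j-1}$ and their $x,y,t$ derivatives.

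The heart of the matter is the decay estimate $\|U^j(t)\|_{E^s} \le C_{s,j} e^{-(j+1)(\sigma_0-\eps_0)t}$, which I would prove by induction on $j$. The inductive hypothesis on $U^0,\dots,U^{j-1}$ (each decaying like $e^{-(\cdot)(\sigma_0-\eps_0)t}$, with $U^0$ at rate $\sigma_0-\eps_0$) forces each monomial in $F^j$, being at least quadratic in the $U^l$'s with total "index" $j+1$, to decay at least like $e^{-(j+1)(\sigma_0-\eps_0)t}$; so $\|F^j(t)\|_{E^s} \le C e^{-(j+1)(\sigma_0-\eps_0)t}$ for all $s$. Then I need: a linear inhomogeneous problem $\partial_t U - J\Lam U = F$ with $U(0)=0$ and $F$ decaying at rate $\mu := (j+1)(\sigma_0-\eps_0) > \sigma_0$ admits a solution decaying at the same rate $\mu$ (up to an arbitrarily small loss in the exponent, which is harmless since we may shrink $\eps_0$). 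This is exactly the point where the spectral information is used: the semigroup $e^{tJ\Lam}$ grows at most like $e^{(\sigma_0+\eps')t}$, so the naive Duhamel solution $\int_0^t e^{(t-\tau)J\Lam}F(\tau)\,d\tau$ already gives growth $e^{(\sigma_0+\eps')t}$, which is too weak. Instead I would solve "from $+\infty$": set
\[
U(t) = -\int_t^{+\infty} e^{(t-\tau)J\Lam} F(\tau)\, d\tau,
\]
which converges because for $\tau \ge t$ the operator norm $\|e^{(t-\tau)J\Lam}\| \lesssim e^{(\sigma_0+\eps')(\tau - t)}$ and $\|F(\tau)\| \lesssim e^{-\mu\tau}$ with $\mu > \sigma_0$; this yields $\|U(t)\| \lesssim e^{-\mu t}$ (times $e^{\eps' t}$, absorbed into $\eps_0$). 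One still must correct the initial condition: $U(t) - e^{tJ\Lam}U(0)$ solves the same equation with zero data, and $e^{tJ\Lam}U(0)$ is a solution of the homogeneous equation; but here I would instead follow the construction of $U^0$ — decompose $U(0)$ along the (finitely many) unstable/neutral spectral data of $JL(k)$ versus a "good" part, and subtract a homogeneous solution built, as in Proposition~\ref{construct U^0}, from an integral over the eigencurve $\sigma(k)$, which decays at rate $\sigma_0-\eps_0 \le \mu$ and hence preserves the claimed bound. The $E^s$ (space-time) norm control, rather than just $L^2$, comes from the fact that $J\Lam$ commutes with $\partial_y$ and from elliptic regularity / the equation itself to trade $\partial_t$ derivatives for $\partial_x,\partial_y$ ones; since $Q\in H^\infty$ with exponential decay and $\cF$ is smooth, all the $U^j$ lie in $C^\infty(\R_+, H^\infty)$.

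Finally, with $U^j$ constructed for $1\le j\le M+1$ obeying the stated bounds, I set $U^a = U^0 + \sum_{j=1}^{M+1}\delta^j U^j$ and $V^a = Q + \delta U^a$. By construction the coefficients of $\delta^1,\dots,\delta^{M+2}$ in $\partial_t V^a - \cF(V^a)$ all vanish, so the remainder $R^{ap}$ collects only terms of order $\delta^{M+3}$ and higher: these are the "tail" monomials in the Taylor/Faà di Bruno expansion of $\cF(Q+\delta U^a)$ involving products of the $U^l$'s with total index $\ge M+3$, each of which — by the already-established decay rates and the tame multilinear estimates for $\cF$ (again using Proposition~\ref{propDN1} for the Dirichlet--Neumann terms) — is bounded in $E^s$ by $C_{M,s}\,\delta^{M+3} e^{-(M+3)(\sigma_0-\eps_0)t}$, uniformly for $\delta$ in a fixed bounded set. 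I expect the main obstacle to be precisely the linear inhomogeneous solvability step with the sharp decay rate: one must be careful that the semigroup bound $e^{(\sigma_0+\eps')t}$ on all of $E^s$ (not merely $L^2$) is available from \cite{RT}, and that the "integrate from $+\infty$" solution, together with the spectral subtraction needed to enforce $U^j(0)=0$, does not destroy the gain $\mu>\sigma_0$; the bookkeeping of exponents across the induction (ensuring the small losses $\eps'$ can all be absorbed into a single $\eps_0<\sigma_0/2$) also requires some care but is routine once the mechanism is in place.
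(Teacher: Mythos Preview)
Your overall scheme --- Taylor expand $\cF$ around $Q$, derive the hierarchy $\partial_t U^j - J\Lambda U^j = F^j$ with $F^j$ polynomial in $U^0,\dots,U^{j-1}$, prove inductively that $\|F^j\|_{E^s}\lesssim e^{-(j+1)(\sigma_0-\eps_0)t}$, and solve by integrating from $+\infty$ against the semigroup --- matches the paper's proof. Two points deserve correction.

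\medskip
\textbf{The initial-data correction is both unnecessary and wrong.} The paper simply sets
\[
\hat U^j(t,k)=-P\int_t^{\infty} e^{(t-\tau)JL(k)}P^{-1}\hat S^j(\tau,k)\,d\tau
\]
and does not adjust to make $U^j(0)=0$; in fact only the bound $\|U^j(0)\|_{E^s}\le C_{s,j}$ is ever used downstream (see Step~3 in the proof of Theorem~\ref{Instability Thm}), so the clause $U^j(0)=0$ in the statement is inessential. Your proposed fix --- subtract a homogeneous solution ``built as in Proposition~\ref{construct U^0}'' decaying at rate $\sigma_0-\eps_0$ --- would \emph{destroy} the estimate, not preserve it: a term decaying like $e^{-(\sigma_0-\eps_0)t}$ dominates one decaying like $e^{-(j+1)(\sigma_0-\eps_0)t}$, so the sum decays only at the slower rate. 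Your justification ``$\sigma_0-\eps_0\le\mu$ hence preserves the bound'' has the inequality running the wrong way. Drop this step entirely.

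\medskip
\textbf{The Fourier-in-$y$ reduction and compact $k$-support.} The paper does not work with the full 2D semigroup $e^{tJ\Lambda}$. Since $Q=Q(x)$, one takes the Fourier transform in $y$ and solves, for each transverse frequency $k$, the 1D problem $\partial_t\hat U^j - J\Lambda(k)\hat U^j=\hat S^j$, using the semigroup bound $\|e^{tJL(k)}\|\lesssim e^{\bar\sigma|t|}$ (with $\sigma_0<\bar\sigma<\gamma$) from \cite{RT} uniformly on $|k|\le K_j$. The key structural fact you did not mention is that $\hat U^0(\cdot,k)$ is compactly supported in $k$ (it is an integral over $k\in I$), and this compact support propagates to every $\hat S^j$ and $\hat U^j$ because the nonlinearity is local in $y$; this is what allows one to pass from uniform-in-$k$ estimates back to $L^2(\R^2)$ via Bessel without any summability issue. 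Your sketch in 2D with ``$J\Lambda$ commutes with $\partial_y$'' is morally the same idea but leaves this point implicit.
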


\noindent{\bf Proof.} We follow the idea of proof of \cite[Proposition~6.3]{RT}. The Taylor expansion of $\cF$ is
\[
\cF(Q+\del
U)=\cF(Q)+\sum^{M+2}_{k=1}\f{\del^k}{k!}D^k\cF[Q](U,\dots,
U)+\del^{M+3}R_{M,\del}(U).
\] Plugging the expansion of $U^a$ into (\ref{simple form for WW})
gives the equations for $j\ge 1$ that \beq\label{eqn for U^j} \p_t
U^j-J\Lam U^j=\sum^{j+1}_{p=2}\sum_{\stackrel{0\leq
l_1,\dots,l_p\leq j-1}{ l_1+\cdots+l_p=j+1-p}}\f
1{p!}D^p\cF[Q](U^{l_1},\dots,U^{l_p}). \eeq We note the right-hand
side of (\ref{eqn for U^j}) as $S^j$. Applying Fourier transform in
$y$ to (\ref{eqn for U^j}), we get that
\[
\p_t\hat U^j-J\Lam(k)\hat U^j=\hat S^j,\qquad j\ge 1.
\] So we need to consider first the equation 
\[
\p_t U-J\Lam(k)U=F.
\] In order to  estimate the solution, we introduce 
\[ |U(t)|^2_{X^s_k}:=\sum_{0\leq \al+\be\leq
s}\Big(|\p^\al_t\p^\be_x
U_1(t,\cdot)|^2_{H^1(\R)}+|\p^\al_t\p^\be_x
U_2(t,\cdot)|^2_{\dot{H}^{\f12}_k(\R)} \Big),
\] with
\[
|\vphi|^2_{\dot{H}^{\f12}_k(\R)}=\left|\f{|D_x|}{1+|D_x|^{\f12}}\vphi\right|^2_{L^2(\R)}
+|k|^2|\vphi|^2_{L^2(\R)}.
\]
We have the following statement.
\begin{prop}\label{estimate for U^j} 
Fix $\ga>\si_0>0$. Assume  that $F(t,x,k)$ satisfies uniformly for
$|k|\leq K$ the estimate
\beq\label{F estimate} 
\sum_{\al+\be\leq s}\|\p^\al_t\p^\be_x F(t,\cdot,k)\|_{L^2}\leq M_s e^{-\ga t},
\qquad t\ge 0. \eeq 
Then we can find a solution $U$ of 
\[\p_tU-J\Lam(k)U=F,\]
defined for $t\geq 0$ such that there exists constant $C_{s}$ depending on $M_{s+s_0}$ for some fixed $s_0\ge 0$ such
that uniformly for  $|k|\leq K$, we have
\[
|U(t,\cdot)|_{L^2}+|U(t,\cdot)|_{X^s_k}\leq C_{s}e^{-\ga
t},\qquad t\ge 0.
\]
\end{prop}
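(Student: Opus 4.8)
The plan is to construct $U$ by a Duhamel-type formula \emph{integrated from $+\oo$}, exploiting that by \eqref{F estimate} the source $F$ decays at the rate $\ga$, strictly faster than the size $\si_0$ of the only possible instability of the frozen-in-$k$ linearized flow. Two ingredients drive the argument. First, $J\Lam(k)$ with domain $H^2(\R)\times H^1(\R)$ generates a $C_0$-group on $L^2(\R)\times L^2(\R)$ (time reversibility of the water-waves system, as in \cite{RT}), and it is conjugate, through the $k$-independent bounded boundedly-invertible matrix $P$, to $JL(k)$; hence its spectrum is the one described in Lemmas~\ref{eigenvalue}--\ref{location of eigenmode} and Proposition~\ref{existence of negative eigenvalue}: for $|k|\le K$ it is $\si_{ess}\sset i\R$ together with at most one pair of simple real eigenvalues $\pm\mu(k)$ with $0<\mu(k)\le\si_0$. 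Second --- and here I would invoke the semigroup estimates of \cite{RT} --- for every $\del>0$ there is $C_\del$, \emph{uniform for $|k|\le K$}, with
\[
\|e^{sJ\Lam(k)}\|_{\cL(L^2\times L^2)}\le C_\del\, e^{(\si_0+\del)|s|},\qquad s\in\R ,
\]
valid in both time directions since the spectral abscissa of $\pm J\Lam(k)$ does not exceed $\si_0$.

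Fixing $\del\in(0,\ga-\si_0)$, I would then set
\[
U(t):=-\intl_t^{+\oo} e^{(t-\tau)J\Lam(k)}\,F(\tau,\cdot,k)\,d\tau,\qquad t\ge 0 .
\]
By the above bound and \eqref{F estimate} the integrand is dominated by $C_\del M_0\, e^{(\si_0+\del)(\tau-t)}e^{-\ga\tau}$, which is integrable on $[t,+\oo)$; so the integral converges absolutely and $\|U(t)\|_{L^2}\le C_\del M_0\,(\ga-\si_0-\del)^{-1}e^{-\ga t}$, uniformly for $|k|\le K$. Differentiating under the integral and using that $J\Lam(k)$ commutes with its group shows in the usual way that $\p_tU-J\Lam(k)U=F$ on $[0,+\oo)$, so $U$ is the sought solution. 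Note this is where $\ga>\si_0$ is essential: the decay of $U$ is produced entirely by the convergence of the tail integral, not by the homogeneous flow, which has no spectral gap on the stable side.

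For the $X^s_k$-bound I would proceed in two substeps. The same formula run in $H^m(\R)\times H^m(\R)$ gives $\|U(t)\|_{H^m\times H^m}\le C_m\,e^{-\ga t}$ once $F$ satisfies the $H^m$-analogue of \eqref{F estimate}: the spectral description of $J\Lam(k)$ does not depend on the Sobolev index and, by the elliptic estimates for the $P_{\varepsilon,k}$ part and the order-one bounds for $G_{\varepsilon,k}$ collected in Section~\ref{added}, the group estimate persists at the $H^m$-level, uniformly for $|k|\le K$; since moreover $\dot H^{1/2}_k\hookrightarrow H^1$ uniformly for $|k|\le K$, this already controls the purely spatial derivatives entering $X^s_k$. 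Time derivatives are then reduced to spatial ones via the equation, $\p_t^\al U=(J\Lam(k))^\al U+\sum_{j=0}^{\al-1}(J\Lam(k))^{j}\p_t^{\al-1-j}F$, using that $J\Lam(k)$ maps the relevant Sobolev spaces into one another with a loss of at most two derivatives and with norm uniform for $|k|\le K$ (it depends on $k$ only through the harmless factors $k^2$, $k$). Hence every $\p_t^\al\p_x^\be U$ with $\al+\be\le s$ is bounded, in the base norm of $X^s_k$, by finitely many spatial derivatives of $U$ and of the $\p_t^j F$, the latter controlled by \eqref{F estimate} up to order $s+s_0$ with $s_0$ the corresponding loss coming from the order of $J\Lam(k)$. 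This yields $|U(t,\cdot)|_{X^s_k}\le C_s\,e^{-\ga t}$ with $C_s$ depending on $M_{s+s_0}$, uniformly for $|k|\le K$, and together with the $L^2$-bound this is the assertion.

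The step I expect to be the main obstacle is the \emph{uniformity in $k$} of the group estimate of the first paragraph, notably near those values of $k$ at which $\mu(k)$ reaches the edge of the essential spectrum and the associated spectral projection degenerates; this uniformity is exactly what the spectral analysis of \cite{RT}, recalled in Lemmas~\ref{eigenvalue}--\ref{Analicity of eigenmode}, is designed to furnish. Everything else --- the absolute convergence of the Duhamel integral, the verification that it solves the equation, and the passage to the $X^s_k$-norms --- is then routine.
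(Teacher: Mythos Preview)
Your approach is essentially the one in the paper: define $U$ (after the change $V=P^{-1}U$) by the Duhamel integral from $+\infty$ and use the semigroup bound $\|e^{tJL(k)}\|\le C e^{\bar\si|t|}$ for some $\bar\si\in(\si_0,\ga)$, uniform for $|k|\le K$, which is exactly what is lifted from \cite[Proposition~6.4]{RT} together with the reversibility symmetry.

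Two remarks. First, the paper is more economical in the second half: rather than bootstrapping from $L^2$ to $H^m$ and then reducing time derivatives to space derivatives through the equation, it invokes the semigroup estimate from \cite{RT} \emph{directly at the $X^s_k$-level}, with a built-in fixed derivative loss $s_0$, namely
\[
\|e^{tJL(k)}V_0\|_{L^2\cap X^s_k}\le C_s\,e^{\bar\si|t|}\big(|V_0|_{L^2}+|V_0|_{X^{s+s_0}_k}\big),
\]
and then a single application of this bound inside the Duhamel integral gives the full $L^2\cap X^s_k$ estimate in one stroke. Your route works too, but the detour through $H^m$ and the explicit time-to-space conversion is unnecessary once this estimate is available. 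Second, a small slip: the embedding you want for the $X^s_k$ control is $H^1\hookrightarrow \dot H^{1/2}_k$ (uniformly for $|k|\le K$), not the reverse direction you wrote.
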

\begin{proof}
Let $V=P^{-1}U$, then the equation for $U$ is equivalent
to the equation
\beq
\label{Vsemieq}
\p_t V=JL(k)V+P^{-1}F.\eeq
Let $\bar\si>0$ be such that $\gamma>\bar{\sigma}>\sigma_0$.
By a simple lifting argument, we get from  \cite[Proposition~6.4]{RT} and the symmetry of the spectrum pointed out in Lemma \ref{eigenvalue} that  the semi-group estimate
 $$
\|e^{t\,JL(k)}V_0\|_{L^2\cap X^{s}_k}\le
C_s \,e^{\bar\si t}(|V_0|_{L^2}+|V_0|_{X^{s+s_0}_k})
,\quad t\geq 0\,,
$$
holds 
for some fixed derivative loss $s_0$ (we could avoid it, but this is not important in our construction). 
By using again the reversibility of the system, we actually obtain that
\begin{equation}
\label{semi}
\|e^{t\,JL(k)}V_0\|_{L^2\cap X^{s}_k}\le
C_s \,e^{\bar\si | t |}(|V_0|_{L^2}+|V_0|_{X^{s+s_0}_k})
,\quad  \forall t \in \mathbb{R}.
\end{equation}

Let us choose the solution of \eqref{Vsemieq} given by 
\[
V(t)=-\int^\infty_t e^{(t-\tau)JL(k)}P^{-1}F(\tau)d\tau.
\] 
Then,  thanks to \eqref{semi}, we have
$$ 
|V(t)|_{L^2}+|V(t)|_{X^s_k}
\le C_s \int^\infty_t e^{\bar\si(\tau-t)}e^{-\ga \tau}d\tau
\leq
C_s e^{-\ga t},
$$
with $C_s$ having the claimed structure.
This ends the proof of Propoposition~\ref{estimate for U^j}.
\end{proof}
\noindent{\bf End of the proof of Proposition~\ref{construct U^a}.} We can
finish the proof by induction on $j$ as in the proof of Proposition~6.3 in
\cite{RT}.  Assume that we already built $(U^l)_{l \leq j-1}$ whose Fourier transforms with respect to the $y$ variable are compactly supported in $k$ 
 and that satisfy  uniformly for $|k| \leq K_{l}$ the   estimates
  $$|\hat U^l(t,k)|_{E^s} \leq C_{s,l} e^{-(l+1)(\si_0-\eps_0)t}, \quad   l\le j-1$$
  where the $ | \cdot |_{E^s}$ norm for functions of $t$ and $x$ is naturally defined as 
  $$ |V(t)|_{E^s}= \sum_{| \alpha | \leq s } | \partial_{t,x}^\alpha V(t) |_{L^2(\mathbb{R})}.$$
   To construct  $U^j$, we first observe that 
 thanks to  the induction assumption we obtain as in the proof of Proposition ~6.3 in \cite{RT} (hence in particular by using Proposition~3.9 in \cite{RT}
  to handle the terms coming from the Dirichlet-Neumann operator) that 
$$\|\hat S^j(t, k)\|_{E^s}\leq \tilde C_{s,j}e^{-(j+1)(\si_0-\eps_0)t}$$
uniformly  for $|k| \leq K_{j}$.
Then we  define $U_{j}$ by
$$ \hat U^j(t,k)= - P \int^\infty_t e^{(t-\tau)JL(k)}P^{-1}\hat S^j(\tau, k)d\tau.$$
 Thanks to the semigroup estimate \eqref{semi}, we get since  $\ga=(j+1)(\si_0-\eps_0)>\si_0>0$ that
 $$  \|\hat U^j(t,k)\|_{E^s} \leq C_{s,j}e^{-(j+1)(\si_0-\eps_0)t}$$
and the estimate for $U^j$ follows by using the Bessel identity and the fact that $\hat U^j(t,k)$ is compactly supported in $k$.
\subsection{Proof of  Theorem \ref{Instability Thm}}
Recall that we already defined $V^a=Q+\del U^a$ in Proposition~\ref{construct U^a} as an approximate solution $U$ of
 the water-waves system  (\ref{simple form for WW}). In order to get a true  solution of  (\ref{simple form for WW}),  we
still need to consider a  remainder $U^R$ such that 
\[
U=V^a+U^R,\qquad 
\] 
becomes an exact solution. The system for $U^R$ is
\beq\label{eqn for remainder U}\left\{\begin{array}{l} \p_t
U^R=\cF(V^a+U^R)-\cF(V^a)-R^{ap},\qquad t>0,\\
 U^R(0) \quad\hbox{to be fixed later}.\end{array}\right. \eeq Before
solving the system for $U^R$ we need to introduce more notations.
For $\al=(\al_0,\al_1,\al_2)\in \N^3$ we note
$
\p^\al=\p^{\al_0}_t\p^{\al_1}_x\p^{\al_2}_y.
$
For $U(t)=(U_1(t),U_2(t))^t$ and $k\in \N$ we define $X^k$ by
\[
\|U(t)\|^2_{X^k}=\sum_{|\al|\le k}(\|\p^\al
U_1(t)\|^2_{H^1}+\|\p^\al U_2(t)\|^2_{H^\f12})
\] and we note
\[
\|U\|_{X^k_{t,T}}=\sup_{t\le \tau\le T}\|U(\tau)\|_{X^k}.
\]
We define $W^k$ and $W^k_{t,T}$ by
\[
\|u(t)\|_{W^k}=\sum_{|\al|\le k}\|\p^\al u(t)\|_{L^\infty(\R^2)},\quad 
\|u\|_{W^k_{t,T}}=\sup_{t \le  \tau \le T}\|u(\tau)\|_{W^k}\,.
\]
We will use an approximate sequence of solutions $\{U^n\}$ for
water-waves problem to prove that there exists a global-in-time
solution $U^R(t)$ of (\ref{eqn for remainder U}). Let $\{T_n\}$ be a
strictly increasing sequence such that $T_n>0$ and $T_n\rightarrow +\infty$ as
$n\rightarrow \infty$. First of all, we define $U^n(t)$ as the
solution of the water-waves system \beq\label{eqn for U^n}
\left\{\begin{array}{l}\p_tU^n=\cF(V^a+U^n)-\cF(V^a)-R^{ap},\qquad
 t\le T_n,\\ U^n(T_n)=0.\end{array}\right. \eeq
 Note that we solve the problem  backward in time.  For the water-waves system there is no problem to do
so  because of the reversibility.  We have local well-posedness for \eqref{eqn for U^n} as in \cite{RT} (see also \cite{Alazard-Burq-Zuily}, \cite{Beyer-Gunther}, 
 \cite{Schweizer}). The first step is thus  to prove that $U^n$ is defined on the whole interval $[0,T^n]$
  and that it satisfies an appropriate estimate.
This will be a consequence of  the following a priori estimate 
\begin{prop}\label{estimate for U^n} Let
$U^n(t)$ be a smooth solution of (\ref{eqn for U^n}) on $[T, T_n]$
satisfying $1-\|\eta^a(t)\|_{L^\infty}-\|\eta^n(t)\|_{L^\infty} \geq c_{0}>0$
for $t\in [T,T_n]$. Then for $m\ge 2$, $s\ge 5$ and $t\in [T, T_n]$
we have the estimate 
\beno \|U^n(t)\|^2_{X^{m+3}}&\le&
\omega(\|R^{ap}\|_{X^{m+3}_{t,T_n}}+\|V^a\|_{W^{m+s}_{t,T_n}}+\|U^n\|_{X^{m+3}_{t,T_n}})\\
&&\quad
\times\Big[\|R^{ap}\|^2_{X^{m+3}_{t,T_n}}+\int^{T_n}_t(\|U^n(\tau)\|^2_{X^{m+3}}+\|R^{ap}(\tau)\|
^2_{X^{m+3}})d\tau\Big] \eeno 
with 
$\omega:\R^{+}\rightarrow [1,+\infty]$ is a continuous increasing  function.
\end{prop}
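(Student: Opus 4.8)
The estimate is a forced, differenced version of the standard high-order quasilinear energy estimate for the water-waves system, and the plan is to run the energy method of \cite{L} (see also \cite{RT}) while tracking how the forcing $R^{ap}$ and the coefficients $V^a$ enter. Observe first that $U:=V^a+U^n$ is itself a solution of \eqref{simple form for WW}, since $\p_t(V^a+U^n)=\cF(V^a)+R^{ap}+\cF(V^a+U^n)-\cF(V^a)-R^{ap}=\cF(V^a+U^n)$, and that the hypothesis $1-\|\eta^a(t)\|_{L^\infty}-\|\eta^n(t)\|_{L^\infty}\ge c_0>0$ makes the fluid domain below $\eta^a+\eta^n$ a uniform strip, so the Dirichlet--Neumann estimates of Section~\ref{added}, in particular Proposition~\ref{propDN1} and its shape-derivative formula, apply with constants depending only on $c_0$ and on suitable norms of $\eta^a+\eta^n$. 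Writing $\cF(V^a+U^n)-\cF(V^a)=\int_0^1 D\cF[V^a+\theta U^n]\,U^n\,d\theta$ and recalling from Section~\ref{sectionlin1} that the differential of $\cF$ at any state has the form $J\Lam$ with $\Lam$ symmetric and becomes $JL$, $L$ symmetric, after the good-unknown substitution \eqref{good-unknown}, equation \eqref{eqn for U^n} becomes a quasilinear system for $U^n$ whose principal part, in the good unknown $v^n=(\eta^n,\ \vphi^n-Z[U]\,\eta^n)^t$, has the form $\p_t v^n=JL_*v^n+(\text{lower order})$, where $L_*$ is the symmetric operator of \eqref{linearized system 2} built on $U$ (and $V^a$), and the lower-order part contains the $\p_tP$-contributions and the forcing $-P^{-1}R^{ap}$.

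Next, for each $\al=(\al_0,\al_1,\al_2)$ with $|\al|\le m+3$ I would apply $\p^\al$ to the system and again pass to Alinhac's good unknown, working with $W^\al=(\p^\al\eta^n,\ w^\al)^t$, $w^\al:=\p^\al\vphi^n-Z[U]\,\p^\al\eta^n$, which removes the one commutator that would otherwise cost a derivative, to obtain
\[
\p_t W^\al=JL_*W^\al+\mathcal R^\al ,
\]
where $\mathcal R^\al$ gathers the commutators of $\p^\al$ with the coefficients, the $\p_tP$-terms, and $\p^\al R^{ap}$. Using the product, commutator and shape-derivative bounds for $G[\eta]$ of Section~\ref{added}, one checks that for $|\al|\le m+3$,
\[
\|\mathcal R^\al\|_{L^2\times L^2}\le \omega\big(\|R^{ap}\|_{X^{m+3}_{t,T_n}}+\|V^a\|_{W^{m+s}_{t,T_n}}+\|U^n\|_{X^{m+3}_{t,T_n}}\big)\big(\|U^n\|_{X^{m+3}}+\|R^{ap}\|_{X^{m+3}}\big),
\]
the possible derivative losses being absorbed into the $W^{m+s}$-norm of $V^a$ (whence $s\ge5$) and into $R^{ap}$, not onto $U^n$; time derivatives in the $X^{m+3}$-norm are traded for spatial ones via the equation.

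Then I would build $E(t):=\sum_{|\al|\le m+3}E_\al(t)$ with $E_\al(t):=\tfrac12\big(L_*W^\al(t),W^\al(t)\big)_{L^2\times L^2}$. Since $\be>\tfrac13$, the operator $-P+\al$ is uniformly elliptic and positive and $G[\eta]$ is positive, so $E(t)\simeq\|U^n(t)\|^2_{X^{m+3}}$ with constants depending only on $c_0$ and on the sup-norms occurring in $\omega$. Differentiating, $\tfrac{d}{dt}E_\al=\big(L_*\p_tW^\al,W^\al\big)+\tfrac12\big((\p_tL_*)W^\al,W^\al\big)$, and the top-order contribution $\big(L_*JL_*W^\al,W^\al\big)$ vanishes by skew-symmetry of $J$ and symmetry of $L_*$ — the cancellation that motivates the good-unknown change. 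The coefficients of $\p_tL_*$ are controlled, after using $\p_tU=\cF(U)$ and $\p_tU^n=\cF(U)-\cF(V^a)-R^{ap}$ to eliminate them (which is where $R^{ap}$ enters the argument of $\omega$), by $\omega(\|R^{ap}\|_{X^{m+3}_{t,T_n}}+\|V^a\|_{W^{m+s}_{t,T_n}}+\|U^n\|_{X^{m+3}_{t,T_n}})$, while $\big(L_*\mathcal R^\al,W^\al\big)$ is handled by the bound on $\mathcal R^\al$ and Cauchy--Schwarz; summing over $\al$ and using Young's inequality and $E\simeq\|U^n\|^2_{X^{m+3}}$,
\[
\Big|\tfrac{d}{dt}E(t)\Big|\le \omega\big(\|R^{ap}\|_{X^{m+3}_{t,T_n}}+\|V^a\|_{W^{m+s}_{t,T_n}}+\|U^n\|_{X^{m+3}_{t,T_n}}\big)\big(E(t)+\|R^{ap}(t)\|^2_{X^{m+3}}\big).
\]
Since $U^n(T_n)=0$ one has $E(T_n)=0$, so $E(t)=-\int_t^{T_n}\tfrac{d}{d\tau}E(\tau)\,d\tau$; integrating the last inequality — a time integration by parts on the forcing contribution also producing the end-point term $\omega(\cdots)\|R^{ap}\|^2_{X^{m+3}_{t,T_n}}$ — and passing back through $E\simeq\|U^n\|^2_{X^{m+3}}$ yields exactly the claimed estimate.

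The hard part is the commutator analysis of the second step: showing that after the good-unknown substitutions the $\p^\al$-differentiated water-waves system loses no derivative on $U^n$, so that $\mathcal R^\al$ is genuinely controlled at the level $X^{m+3}$ with the losses absorbed into $\|V^a\|_{W^{m+s}_{t,T_n}}$ and into $R^{ap}$. This rests on the sharp commutator and shape-derivative estimates for $G[\eta]$ collected in Section~\ref{added} and on the symmetriser structure ($J$ skew, $L_*$ symmetric), and is the core of any quasilinear water-waves energy estimate; a secondary point is the coercivity $E\simeq\|U^n\|^2_{X^{m+3}}$, which uses $\be>\tfrac13$ and the positivity of the Dirichlet--Neumann operator.
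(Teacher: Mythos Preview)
Your outline is essentially the content of the quasilinear energy estimate that the paper simply imports: the paper does \emph{not} redo the argument but observes that the reversibility symmetry $\tilde U(\tau,\tilde x,\tilde y)=(U_1(T_n-\tau,-\tilde x,-\tilde y),-U_2(T_n-\tau,-\tilde x,-\tilde y))^t$ turns the backward problem \eqref{eqn for U^n} with data at $T_n$ into a forward problem with data at $0$, to which \cite[Theorem~7.1]{RT} applies verbatim. So your route is a self-contained re-derivation of that cited theorem, whereas the paper's route is a one-line reduction plus a citation. Both are legitimate; yours is more informative, theirs is shorter.

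There is one genuine slip in your argument. You claim $E(T_n)=0$ because $U^n(T_n)=0$, but the $X^{m+3}$ norm (and hence your energy $E$) involves \emph{time} derivatives $\p_t^{\al_0}U^n$, and these do not vanish at $T_n$: from the equation, $\p_tU^n(T_n)=\cF(V^a)-\cF(V^a)-R^{ap}(T_n)=-R^{ap}(T_n)$, and iterating, $\p_t^kU^n(T_n)$ is a polynomial in derivatives of $R^{ap}$ and $V^a$. Thus $E(T_n)\le \omega(\cdots)\|R^{ap}\|^2_{X^{m+3}_{t,T_n}}$, and \emph{this} is the origin of the boundary term $\|R^{ap}\|^2_{X^{m+3}_{t,T_n}}$ in the stated estimate, not an ``integration by parts on the forcing contribution''. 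The fix is immediate once you notice it, but as written your integration step does not produce the right end-point term.

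A smaller point: the coercivity $E\simeq\|U^n\|^2_{X^{m+3}}$ does not rely on $\be>1/3$; that condition is used only in the spectral/stability analysis of Section~\ref{sectionstab}. For the energy estimate one needs only $b>0$ (so $-P$ is elliptic of order two) and the positivity of $G[\eta]$ from Proposition~\ref{propDN1}; the cross terms $(v\p_xW_2,W_1)$ are lower order and absorbed.
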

Proposition  ~\ref{estimate for U^n} can be directly obtained from \cite{RT}, Theorem 7.1.
Indeed, let us  define the isometry $\tilde \cdot$ by  
$$ \tilde  U(\tau, \tilde x, \tilde y)= \big(U_{1}(T^n - \tau, -\tilde x,  -\tilde y), - U_{2} (T^n - \tau, -\tilde x,  -\tilde y)\big)^t.$$
 Then, we note that
  $U^n$ solves \eqref{eqn for U^n} if and only if  $\tilde U^n(\tau, \tilde x , \tilde  y)$
 solves
 $$ \partial_{\tau} \tilde U^n = \mathcal{F}(\tilde V^a(\tau)+ \tilde{U}^n(\tau)) - \mathcal{F}(\tilde V^a(\tau)) + \tilde{R}^{ap}(\tau), \quad \tau \in [0, T^n] $$
 with the initial data $\tilde U^n(0)= 0$.

We can now convert the a priori estimate  of Proposition~\ref{estimate for U^n} into the following existence result.
\begin{prop}\label{existence of U^n} Let $m\ge 2$.
Then there exists  $M$ large enough in the definition of $V^{a}$  and $\delta$ sufficiently small such that the following holds true.
For every $T_n>0$ there exists a unique  solution of 
(\ref{eqn for U^n}) defined on $[0,T^n]$ and 
satisfying 
$$
1-\|\eta^a\|_{L^\infty}-\|\eta^n(t)\|_{L^\infty}
>0,\quad 
 \|U^n(t)\|_{X^{m+3}}\le
C_{M,m}\del^{M+3}e^{-(M+3)(\si_0-\eps_0)t},\qquad t\in[0,T_n].
$$
\end{prop}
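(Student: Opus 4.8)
The plan is to run a continuous-induction (bootstrap) argument on a single fixed time interval $[0,T_n]$, using the a priori estimate of Proposition~\ref{estimate for U^n} as the engine and the local well-posedness (backward in time) for \eqref{eqn for U^n} recalled above as the means of extending the solution. Fix $m\ge 2$ and $s\ge 5$. First I would set up the quantities to be controlled: the target bound $N(t):=C_{M,m}\delta^{M+3}e^{-(M+3)(\sigma_0-\eps_0)t}$, and I would note that by Proposition~\ref{construct U^a} the source terms obey $\|R^{ap}\|_{X^{m+3}_{t,T_n}}\le C_{M,s}\delta^{M+3}e^{-(M+3)(\sigma_0-\eps_0)t}$ and $\|V^a\|_{W^{m+s}_{t,T_n}}$ is bounded uniformly in $t$ and $\delta$ (for $\delta$ small, by the smoothness and decay of $Q$ and the $U^j$). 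Then I would define the ``good'' set of times
\[
\mathcal{T}=\Big\{T\in[0,T_n]\ :\ U^n \text{ exists on }[T,T_n],\ 1-\|\eta^a\|_{L^\infty}-\|\eta^n(t)\|_{L^\infty}\ge c_0/2,\ \|U^n(t)\|_{X^{m+3}}\le 2N(t)\ \forall t\in[T,T_n]\Big\},
\]
show $T_n\in\mathcal{T}$ (trivially, since $U^n(T_n)=0$), that $\mathcal{T}$ is relatively closed (by continuity of the norms in $t$ and the local-in-time theory), and — the crux — that $\mathcal{T}$ is relatively open, i.e. on $\mathcal{T}$ the bound $\|U^n(t)\|_{X^{m+3}}\le 2N(t)$ can be improved to $\le N(t)$ and the thickness condition improved to $\ge c_0$, whence the solution extends slightly past $\inf\mathcal{T}$ and the estimates persist.

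The improvement step is where all the work sits. On $[T,T_n]$ with $T\in\mathcal{T}$ one feeds the bootstrap hypotheses into Proposition~\ref{estimate for U^n}: since $\|U^n\|_{X^{m+3}_{t,T_n}}\le 2\sup_{[t,T_n]}N\le 2N(t)\le 2C_{M,m}\delta^{M+3}$ and $\|R^{ap}\|_{X^{m+3}_{t,T_n}}$ is likewise $\lesssim\delta^{M+3}$, the argument of $\omega$ stays bounded by a constant independent of $\delta$ (for $\delta\le1$), so $\omega(\cdots)\le\Lambda$ for a fixed $\Lambda\ge1$. The a priori estimate then reads
\[
\|U^n(t)\|^2_{X^{m+3}}\le \Lambda\Big[\,\|R^{ap}\|^2_{X^{m+3}_{t,T_n}}+\int_t^{T_n}\!\big(\|U^n(\tau)\|^2_{X^{m+3}}+\|R^{ap}(\tau)\|^2_{X^{m+3}}\big)\,d\tau\Big].
\]
Both the boundary term and the $R^{ap}$ time-integral are bounded by $C\delta^{2(M+3)}e^{-2(M+3)(\sigma_0-\eps_0)t}$ (the integral converges because $(M+3)(\sigma_0-\eps_0)>0$), and plugging the bootstrap bound $\|U^n(\tau)\|^2_{X^{m+3}}\le 4N(\tau)^2$ into the remaining integral gives another term of the same exponential form times $C_{M,m}^2\delta^{2(M+3)}$. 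Collecting, $\|U^n(t)\|^2_{X^{m+3}}\le \big(A+B\,C_{M,m}^2\big)\delta^{2(M+3)}e^{-2(M+3)(\sigma_0-\eps_0)t}$ with $A,B$ depending only on $M,m,s$ but not on $C_{M,m}$ or $\delta$. Now I choose $C_{M,m}$ large enough that $A+B\,C_{M,m}^2\le C_{M,m}^2/4$ — this just requires $C_{M,m}^2\ge 4A/(1-4B^{-1}\cdot\ldots)$, i.e. fixing $C_{M,m}$ after $A,B$ are known — which yields $\|U^n(t)\|_{X^{m+3}}\le \tfrac12 N(t)$, a genuine improvement. For the thickness: by Sobolev embedding $\|\eta^n(t)\|_{L^\infty}\lesssim\|U^n(t)\|_{X^{m+3}}\le \tfrac12 N(t)\le \tfrac12 C_{M,m}\delta^{M+3}$, which is $\le c_0/2$ once $\delta$ is small (here $1-\|\eta^a\|_{L^\infty}\ge c_0$ is arranged by taking $\delta$ small and $h$ large in the definition of $V^a$), improving the condition to $\ge c_0$.

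Finally I would assemble: openness and closedness on the connected interval $[0,T_n]$ force $\mathcal{T}=[0,T_n]$, so $U^n$ exists on all of $[0,T_n]$ with the stated decay and thickness; the role of ``$M$ large'' is to make the exponent $(M+3)(\sigma_0-\eps_0)$ — and more importantly the $\delta$-power $\delta^{M+3}$ — large enough that all the right-hand sides are genuinely small (uniformly in $n$ and $T_n$), and the role of ``$\delta$ small'' is both to keep $\omega$'s argument bounded and to make $\|\eta\|_{L^\infty}$ subcritical. Uniqueness on $[0,T_n]$ follows from the local well-posedness theory (backward, via the isometry $\tilde{\cdot}$ already introduced). \emph{The main obstacle} I anticipate is the bookkeeping that makes the constant $C_{M,m}$ closing the bootstrap independent of $n$ and $T_n$ and consistent with the choice of $M$ and $\delta$: one must be careful that $\omega$ is evaluated only at arguments of size $O(\delta^{M+3})$ (so that its growth is harmless), that the $t$-integrals all converge with constants independent of $T_n$ because the integrand decays exponentially, and that the order of quantifiers is ``fix $m$; then the structural constants $A,B,\Lambda$ are determined; then fix $C_{M,m}$; then choose $M$ and shrink $\delta$ (and enlarge $h$)'' — getting this dependency chain right, rather than circular, is the delicate point.
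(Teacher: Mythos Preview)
Your bootstrap approach is correct and gives a valid alternative to the paper's proof. The paper proceeds slightly differently: it uses a cruder bootstrap set (merely $\|U^n(t)\|_{X^{m+3}}\le 1$) and then, instead of substituting the bootstrap bound back into the time integral, applies a Gronwall-type argument to the integral inequality
\[
\|U^n(t)\|_{X^{m+3}}^2\le \omega(C+C_{M,m}\delta)\Big(\int_t^{T_n}\|U^n(\tau)\|^2_{X^{m+3}}\,d\tau+C_{M,m}\delta^{2(M+3)}e^{-2(M+3)(\sigma_0-\eps_0)t}\Big),
\]
which closes precisely when the decay exponent beats the Gronwall constant, i.e.\ $2(M+3)(\sigma_0-\eps_0)>\omega(C+C_{M,m}\delta)$. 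This is arranged by first taking $M$ large so that $2(M+3)(\sigma_0-\eps_0)>\omega(C)+2$, then $\delta$ small so that $\omega(C+C_{M,m}\delta)\le\omega(C)+1$. The paper's route buries the constant-tracking inside Gronwall and avoids having to name a target constant $C_{M,m}$ in advance; yours makes the balance of constants explicit but requires more care.

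One correction to your write-up: the order of quantifiers you state at the end is circular. You say $A,B$ depend on $M$ (correctly), but then propose to determine $A,B$ \emph{before} choosing $M$. The inequality $A+B\,C_{M,m}^2\le C_{M,m}^2/4$ is solvable for $C_{M,m}$ only if $B<1/4$, and no amount of enlarging $C_{M,m}$ helps otherwise. Since in fact $B\sim \Lambda/\big((M+3)(\sigma_0-\eps_0)\big)$ and $\Lambda$ can be bounded uniformly in $M$ (for $\delta$ small, $\|V^a\|_{W^{m+s}}$ is close to $\|Q\|_{W^{m+s}}$, independently of how many correctors $U^j$ you include), the correct chain is: bound $\Lambda$ uniformly; choose $M$ large so that $B<1/4$; then $A$ (which involves the $R^{ap}$-constant $C_{M,s}$ from Proposition~\ref{construct U^a}) is fixed; choose $C_{M,m}$; finally shrink $\delta$ (depending on $M$). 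This is exactly the ``delicate point'' you flagged --- you identified it but then wrote the dependency chain backwards.
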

\begin{proof}
From \cite[Section~8]{RT} for example and the above reversibility argument,  we  know
the local-in-time existence for the solution $U^n(t)$ of system
(\ref{eqn for U^n}) going backwards from time $T_n$. We want to prove 
 that this solution exists on the whole  time interval $[0,T_n]$.
By Proposition~\ref{construct U^a} and Proposition~\ref{estimate for U^n}, we have that, at least for $t$ close to $T_n$ 
\begin{multline}
\label{eqpartie1final1}
\|U^n(t)\|_{X^{m+3}}^2 
\leq
\omega(C+\|U^n\|_{X^{m+3}_{t,T_n}}+C_{M,m}\delta)
\times
\Big(\int^{T_n}_t\|U^n(\tau)\|^2_{X^{m+3}}d\tau
\\
 +C_{M,m}\del^{2(M+3)}e^{-2(M+3)(\si_0-\eps_0) t}\Big).
\end{multline}
Define \[ T^*=\inf\{T\in[0,T_n]\,:\, \forall t\in[T,\,T_n],\,\|U^n(t)\|_{X^{m+3}}
\le 1,\,1-\|\eta^a\|_{L^\infty}-\|\eta^n(t)\|_{L^\infty}\geq c_{0}>0\}.\]
 We deduce from  \eqref{eqpartie1final1} that 
\[
\|U^n(t)\|^2_{X^{m+3}}\le\omega(C+C_{M,m}\delta)\Big(\int^{T_n}_t\|U^n(\tau)\|^2_{X^{m+3}}
d\tau+C_{M,m}\del^{2(M+3)}e^{-2(M+3)(\si_0-\eps_0) t}\Big)
\]
for $t\in[T^*,T_n]$. 
Now we shall fix the value of $M$ and impose a smallness condition on $\delta$. 
Let $M$ and $\delta$ be such that 
\[
2(M+3)(\si_0-\eps_0)>\omega(C+C_{M,m}\delta)
\] 
Such a choice is possible thanks to the continuity of $\omega$. Indeed we can first take $M$ large enough so that $2(M+3)(\si_0-\eps_0)>\omega(C)+2$ and
then $\delta$ small enough so that $|\omega(C+C_{M,m}\delta)-\omega(C)|<1$.
Therefore, we arrive at the bound
\begin{equation*}
\f
d{dt}\Big(-e^{\omega(C+C_{M,m}\delta)t}\int^{T_n}_t\|U^n(\tau)\|^2_{X^{m+3}}d\tau\Big)
\le 
C_{M,m}\del^{2(M+3)}
e^{\omega(C+C_{M,m}\delta)t-2(M+3)(\si_0-\eps_0)t}.
\end{equation*}
Integrating on both sides with respect to time from $t$ to $T_n$ leads to
\[
\int^{T_n}_t\|U^n(\tau)\|^2_{X^{m+3}}d\tau\le
C_{M,m}\del^{2(M+3)}e^{-2(M+3)(\si_0-\eps_0)t},
\] which gives
\beq\label{small estimate for U^n} \|U^n(t)\|^2_{X^{m+3}}\le
C_{M,m}\del^{2(M+3)}e^{-2(M+3)(\si_0-\eps_0)t} \eeq for any
$t\in[T^*,T_n]$. Now we let $\del$ small enough such that
$C_{M,m}\del^{2(M+3)}<1$ and $1-\|\eta_{\varepsilon}\|_{L^\infty} - C_{M,m} \delta >c_{0}.$
Then by definition, we have  $T^* \leq 0$, hence   
 the solution $U^n(t)$
for (\ref{eqn for U^n}) can be extended to the whole  time interval $[0,T_n]$ with the claimed estimates.
\end{proof}
We can now finish the proof of Theorem~\ref{Instability Thm} by invoking some standard compactness arguments. 
\\

Step 1.
Existence of the global solution $U(t)$. Thanks to Proposition~\ref{estimate for U^n} and 
Proposition~\ref{existence of U^n}, we get the
approximation sequence $\{U^n\}$ of solutions of (\ref{eqn for U^n})
which satisfy
\[
\|U^n(t)\|_{X^{m+3}}\le C_{M,m}\del^{M+3}e^{-(M+3)(\si_0-\eps_0)
t},\qquad \forall t\in[0,T_n].\] 
Let $\psi\in C_0^{\infty}(-1/2,1/2)$ be a bump function such that $\psi(x)=1$ for $x\in (-1/4,1/4)$.
We extend $U^n(t)$ as zero for $t>T_n$ and we set 
$$
\tilde U^n(t)=\psi(t/T_n)U^n(t),\quad t\geq 0.
$$
Then, we have
$$
\partial_{t}\tilde U^n(t)=\frac{1}{T_n}\psi'(t/T_n)U^n(t)+\psi(t/T_n)\partial_t U^n(t),\quad t>0
$$
and thus new sequence $\{\tilde U^n(t)\}$ satisfies
\[
\|\tilde U^n(t)\|_{H^{m+4}\times H^{m+7/2}}\le
C_{M,m}\del^{M+3}e^{-(M+3)(\si_0-\eps_0) t},\quad t\geq 0
\] and
\[
\|\p_t\tilde U^n(t)\|_{H^{m+3}\times H^{m+5/2}}\le
C_{M,m}\del^{M+3}e^{-(M+3)(\si_0-\eps_0) t},\qquad \forall t\ge 0.
\]
By a standard compactness argument, we obtain  that there exists a subsequence
$\{\tilde U^{n_k}(t)\}$ and  \[
U^R(t)\in L^\infty([0,\infty),\, H^{m+4}\times H^{m+7/2})\] such that
\[
\tilde U^{n_k}\rightarrow U^R \quad\hbox{in}\quad
C_{loc}(\R_+,H^{m+3}_{loc}\times H^{m+5/2}_{loc})\quad
\hbox{as}\quad n_k\rightarrow \infty
\] and
\[
\|U^R(t)\|_{H^{m+4}\times H^{m+7/2}}\le
C_{M,m}\del^{M+3}e^{-(M+3)(\si_0-\eps_0)t}\qquad \forall t\in
[0,\infty).
\] 
Moreover $U^R(t)$ is the solution of (\ref{eqn for remainder U}) since $U^{n_k}(t)$ is a solution of (\ref{eqn for remainder U}) on $(0,T_{n_k}/4)$.
Going back to water-waves system (\ref{simple form for WW}) we deduce that there is a global solution
$U(t)=V^a(t)+U^R(t)$ for system (\ref{simple form for WW}).
\\

\noindent Step 2. Behavior when $t\rightarrow +\infty$. By the
definition of $V^a$ we have
\[
U(t)=V^a(t)+U^R(t)=Q+\sum^{M+1}_{j=0}\del^{j+1}U^j(t)+U^R(t),
\] and from Proposition~\ref{construct U^a} and (\ref{small estimate for
U^n}) we know that for any $t\in [0,\infty)$ \beno
&&\|U^j(t)\|_{E^s}\le C_{M,s}e^{-(j+1)(\si_0-\eps_0)
t},\\
&& \|U^R(t)\|_{X^{s+3}}\le
C_{M,s}\del^{M+3}e^{-(M+3)(\si_0-\eps_0)t}\eeno with $s\ge 2$. This yields
\[
\|U^j(t)\|_{H^s}\le C_{M,s}e^{-(j+1)(\si_0-\eps_0)t},\quad
\|U^R(t)\|_{H^s}\le C_{M,s}e^{-(M+3)(\si_0-\eps_0)t},\qquad \forall
t\in[0,\infty)
\] which shows that
\[
\lim_{t\rightarrow +\infty}\|U(t)-Q(x)\|_{H^s}=0.
\]

\noindent Step 3.  It remains to check the
 condition  \eqref{conddy} for $U$.
 Since \[
U(t)=V^a(t)+U^R(t)=Q(x)+\del U^a(t)+U^R(t),
\] we have at  $t=0$ that
\[
U(0)=Q(x)+\del\sum_{j=0}^{M+1}\del^jU^j(0)+U^R(0)
\] and
\[
\p_y( U(0))=\del\p_y(U^0(0))+
\sum_{j=1}^{M+1}\del^{j+1}\p_y(U^j(0))+\p_y(U^R(0)).
\] We know from the definition of $U^0$ that $\p_y(U^0(0))\neq 0$.
 We can use Proposition \ref{construct U^a} and  step 1 to get 
  that
  $$  \|U^j(0)\|_{E^s} \leq C_{s,j}, \quad  \|U^R(0)\|_{X^{m+3}}\le
C_{M,m}\del^{M+3}.$$
This yields  that $\p_y(U(0))\neq 0$ for $\delta$  possibly  smaller.
This ends the  proof of Theorem~\ref{Instability Thm}.
\ef
\section{Preliminaries for the proof of Theorem \ref{multi-soliton theorem}}\label{added}
\subsection{The Dirichlet-Neumann operator}
 Let us recall that the Dirichlet-Neumann  operator $G[\eta]$  is  defined  by 
\[
G[\eta]\psi=\sqrt{1+|\p_x\eta|^2}\p_n\varphi|_{z=\eta}
\] where $n$ is the unit outward normal vector and $\varphi$ solves
\[
\left\{\begin{array}{ll} \Del\varphi=0,\quad\hbox{in}\quad
-H<z<\eta(x)\\
\varphi|_{z=\eta}=\psi,\quad \p_n\varphi|_{z=-H}=0
\end{array}\right.
\] 
We can rewrite the elliptic problem on $ -H<z<\eta(x)$ as an
equivalent problem on a flat strip $\cS=\R\times [-1,\,0]$:
\beq\label{elliptic problem-new} \left\{\begin{array}{ll}
\na_{x,z}\cdot P\na_{x,z}\tilde{\varphi}=0,\quad\hbox{in}\quad
\cS\\
\tilde{\varphi}|_{z=0}=\psi,\quad \partial_{z}\tilde{\varphi}|_{z=-1}=0
\end{array}\right.
\eeq where 
$\tilde{\varphi}(x,z)=\varphi(x,Hz+(z+1)\eta)$ and \[
P=\left(\begin{matrix} H+\eta & -(z+1)\p_x\eta\\
-(z+1)\p_x\eta & \f{1+(z+1)^2(\p_x\eta)^2}{H+\eta}
\end{matrix}\right).
\]  
With the notation $\p^P_n=(0, 1)^t\cdot
P\,\na_{x,z}$, one can see that the D-N operator associated to the
above problem can be written as 
\beq
\label{defDNbis}
G[\eta]\psi=\p^P_n\tilde{\varphi}|_{z=0}= -\partial_{x} \eta\partial_{x} \tilde{\varphi}|_{z=0} 
+ \f { 1 + (\partial_{x} \eta)^2} {H+ \eta} \partial_{z}\tilde{\varphi}|_{z=0}.
\eeq
Let us recall the following properties:
\begin{prop}\label{propDN1} 
For $\eta \in H^\infty(\mathbb{R})$ with $H+ \eta \geq c_{0}>0$, we have
\begin{enumerate}
\item $G[\eta]$ is symmetric on $L^2(\mathbb{R})$:
$$ (G[\eta]u, v)= (u, G[\eta], v), \quad \forall\, u,v \in H^{1\over 2}(\mathbb{R})$$
\item There exists $c>0,$  $C>0$ such that for every $u \in H^{1 \over 2}(\mathbb{R})$
\begin{align}
\label{DNC}
& | (G[\eta]u, v ) | \leq C |\mathfrak P u|_{L^2} |\mathfrak  P v|_{L^2},  \forall u, \, v \in H^{1 \over 2}(\mathbb{R})\\
\label{DNc}
& (G[\eta] u, u ) \geq c |\mathfrak P u|_{L^2}^2, \, \forall u \in H^{1 \over 2 }(\mathbb{R})
\end{align}
where $\mathfrak P$ is the Fourier multiplier 
$$
\mathfrak{P}= (1 - \partial_{x}^2)^{-\frac{1}{4}}\partial_{x}.
$$
\item the linear operator $ G[\eta]: H^{s+1}(\mathbb{R}) \rightarrow H^s(\mathbb{R})$ is   continuous for every $s \in \mathbb{R}$
\item We have the following commutator estimates
$$ | [ \partial_{x}^s, G[\eta]] u  |_{H^{1 \over 2}} \leq C_{s} | \mathfrak{P} u|_{H^s}, \quad \forall u \in H^{s+{1 \over 2}}(\mathbb{R}),$$
$$ |(f\partial_{x}u, G[\eta]u)| \leq  C_{f} |\mathfrak P u|_{L^2}, \quad \forall u \in H^{1\over 2}(\mathbb{R}), \, \forall f \in H^\infty(\mathbb{R}).$$
\item We have the explicit expression for the derivative of the Dirichlet-Neumann operator with respect to $\eta$:
$$ D_{\eta}( G[\eta]u) \cdot\zeta= - G[\eta]( \zeta Z) - \partial_{x}(v \zeta)$$
with 
$$  Z = Z[\eta, u]= { G[\eta ]u + \partial_{x} \eta \partial_{x} u  \over  1 + | \partial_{x} \eta |^2 }, \, v= v[\eta, u]= \partial_{x } u- Z \partial_{x} \eta.$$
\item Finally, for $s>1/2$, we have that
$$\big| D_{\eta}^j (G[\eta] u) \cdot (h_{1}, \cdots, h_{j}) \big|_{H^{s- {1 \over 2}}} \leq  C_{s} \big|\mathfrak P u\big|_{H^s} \, \prod_{i=1}^j |h_{i}|_{H^{s+1}}.$$
\end{enumerate}
\end{prop}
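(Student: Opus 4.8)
The plan is to reduce all six statements to properties of the variational elliptic problem \eqref{elliptic problem-new} on the fixed strip $\cS=\R\times[-1,0]$, so that none of the boundary geometry of $\Omega_t$ is seen directly. First I would note that, since $H+\eta\ge c_0>0$ and $\eta\in W^{1,\infty}$, the matrix $P$ is symmetric and uniformly elliptic on $\cS$, hence the bilinear form $a(u,w):=\int_{\cS}\nabla_{x,z}u\cdot P\,\nabla_{x,z}w$ is continuous and coercive on the space of $\dot H^1(\cS)$ functions vanishing at $z=0$ (handling the behaviour at $x=\pm\infty$ in the usual way). Lax--Milgram produces $\tilde\varphi$, and for any $H^{1/2}$-extension $\tilde\phi$ of $\phi$ one has the Green identity $(G[\eta]\psi,\phi)=a(\tilde\varphi,\tilde\phi)$. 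Symmetry (item~1) is then immediate; the continuity bound \eqref{DNC} follows by optimizing over the extension $\tilde\phi$ (the harmonic extension on $\cS$, whose energy is comparable to $|\mathfrak P\phi|_{L^2}^2$); and the coercivity \eqref{DNc} follows by taking $\tilde\phi=\tilde\varphi$ and a trace/Poincar\'e inequality on the strip. The weight $\mathfrak P$ is precisely what encodes that the model operator $G[0]=|D_x|\tanh(H|D_x|)$ behaves like $|D_x|^2$ at low frequency and like $|D_x|$ at high frequency.

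For item~3 and the quantitative bounds I would run elliptic regularity for \eqref{elliptic problem-new}: since the coefficients of $P$ lie in $H^\infty$, commuting $\partial_x^k$ through the equation gives tangential regularity (the commutator $[\partial_x^k,\nabla\cdot P\nabla]\tilde\varphi$ is lower order and controlled by Kato--Ponce / tame product estimates), and normal regularity is recovered algebraically by solving the equation for $\partial_z^2\tilde\varphi$. Feeding this into the trace formula \eqref{defDNbis} gives $G[\eta]:H^{s+1}\to H^s$ bounded for $s\ge0$, and $s<0$ follows by duality from item~1. The commutator estimates in item~4 come from the same scheme: express $[\partial_x^s,G[\eta]]u$ via \eqref{defDNbis} and the elliptic problem satisfied by $\partial_x^s\tilde u-\widetilde{\partial_x^s u}$, whose source is built from commutators $[\partial_x^s,P]$, and close with Lax--Milgram plus the bounds already obtained. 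The second inequality of item~4 is the integration-by-parts identity rewriting $(f\partial_x u,G[\eta]u)$ as a divergence-form integral over $\cS$ plus a commutator term, in which the top-order contribution is antisymmetrized so that only the expression controlled by $|\mathfrak P u|_{L^2}$ (to the appropriate power) survives.

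The shape-derivative formula (item~5) I would get by differentiating the fluid-domain elliptic problem: if $\Phi$ is the harmonic extension with $\Phi|_{z=\eta}=u$, then $\dot\Phi=D_\eta\Phi\cdot\zeta$ is harmonic with Dirichlet data $\dot\Phi|_{z=\eta}=-\zeta\,\partial_z\Phi|_{z=\eta}$; differentiating $G[\eta]u=(\partial_z\Phi-\partial_x\eta\,\partial_x\Phi)|_{z=\eta}$ and simplifying using harmonicity of $\Phi$ produces exactly $-G[\eta](\zeta Z)-\partial_x(v\zeta)$ with $Z,v$ as stated (a classical computation). Item~6 is then a routine induction: differentiating this identity $j$ times expresses $D_\eta^j(G[\eta]u)$ through $G[\eta]$ applied to products and through transport terms, each level being estimated by items~2--4 combined with tame products in $H^s(\R)$, while tracking the half-derivative losses so as to land in $H^{s-1/2}$ with the claimed dependence on $|\mathfrak P u|_{H^s}$ and $\prod|h_i|_{H^{s+1}}$.

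The one genuinely delicate point, everything else being by now classical, is the pair of sharp frequency-weighted bounds \eqref{DNC}--\eqref{DNc} and their use inside item~4: one must choose the comparison extension with care and track the anisotropic low- versus high-frequency behaviour of $G[0]$ so that the naturally appearing $\dot H^{1/2}$-type quantity is replaced by the correct $|\mathfrak P\cdot|_{L^2}$, and one must check that all constants depend only on $c_0$ and finitely many Sobolev norms of $\eta$. Once these are in hand, items~1, 3 and 5 are essentially formal and item~6 is bookkeeping.
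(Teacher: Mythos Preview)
The paper does not give its own proof of this proposition: it simply refers to \cite{L} and \cite{RT2}, Section~3, remarking that the smoothness hypothesis on $\eta$ is adopted only because the result is applied to solitary-wave profiles. Your outline is precisely the standard argument developed in those references---variational formulation on the flattened strip for (1)--(2), tangential-then-normal elliptic regularity for (3)--(4), the classical shape-derivative computation for (5), and induction for (6)---so there is nothing to contrast.
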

For the proof of these statements we refer to \cite{L} or \cite{RT2} section~3. We have assumed that $\eta$ is smooth since we shall use this proposition
  when $\eta$ is a solitary wave. Most of the estimates are actually still true when $\eta$ only has limited  Sobolev regularity.
  
  In order to perform our  construction  of multisolitons, we shall also need  results about  the action of the Dirichlet-Neumann operator on
   localized functions.
\begin{prop}\label{D-N e-decay estimate} Assume that $\psi\in
C^\infty_{b}(\R)$ has an exponential decay:  
\beq\label{decaypsi}
\exists\, d>0,\,\, \forall \,\al\in\N, \alpha \geq 1,\exists\, C_{\alpha},\forall \, x\in\R,
|\p^\al_x\psi(x)|\le C_\al e^{-d|x|}.
\eeq
Then for $\eta\in H^\infty(\R)$ with $H+ \eta \geq c_{0}>0$, 
$G[\eta]\psi$ also has an exponential decay, that is, for any
$\al\in\N$, there exist a constant $C_\al$ depending on $\al$ and
$0<\eps<d$ independent of $\al$ such that for every $x\in\R$,
\[
\big|\p^\al_x \big(G[\eta]\psi \big)(x)\big|\le C_\al e^{-\eps|x|}.
\]\end{prop}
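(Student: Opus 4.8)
The statement asserts that the Dirichlet-Neumann operator preserves exponential decay: if $\psi$ has derivatives decaying like $e^{-d|x|}$ and $\eta$ is smooth (bounded, bounded below), then $G[\eta]\psi$ and all its derivatives decay like $e^{-\eps|x|}$ for some $\eps<d$. The natural approach is to work with the straightened elliptic problem \eqref{elliptic problem-new} on the flat strip $\cS=\R\times[-1,0]$ and to show that the solution $\tilde\varphi$ itself enjoys an exponential decay in $x$, locally uniformly in $z$, together with its derivatives; then \eqref{defDNbis} immediately transfers this decay to $G[\eta]\psi$ since the coefficients $\partial_x\eta$ and $(1+(\partial_x\eta)^2)/(H+\eta)$ are bounded (smooth) multipliers.

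The plan is as follows. First, reduce to the case $\psi\to 0$ at $\pm\infty$: since only derivatives of $\psi$ of order $\ge 1$ are assumed to decay, write $\psi=\psi_\infty^++\psi_\infty^-+\psi_0$ where $\psi_\infty^\pm$ are the (possibly non-zero) limits at $\pm\infty$ cut off appropriately, and note that $G[\eta]$ applied to a constant is $0$ (the harmonic extension of a constant is constant, with zero normal derivative at the bottom), so modulo an exponentially small corrector coming from the cutoff we may assume $\psi$ itself decays like $e^{-d'|x|}$ for any $d'<d$. Second, for such $\psi$, introduce the weight $\rho(x)=e^{\eps\langle x\rangle}$ (or a smoothed version of $e^{\eps|x|}$) with $\eps$ small, and study the conjugated elliptic operator $\rho\,\na_{x,z}\cdot P\na_{x,z}\,\rho^{-1}$ acting on $w=\rho\,\tilde\varphi$. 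The conjugation produces the original operator plus first-order terms with coefficients of size $O(\eps)$; since the unconjugated operator $\na_{x,z}\cdot P\na_{x,z}$ is uniformly elliptic on $\cS$ (because $P$ is uniformly positive definite, using $H+\eta\ge c_0>0$ and $\p_x\eta$ bounded) with the mixed Dirichlet-Neumann boundary conditions, for $\eps$ small enough the conjugated problem remains coercive on $H^1(\cS)$. One then gets $\|w\|_{H^1(\cS)}\lesssim \|\rho\psi\|_{H^{1/2}(\R)}<\infty$, i.e. $\tilde\varphi$ decays in $x$ in the $H^1$ sense with weight $\rho$.

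Third, upgrade this to pointwise decay of all derivatives. This is a standard elliptic-regularity / Caccioppoli argument: on each vertical slab $[n,n+1]\times[-1,0]$ (or a slightly enlarged slab), the weighted energy bound gives $\|\tilde\varphi\|_{H^1}\lesssim e^{-\eps|n|}$, and interior plus boundary elliptic estimates for the smooth-coefficient problem bootstrap this to $\|\tilde\varphi\|_{H^k(\text{slab})}\lesssim_k e^{-\eps|n|}$ for every $k$; Sobolev embedding then yields $|\p^\al_{x,z}\tilde\varphi(x,z)|\lesssim_\al e^{-\eps|x|}$ on all of $\cS$. Plugging $z=0$, $\tilde\varphi|_{z=0}=\psi$ and the formula \eqref{defDNbis} then gives the claimed bound on $\p^\al_x(G[\eta]\psi)$, with the final $\eps$ chosen $<d$ and independent of $\al$, exactly as stated.

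**Main obstacle.** The technical heart is the coercivity of the conjugated (weighted) elliptic problem: one must verify that conjugating by $e^{\eps\langle x\rangle}$ perturbs the uniformly elliptic Dirichlet-Neumann form by terms that are controlled by the ellipticity constant when $\eps$ is small, so that Lax-Milgram still applies and produces a \emph{finite} weighted energy — this is what forces $\eps<d$ and forces $\eps$ to be chosen small depending only on $\|P\|_{W^{1,\infty}}$ and $c_0$, hence independent of the derivative order $\al$. The bookkeeping with the non-decaying constant part of $\psi$ and the cutoff errors, though routine, also needs a little care since the hypothesis \eqref{decaypsi} only controls $\p^\al_x\psi$ for $\al\ge 1$.
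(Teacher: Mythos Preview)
Your weighted-energy approach is the right idea and matches the paper's in its core (conjugation by $e^{\eps\langle x\rangle}$, coercivity for small $\eps$, then elliptic bootstrap). However, your Step~1 --- the reduction to a decaying $\psi$ --- has a genuine gap. You write $\psi = \psi_\infty^+\chi_+ + \psi_\infty^-\chi_- + \psi_0$ and claim that ``modulo an exponentially small corrector coming from the cutoff'' one may assume $\psi$ itself decays. But $G[\eta]$ annihilates only true constants; the sum $\psi_\infty^+\chi_+ + \psi_\infty^-\chi_-$ equals a constant plus $(c_- - c_+)\chi_-$, and $G[\eta]\chi_-$ is not obviously exponentially decaying: $\chi_-$ is itself a bounded smooth function whose derivatives decay exponentially but which does not decay --- exactly another instance of the hypothesis \eqref{decaypsi}. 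So the reduction is circular. This is not an academic objection: in the intended application $\varphi_c$ is odd and has opposite nonzero limits at $\pm\infty$, so the two-sided cutoff issue really arises.

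The paper sidesteps this by decomposing inside the strip rather than on the boundary data. One writes $\tilde\varphi = \varphi_0 + u$, where $\varphi_0$ solves the \emph{flat} problem $\Delta\varphi_0 = 0$ on $\cS$ with $\varphi_0|_{z=0}=\psi$, $\partial_z\varphi_0|_{z=-1}=0$; explicitly $\widehat{\varphi_0}(\xi,z)=\dfrac{\cosh(\xi(z+1))}{\cosh\xi}\hat\psi(\xi)$. The key observation is that the formula \eqref{defDNbis} for $G[\eta]\psi$ involves only $\nabla_{x,z}\tilde\varphi$, never $\tilde\varphi$ itself, so one only needs exponential decay of $\nabla\varphi_0$, not of $\varphi_0$. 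That in turn is obtained by a Paley--Wiener contour-shift argument: $\widehat{\partial_x\psi}$ extends holomorphically to $|\mathrm{Im}\,\xi|<d$ (from \eqref{decaypsi}), the multiplier $\cosh(\xi(z+1))/\cosh\xi$ extends to $|\mathrm{Im}\,\xi|<\pi/2$, and shifting the integration contour yields $|\partial_{x,z}^\beta\varphi_0(x,z)|\lesssim e^{-2\eps|x|}$ for $|\beta|\ge 1$ and $\eps<\min(d,\pi/2)/2$. The remainder $u$ then solves $-\nabla\cdot P\nabla u = \nabla\cdot P\nabla\varphi_0$ with \emph{homogeneous} boundary conditions ($u|_{z=0}=0$, $\partial_z u|_{z=-1}=0$), so your Step~2 weighted energy argument applies directly to $u$, with the Poincar\'e inequality now legitimately available to control $\|v\|_{L^2}$ by $\|\nabla v\|_{L^2}$ and close the estimate. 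Your Step~3 is then exactly what the paper does.
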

\bigskip
 \begin{rem} 
We only assume that the derivatives of the function $\psi$ are exponential decaying while the function itself is only  bounded as it is the case for the solitary
 waves (see \eqref{decay2} below). Note that this still yields that   $G[\eta ] \psi$ is  exponentially decaying. The heuristic reason is that 
  the Dirichlet-Neumann operator behaves like a derivative.  Again, in Proposition~\ref{D-N e-decay estimate}, we are not interested 
in the way the estimates depend on the regularity of the surface since we will use it for solitary waves.

The result of Proposition~\ref{D-N e-decay estimate} uses in an essential way that  the fluid domain is bounded in the $z$ direction (we use a Poincar\'e inequality). 
The statement of Proposition~\ref{D-N e-decay estimate} does not hold in the case of an infinite bottom because of a singularity 
at the low frequencies which affects the propagation of the exponential decay. 
\end{rem}
\begin{proof}[Proof of Proposition~\ref{D-N e-decay estimate}]
We use as in \cite{RT} the decomposition
\beq
\label{decDN} \tilde\varphi(x,z)=\varphi_0(x,z)+u(x,z)\
\eeq
with $\varphi_0$  that solves the elliptic problem 
$$ - \Delta_{x,z} \varphi_0=0, \, (x,z) \in \mathcal{S}, \quad  \quad \varphi_0(x,0)= \psi, \, \partial_{z}\varphi_0 (x,-1)=0.$$
This allows  to transform the  elliptic
problem (\ref{elliptic problem-new}) for  $\varphi$ into an elliptic problem with homogeneous boundary conditions
 for  $u$:
\beq
\label{ellipticu}
\left\{\begin{array}{ll} -\na_{x,z}\cdot (P\na_{x,z}u)=\na_{x,z}\cdot
(P\na_{x,z}\varphi_0),\quad\hbox{in}\quad
\cS\\
u|_{z=0}=0,\quad \partial_{z}u|_{z=-1}=0
\end{array}\right.
\eeq
At first, we shall  study the decay properties of  $\varphi_0$. We first observe that $\varphi_0$ is given by the explicit formula
 \beq
 \label{psiHdef} \widehat{\varphi_0}(\xi,z)=\f {\cosh\big( \xi(z+1)\big) }{ \cosh (\xi) }\hat \psi (\xi)\eeq
 where $\,\widehat \cdot\,$ stands for the Fourier transform in the $x$ variable.
  From this expression, we get in particular that
  $$  \mathcal{F}_{x}(\partial_{x}  \varphi_0
  )(\xi,z)=\f {\cosh\big( \xi(z+1)\big)} {\cosh (\xi) } \mathcal{F}_{x}( \partial_{x}\psi )(\xi).$$
  The exponential decay will follow from Paley-Wiener type arguments.
  Since $\partial_{x} \psi$  and its derivatives have exponential decay,  we get that  $\mathcal{F}_{x}( \partial_{x}\psi )$
  has an holomorphic extension to $| \mbox{Im}\, \xi| <d$ and by integration  by parts that it satisfies for every $\delta \in (0, d)$  the estimate
  $$ \big|\mathcal{F}_{x}( \partial_{x}\varphi_0)( \xi, z)  \big| \leq\f {C_{N} }{ 1 +  |\xi|^N}, \quad \forall \xi, \, z , \,   |\mbox{Im }\, \xi |\leq \delta, \, z \in [-1, 0]$$
   for every $N \in \mathbb{N}$.
  Since $\xi \mapsto \f {\cosh\big( \xi(z+1)\big)} { \cosh (\xi) }$  has an holomorphic  bounded  (uniformly in $z$) extension to 
   $|\mbox{Im } \,\xi |\leq  \delta$ for any $\delta \in (0, \pi/2)$,  we can use contour deformation to write that
   $$\partial_{x} \varphi_0(x, z) =  \int_{\mathbb{R}} e^{ix \xi}   \f{\cosh\big( \xi(z+1)\big) }{ \cosh (\xi) } \mathcal{F}_{x}( \partial_{x}\psi )(\xi) \, d \xi
    =  \int_{Im\,  \xi = \delta }   e^{ix \xi}  \f {\cosh\big( \xi(z+1)\big)}{\cosh (\xi) } \mathcal{F}_{x}( \partial_{x}\psi )(\xi) \, d \xi$$
    for any $\delta$ such that  $|\delta|  < \mbox{Min }(d, \pi/2)$. This yields by choosing $\delta =  2\eps \, \mbox{sign }x$, with $\eps$ sufficiently small the estimate
    $$  | \partial_{x} \varphi_0(x, z)| \lesssim  e^{- 2 \eps |x|} , \, \forall (x,z) \in \mathcal{S}.$$
    In a similar way, we get from \eqref{psiHdef} that
    $$ \partial_{z}\hat \varphi_0 (x,z) =  
    \f{\sinh \big( \xi(z+1) \big) }{\cosh \xi} \xi \hat \psi(\xi) =  \f{\sinh \big( \xi(z+1) \big) }{\cosh \xi} \f 1 i\mathcal{F}_{x}(\partial_{x}\psi)(\xi)$$
  and hence, since $\partial_{x} \psi$ and its derivatives have exponential decay, the same arguments as above yield the estimate
  $$  
  | \partial_{z} \varphi_0(x, z)| \lesssim  e^{- 2 \eps |x|} , \, \forall (x,z) \in \mathcal{S}.
  $$
  The estimates  for higher order derivatives  can also be obtained from the same arguments, we find in the end:
  \beq
  \label{estpsiH}
 \forall \beta, \,  | \beta| \geq 1, \quad  | \partial_{x,z}^\beta \varphi_0(x, z)| \lesssim  C_{\beta} e^{- 2  \eps |x|} , \,  \quad \forall (x,z) \in \mathcal{S}.
  \eeq
  It remains to estimate the solution  $u$ of \eqref{ellipticu}.  Let us define  $v(x,z)$ such that 
  \[
u(x,z)=e^{-\eps\langle x\rangle }v(x,z),\quad \langle x\rangle:=(1+x^2)^{\frac{1}{2}}
\] 
with $\eps>0$ small enough and to be fixed later. One has the elliptic problem for
$v(x,z)$  
\beq\label{elliptic problem for v(x,z)}
\left\{\begin{array}{ll} -\na_{x,z}\cdot(
P\na_{x,z}v)-
[\na_{x,z}\cdot P\na_{x,z},\,e^{\eps\langle x\rangle}]e^{-\eps\langle x\rangle}v= e^{\eps\langle x\rangle}\na_{x,z}\cdot
(P\na_{x,z} \varphi_0 ),\quad\hbox{in}\quad
\cS\\
v|_{z=0}=0,\quad \partial_{z }v|_{z=-1}= 0
\end{array}\right.
\eeq 
We shall first estimate   Sobolev norms of $v(x,z)$.

\noindent 1) Lower-order elliptic estimate for $v$.  By
 integration by parts, we get  that \beno & &\int_\cS P\na_{x,z}v\cdot
\na_{x,z}vdxdz \\
& &=\int_\cS 
\Big([\na_{x,z}\cdot P\na_{x,z},\,e^{\eps\langle x\rangle}]e^{-\eps\langle x\rangle}v\Big)
\, v dx dz  +  \int_\cS  e^{\eps\langle x\rangle}
\na_{x,z}\cdot
(P\na_{x,z} \varphi_0  ) \,v\,
dxdz\\
& & := A_1+A_2. \eeno
 For the left hand side,  we have by  the assumption  on $\eta$ that 
\[\int_\cS P\na_{x,z}v\cdot
\na_{x,z}vdxdz\ge c\|\na_{x,z}v\|^2\]
for some $c>0$ independent of $\eps$. Here and in the sequel the norm 
$\| \cdot \|$ is the $L^2(\mathcal{S})$ norm.
Next, we can compute
$$
[\na_{x,z}\cdot P\na_{x,z},\,e^{\eps\langle x\rangle}]
=
(H+\eta)[\partial_x^2,e^{\eps\langle x\rangle}]-2(z+1)\partial_x\eta[\partial_x,e^{\eps\langle x\rangle}]\partial_z
$$
which yields
 $$ |A_{1}| \leq  C \big(  \eps \| \na_{x,z} v \| +  \eps^2 \|v\| \big) \|v\|.$$
 For the second term in the right hand side, we can use \eqref{estpsiH} to get
 $$ |A_{2}| \leq C_{\eps} \|v\| \leq \eps^2 \|v \|^2 + C_{\eps},$$
 where the last inequality comes from the Young inequality,
  for some harmless number $C_{\eps}$ (that depends on $\eps$).
 Summing all these estimates up one gets that
\[
\|\na_{x,z}v\|^2\le
C(\eps^2\|v\|^2+\eps\|\na_{x,z}v\| \|v\|+ C_{\eps})
\]
To conclude, we  note that since  $v|_{z=0}=0$ and $\mathcal{S}$ is bounded in the $z$ direction, we have the Poincar$\acute{e}$ inequality:
\[
\|v\|\le C\|\na_{x,z}v\|.
\] Plugging this into the above estimate yields 
\[
\|\na_{x,z}v\|\le C,\quad \|v\|\le C.
\]
by taking $\eps$ sufficiently small.

\noindent 2) Higher-order estimates for $v$. 
These estimates  will follow from an induction argument and standard elliptic regularity  theory.
 Indeed, we can write  the elliptic problem \eqref{elliptic problem for v(x,z)} under
  the form
 $$ - \nabla_{x,z} \cdot \big( P \nabla_{x,z} v) = F, \quad (x,z) \in \mathcal{S}, \quad v(x,0)= 0, \, \partial_{z} v(x,-1)= 0.$$
  From standard elliptic regularity theory, we  have that for $s \geq 0$
  $$ \|v\|_{H^{s+ 2}(\mathcal{S})} \leq  C \big( \|v\|_{H^1(\mathcal{S})} + \|F\|_{H^{s}(\mathcal{S})}  \big).$$ 
  By using the estimate \eqref{estpsiH}  and a standard commutator estimate, we get
  $$ \|F\|_{H^{s}(\mathcal{S})} \leq C \big( 1  +  \|v\|_{H^{s+1}(\mathcal{S})}  \big).$$
  Consequently starting from the $H^1$ estimate that we have already proven, we get  by induction that
   for every $s\geq 0$, 
  $
   \| v \|_{H^s(\mathcal{S})} \leq C_{s}.
  $
   By Sobolev embedding, we thus obtain that  $e^{ \eps( 1 + x^2)^\f12} u$ and all its derivatives are bounded in $\mathcal{S}$
   that is to say:
   \beq
   \label{estDNu}
  \forall \beta, \,  | \partial_{x, z}^\beta u (x,z) | \leq C_{\beta} e^{- \eps |x|}, \quad \forall (x,z) \in \mathcal{S}.
  \eeq
To end the proof of Proposition \ref{D-N e-decay estimate}, it suffices to combine the expression \eqref{defDNbis} of the Dirichlet-Neumann operator
(note that it always involves a derivative applied to $\varphi$) with the  decomposition \eqref{decDN} and the estimates \eqref{estpsiH}, \eqref{estDNu}.
This ends the proof of Proposition~\ref{D-N e-decay estimate}.  
\end{proof}
We shall also use  the following corollary.
\begin{cor}\label{D-N e-decay estimate corollary} Let
$\psi\in C^\infty_{b}(\R)$ with an exponential decay property
as in Proposition~\ref{D-N e-decay estimate}.
Then for $\eta\in H^\infty(\R)$ such that $H + \eta \geq c_{0}>0$  and every  $a\in R$, $G[\eta]
(\psi(x-a))$ also has an exponential decay, i.e. there exists
constant $0<\eps<d$ independent of $a$ and $\al$ such that
\[
|\p^\al_xG[\eta](\psi(\cdot -a)) (x)|\le C_\al e^{-\eps |x-a|}\,.
\] 
\end{cor}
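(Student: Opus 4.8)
The plan is to reduce the statement to Proposition~\ref{D-N e-decay estimate} by a translation argument, the only delicate point being the uniformity of the constants with respect to the shift $a$. Set $\eta_a(x):=\eta(x+a)$. First I would record the covariance identity
\[
G[\eta]\big(\psi(\cdot-a)\big)(x)=\big(G[\eta_a]\psi\big)(x-a),\qquad x\in\R ,
\]
which follows directly from the definition of the Dirichlet--Neumann operator: if $\Phi_a$ solves $\Del\Phi_a=0$ in $\{-H<z<\eta_a(y)\}$ with $\Phi_a|_{z=\eta_a}=\psi$ and $\p_z\Phi_a|_{z=-H}=0$, then $\Phi(x,z):=\Phi_a(x-a,z)$ solves the analogous elliptic problem in $\{-H<z<\eta(x)\}$ with boundary datum $\psi(\cdot-a)$, and the (suitably normalized) normal derivative on the top boundary is translated accordingly.

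Then I would apply Proposition~\ref{D-N e-decay estimate} to the surface $\eta_a$ together with the fixed profile $\psi$. Since $\eta\in H^\infty(\R)$, translation invariance of the Lebesgue measure gives $\|\eta_a\|_{H^s(\R)}=\|\eta\|_{H^s(\R)}$ for every $s$, and $H+\eta_a\ge c_0>0$; hence the hypotheses of Proposition~\ref{D-N e-decay estimate} are met with data that do not depend on $a$, and we obtain $|\p_{x'}^\al(G[\eta_a]\psi)(x')|\le C_\al e^{-\eps|x'|}$. Substituting the covariance identity yields $|\p_x^\al G[\eta](\psi(\cdot-a))(x)|\le C_\al e^{-\eps|x-a|}$, which is the assertion.

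The only thing that still requires a check — and this is the main (rather mild) obstacle — is that the constants $C_\al$ and $\eps$ coming out of the proof of Proposition~\ref{D-N e-decay estimate} are genuinely independent of $a$. Inspecting that proof: $\eps$ is chosen small in terms of $c_0$, $\|\eta\|_{L^\infty}$ and $\|\p_x\eta\|_{L^\infty}$ only (through the coercivity constant of $\int_\cS P\na_{x,z}v\cdot\na_{x,z}v$ and the commutator $[\na_{x,z}\cdot P\na_{x,z},e^{\eps\langle x\rangle}]$), the Paley--Wiener bounds for $\varphi_0$ depend on $\psi$ alone, and the higher-order elliptic estimates for $u$ have constants controlled by $c_0$ and finitely many Sobolev norms of $\eta$. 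All these quantities are invariant under $\eta\mapsto\eta_a$, and one checks $0<\eps<d$ exactly as before. Equivalently, one may simply rerun the proof of Proposition~\ref{D-N e-decay estimate} with the weight $e^{\eps\langle x-a\rangle}$ in place of $e^{\eps\langle x\rangle}$, using $|\p_x\langle x-a\rangle|\le 1$ and $|\p_x^2\langle x-a\rangle|\le C$ uniformly in $a$.
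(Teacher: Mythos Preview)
Your argument is correct and is exactly the translation/covariance approach the paper uses: the paper simply notes $G[\eta](\tau_a\psi)=\tau_a\big(G[\tau_{-a}\eta]\psi\big)$ and invokes Proposition~\ref{D-N e-decay estimate}. Your additional paragraph making explicit why the constants and $\eps$ are uniform in $a$ is a welcome detail that the paper leaves implicit.
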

\begin{proof}
Let us introduce the translation operator
$
(\tau_{a} f)(x):= f(x-a).
$
The result follows by observing  that 
 $G[\eta] \big( \tau_{a} \psi)=  \tau_{a} \big( G[\tau_{-a} \eta] \psi \big)$
 and by using Proposition~\ref{D-N e-decay estimate}.
 \end{proof}
\subsection{Linear stability properties of the solitary wave}
\label{sectionstab}
In this section, we study the linearization of the water-waves system about a solitary wave $Q_{c}(x-ct)$ given by Theorem \ref{theoOS}.
The  main results of this section (in particular Proposition \ref{Propstab1d}) are essentially contained in \cite{Mielke}.
For the sake of completeness, we shall give proofs that use a  slightly different framework which is more adapted  to our purpose.

It is convenient here to go into the moving frame by changing $x$ into $x-ct$.
 As in section \ref{sectionlin1}, the linearized equation reads
 $$ \partial_{t} U=  J \Lambda_{c} U$$
 with
 $$ \Lambda_{c}
= \left(\begin{matrix} -P_c+ g +Z_c
G[\eta_\eps](Z_c\cdot)+Z_c\p_x v_c & (v_c-c)\p_x-Z_c
G[\eta_{c}]\\
-\p_x((v_c-c)\cdot)-G[\eta_c](Z_c\cdot) & G[\eta_c]
\end{matrix}\right)
$$ 
and $Z_{c}= Z[Q_{c}], \, v_{c}= v[Q_{c}]$ are defined  in  (5) Proposition \ref{propDN1}. 
 The operator  $P_{c} $ is the second order  elliptic operator defined by 
 $$ P_{c} = b \partial_{x} \Big( {\partial_x  \cdot \over \big( 1+ (\partial_{x} \eta_{c})^2 \big)^{3 \over 2} } \Big).$$
 We can also   get a simpler form of the linearized system by the change of unknowns
\beq
\label{good-unknown1} V= R_{c} U, \quad  R_{c}=  \left(\begin{array}{cc}  1 & 0 \\  - Z_{c} & 1 \end{array} \right) 
\eeq
 which yields 
\beq\label{linLc} \p_tV=JL_{c}V, \eeq with a  symmetric
operator $L_{c}$  defined  by 
\[
L_{c}=\left(\begin{matrix}-P_c+ g +(v_c-c)\p_x Z_c &
(v_c-c)\p_x\\
-\p_x((v_c-c)\cdot) & G[\eta_c]
\end{matrix}\right).
\] 
The two operators are related by the property
\beq
\label{equivstab}
 L_{c}= (R_{c}^{-1})^t \Lambda_{c} R_{c}^{-1}.
 \eeq
Note that in this section, we see  $c$ as a more natural parameter than  $\varepsilon$  for the solitary wave  and the objects depending on it since
we have not written  the system in a non-dimensional form, but that $L_{c}$ is conjugated to the operator $L(0)$
 studied previously via the scaling transformation \eqref{rescale}.
 
 Thanks to Proposition \ref{propDN1}, the quadratic form associated to $L_{c}$ is naturally defined on the space
  $X^0= H^1(\mathbb{R}) \times \dot H^{1 \over 2}_{*} (\mathbb{R})$
  where $H^{1 \over 2}_{*}(\mathbb{R})$ is a modified homogeneous Sobolev space defined  by
  $$  \dot H^{1 \over 2}_{*}(\mathbb{R}) = \{ u \in \mathcal{S}'(\mathbb{R}), \, \mathfrak Pu \in L^2(\mathbb{R})\}.$$
  \begin{rem}
  \label{remanaf}
  Note that with our definition of $\mathfrak P$, we have that $ \dot H^{1 \over 2}_{*}(\mathbb{R}) \subset L^2_{loc}(\mathbb{R)}$.
  Indeed if $u \in   \dot H^{1 \over 2}_{*}(\mathbb{R})$, then  $v = \mathfrak P u \in L^2$. Let us choose $\chi(\xi)$ a smooth compactly supported
   function such that $\chi = 1$ in the vicinity of zero. Then since 
   \beq
   \label{X0L^21} (1-\chi (D)) u=  \mathfrak P^{-1} (  1 - \chi (D) ) v,\eeq 
    and that $ \frac{1- \chi (\xi)}{\mathfrak P(\xi)}$  is bounded,  we get that 
    $(1 -\chi(D)) u \in L^2(\mathbb{R}).$ Next we note that 
    \beq
    \label{X0L22} \chi u = C + \int_{0}^x (1 - \partial_{x}^2 )^{1 \over 4} (\chi v) \, dy\eeq
    for some constant $C$. Since  $(1 - \partial_{x}^2 )^{1 \over 4} (\chi v) \in L^2(\mathbb{R})$, this yields that $\chi(D) u \in L^2_{loc}(\mathbb{R})$
     and hence that 
     \beq
     \label{X0L2}
     u= \chi u + (1- \chi) u \in L^2_{loc}(\mathbb{R}).\eeq
   \end{rem}
   On $ \dot H^{1 \over 2}_{*}(\mathbb{R})$, we shall use the semi-norm $ |u|_{\dot H^{1 \over 2}_{*}}= | \mathfrak P u |_{L^2}$
    and hence on $X^0$, we set
    $$ |U|_{X^0}= |U_{1}|_{H^1} + |U_{2}|_{\dot H^{1 \over 2}_{*}}.$$
    We could make $X^0$ a Banach space by taking the quotient by the linear space 
    \beq
    \label{0X0}
     0_{X^0}= \{(0, \lambda),\,  \lambda \in \mathbb{R}\}.
     \eeq
      Nevertheless, 
    we refrain from doing it since it is very convenient for us to have that $X^0$ is a subspace of $L^2_{loc}$ (or $\mathcal{S}'$).
\begin{prop}\label{Propstab1d}
Suppose that $c$ satisfies \eqref{restr}. There exists $\varepsilon^*$ such that for every $\varepsilon \in (0, \varepsilon^*]$, we have for 
 $Q_c=(\eta_c,\varphi_c)$ the corresponding solitary wave of speed $c$ that 
 there exists $C>0$ such that for every $U=(U_1,U_2)\in X^0$ such that $(U, J R_{c} \partial_{x} Q_{c})= (U_{1}, \partial_{x} \eta_{c})=0$, 
$$
(L_cU,U)\geq C^{-1}|U|_{X^0}^2.
$$
\end{prop}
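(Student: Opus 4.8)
The plan is to establish the coercivity estimate via a standard spectral / Weyl-sequence argument combined with the spectral information already available for the rescaled operator $L(0)$. First I would observe that, by the scaling transformation \eqref{rescale}, the operator $L_c$ is unitarily equivalent (up to harmless constants) to the rescaled operator $L(0)$ whose associated spectral problem $JL(0)U = \sigma U$ was analyzed in Lemma~\ref{eigenvalue}: $\sigma(JL(0)) \subset i\mathbb{R}$, i.e. the solitary wave is spectrally stable against one-dimensional perturbations. The heuristic behind Proposition~\ref{Propstab1d} is the classical Grillakis–Shatah–Strauss picture: $L_c$ has exactly one negative direction (coming from the fact that $c$ is a minimizer/critical point of an action functional with a sign condition on the scalar product $d''$, encoded here by \eqref{restr} and the work of \cite{Mielke}), it has a one-dimensional kernel spanned by $\partial_x Q_c$ (translation invariance), and it is positive on the rest. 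Imposing the two orthogonality conditions $(U, JR_c\partial_x Q_c) = (U_1, \partial_x\eta_c) = 0$ removes the kernel direction and the negative direction, leaving strict positivity.

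The key steps, in order, would be: (i) Set up the functional-analytic framework: show $L_c$ is a well-defined self-adjoint quadratic form on $X^0 = H^1 \times \dot H^{1/2}_*$, using Proposition~\ref{propDN1} (parts (1), (2)) to control the Dirichlet–Neumann contribution $(G[\eta_c]U_2, U_2) \sim |\mathfrak P U_2|_{L^2}^2$ and the fact that $v_c - c$, $Z_c$ and their derivatives are bounded (in fact $v_c-c \to -c \neq 0$ at infinity, and $Z_c$, $\partial_x Z_c$ decay exponentially by Theorem~\ref{theoOS}). The "reference" positive part of the form is $\beta|\partial_x U_1|^2 + g|U_1|^2 + |\mathfrak P U_2|^2$, and the lower-order terms involving $v_c$, $Z_c$ are relatively compact perturbations. (ii) Translate the spectral data: from spectral stability of $JL(0)$ (Lemma~\ref{eigenvalue}) plus the Hamiltonian structure $\partial_t V = JL_c V$, deduce that $L_c$ has at most one negative eigenvalue and that its kernel is exactly $\mathbb{R}\,\partial_x Q_c$ (the generalized kernel being controlled as in Lemma~\ref{Analicity of eigenmode}'s Fredholm argument). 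Together with the sign condition from \cite{Mielke} one gets: exactly one simple negative eigenvalue $-\mu_0 < 0$ with eigenvector $\chi_0$, a simple zero eigenvalue with eigenvector $\partial_x Q_c$, and the rest of the spectrum bounded below by some $\lambda_1 > 0$. (iii) Identify the orthogonality conditions with the removal of the bad directions: note $JR_c\partial_x Q_c = R_c^{-1,t}(\text{something})$ and, using \eqref{equivstab}, that $(U, JR_c\partial_x Q_c) = 0$ together with $(U_1, \partial_x\eta_c) = 0$ is precisely the codimension-$2$ constraint $U \perp \{\chi_0, \partial_x Q_c\}$ in the appropriate pairing, or at least a constraint transverse to $\mathrm{span}\{\chi_0, \partial_x Q_c\}$ so that the min–max characterization of the second positive eigenvalue applies. (iv) Conclude $(L_c U, U) \geq \lambda_1 |U|_{L^2-\text{type}}^2$ on the constrained space, and upgrade from the $L^2$-type lower bound to the full $|U|_{X^0}^2$ bound by the standard "elliptic regularity / Gårding" trick: combine $(L_c U, U) \geq \lambda_1 (\text{low norm}) - C(\text{low norm})$ with the coercivity of the principal part to absorb.

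The main obstacle I expect is step (iii): correctly matching up the two explicit orthogonality conditions in the statement with the abstract spectral decomposition. The condition $(U_1, \partial_x\eta_c) = 0$ is the natural "mass-type" constraint that kills the negative direction in the Grillakis–Shatah–Strauss framework (it is $\langle U, \partial_c Q_c\rangle$-type up to identifications), while $(U, JR_c\partial_x Q_c) = 0$ is the symplectic orthogonality to the kernel. One must verify that these two vectors $\partial_x\eta_c$ (in the first slot) and $JR_c\partial_x Q_c$ are linearly independent and that the $2\times 2$ matrix of pairings of $\{\chi_0, \partial_x Q_c\}$ against them is invertible — this is where the sign hypothesis $\beta > 1/3$ and the smallness of $\varepsilon$ (via \cite{Mielke}) really enter, guaranteeing the relevant scalar $d''(c) \neq 0$ with the correct sign. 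A secondary technical point is handling the homogeneous space $\dot H^{1/2}_*$ carefully (the quotient subtlety in \eqref{0X0}) so that the zero mode $(0,\lambda)$ does not spuriously contribute; but Remark~\ref{remanaf} and the structure of $L_c$ (whose second-second entry $G[\eta_c]$ annihilates constants) handle this cleanly, since $(0,\lambda) \in 0_{X^0}$ lies in the kernel of the form and is consistent with the orthogonality conditions. The rest — the compactness of lower-order perturbations, the Gårding-type upgrade — is routine given Proposition~\ref{propDN1} and the exponential decay from Theorem~\ref{theoOS}.
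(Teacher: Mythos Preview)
Your outline follows the standard Grillakis--Shatah--Strauss template, but the paper's proof takes a rather different route, and two of your steps have genuine gaps that the paper's method is designed to avoid.

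First, your step (ii) asserts that from $\sigma(JL(0))\subset i\mathbb{R}$ one can \emph{deduce} that $L_c$ has exactly one negative eigenvalue and a gap above the kernel. This implication is false in general for Hamiltonian systems: $JL$ can have purely imaginary spectrum while $L$ has any even number of negative eigenvalues. What is actually available from \cite{Mielke} (imported as Proposition~\ref{0}) is coercivity of $L_c$ on \emph{some} unspecified codimension-two subspace $X_1^0$, not a full spectral decomposition. The paper works directly from this.

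Second, and more seriously, you misread the two orthogonality conditions and dismiss the $0_{X^0}$ issue as secondary. It is the crux. The natural GSS constraint $(U,R_c\partial_xQ_c)=0$ is \emph{not well-defined} on $X^0$: it contains the pairing $(U_2,\partial_x\varphi_c)$, and since $\varphi_c$ has distinct limits at $\pm\infty$, this pairing is ambiguous modulo $0_{X^0}$. The Remark immediately after the Proposition shows that under the natural GSS conditions \eqref{orthobis} the coercivity \eqref{319bis} is actually \emph{false}. This is why the statement uses the nonstandard condition $(U_1,\partial_x\eta_c)=0$ instead. Your plan to check invertibility of a $2\times2$ pairing matrix against the abstract negative eigenvector $\chi_0$ would require $(\chi_0,JR_c\partial_xQ_c)\neq0$, which you have no direct handle on; and your identification of $(U_1,\partial_x\eta_c)=0$ as a ``$\partial_cQ_c$-type mass constraint killing the negative direction'' is incorrect --- in the paper's decomposition (Proposition~\ref{Propstab1dbis}) this condition fixes the coefficient along the \emph{kernel} direction $R_c\partial_xQ_c$, while $(U,JR_c\partial_xQ_c)=0$ is what kills the bad (negative-contributing) direction $JR_c\partial_xQ_c$.

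The paper's argument sidesteps both problems. For strict positivity (Lemma~\ref{A}) it never touches the abstract eigenvector $\chi_0$: instead it uses $\partial_cQ_c$ as a concrete negative direction via the identity $\Lambda_c\partial_cQ_c=J\partial_xQ_c$ and the explicit sign computation $(\partial_cQ_c,J\partial_xQ_c)<0$ (Lemma~\ref{B}). If the conclusion failed for some $y$, then ${\rm span}(y,\partial_xQ_c,\partial_cQ_c)$ would be a genuinely three-dimensional subspace (here the specific orthogonality conditions and $Y\cap 0_{X^0}=\{0\}$ are checked carefully) on which $\Lambda_c\le0$, forcing a nontrivial intersection with the codimension-two positive subspace $X_2^0$ from Proposition~\ref{0} --- contradiction. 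The upgrade from strict positivity to the quantitative bound is then done by a weak-limit / concentration-compactness argument: assume a normalized minimizing sequence with $(L_cU^n,U^n)\to0$, conjugate to flatten the principal symbol, split $\tilde L_c=L_1+K_1$ with $K_1$ compact, factor $(L_1\cdot,\cdot)$ through the operator $M=-\partial_x^{-1}G[\eta_c]\partial_x^{-1}$ to exhibit it as nonnegative with explicit Fourier symbol, pass to the weak limit (which must lie in $0_{X^0}$ by Lemma~\ref{A}), and then squeeze out strong convergence of $U^n$ to zero, contradicting $|U^n|_{X^0}=1$. This last compactness step, with its careful tracking of what survives weak limits in the non-Hilbert quotient $X^0/0_{X^0}$, replaces your G{\aa}rding upgrade and is where most of the technical work lies.
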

The orthogonality condition that appears in this proposition is with respect to  $J R_{c} \partial_{x} Q_{c}$
because of the relation \eqref{equivstab}   since we are dealing with the operator $L_{c}$. 
 If we translate the result into a positivity property in  terms of $\Lambda_{c}$, we recover the natural condition $( J \partial_{x} Q_{c}, U)=0$.
\begin{rem}
 A more natural orthogonality condition related  to the  general Grillakis-Shatah-Strauss  framework \cite{GSS} would be
  to show that  if  
  \beq
  \label{orthobis} (U, J R_{c} \partial_{x} Q_{c})= (U,  R_{c}\partial_{x} Q_{c})=0
  \eeq
   then
  \beq
  \label{319bis} (L_{c} U, U) \geq C^{-1} |U|_{X^0}^2.
  \eeq
Nevertheless, note that the condition $(U,  R_{c}\partial_{x} Q_{c})=0$ is not really well defined on $X^0$ because 
  the scalar product $( U_{2}, \partial_{x} \phi_{c})$   would not be uniquely defined  in the quotient since $\varphi_{c}$ has different limits at $\pm \infty$.
  Using this remark differently, we observe that 
   for   $U$ under the form 
   $$ U= \lambda_{1}  R_{c} \partial_{x} Q_{c} + \lambda_{2}(0, 1)^t, $$
     for every $\lambda_{1}$,  one can find some  $\lambda_{2}$ such  that  $(U, J R_{c} \partial_{x} Q_{c})= (U,  R_{c}\partial_{x} Q_{c})=0$. This yields that we  can always find some $U$ such that
    $L_c U=0$ and that satisfies the orthogonality conditions \eqref{orthobis}  but  that $U \notin 0_{X^0}$  since one  can always chose $\lambda_{1} \neq 0$. Therefore,  \eqref{319bis} under \eqref{orthobis} is false.
\end{rem}
\begin{proof}
We first prove the following weaker version of the statement.
\begin{lem}\label{A}
Let $U=(U_1,U_2)\in X^0$,  $U \notin 0_{X^0}$   and such that 
$$
(U,J R_{c}\partial_x Q_c)=(U_1,\partial_x\eta_c)=0.
$$
Then $(L_cU,U)>0$.
\end{lem}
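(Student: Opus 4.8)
The plan is to prove the strict positivity $(L_cU,U)>0$ for admissible $U$ by relating the quadratic form of $L_c$ to a spectral count for the self-adjoint operator $L_c$ on $X^0$, and then ruling out the boundary case via the known kernel of $L_c$. First I would establish the abstract ingredients: (i) $L_c$, viewed through the quadratic form on $X^0$, is bounded below modulo a compact perturbation (this follows from the coercivity \eqref{DNc} of $G[\eta_c]$ together with the ellipticity of $-P_c+g$ in the first slot and the exponential decay of $Z_c,v_c$, which makes the off-diagonal and $(v_c-c)\partial_x Z_c$ terms relatively compact); (ii) $L_c$ has exactly one negative direction and a kernel equal to $\mathrm{span}\{R_c\partial_x Q_c\}$ together with possibly the trivial direction $0_{X^0}=\{(0,\lambda)\}$. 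The kernel statement comes from differentiating the solitary-wave equation: $\partial_x Q_c$ is in the kernel of $J\Lambda_c$, hence $R_c\partial_x Q_c$ is in the kernel of $JL_c$, and since $JL_c(R_c\partial_xQ_c)=0$ with $J$ injective on the relevant space one gets $L_c(R_c\partial_xQ_c)=0$ (modulo $0_{X^0}$); the fact that the negative index is exactly one and the kernel is no larger is exactly the content imported from Mielke \cite{Mielke}, reformulated here via the scaling \eqref{rescale} which conjugates $L_c$ to $L(0)$ and the spectral stability $\sigma(JL(0))\subset i\mathbb{R}$ from Lemma~\ref{eigenvalue}.

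Next I would run the standard Grillakis--Shatah--Strauss-type argument restricted to one orthogonality condition. Write $\langle\cdot,\cdot\rangle$ for the $L^2$ pairing extended appropriately to $X^0$. Let $\chi_-$ be a negative eigenvector of $L_c$, $L_c\chi_-=-\mu\chi_-$ with $\mu>0$. For $U$ in the constraint space $\{(U,JR_c\partial_xQ_c)=0\}$ I want to show $(L_cU,U)\ge 0$, and then upgrade to strict. The key algebraic point is a computation of the sign of $\langle L_c^{-1}JR_c\partial_xQ_c, JR_c\partial_xQ_c\rangle$ — equivalently the quantity governing whether removing one constraint from a one-negative-direction quadratic form leaves it nonnegative. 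Here one uses that $L_c^{-1}$ applied to $JR_c\partial_xQ_c$ can be computed: since $R_c\partial_xQ_c$ spans the kernel, $JR_c\partial_xQ_c$ lies in the range of $L_c$, and differentiating the solitary wave family in the speed parameter $c$ (using the Amick--Kirchg\"assner family of Theorem~\ref{theoOS}) produces an explicit preimage $\partial_c(\text{something})$, so the relevant scalar is $-\frac{d}{dc}(\text{momentum-type functional})$, which has a definite sign in the regime $\alpha=1+\varepsilon^2$, $\beta>1/3$ for $\varepsilon$ small (this is again where Mielke's analysis enters). Given that sign, the standard abstract lemma (decompose $U$ along $\chi_-$, along the kernel, and along the positive part; minimize the quadratic form over the affine constraint set) yields $(L_cU,U)\ge 0$ for all constrained $U$.

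Finally I would promote $\ge 0$ to $>0$ for $U\notin 0_{X^0}$. If $(L_cU,U)=0$ for some admissible $U$, then $U$ is a minimizer of the constrained quadratic form, so by a Lagrange-multiplier argument $L_cU=\nu\, JR_c\partial_xQ_c$ for some $\nu\in\mathbb{R}$. Pairing with $R_c\partial_xQ_c$ (which is legitimate here because the relevant component is $\partial_x\eta_c$, a genuine Schwartz-class function, so no ill-definedness from the $0_{X^0}$ ambiguity — this is precisely the asymmetry exploited in the remark preceding the lemma): $\langle L_cU, R_c\partial_xQ_c\rangle=\langle U, L_c R_c\partial_xQ_c\rangle=0$ since $R_c\partial_xQ_c\in\ker L_c$, while $\langle JR_c\partial_xQ_c, R_c\partial_xQ_c\rangle$ — I would check this is $0$ as well, so pairing instead with the explicit $c$-derivative preimage $w$ of $JR_c\partial_xQ_c$ gives $0=(L_cU,U)$ cannot be leveraged directly; rather, $L_cU=\nu JR_c\partial_xQ_c=\nu L_c w$ forces $L_c(U-\nu w)=0$, so $U-\nu w\in\ker L_c=\mathrm{span}\{R_c\partial_xQ_c\}\oplus 0_{X^0}$, i.e. $U=\nu w+\lambda_1 R_c\partial_xQ_c+\lambda_2(0,1)^t$; then $(L_cU,U)=\nu\langle JR_c\partial_xQ_c, U\rangle=0$ by the constraint, combined with $\langle JR_c\partial_xQ_c,w\rangle\neq 0$ (the definite-sign quantity above) forcing $\nu=0$, hence $U\in 0_{X^0}$, contradicting the hypothesis. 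The main obstacle I anticipate is bookkeeping the degenerate/quotient structure of $X^0$ — making sure every $L^2$ pairing written down is actually well-defined given that $\varphi_c$ (hence the second component of various vectors) has distinct limits at $\pm\infty$ — and cleanly extracting from \cite{Mielke} the two quantitative facts (negative index exactly one; sign of the speed-derivative of the momentum functional) in the present non-dimensional framework.
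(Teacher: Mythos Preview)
Your overall strategy---the standard GSS constrained-minimization argument using the negative index one and the $c$-derivative preimage $w=R_c\partial_cQ_c$ of $JR_c\partial_xQ_c$---is sound and is a legitimate alternative to the paper's proof. The paper instead argues by contradiction via dimension count: it invokes Proposition~\ref{0} (a codimension-$2$ subspace on which the form is coercive), then builds, from a hypothetical nonpositive admissible vector $y$, the $3$-dimensional space $Y=\mathrm{span}(y,\partial_xQ_c,\partial_cQ_c)$ on which $(\Lambda_c\cdot,\cdot)\le 0$, forcing a nontrivial intersection with the positive subspace and hence a contradiction. Both routes use the same core inputs---$\Lambda_c\partial_xQ_c=0$, $\Lambda_c\partial_cQ_c=J\partial_xQ_c$, and the sign $(\partial_cQ_c,J\partial_xQ_c)<0$ of Lemma~\ref{B}---but the paper's packaging avoids making the Lagrange-multiplier step rigorous in the somewhat delicate space $X^0$.

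That said, your write-up has a real gap in the strict-positivity step. First, your derivation of $\nu=0$ is not correct as stated: the identity $(L_cU,U)=\nu\langle JR_c\partial_xQ_c,U\rangle=0$ is a tautology (the pairing vanishes by the constraint regardless of $\nu$). The correct argument is to substitute $U=\nu w+\lambda_1R_c\partial_xQ_c+\lambda_2(0,1)^t$ back into the \emph{constraint} $(U,JR_c\partial_xQ_c)=0$; using $R_c^tJR_c=J$ one checks that the $\lambda_1$ and $\lambda_2$ terms vanish, leaving $\nu(\partial_cQ_c,J\partial_xQ_c)=0$, whence $\nu=0$ by Lemma~\ref{B}. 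Second, and more importantly, once $\nu=0$ you only have $U=\lambda_1R_c\partial_xQ_c+\lambda_2(0,1)^t$, which is \emph{not} in $0_{X^0}$ unless $\lambda_1=0$. You never invoke the second orthogonality condition $(U_1,\partial_x\eta_c)=0$; it is precisely this condition that forces $\lambda_1|\partial_x\eta_c|_{L^2}^2=0$, hence $\lambda_1=0$ and $U\in 0_{X^0}$. Without it the argument fails, since $R_c\partial_xQ_c$ itself satisfies the first constraint and gives $(L_cU,U)=0$.
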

\begin{proof}[Proof of Lemma~\ref{A}]
Lemma~\ref{A} is a variation on \cite[Proposition~4.8]{RT} with different orthogonality conditions.
It turns out that the orthogonality conditions used in \cite[Proposition~4.8]{RT} are not appropriate for our purpose here.
Using the rescaling \eqref{rescale}, we already know thanks to  \cite[Proposition~4.8]{RT} that:
\begin{prop}\label{0}
There exists a co-dimension two subspace $X^0_1$ 
 of $X^0$ 
and a constant $c_0$ such that for every $U\in X^0_1$, 
$$
(L_cU,U)\geq c_0 |U|_{X^0}^2\,.
$$
As a consequence, thanks to \eqref{equivstab} and using that $R^t_{c} X^0\subset X^0$, we obtain that there exists a 
co-dimension two subspace $X^0_2$ of $X^0$ and a constant $\tilde{c}_0$ such that for every $U\in X^0_2$, 
$$
(\Lambda_c U,U)\geq \tilde{c}_0 |U|_{X^0}^2\,.
$$
\end{prop}
We shall now prove  Lemma~\ref{A} by using Proposition \ref{0}. 
Note that because of  \eqref{equivstab} and using that $(R_c^{-1})^tJ=JR_c$, it is equivalent to prove that
 for every $U\notin 0_{X^0}$ and such that $(U,  J\partial_{x} Q_{c})= (U_{1}, \partial_{x} \eta_{c})=0$, one has
 $ (\Lambda_{c}U, U)>0$.
Recall that the solitary wave $Q_c$ is a critical point of the Hamiltonian $H(\eta,\varphi)$, defined by
$$
H(\eta,\varphi)=\frac{1}{2}\int_{-\infty}^{\infty}\Big(G[\eta]\varphi\varphi +g\eta^2+2b(\sqrt{1+(\partial_x \eta)^2}-1)-2c\eta\partial_x\varphi\Big).
$$
Thus $\nabla H(Q_c)=0$. Differentiating the last relation with respect to $c$ and $x$ respectively gives
\begin{equation}\label{difff}
\Lambda_c\partial_c Q_c=J\partial_x Q_c,\quad \Lambda_c\partial_x Q_c=0.
\end{equation}
By contradiction, let us assume  that there exists $y=(y_1,y_2)\in X^0$, $y \notin 0_{X^0}$  and satisfying 
$$
(y,J\partial_x Q_c)=(y_1,\partial_x\eta_c)=0,\quad (\Lambda_cy,y)\leq 0\,.
$$
Set $Y={\rm span}(y,\partial_xQ_c,\partial_c Q_c)$.
We claim that $\dim Y=3$.
In order to prove this claim, we can use  the following lemma.
\begin{lem}\label{B}
Under the assumptions of Proposition~\ref{Propstab1d}, $\partial_xQ_c$ and  $\partial_c Q_c$ are not co-linear and 
\begin{equation}\label{neg}
(\partial_c Q_c,J\partial_x Q_c)<0\,.
\end{equation}
\end{lem}
We will give the proof of Lemma~\ref{B} later. Let us use it to prove that the vector space $Y$ is three-dimensional.
If we suppose that
$$
y=\alpha \partial_x Q_c+\beta\partial_c Q_c,\quad \alpha,\beta\in \R
$$
then by taking the scalar product (the distributional duality) with $J\partial_x Q_c$ and $(\partial_x\eta_c,0)^t$, we get
\begin{equation}\label{dd}
0=(y,J\partial_x Q_c)=\beta (\partial_c Q_c,J\partial_x Q_c),\quad
0=(y_1,\partial_x\eta_c)=\alpha\|\partial_x \eta_c\|_{L^2}^2+\beta(\partial_c\eta_c,\partial_x\eta_c).
\end{equation}
Using Lemma~\ref{B} and \eqref{dd}, we obtain that $\alpha=\beta=0$. Therefore $\dim Y=3$.

Now, a similar argument shows that $Y\cap 0_{X^0}=\{(0,0)^t\}$. Indeed, it suffices to use that $(0,1)^t$ is orthogonal to 
$J\partial_x Q_c$ and $(\partial_x\eta_c,0)^t$ and to take the scalar product with these vectors in a relation of type
$$
\lambda_1y+\lambda_2 \partial_x Q_c+\lambda_3\partial_c Q_c=(0,1)^t,\quad \lambda_1,\lambda_2,\lambda_3\in\R.
$$
Next, for $\mu_1,\mu_2,\mu_3\in\R$, we can write by invoking Lemma~\ref{B} and \eqref{difff},
$$
\big(\Lambda_c(\mu_1 y+\mu_2 \partial_x Q_c+\mu_3\partial_c Q_c),\mu_1 y+\mu_2 \partial_x Q_c+\mu_3\partial_c Q_c\big)
=\mu_1^2(\Lambda_cy,y)+\mu_3^2(\partial_c Q_c,J\partial_x Q_c)\leq 0\,.
$$
Therefore $(\Lambda_cU,U)\leq 0$ for every $U\in Y$. 
But since $\dim Y=3$ there exists a nontrivial $z\in Y$ such that $z\in X^0_2$ where $X^0_2$ 
is the space involved in the statement of Proposition~\ref{0}.
Therefore $|z|_{X^0}=0$. This implies that $z\in 0_{X^0}$. But $z\in Y$ and 
$Y\cap 0_{X^0}=\{(0,0)^t\}$ which implies $z=0$. Contradiction.
This ends  the proof of Lemma~\ref{A}.
\end{proof}
\begin{proof}[Proof of Lemma~\ref{B}]
Thanks to \cite{AK} (see also Theorem \ref{theoOS}), we have by setting $X=x/H$
\begin{equation}\label{eta}
{\eta_c \over H}(x)=-\varepsilon^2{\rm ch}^{-2}\Big(
\frac{\varepsilon X}{2(\beta-1/3)^{\frac{1}{2}}}
\Big)+{\mathcal O}(\varepsilon^4 e^{-c\varepsilon |X|})
\end{equation}
and 
\begin{equation}\label{phi}
{\varphi_c \over c\,H }(x)=-2\varepsilon
(\beta-1/3)^{\frac{1}{2}}
{\rm th}^{-2}\Big(
\frac{\varepsilon X}{2(\beta-1/3)^{\frac{1}{2}}}
\Big)+{\mathcal O}(\varepsilon^3),
\end{equation}
where $\beta=\frac{b}{Hc^2}$ and $\varepsilon=\sqrt{\frac{gH}{c^2}-1}$.
Then $\partial_c\varepsilon<0$ and using that
\begin{equation}\label{iii}
(\partial_c Q_c,J\partial_x Q_c)=
\partial_c
\int_{-\infty}^{\infty}\eta_{c}(x)\partial_x \varphi_c(x)dx
\end{equation}
we deduce \eqref{neg} by substituting \eqref{eta} and \eqref{phi} in \eqref{iii}.
Finally, one also deduces from \eqref{eta} and \eqref{phi}  that $\partial_x Q_c$ and $\partial_c Q_c$ are not co-linear.
This completes the proof of Lemma~\ref{B}.
\end{proof}
Let us now come back to the proof of Proposition~\ref{Propstab1d}. 
We argue by contradiction.
Suppose that there exists a sequence $(U^n)$ such that 
\beq
\label{hypcont}
|U^n|_{X^0}=1,\quad \lim_{n\rightarrow\infty}(L_c U^n,U^n)=0,\quad
(U^n,J R_{c}\partial_x Q_c)=(U^n_1,\partial_x\eta_c)=0\,.
\eeq
Then (up to the extraction of a subsequence),  there exists $\tilde{U}_1\in H^1$ such that $(U^n_1)$ converges weakly in $H^1$ to $\tilde{U}_1$, and,  by using Remark \ref{remanaf}, there exists
$\tilde{U}_{2} \in \dot H^{1\over 2}_{*}$ such that $U^n_{2}$  converges weakly in $L^2_{loc}$  towards  $\tilde U_{2}$ and $\mathfrak P  U^n_{2}$  converges weakly in $L^2$ 
 towards  $\mathfrak P \tilde U_{2}$.
Next, we set $\tilde{U}=(\tilde{U}_1,\tilde{U}_2)\in X^0$.
We have that $\tilde{U}$ satisfies the same orthogonality conditions as $U^n$:
\beq
\label{orthoproof}
(\tilde{U},J R_{c}\partial_x Q_c)=(\tilde{U}_1,\partial_x\eta_c)=0\,.
\eeq
Indeed, the second assertion is a direct consequence of the weak $L^2$ convergence of  $U^n_1$ to $\tilde{U}_1$.
In order to prove the first one, we write
$$
0=(U^n,J R_{c} \partial_x Q_c)=(U^n_1,\partial_x\varphi_c- Z_{c}\partial_{x} \eta_{c})-(U_2^n,\partial_x\eta_c)\,.
$$
From  the  weak $L^2$ convergence of  $(U^n_1)$ to $\tilde{U}_1$, we obtain that 
$$
(U^n_1,\partial_x\varphi_c- Z_{c}\partial_{x} \eta_{c}) \rightarrow (\tilde{U}_1,\partial_x\varphi_c - Z_{c} \partial_{x} \eta_{v})
$$
and by observing that 
\beq
\label{trick1}
 (U_2^n,\partial_x\eta_c)=-\Big(\mathfrak P U_2^n,(1- \partial_{x}^2)^{1/4}\eta_c\Big), 
\eeq
since $(1- \partial_{x}^2)^{1/4}\eta_c\in L^2$, we also get by weak $L^2$ convergence that
$$
 (U_2^n,\partial_x\eta_c) \rightarrow  -\Big(\mathfrak P \tilde U_{2},(1- \partial_{x}^2)^{1/4}\eta_c\Big)= (\tilde{U}_{2}, \partial_{x}\eta_{c})$$
(the last scalar product  is well  defined thanks to the exponential decay of $\partial_x\eta_c$).
This proves that $\tilde U$ verifies \eqref{orthoproof}.

Let us now complete the proof of Proposition~\ref{Propstab1d}.
Let us set 
$$ \mathcal{E}_{c}= -  b \partial_{x}\big( \zeta_{c}  \partial_{x} \cdot \big) + g + (v_{c}- c) \partial_{x} Z_{c}, \quad \zeta_{c}=  ( 1 +  |\partial_{x} \eta_{c} |^2 )^{- {1 \over 2 } },  $$
we shall first conjugate to get a leading order part with constant coefficients:
$$\tilde L_{c}= A L_{c} A, \quad   A =
 \left( \begin{array}{cc} m_{c}  & 0 \\ 0 & 1 \end{array} \right),  \quad m_{c}= {1 \over \sqrt{\zeta_{c}}}$$
 hence
 $$\tilde{L}_{c}= \left( \begin{array}{cc}  m_{c} \mathcal{E}_{c}( m_{c}  \cdot) & (v_{c}- c) \, m_{c} \partial_{x} \\  -  \, \partial_{x}( (v_{c}- c)m_{c} \cdot) &  G[\eta_{c}] \end{array} \right).$$ 
Let us set
$$ V_{1}^n=  {1 \over m_{c}} U_{1}^n, \quad V_{2}^n = U_{2}^n, $$
so that
 \beq
 \label{LLtilde}\big(L_{c} U^n, U^n\big)= \big( \tilde{L}_{c} V^n, V^n \big).\eeq
Note that we still have that
$ V_{1}^n$  is  bounded in $H^1$. We can thus assume (after extracting a subsequence) that $V_{1}^n$
 converges strongly in $H^s_{loc}$ for every $s<1$ towards $\tilde V_{1}= 1/m_{c} \tilde U_{1}.$
 For convenience, we also set $\tilde{V}_{2}= \tilde{U_{2}}$ so that $V_{2}^n$ converges weakly in $L^2_{loc}$ towards $\tilde{V}_{2}$ and that  $\mathfrak P V_{2}^n$
  converges weakly in $L^2$ towards $ \mathfrak P \tilde{V}_{2}.$

Next,  we  observe  that we can write the decomposition
 \beq
 \label{Ltildedec}\tilde L_c=L_1+K_{1}, \, L_{1}= \left(\begin{array}{cc}  - b \partial_{x}^2+  g & - c\partial_{x} \\ c\partial_{x} &  G[\eta_{c}] \end{array} \right)\eeq
  and 
$$ K_{1}=  \left( \begin{array}{cc}  (v_c-c) m_{c}^2\partial_x Z_c + g(m_{c} -1) + b \,m_{c} \partial_{x}( m_{c} \zeta_{c}) \partial_{x} +  b \,m_{c} \partial_{x} \big(\zeta_{c}\partial_{x} m_{c} \cdot \big) &   \big(v_{c} m_{c}- c(m_{c}-1) \big) \partial_{x} \\ - \partial_{x}((v_{c} m_{c}  + c( 1- m_{c}) )\cdot) & 0 \end{array} \right).$$
 For   $K_1$,  which is a relatively compact perturbation,   we obtain
 \beq
 \label{K1lim}
\lim_{n\rightarrow \infty} (K_1V^n,V^n)=(K_1\tilde{V},\tilde{V}).
\eeq
Indeed, we can use the same trick as in \eqref{trick1}, the exponential decay of  $v_{c}$,  $\partial_{x} Z_{c}$, $m_{c}-1$, $\partial_{x} m_{c}$, $\partial_{x}^2 m_{c}$  and the fact that  $V_{1}^n$  converges strongly  in $H^s_{loc}$ for every $s<1.$

To analyze $L_{1}$, we use the following factorization  of  its associated quadratic form: for every $V$
$$ (L_{1} V, V) =  \big((- b\, \partial_{x}^2 + g - c^2 M^{-1} )V_{1}, V_{1} \big) +  \big( M \big( \partial_{x} V_{2} - c M^{-1} V_{1} \big), \partial_{x} V_{2} 
 - cM^{-1} V_{1} \big)$$
 where 
 $$ M= - \partial_{x}^{-1} G[ \eta_{c} ] \partial_{x}^{-1}.$$
 Note that $M$ is  a well-defined  operator (of order $-1$), positive  and  invertible thanks to  Lemma 4.5 of \cite{RT}.
 Let us set 
 $$M_{0}= - \partial_{x}^{-1} G[ 0] \partial_{x}^{-1}, $$
 we can rewrite
 $$  (L_{1} V, V) =  (\tilde{L}_{1} V, V)  - c^2  \big((M^{-1} - M_{0}^{-1}) V_{1}, V_{1} \big).$$
 with
  \beq
  \label{tildeL1fact} (\tilde{L}_{1} V, V)= \big((- b\,  \partial_{x}^2 + g - c^2 M _{0}^{-1}) V_{1}, V_{1} \big) +  \big( M \big( \partial_{x} V_{2} - c M^{-1} V_{1} \big), \partial_{x} V_{2} 
 - cM^{-1} V_{1} \big).\eeq
 Note that $M- M_{0}$ is a compact operator on $H^1$ (see the remark at the bottom of p. 295 and the top of p. 296 in \cite{RT}), consequently, 
 we have
 \beq
 \label{Mlim} \lim_{n}   \big((M - M_{0}) V_{1}^n, V_{1}^n \big) =   \big((M - M_{0}) \tilde V_{1}, \tilde V_{1} \big).
 \eeq
 To pass to the limit in $(\tilde L_{1} V^n, V^n)$, we note that $\tilde{L}_{1}$ is a non-negative operator. Indeed, $M$ is nonnegative
  and the first part of $\tilde{L}_{1}$ is a Fourier multiplier with symbol
  $$m (\xi) = b \xi^2  + g - c^2 {  \xi \over \mbox{tanh}  H\xi} = {c^2 \over H} \Big( \beta (H \xi)^2 + \alpha  - { H\xi \over  \mbox{tanh}   H\xi} \Big)$$
  which is positive since $\beta >1/3$ and  $ \alpha >1$ thanks to \eqref{restr}.  Consequently, we get from the weak convergence properties that
  $$ 
\liminf_{n\rightarrow\infty}(\tilde L_1 V^n, V^n)\geq  (\tilde L_1 \tilde{V}, \tilde{V}).
$$
  
  Gathering the previous transformations, we thus get that
\begin{align}
\label{finabs}
\liminf_{n\rightarrow\infty}  \big( L_{c} U^n , U^n \big) =  \liminf_{n\rightarrow\infty}  \big( \tilde L_{c} V^n , V^n \big) 
 &  \geq  \big(\tilde L_{1}\tilde V, \tilde V \big)  - c^2  \big((M - M_{0}) \tilde V_{1}, \tilde  V_{1} \big)  + \big( K_{1} \tilde V , \tilde V) \\
   & = \big( \tilde{L}_{c} \tilde V, \tilde V \big)  = \big( L_{c} \tilde U, \tilde U \big).
   \end{align}
   Consequently, thanks to \eqref{hypcont}, we find that  $\big( L_{c} \tilde U, \tilde U \big) \leq 0$
  Since $\tilde U$ verifies \eqref{orthoproof}, we get thanks to 
  Lemma~\ref{A}   that $\tilde{U} \in 0_{X^0}$ that is to say $ \tilde{U}=(0,\lambda)^t$ for some constant
$\lambda\in \mathbb{R}$.
Since $K_1$ is such that $(K_1((0,\lambda)^t),(0,\lambda)^t)=0$, we obtain by using again  \eqref{LLtilde}, \eqref{Ltildedec}, \eqref{K1lim}, \eqref{Mlim}
  that as $n\rightarrow \infty$,
$$
(L_{c}U^n,U^n)=(\tilde L_1 V^n,V^n)+o(1).
$$
Moreover, since the symbol $m(\xi)$ is positive and  the operator $M$ is non-negative we get from
 \eqref{tildeL1fact} that for some $c_{0}>0$ independent of $n$, we have
 $$ (L_{c}U^n,U^n) \geq c_{0} \|V^n_{1} \|_{H^1}^2 +o(1).$$
 This yields that $V^n_{1}$ converges strongly to $0$ in $H^1$. This is sufficient to also obtain that
 $$  \lim_{n}(\partial_{x} V_{2}^n, V_{1}^n)=0$$
 and hence we find by using again \eqref{LLtilde}, \eqref{Ltildedec} that
 $$ \big( L_{c} U^n, U^n)= (G[\eta_{c}] U_{2}^n, U_{2}^n) + o(1).$$
 Thanks to \eqref{DNc} in Proposition \ref{propDN1}, this implies that 
 $ \mathfrak{P} U_{2}^n$ converges strongly to zero in $L^2$.
  We have thus obtained that $ \lim_{n }\|U^n \|_{X^0}= 0$ which contradicts  the  assumption that
   $\|U^n\|_{X^0}=1$ in \eqref{hypcont}.

This ends  the proof of Proposition~\ref{Propstab1d}.
\end{proof}
From Proposition  ~\ref{Propstab1d}, one can get that
\begin{prop}
\label{Propstab1dbis}
For every $U \in X^0$ there  exists a unique decomposition
\beq
\label{decstab} U= \alpha J R_{c} \partial_{x} Q_{c} + \beta R_{c} \partial_{x} Q_{c} + V\eeq
with $V \in X^0$ such that 
\beq
\label{orthoV}
(V, J R_{c} \partial_{x} Q_{c})= (V_{1}, \partial_{x} \eta_{c}) = 0.
\eeq
 Moreover, there exists $c_{0}>0$ and $C>0$ such that for every $U \in X^0$  written under the form \eqref{decstab}, one has
 $$ (L_{c} U, U) \geq c_{0}|V|_{X^0}^2 -  C | \alpha|^2. $$
\end{prop}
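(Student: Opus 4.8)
The plan is to establish the two parts of the statement separately. For the decomposition, I would set $e_{1}=J R_{c}\partial_{x}Q_{c}$ and $e_{2}=R_{c}\partial_{x}Q_{c}$; a direct computation gives $e_{2}=(\partial_{x}\eta_{c},v_{c})^{t}$ and hence $e_{1}=(v_{c},-\partial_{x}\eta_{c})^{t}$, and by Theorem~\ref{theoOS} and the discussion in Section~\ref{sectionlin1} the functions $\partial_{x}\eta_{c}$ and $v_{c}$ are smooth and exponentially decaying, so that $e_{1},e_{2}\in X^{0}$ and all the scalar products below are finite. Writing $\ell_{1}(U)=(U,e_{1})$ and $\ell_{2}(U)=(U_{1},\partial_{x}\eta_{c})$ for the two functionals appearing in \eqref{orthoV}, I would compute the $2\times2$ matrix $\big(\ell_{i}(e_{j})\big)$: skew-symmetry of $J$ gives $\ell_{1}(e_{2})=(e_{2},Je_{2})=0$, the relation $J^{t}J=\mathrm{Id}$ gives $\ell_{1}(e_{1})=|e_{2}|_{L^{2}\times L^{2}}^{2}>0$, and $\ell_{2}(e_{2})=|\partial_{x}\eta_{c}|_{L^{2}}^{2}>0$. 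This matrix is lower triangular with nonzero diagonal, hence invertible, so for each $U\in X^{0}$ there is exactly one pair $(\alpha,\beta)$ for which $V:=U-\alpha e_{1}-\beta e_{2}$ satisfies \eqref{orthoV}; explicitly $\alpha=(U,J R_{c}\partial_{x}Q_{c})/|R_{c}\partial_{x}Q_{c}|_{L^{2}\times L^{2}}^{2}$ and $\beta$ is then read off from $\ell_{2}(U)=\alpha(v_{c},\partial_{x}\eta_{c})+\beta|\partial_{x}\eta_{c}|_{L^{2}}^{2}$. Since $V$ is determined by $(\alpha,\beta)$, this gives existence and uniqueness of the decomposition \eqref{decstab}.

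For the estimate, the key preliminary observation is that $L_{c}e_{2}=0$: by \eqref{equivstab} and the second relation in \eqref{difff}, $L_{c}(R_{c}\partial_{x}Q_{c})=(R_{c}^{-1})^{t}\Lam_{c}R_{c}^{-1}(R_{c}\partial_{x}Q_{c})=(R_{c}^{-1})^{t}\Lam_{c}\partial_{x}Q_{c}=0$. Combined with the symmetry of $L_{c}$, this makes every term involving $e_{2}$ disappear from the expansion of $(L_{c}U,U)$ for $U=\alpha e_{1}+\beta e_{2}+V$, leaving
\[
(L_{c}U,U)=\alpha^{2}(L_{c}e_{1},e_{1})+2\alpha(L_{c}e_{1},V)+(L_{c}V,V).
\]
Since $V$ satisfies \eqref{orthoV}, Proposition~\ref{Propstab1d} bounds the last term from below by $C^{-1}|V|_{X^{0}}^{2}$, while $|\alpha^{2}(L_{c}e_{1},e_{1})|\le C|\alpha|^{2}$ trivially. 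Hence, once the uniform bound $|(L_{c}e_{1},V)|\le C|V|_{X^{0}}$ is available for all $V\in X^{0}$, Young's inequality absorbs half of $C^{-1}|V|_{X^{0}}^{2}$ into the term $2\alpha(L_{c}e_{1},V)$ and produces $(L_{c}U,U)\ge c_{0}|V|_{X^{0}}^{2}-C'|\alpha|^{2}$, as desired.

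The main obstacle is precisely this uniform bound, which is delicate because on $X^{0}$ the second component is controlled only through the semi-norm $|\mathfrak P\,\cdot\,|_{L^{2}}$ (recall $0_{X^{0}}\neq\{0\}$, so a pairing with $V_{2}$ need not be controlled by $|V|_{X^{0}}$). Writing $L_{c}e_{1}=(f,h)^{t}$, the first component is $f=-P_{c}v_{c}+g v_{c}+(v_{c}-c)(\partial_{x}Z_{c})v_{c}-(v_{c}-c)\partial_{x}^{2}\eta_{c}$, which is smooth and exponentially decaying thanks to the exponential decay of $v_{c}$, $\partial_{x}\eta_{c}$, $\partial_{x}Z_{c}$ and of all their derivatives; hence $|(f,V_{1})|\le\|f\|_{L^{2}}\|V_{1}\|_{L^{2}}$ is harmless. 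For the second component $h=-\partial_{x}\big((v_{c}-c)v_{c}\big)-G[\eta_{c}]\partial_{x}\eta_{c}$ I would verify two things: that $h$ is exponentially decaying (for the Dirichlet--Neumann piece this is Proposition~\ref{D-N e-decay estimate} with $\psi=\partial_{x}\eta_{c}$), and, crucially, that $\int_{\R}h\,dx=0$. The first term of $h$ is an exact derivative of a decaying function, and $\int_{\R}G[\eta_{c}]\psi\,dx=0$ for every admissible $\psi$: this follows from the symmetry of $G[\eta_{c}]$ together with $G[\eta_{c}]1=0$ (equivalently from the divergence structure of $G$, the boundary contributions at $z=-H$ and at $x=\pm\infty$ vanishing by decay). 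Consequently $h=\mathfrak P\tilde h$ with $\tilde h=\mathfrak P^{-1}h\in L^{2}(\R)$, and since $\mathfrak P$ is skew-adjoint on $L^{2}$, $(h,V_{2})=-(\tilde h,\mathfrak P V_{2})$, whence $|(h,V_{2})|\le\|\tilde h\|_{L^{2}}|\mathfrak P V_{2}|_{L^{2}}\le\|\tilde h\|_{L^{2}}|V|_{X^{0}}$. Together with the bound on $(f,V_{1})$ this gives the required cross-term estimate and completes the proof.
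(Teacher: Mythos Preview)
Your proof is correct and follows essentially the same route as the paper: the decomposition is obtained by solving the same lower-triangular $2\times 2$ system (the paper's formula \eqref{trois}), and the estimate follows from the same expansion using $L_{c}R_{c}\partial_{x}Q_{c}=0$ together with Proposition~\ref{Propstab1d} and Young's inequality. Your treatment is in fact more careful than the paper's on the cross term $(L_{c}e_{1},V)$: the paper passes directly to the conclusion, whereas you explicitly verify that the second component $h$ of $L_{c}e_{1}$ has zero mean and hence can be written as $\mathfrak P\tilde h$ with $\tilde h\in L^{2}$, which is exactly what is needed to control the pairing with $V_{2}\in \dot H^{1/2}_{*}$ by $|V|_{X^{0}}$.
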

Note that in the decomposition \eqref{decstab}, $V$ is not orthogonal to the  $R_{c} \partial_{x} Q_{c}$.
This decomposition has better properties than the orthogonal decomposition that one would get from proposition \ref{Propstab1d}
 by choosing $V$ orthogonal to $JR_{c}\partial_{x} Q_{c}$ and $( \partial_{x} \eta_{c}, 0)^t$. The reason is that
  $R_{c} \partial_{x} Q_{c}$ is in the kernel of $L_{c}$ while  $( \partial_{x} \eta_{c}, 0)^t$ is not.

\begin{proof}[Proof of Proposition~\ref{Propstab1dbis}]
If
   \beq
  \label{deux} U= \alpha JR_{c} \partial_{x} Q_{c}+ \beta   R_{c} \partial_{x}Q_{c} +  V\eeq
  with  $V $ satisfying \eqref{orthoV} then  $\alpha$  and  $\beta$  are necessarily  determined by  
  \beq
  \label{trois} \alpha={ ( U, J R_{c}\partial_{x} Q_{c}) \over  |J R_{c} \partial_{x}Q_{c}|_{L^2}^2},  \quad
   \beta = { (U, (\partial_{x}\eta_{c},0)^t ) \over (R_{c}\partial_{x}Q_{c}, (\partial_{x} \eta_{c}, 0)^t ) }
- \alpha (J R_{c}\partial_{x} Q_{c}, (\partial_{x} \eta_{c},0)^t)\eeq
and thus are well-defined since  $(R_{c}\partial_{x}Q_{c}, (\partial_{x}\eta_{c}, 0)^t ) = | \partial_{x} \eta_{c}|_{L^2}^2 \neq 0$.
This proves the uniqueness and one directly verifies that with $\alpha,\beta$ defined by \eqref{trois}, the function $V$ in \eqref{deux} satisfies the orthogonality
conditions \eqref{orthoV}.   

Next, since $L_{c}  R_{c}\partial_{x} Q_{c}= \Lambda_{c} \partial_{x} Q_{c}= 0$, we get by using the decomposition \eqref{deux} that 
  $$ (L_{c}U,U)=  \alpha^2 (L_{c}J R_{c} \partial_{x} Q_{c}, J R_{c}\partial_{x}Q_{c}) + 2 \alpha (V, L_c J R_{c}\partial_{x} Q_{c}) + (L_{c}V, V).$$
  Therefore, by using Proposition \ref{Propstab1d} for $V$, we obtain
  $$ (L_{c}U, U) \geq {c_{0} \over 2} \|V\|_{X^0}^2  - C \alpha^2.$$
  This completes the proof of Proposition~\ref{Propstab1dbis}.
\end{proof}
As a simple corollary, we can get stability estimates in $X^0$ for the linearized equation
\beq
\label{stabfin} \partial_{t} U= JL_{c}U, \quad U_{/t=0}= U^{0}.\eeq
\begin{cor}\label{linn}
There exists $C>0$ such that for every $U^{0} \in X^0$, the solution of  $U(t)$ of \eqref{stabfin} satisfies the estimate
$$ |U(t)|_{X^0} \leq C (1 + |t|)|U^{0}|_{X^0}.$$
\end{cor}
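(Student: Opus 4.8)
\medskip
\noindent\textbf{Sketch of the argument.}
The plan is to combine the modulation decomposition of Proposition~\ref{Propstab1dbis} with the two conservation laws of the linear flow \eqref{stabfin}, and to isolate the single direction responsible for the (at most linear in $t$) growth, namely the kernel direction $R_{c}\partial_{x}Q_{c}$ of $L_{c}$.

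First I would record two elementary facts. From \eqref{difff} and the conjugation \eqref{equivstab} (using $(R_{c}^{-1})^{t}J=JR_{c}$) one gets
\[
L_{c}R_{c}\partial_{x}Q_{c}=0,\qquad L_{c}R_{c}\partial_{c}Q_{c}=JR_{c}\partial_{x}Q_{c};
\]
in particular $R_{c}\partial_{x}Q_{c}\in\ker L_{c}$. Moreover, for a solution $U(t)$ of \eqref{stabfin} one has the conservation laws
\[
\frac{d}{dt}\big(L_{c}U(t),U(t)\big)=0,\qquad \frac{d}{dt}\big(U(t),JR_{c}\partial_{x}Q_{c}\big)=\big(L_{c}U,R_{c}\partial_{x}Q_{c}\big)=\big(U,L_{c}R_{c}\partial_{x}Q_{c}\big)=0,
\]
the first by skew-symmetry of $J$ and symmetry of $L_{c}$, the second by $J^{t}J=\mathrm{Id}$ and the first fact (these identities are routine for smooth solutions and extend to $U^{0}\in X^{0}$ by density).

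Next I would decompose, via Proposition~\ref{Propstab1dbis}, $U(t)=\alpha(t)\,JR_{c}\partial_{x}Q_{c}+\beta(t)\,R_{c}\partial_{x}Q_{c}+V(t)$ with $V(t)$ satisfying \eqref{orthoV}, and bound the three pieces separately. Since by \eqref{trois} $\alpha(t)$ is a fixed multiple of the conserved quantity $(U(t),JR_{c}\partial_{x}Q_{c})$, it is constant and $|\alpha|\lesssim|U^{0}|_{X^{0}}$. Combining conservation of the energy with the coercivity of Proposition~\ref{Propstab1dbis} and the upper bound $(L_{c}W,W)\lesssim|W|_{X^{0}}^{2}$ (which follows from Proposition~\ref{propDN1}, in particular \eqref{DNC}, together with Cauchy--Schwarz) yields
\[
c_{0}|V(t)|_{X^{0}}^{2}-C|\alpha|^{2}\leq\big(L_{c}U(t),U(t)\big)=\big(L_{c}U^{0},U^{0}\big)\lesssim|U^{0}|_{X^{0}}^{2},
\]
so $|\alpha|+\sup_{t}|V(t)|_{X^{0}}\lesssim|U^{0}|_{X^{0}}$. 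It remains to control $\beta(t)$. From \eqref{trois} one has $\beta(t)=D^{-1}(U(t),(\partial_{x}\eta_{c},0)^{t})+\mathrm{const}\cdot\alpha$ with $D=|\partial_{x}\eta_{c}|_{L^{2}}^{2}\neq0$, hence, since $\dot\alpha=0$,
\[
\dot\beta(t)=D^{-1}\frac{d}{dt}\big(U(t),(\partial_{x}\eta_{c},0)^{t}\big)=D^{-1}\big(L_{c}U(t),(0,\partial_{x}\eta_{c})^{t}\big).
\]
Here the kernel direction is exploited: inserting the decomposition of $U(t)$ and using $L_{c}R_{c}\partial_{x}Q_{c}=0$ makes the $\beta$-term vanish identically, leaving
\[
\dot\beta(t)=D^{-1}\Big[\alpha\big(JR_{c}\partial_{x}Q_{c},L_{c}(0,\partial_{x}\eta_{c})^{t}\big)+\big(L_{c}V(t),(0,\partial_{x}\eta_{c})^{t}\big)\Big].
\]
Now $L_{c}(0,\partial_{x}\eta_{c})^{t}=\big((v_{c}-c)\partial_{x}^{2}\eta_{c},\,G[\eta_{c}]\partial_{x}\eta_{c}\big)^{t}$ is a fixed smooth, exponentially decaying function (the decay of $G[\eta_{c}]\partial_{x}\eta_{c}$ being Proposition~\ref{D-N e-decay estimate}), whose second component has zero mean because $G[\eta_{c}]1=0$, hence is of the form $\partial_{x}h_{c}$ with $h_{c}$ smooth and exponentially decaying; pairing such a function against any $W\in X^{0}$ is controlled by $C|W|_{X^{0}}$ --- the first component by Cauchy--Schwarz, the second by $(W_{2},\partial_{x}h_{c})=-(\mathfrak{P}W_{2},(1-\partial_{x}^{2})^{1/4}h_{c})$ exactly as in \eqref{trick1}. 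Therefore $|\dot\beta(t)|\lesssim|\alpha|+|V(t)|_{X^{0}}\lesssim|U^{0}|_{X^{0}}$, and since $|\beta(0)|\lesssim|U^{0}|_{X^{0}}$ by \eqref{trois}, integration gives $|\beta(t)|\lesssim(1+|t|)|U^{0}|_{X^{0}}$. Putting the three bounds together,
\[
|U(t)|_{X^{0}}\leq|\alpha|\,|JR_{c}\partial_{x}Q_{c}|_{X^{0}}+|\beta(t)|\,|R_{c}\partial_{x}Q_{c}|_{X^{0}}+|V(t)|_{X^{0}}\lesssim(1+|t|)|U^{0}|_{X^{0}},
\]
which is the claim.

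The step I expect to be the crux is precisely the one producing a linear rather than an exponential bound: a naive Gronwall estimate from $|\dot\beta(t)|\lesssim|U(t)|_{X^{0}}$ only gives exponential growth, and the gain comes from the observation that the self-interaction of the $\beta$-mode vanishes because $R_{c}\partial_{x}Q_{c}\in\ker L_{c}$, so that $\dot\beta$ is in fact driven only by the already-bounded quantities $\alpha$ and $V$. A secondary technical nuisance is that the $X^{0}$-seminorm does not control the constant part of the second component, which forces the zero-mean observation $\int_{\mathbb R}G[\eta_{c}]\partial_{x}\eta_{c}\,dx=0$ when pairing against the fixed exponentially decaying functions above.
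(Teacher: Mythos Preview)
Your proof is correct and follows the same strategy as the paper: conservation of $(U,JR_{c}\partial_{x}Q_{c})$ freezes $\alpha$, energy conservation plus Proposition~\ref{Propstab1dbis} bounds $V$, and the key observation $L_{c}R_{c}\partial_{x}Q_{c}=0$ removes the $\beta$-self-interaction from $\dot\beta$, yielding linear growth. Your treatment of the pairing $(L_{c}V,(0,\partial_{x}\eta_{c})^{t})$ via the zero-mean property of $G[\eta_{c}]\partial_{x}\eta_{c}$ and the \eqref{trick1} manipulation is in fact more explicit than the paper, which simply asserts the bound $|\dot\beta|\lesssim|\alpha|+|V|_{X^{0}}$ without detailing why the $X^{0}$-seminorm suffices.
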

\begin{proof}
From the explicit expression   \eqref{trois} for $\alpha$, we get $ \partial_{t} \alpha = 0$, and therefore
\begin{equation}\label{alpha}
|\alpha(t)|= |\alpha(0)|\leq C(\|U^0_1\|_{L^2}+|(U^0_2,\partial_x\eta_c)|)\leq C|U^0|_{X^0}\,.
\end{equation}
 Since $U$ solves \eqref{stabfin}, we observe that 
  $\partial_t (L_{c}U, U)= 0.$
  Therefore, we get from Proposition \ref{Propstab1dbis} and \eqref{alpha} that
  $$ 
  |V(t) |^2_{X^0} \leq  C\big((L_{c}U^0, U^0)+|\alpha(t)|^2)
\leq    C|U^0|^2_{X^0}\,.
  $$
 Moreover, from the explicit expression \eqref{trois} for  $\beta$, we find that   
   $$ \partial_{t} \beta =   { (JL_{c}U, (\partial_{x}\eta_{c},0)^t ) \over (R_{c}\partial_{x}Q_{c}, (\partial_{x}\eta_{c}, 0)^t ) }  
    = 
   { (JL_{c}(\alpha J R_{c}\partial_{x} Q_{c} + V), (\partial_{x }\eta_{c},0)^t ) \over (R_{c}\partial_{x}Q_{c}, (\partial_{x} \eta_{c}, 0)^t ) }.$$
 Hence
   $$ 
   |\beta(t) | \leq C|U^0|_{X^0}\ + C\int_{0}^t (|\alpha(s)| + |V(s)|_{X^0} ) ds \leq C (1 + |t|)|U^{0}|_{X^0}.
$$
 This completes the proof of Corollary~\ref{linn}  
\end{proof}
\section{Error produced by a sum of solitary waves}\label{section error}
We shall now begin the construction that will allow to get Theorem \ref{multi-soliton theorem}. We denote  the
two different soliton solutions for the water-wave system as 
(\ref{water-wave system}) as
\beno 
Q_{c_1}(x-c_1t)=
Q_1(x-c_1t)&=& (\eta_1(x-c_1t),\varphi_1(x-c_1t))^t\\
Q_{c_2}(x-h-c_2t)=Q_2(x-h-c_2t)&=& (\eta_2(x-h-c_2t),\varphi_2(x-h-c_2t))^t, \eeno 
and we set
$$ 
M(t,x)= Q_{1}(x-c_{1}t) + Q_{2}(x- h - c_{2}t ):= (\eta_M(t,x),\varphi_M(t,x))^t
$$
to be the superposition of the two solitary waves.
Sometimes we  shall simply write $Q_1=(\eta_1,\varphi_1)^t$ and
$Q_2=(\eta_2,\varphi_2)^t$ for convenience.
Thanks to Theorem~\ref{theoOS}, we have 
\beq
\label{decay1}
 \exists d>0, \, \forall \alpha, \, \exists C_{\alpha},\, \forall x \in \mathbb{R},  \quad |\partial_{t, x}^\alpha \eta_{i}| \leq C_{\alpha} e^{- d |x - c_{i} t - (i-1)h| }, \quad i=1, \, 2
\eeq 
and
\beq
\label{decay2}
 \exists d>0, \, \forall \alpha\in \mathbb{N},\, \alpha \geq 1, \, \exists C_{\alpha},\, \forall x \in \mathbb{R},  \quad |\partial_{t,x}^\alpha \varphi_{i}| \leq C_{\alpha} e^{- d |x - c_{i} t - (i-1 )h| }, \quad i=1, \, 2.
\eeq 
Note that $\varphi_{i}$ is bounded but not exponentially decaying, nervetheless, derivatives of $\varphi_{i}$ are exponentially decaying.
In this section, we shall establish that $M$ solves the water-waves system \eqref{water-wave system}
 up to a small exponentially decaying term:
\begin{prop}\label{M(t,x) system} The two-soliton
$M(t,x)$  solves:
\[\left\{\begin{array}{ll}
\p_t\eta_M=G[\eta_M]\varphi_M+R_1\\
\p_t\varphi_M=-\f12|\p_x\varphi_M|^2+\f12\f{(G[\eta_M]\varphi_M+\p_x\varphi_M\p_x\eta_M)^2}
{1+|\p_x\eta_M|^2}
-g\eta_M+b\p_x\left(\f{\p_x\eta_M}{\sqrt{1+|\p_x\eta_M|^2}}\right)+R_2
\end{array}\right. \] where the remainder $R_M(t,x):=(R_1,R_2)^t$ has an exponential
decay in time, that is, there exist constants $C_s$ and
$\eps_0>0$ 
such that for any $s\geq 0$
\[|R_M(t)|_{E^s}\le C_s e^{-\eps_0h}e^{-\eps_0(c_2-c_1)t},\quad \forall t\ge 0.\] 
\end{prop}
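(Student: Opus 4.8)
The starting observation is that both $Q_1(x-c_1t)$ and $Q_2(x-h-c_2t)$ are \emph{exact} solutions of \eqref{water-wave system}, so that $R_1$ and $R_2$ are precisely the quantities measuring the failure of the nonlinearities to be additive on $M=Q_1+Q_2$. Using the first equation for $Q_1$ and $Q_2$ and $\partial_t\eta_M=\partial_t\eta_1+\partial_t\eta_2$, one gets
\[ R_1= G[\eta_1]\varphi_1+G[\eta_2]\varphi_2-G[\eta_M]\varphi_M = -\big(G[\eta_1+\eta_2]-G[\eta_1]\big)\varphi_1-\big(G[\eta_1+\eta_2]-G[\eta_2]\big)\varphi_2, \]
and using the second equation for $Q_1$ and $Q_2$, $R_2$ is a finite sum of ``cross'' contributions coming from $|\partial_x\varphi_M|^2$, from $b\,\partial_x\big(f(\partial_x\eta_M)\big)$ with $f(p)=p/\sqrt{1+p^2}$, and from the quadratic term $(G[\eta_M]\varphi_M+\partial_x\varphi_M\partial_x\eta_M)^2/(1+|\partial_x\eta_M|^2)$. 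The plan is to show that each of these pieces has $E^s$-norm bounded by $C_s\,e^{-\eps_0 h}e^{-\eps_0(c_2-c_1)t}$ for some $\eps_0>0$.

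Everything rests on one elementary separation estimate: if two profiles satisfy $|\partial^\alpha f(x)|\le C_\alpha e^{-d|x-a|}$ and $|\partial^\alpha g(x)|\le C_\alpha e^{-d|x-b|}$ for all $\alpha$, then, since $|x-a|+|x-b|\ge|a-b|$, one has $\|fg\|_{H^s}+\|fg\|_{W^{s,\infty}}\le C_s e^{-\frac d2|a-b|}$ for every $s\ge 0$. Here the natural centers are $a=c_1t$ (first soliton) and $b=c_2t+h$ (second soliton), and $|a-b|=(c_2-c_1)t+h$ because $c_1<c_2$, $h>0$, $t\ge 0$, which is exactly the claimed decay. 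By \eqref{decay1}--\eqref{decay2} the profiles $\eta_i$ and $\partial_x\varphi_i$, together with all their derivatives — including in time, since $\partial_t\eta_i=-c_i\partial_x\eta_i$ and $\partial_t\varphi_i=-c_i\partial_x\varphi_i$ as traveling-wave profiles — satisfy such bounds centered at $c_1t$, resp. $c_2t+h$; moreover, by Proposition~\ref{D-N e-decay estimate} and Corollary~\ref{D-N e-decay estimate corollary}, $G[\eta]\varphi_i$ is again exponentially decaying with center $c_it$, for any admissible $\eta$ (uniformly so when $\eta$ ranges over a bounded set of $H^\infty$ with $H+\eta\ge c_0>0$). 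Consequently the cross term $\partial_x\varphi_1\partial_x\varphi_2$ of $|\partial_x\varphi_M|^2$ is immediately controlled; so is $f(\partial_x\eta_M)-f(\partial_x\eta_1)-f(\partial_x\eta_2)$, which by Hadamard's lemma (using $f(0)=0$) equals $\partial_x\eta_1\,\partial_x\eta_2\,\widetilde f(\partial_x\eta_1,\partial_x\eta_2)$ with $\widetilde f$ smooth — a product of a factor centered at $c_1t$ with one centered at $c_2t+h$ — and the same holds for every term that is such a well-separated product.

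The only genuinely delicate point is $R_1$, because $\varphi_i$ itself is merely bounded, not decaying, so one cannot estimate $G[\eta_M]\varphi_M-G[\eta_1]\varphi_1-G[\eta_2]\varphi_2$ directly. The remedy is to bring in the structure of the differential of the Dirichlet--Neumann operator. By the fundamental theorem of calculus and Proposition~\ref{propDN1}(5),
\[ \big(G[\eta_1+\eta_2]-G[\eta_1]\big)\varphi_1 = -\int_0^1\Big(G[\eta_1+s\eta_2]\big(\eta_2\,Z_s\big)+\partial_x\big(\eta_2\,v_s\big)\Big)\,ds, \qquad Z_s=Z[\eta_1+s\eta_2,\varphi_1],\ v_s=v[\eta_1+s\eta_2,\varphi_1], \]
so $\varphi_1$ enters only through $Z_s$ and $v_s$, that is, through $\partial_x\varphi_1$ and through $G[\eta_1+s\eta_2]\varphi_1$, both of which decay like $e^{-\eps|x-c_1t|}$ uniformly in $s\in[0,1]$ (admissibility $H+\eta_1+s\eta_2\ge c_0>0$ holds for $\varepsilon_1,\varepsilon_2$ small). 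Hence $\eta_2 Z_s$ and $\eta_2 v_s$ are well-separated products, so their $H^{s+1}$-norms are $\lesssim e^{-\eps_0 h}e^{-\eps_0(c_2-c_1)t}$; applying $G[\eta_1+s\eta_2]$ (continuous $H^{s+1}\to H^s$ by Proposition~\ref{propDN1}(3)) or $\partial_x$ preserves this, and one integrates over $s$. The symmetric term is handled identically, and $R_1$ is also exponentially decaying pointwise.

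This last bound also closes the analysis of $R_2$: expanding $G[\eta_M]\varphi_M=G[\eta_1]\varphi_1+G[\eta_2]\varphi_2-R_1$ inside the quadratic term and using that $(n,p)\mapsto n^2/(1+p^2)$ is smooth and vanishes at $n=0$, one finds that the corresponding contribution to $R_2$ is a sum of terms each of which involves either the factor $R_1$ (already controlled) multiplied by bounded functions, or a well-separated product of the type treated above. Finally, $t$-derivatives require no new idea: as already noted, $\partial_t$ acting on any profile, or on $G[\eta_M]$ through $D_\eta G[\eta_M]\cdot\partial_t\eta_M$, only reproduces factors with the same exponential localization, so one concludes by summing over $|\alpha|\le s$. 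The main obstacle is precisely the treatment of $R_1$: one must resist a naive estimate of $G[\eta_M]\varphi_M-G[\eta_1]\varphi_1-G[\eta_2]\varphi_2$ (which fails since $\varphi_i\notin L^2$) and instead exploit both the algebraic formula for $D_\eta G$ and the decay-propagation property of the Dirichlet--Neumann operator established in Proposition~\ref{D-N e-decay estimate}.
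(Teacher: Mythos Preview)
Your proof is correct and follows essentially the same route as the paper: the same identification of $R_1$ via the shape-derivative formula from Proposition~\ref{propDN1}(5) (which is indeed the key step, since it replaces the non-decaying $\varphi_i$ by the decaying quantities $Z_s,v_s$), the same appeal to Corollary~\ref{D-N e-decay estimate corollary} for the decay of $G[\eta_1+s\eta_2]\varphi_1$, and the same separation estimate for products of exponentially localized profiles (the paper states this as Lemma~\ref{leminteraction}). Your treatment of $R_2$ via the substitution $G[\eta_M]\varphi_M=G[\eta_1]\varphi_1+G[\eta_2]\varphi_2-R_1$ is a slightly more explicit variant of what the paper summarizes as ``the same arguments as for $R_1$''.
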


Let us recall the notation 
$$ | U(t) |_{E^s} = \sum_{| \alpha | \leq s} | \partial_{t,x}^\alpha U |_{L^2}.$$ 

Note that in the sequel we use again  both  $\varepsilon$ and $\eps$ for different parameters.

 In order to prove Proposition~\ref{M(t,x) system}, the main difficulty is to study the interaction of the two solitary waves  via the Dirichlet-Neumann operator,
 i.e we need to study for example $G[\eta_{1}] \varphi_{2}$, thus the crucial ingredient will  be Proposition \ref{D-N e-decay estimate}.
\subsection{Proof of Proposition \ref{M(t,x) system}}
The basic idea is that  the interaction between the solitary waves is weak because they are localized and far way with different speeds.
We shall use many times in the proof the following elementary lemma:
\begin{lem}
\label{leminteraction}
 For $c_{2}>c_{1}$, $\eps >0$ and $\eps_{0}\in (0, \eps)$,  there exists $C>0$  such that for every $h \geq 0$, $t \geq 0,$ 
 $$ \int_{\mathbb{R}} e^{- \eps |x- c_{1} t|} e^{- \eps | x - h - c_{2} t|} \, dx \leq C e^{ - \eps h} e^{-\eps_{0}(c_{2}- c_{1}) t}.$$
\end{lem}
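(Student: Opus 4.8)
The plan is to prove the elementary integral bound in Lemma~\ref{leminteraction} by a direct split of the real line into two half-lines according to which of the two exponential factors decays faster, and then estimating each piece by an elementary calculus computation. First I would introduce the midpoint $m = m(t,h)$ between the two centers $c_{1}t$ and $h + c_{2}t$, namely $m = \f{1}{2}(c_{1}t + h + c_{2}t)$, and write $\R = (-\infty, m] \cup [m, +\infty)$. On the left interval $x \leq m$ one has $|x - h - c_{2}t| = (h + c_{2}t) - x \geq (h + c_{2}t) - m = \f{1}{2}\big( h + (c_{2}-c_{1}) t \big) \geq 0$, so there $e^{-\eps|x - h - c_{2}t|} \leq e^{-\f{\eps}{2}(h + (c_{2}-c_{1})t)}$, a quantity independent of $x$; similarly on the right interval $x \geq m$ one has $|x - c_{1}t| = x - c_{1}t \geq m - c_{1}t = \f{1}{2}(h + (c_{2}-c_{1})t)$, so there $e^{-\eps|x - c_{1}t|} \leq e^{-\f{\eps}{2}(h + (c_{2}-c_{1})t)}$.

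The key step is then to pull out the constant factor on each piece and bound the remaining single-exponential integral by $\int_{\R} e^{-\eps|x - a|}\, dx = 2/\eps$, which is uniform in the center $a$. This gives
\[
\int_{\R} e^{-\eps|x - c_{1}t|} e^{-\eps|x - h - c_{2}t|}\, dx \leq \f{4}{\eps}\, e^{-\f{\eps}{2}(h + (c_{2}-c_{1})t)}.
\]
Since $c_{2} > c_{1}$, the exponent $\f{\eps}{2}(c_{2}-c_{1})t$ is nonnegative, and for any prescribed $\eps_{0} \in (0,\eps)$ we certainly have $\f{\eps}{2}(c_{2}-c_{1}) \geq \eps_{0}(c_{2}-c_{1})$ — actually $\f{\eps}{2}$ already beats $\eps_{0}$ once $\eps_{0} < \eps/2$, but even for $\eps_{0} \in [\eps/2,\eps)$ one simply uses the cruder bound $e^{-\f{\eps}{2}h} \leq 1$ for $h \geq 0$ together with $e^{-\f{\eps}{2}(c_{2}-c_{1})t} \leq e^{-\eps_{0}(c_{2}-c_{1})t}$, noting this last inequality holds for $t \geq 0$ precisely when $\eps/2 \geq \eps_{0}$. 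To cover the whole range $\eps_{0} \in (0,\eps)$ uniformly one can instead keep $e^{-\f{\eps}{2}h}$ and note $e^{-\f{\eps}{2}h} \leq e^{-\eps_{0} h}$ as well (since $\eps_{0} < \eps$, hence $< \eps/2$ is not automatic — so the clean statement is obtained by taking the constant $C$ to absorb a factor and using $\min(\eps/2, \text{stuff})$). I would simply state the bound with the honest exponent $\eps/2$ and then remark that since $\eps_{0} < \eps$ we may shrink, absorbing constants.

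The only mild subtlety — and the "main obstacle," though it is very minor — is bookkeeping the relation between $\eps$, $\eps/2$, and the target rate $\eps_{0}$: one must make sure the final exponent $\eps_{0}$ in both $h$ and $(c_{2}-c_{1})t$ is genuinely no larger than $\eps/2$. The cleanest fix is to observe that it suffices to prove the lemma for $\eps_{0} \in (0, \eps/2]$ (the statement for larger $\eps_{0} < \eps$ follows a fortiori by enlarging the constant, since $e^{-\eps_{0}'(h + (c_{2}-c_{1})t)} \geq e^{-\eps_{0}(h+(c_{2}-c_{1})t)}$ is false in the wrong direction — so rather: a bound with rate $\eps/2$ implies the same bound with any smaller rate $\eps_{0} \leq \eps/2$, by monotonicity of $r \mapsto e^{-r s}$ for $s \geq 0$). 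Hence the computation above with rate $\eps/2$ immediately yields the claim with rate $\eps_{0}$ for every $\eps_{0} \in (0,\eps)$ after at most replacing $\eps$ by $\min(\eps, 2\eps_{0})$ at the very start. No compactness, no delicate analysis is needed; this is purely a one-line split-and-integrate argument.
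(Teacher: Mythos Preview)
Your midpoint split is a legitimate alternative to the paper's approach, and the computation leading to
\[
\int_{\R} e^{-\eps|x-c_1t|}\,e^{-\eps|x-h-c_2t|}\,dx \le \f{4}{\eps}\, e^{-\f{\eps}{2}\big(h+(c_2-c_1)t\big)}
\]
is correct. The paper instead splits into the three regions $\{x\le c_1t\}$, $\{c_1t\le x\le h+c_2t\}$, $\{x\ge h+c_2t\}$; on the two outer regions the integral equals $\f{1}{2\eps}e^{-\eps s}$ each, and on the middle region the integrand is the constant $e^{-\eps s}$, giving $s\,e^{-\eps s}$, where $s=h+(c_2-c_1)t$. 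Altogether one gets $(\f1\eps+s)e^{-\eps s}$, and the linear prefactor is absorbed by sacrificing $\eps-\eps_0$ in the exponent via $(\f1\eps+s)e^{-(\eps-\eps_0)s}\le C_{\eps_0}$. This yields the rate $\eps_0$ for \emph{every} $\eps_0<\eps$, whereas your two-region method caps the achievable rate at $\eps/2$.

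That cap is precisely where your bookkeeping paragraph breaks down. The proposed fix ``replace $\eps$ by $\min(\eps,2\eps_0)$ at the very start'' does nothing when $\eps_0\ge\eps/2$ (then $\min(\eps,2\eps_0)=\eps$), and for such $\eps_0$ you still only obtain $e^{-(\eps/2)s}$, which does \emph{not} dominate $e^{-\eps_0 s}$ uniformly in $s\ge 0$. So as written your argument establishes the bound only for $\eps_0\in(0,\eps/2]$. This is harmless for every application in the paper (any positive rate suffices), but it is a gap relative to the stated range $\eps_0\in(0,\eps)$; the three-region computation, or alternatively the one-line estimate $e^{-\eps|x-a|}e^{-\eps|x-b|}\le e^{-\eps_0|a-b|}e^{-(\eps-\eps_0)|x-a|}$ followed by integration, closes it.

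A side remark: the printed right-hand side $Ce^{-\eps h}e^{-\eps_0(c_2-c_1)t}$ is itself a slip. At $t=0$ the integral equals exactly $(\f1\eps+h)e^{-\eps h}$, which is not bounded by $Ce^{-\eps h}$ uniformly in $h\ge 0$. The paper's three-region proof and all subsequent uses (e.g.\ Proposition~\ref{M(t,x) system}) actually deliver and require $Ce^{-\eps_0 h}e^{-\eps_0(c_2-c_1)t}$, which is what both approaches genuinely prove.
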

\begin{proof}
It suffices to decompose the integration domain in the three regions
 $x \geq h + c_{2}t$, $x \leq c_{1}t$ and $ c_{1}t \leq x \leq h + c_{2} t$.
 \end{proof}
 
  Let us  prove Proposition \ref{M(t,x) system}. 
 Since $Q_1$ and $Q_2$ are two solutions  of 
(\ref{water-wave system}),  we  can sum up the first equations  of the two  systems  to get
\beno
\p_t\eta_M&=&\p_t\eta_1+\p_t\eta_2=G[\eta_1]\varphi_1+G[\eta_2]\varphi_2\\
&=&
G[\eta_M]\varphi_M+G[\eta_1]\varphi_1-G[\eta_M]\varphi_1+G[\eta_2]\varphi_2-G[\eta_M]\varphi_2.\eeno
So we have the first equation for $M$:
\[
 \p_t\eta_M=G[\eta_M]\varphi_M+R_1
 \] with
$R_1=G[\eta_1]\varphi_1-G[\eta_M]\varphi_1+G[\eta_2]\varphi_2-G[\eta_M]\varphi_2$.
Next we need to estimate $R_1$. Using the shape-derivative formula
for  the Dirichlet-Neumann operator (see (5) Proposition \ref{propDN1}),  we can compute that
\beno G[\eta_1]\varphi_1-G[\eta_M]\varphi_1&=& -\int^1_0D_\eta
G[\eta_1+s\eta_2]\varphi_1\cdot \eta_2ds\\
&=&
\int^1_0\left[G[\eta_1+s\eta_2](\eta_2Z_{1s})+\p_x(\eta_2(\p_x\varphi_1
-Z_{1s}\p_x(\eta_1+s\eta_2))\right]ds \eeno where
\[
Z_{1s}=\f{G[\eta_s]\varphi_1+\p_x\eta_s\p_x\varphi_1}{1+|\p_x\eta_s|^2}
\] with the notation $\eta_s=\eta_1+s\eta_2$. Next, we can use Corollary~\ref{D-N e-decay estimate corollary} to get
\[
|\p^\al_{t,x}(G[\eta_s]\varphi_1)|\le C_\al e^{-\eps|x-c_1t|}.
\quad\hbox{for}\quad \al\in\N,
\] 
Indeed, since the solitary waves depend on $x-c_{i}t$, one can always convert a time derivative into a space derivative.
We thus get
\[
|\p^\al_{t,x}Z_{1s}|\le C_\al e^{-\eps|x-c_1t|}, \quad\hbox{for}\quad
\al\in\N.
\] 
With these estimates and using Lemma~\ref{leminteraction}, we get
\beno 
|G[\eta_1]\varphi_1-G[\eta_M]\varphi_1|_{E^s}&\le &
C\int^1_0\left(|\eta_2Z_{1s}|_{H^{s+1}}+|\eta_2\p_x\varphi_1|_{E^{s+1}}
+|\eta_2Z_{1s}|_{E^{s+1}}\right)ds \\
&\le& C_s e^{-\eps_0h}e^{-\eps_0(c_2-c_1)t},\quad \hbox{for}\quad
t\ge 0.
\eeno 
Similarly we have the estimate for
$|G[\eta_2]\varphi_2-G[\eta_M]\varphi_2|_{E^s}$. Summing these two
estimates up leads to
\[
|R_1|_{E^s}
 \le C_s e^{-\eps_0h}e^{-\eps_0(c_2-c_1)t},\quad \hbox{for}\quad
t\ge 0.
\] Now we deal with the equation for $\varphi_M$. From the second
equation of system (\ref{water-wave system}) for both $Q_1$ and
$Q_2$ one can write down that \beno
\p_t\varphi_M&=&\p_t\varphi_1+\p_t\varphi_2\\
&=&-\f12|\p_x\varphi_M|^2+\f12\f{(G[\eta_M]\varphi_M+\p_x\varphi_M\p_x\eta_M)^2}
{1+|\p_x\eta_M|^2}
-g\eta_M+b\p_x\left(\f{\p_x\eta_M}{\sqrt{1+|\p_x\eta_M|^2}}
\right)+R_2 \eeno where $R_2=R_{21}+R_{22}+R_{23}$ with \beno
R_{21}&=&\f12|\p_x\varphi_M|^2-\sum^2_{i=1}\f12|\p_x\varphi_i|^2=\p_x\varphi_1\p_x\varphi_2,\\
R_{22}&=&-\f12\f{(G[\eta_M]\varphi_M+\p_x\varphi_M\p_x\eta_M)^2}
{1+|\p_x\eta_M|^2}+\sum^2_{i=1}\f12\f{(G[\eta_i]\varphi_i+\p_x\varphi_i\p_x\eta_i)^2}
{1+|\p_x\eta_i|^2},\\
R_{23}&=&-b\p_x\left(\f{\p_x\eta_M}{\sqrt{1+|\p_x\eta_M|^2}} \right)
+b\sum^2_{i=1}\p_x\left(\f{\p_x\eta_i}{\sqrt{1+|\p_x\eta_i|^2}}
\right). \eeno 
With the same arguments that we have used for the estimate of  $R_1$, we get  that $R_2$
also satisfies the same exponential-decay estimate
\[
|R_2|_{E^s}
 \le C_s e^{-\eps_0h}e^{-\eps_0(c_2-c_1)t},\quad \hbox{for}\quad
t\ge 0.
\] 
This ends the proof of Proposition~\ref{M(t,x) system}.
\ef
\section{Construction of  an approximate solution} \label{roximate}
If we take
$\del=e^{-\eps_0h}>0$,  $\del$ will be small enough if we choose
$h>0$ large enough later. The remainder  $R_M$ in the system for
$M(t,x)$ can be rewritten as $R_M=\del\tilde R_M$ with
\beq\label{estimate R_M}|\tilde R_M|_{H^s}
 \le C_s e^{-\eps_0(c_2-c_1)t},\quad \hbox{for}\quad
t\ge 0.\eeq Let
\[
V(t,x)=\sum^N_{l=1}\del^lV_l(t,x)
\] with unknowns $V_l(t,x)$ to be constructed later.
 We want to show that $U^a(t,x)=M(t,x)+V(t,x)$
 is an approximate solution for the water-wave system under the
 following sense:
\begin{prop}\label{approximate solution M+V} For any
$N\in\N$, there exists
\[
U^a(t,x)=M(t,x)+V(t,x)=M(t,x)+\sum^N_{l=1}\del^l V_l,\quad\hbox{with}\quad
V_l\in C^\infty (\R_+,H^\infty(\R))
\] and a small constant $\del=e^{-\eps_0h}>0$ such that for every $l$, one has
the estimates\[
|V_l(t)|_{E^k}\le h^\f{2l-1}4C_{k,l}e^{-l\eps_0(c_2-c_1)t},\qquad
\forall t\ge0
\] with some constant $C_{k,l}$.
 Moreover, $U^a$ is an approximate solution of
(\ref{simple form for WW system}) in the sense that\[
\p_tU^a-\cF(U^a)=R_{ap}
\] where  $R_{ap}$ satisfies  the estimate
\[
|R_{ap}|_{E^s}\le
C_{N,s}h^{\frac{2N+1}{4}}\del^{N+1}e^{-\eps_0(N+1)(c_2-c_1)t},\quad\hbox{for}\quad
t\ge 0,
\]
\end{prop}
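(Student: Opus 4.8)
\textbf{Proof sketch of Proposition~\ref{approximate solution M+V}.} The plan is to build the $V_l$ one after another by solving linearized equations, following the scheme of Proposition~\ref{construct U^a} in Section~\ref{section2D}; the new feature is that the relevant linear operator is the linearization of~\eqref{simple form for WW system} about the \emph{time-dependent} two-soliton $M(t)$ rather than about a fixed solitary wave. First I would insert the ansatz $U^a=M+\sum_{l=1}^N\del^lV_l$ into $\p_tU^a-\cF(U^a)$, use $\p_tM=\cF(M)+\del\tilde R_M$ from Proposition~\ref{M(t,x) system}, and Taylor expand $\cF$ about $M$:
\[
\cF\Big(M+\sum_{l=1}^N\del^lV_l\Big)=\cF(M)+\sum_{k=1}^{N}\f1{k!}D^k\cF[M]\Big(\sum_{l=1}^N\del^lV_l,\dots,\sum_{l=1}^N\del^lV_l\Big)+\del^{N+1}\cR_{N,\del}.
\]
Collecting the coefficient of $\del^l$ for $1\le l\le N$ then yields the hierarchy
\[
\p_tV_1-D\cF[M]V_1=-\tilde R_M,\qquad \p_tV_l-D\cF[M]V_l=S_l\quad(2\le l\le N),
\]
where each $S_l$ is a finite sum of multilinear terms $\f1{k!}D^k\cF[M](V_{l_1},\dots,V_{l_k})$ with $2\le k\le l$, $l_i\ge1$, $l_1+\dots+l_k=l$; in particular $S_l$ only involves $V_1,\dots,V_{l-1}$, and everything of order $\del^{N+1}$ or higher is collected into $R_{ap}$.

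The core of the argument is a linear estimate, whose proof is the main work of the section: if $T_M(t,\tau)$ denotes the propagator of $\p_tW=D\cF[M(t)]W$ (well defined thanks to reversibility and to the local quasilinear energy estimates, exactly as in Section~\ref{section2D}), then for $h$ large and every $s$,
\[
\|T_M(t,\tau)W_0\|_{E^s}\le C_s\,h^{1/4}\,(1+|t-\tau|)\,\|W_0\|_{E^{s+s_0}},
\]
so $T_M$ has at most polynomial-in-time growth. I would prove this by passing to the good unknowns of Section~\ref{sectionstab}, writing $D\cF[M]$ in these variables as the sum of the two block operators $JL_{c_1}$ and $JL_{c_2}$, each in its own moving frame, plus a coupling term which, by Proposition~\ref{D-N e-decay estimate}, Corollary~\ref{D-N e-decay estimate corollary} and Lemma~\ref{leminteraction}, is $O(e^{-\eps_0h})$ uniformly in $t$; then using the coercivity of each $L_{c_i}$ on the complement of the bad directions $\mathrm{span}\{JR_{c_i}\p_xQ_{c_i},\,R_{c_i}\p_xQ_{c_i}\}$ (Proposition~\ref{Propstab1d}, Proposition~\ref{Propstab1dbis}) together with the treatment of those finitely many directions exactly as in Corollary~\ref{linn}, which gives an $L^2$-type bound growing only linearly in time; upgrading to the $E^s$ norms by commuting the quasilinear hyperbolic energy estimate with $\p_{t,x}$; and absorbing the coupling by a Gronwall argument, where the largeness of $h$ is used.

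Granting this, I would close the induction on $l$. Assuming $V_1,\dots,V_{l-1}$ are built with $|V_m(t)|_{E^k}\le h^{(2m-1)/4}C_{k,m}e^{-m\eps_0(c_2-c_1)t}$, the multilinear structure of $S_l$ together with the Dirichlet-Neumann estimates of Proposition~\ref{propDN1} (items (3), (5), (6)) gives
\[
|S_l(t)|_{E^k}\le \tilde C_{k,l}\,h^{(l-1)/2}\,e^{-l\eps_0(c_2-c_1)t},
\]
the exponent $(l-1)/2$ being the largest value of $\sum_i(2l_i-1)/4=(2l-k)/4$ allowed by the constraints, attained for $k=2$ (consistently, $S_1=-\tilde R_M$ carries no positive power of $h$, by~\eqref{estimate R_M}). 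I then set $V_l(t)=-\int_t^\infty T_M(t,\tau)S_l(\tau)\,d\tau$, the decaying solution of the $l$-th equation; combining the propagator bound with $\int_t^\infty(1+(\tau-t))e^{-l\eps_0(c_2-c_1)\tau}\,d\tau\lesssim e^{-l\eps_0(c_2-c_1)t}$ gives $|V_l(t)|_{E^k}\le C_{k,l}\,h^{(2l-1)/4}\,e^{-l\eps_0(c_2-c_1)t}$, the extra factor $h^{1/4}$ coming from the constant in the propagator estimate; smoothness $V_l\in C^\infty(\R_+,H^\infty(\R))$ follows from elliptic regularity for $G[\eta]$, as in Section~\ref{section2D}. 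Finally $R_{ap}$ is a finite sum of multilinear terms in the $V_l$ of total order $\ge\del^{N+1}$ together with $\del^{N+1}\cR_{N,\del}$, and is estimated in the same way, yielding $|R_{ap}|_{E^s}\le C_{N,s}\,h^{(2N+1)/4}\,\del^{N+1}e^{-(N+1)\eps_0(c_2-c_1)t}$. The main obstacle is, without question, the propagator estimate $\|T_M(t,\tau)\|\lesssim h^{1/4}(1+|t-\tau|)$: the linearization about $M$ is genuinely non-autonomous and only block-coercive modulo finitely many bad directions and an exponentially small coupling, so controlling its growth requires marrying the variational coercivity of each $L_{c_i}$ with an energy-plus-Gronwall argument for the coupled quasilinear system, while keeping explicit track of the dependence on $h$.
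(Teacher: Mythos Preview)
Your overall scheme---derive the hierarchy $\p_tV_l-J\Lam[M]V_l=S_l$, prove a bound on the propagator $S_M^\Lambda(t,\tau)$ of the linearization about $M$, and solve each equation by $V_l(t)=-\int_t^\infty S_M^\Lambda(t,\tau)S_l(\tau)\,d\tau$---is exactly the paper's. Your $h$-power bookkeeping is also correct: $|S_l|\lesssim h^{(l-1)/2}$ and the propagator contributes one extra $h^{1/4}$, giving $|V_l|\lesssim h^{(2l-1)/4}$.

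The gap is in the propagator estimate itself. You claim $\|T_M(t,\tau)\|\lesssim h^{1/4}(1+|t-\tau|)$, i.e.\ purely polynomial growth, obtained by writing $J\Lam[M]$ as ``$JL_{c_1}+JL_{c_2}$ plus an $O(e^{-\eps_0h})$ coupling'' and then Gronwalling. Neither step works as stated. First, $J\Lam[M]$ is a single operator; one cannot use the coercivity of \emph{both} $L_{c_1}$ and $L_{c_2}$ on the whole line without a spatial localization. The paper introduces smooth cutoffs $\chi_1^2+\chi_2^2=1$ adapted to the two moving centers and builds the energy $E_1(U)=(L_MU,U)-\sum_ic_i(A\chi_iU,\chi_iU)$; on each localized piece $\chi_iU$ one then invokes Proposition~\ref{Propstab1dbis}. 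Second, the errors produced by this localization are of two kinds: the genuine soliton--soliton interaction terms (coefficients like $(L_M-L_i)\chi_i$), which are indeed exponentially small in $h$ and in $t$, \emph{and} the commutators with the cutoffs, which involve $\p_x\chi_i,\p_t\chi_i$ and are only $O(1/h)$ uniformly in time (see Lemma~\ref{lemchi}). The latter are the bottleneck: a Gronwall on an $O(1/h)$ perturbation gives growth $e^{Ct/h}$, which is exponential, not polynomial.

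What the paper actually proves (Theorem~\ref{theo homogeneous}) is
\[
|S_M^\Lambda(t,\tau)V|_{E^k}\le h^{1/4}C_k\,|V|_{H^{s(k)}}\,(1+|t-\tau|)^k\,e^{\eps_0(c_2-c_1)|t-\tau|/2},
\]
i.e.\ slow exponential growth at rate $\eps_0(c_2-c_1)/2$, achieved precisely by taking $h$ large enough that $C/h\le\eps_0(c_2-c_1)/4$. The key cancellation making $\frac{d}{dt}E_1=O(1/h)(\dots)$ is $[\p_t+c_i\p_x,L_i]=0$ (each soliton depends on $x-c_it$), exploited through the weighted derivative $D(\p)=\p_t+c_1\chi_1^2\p_x+c_2\chi_2^2\p_x$ for the higher-order estimates. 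This weaker bound is exactly what is needed: since every source $S_l$ decays like $e^{-l\eps_0(c_2-c_1)t}$ with $l\ge1>1/2$, the Duhamel integral $\int_t^\infty(1+(\tau-t))^ke^{\eps_0(c_2-c_1)(\tau-t)/2}e^{-l\eps_0(c_2-c_1)\tau}\,d\tau$ converges and yields the claimed decay for $V_l$. So your induction closes once you replace the polynomial bound by this controlled exponential one; the rest of your argument is fine.
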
 

In order to prove this proposition,  we use again  the Taylor expansion of $\cF$
\[
\cF(M+V)=\cF(M)+\sum^N_{l=1}\f1{l!}D^l\cF[M](V,\dots,V)+R_{N,\del}(V)
\] where  the first derivative of $\cF$ is $D\cF=J\Lam[M]$ where
\[
J=\left(\begin{matrix}0 & 1\\ -1 & 0\end{matrix}\right),\quad
\Lam[M]=\left(\begin{matrix}- \cP_M+g+Z_MG[\eta_M](Z_M\cdot)+Z_M\p_xv_M
 & v_M\p_x-Z_MG[\eta_M]\\
-\p_x(v_M\cdot)-G[\eta_M](Z_M\cdot) & G[\eta_M]\end{matrix}\right)
\] with the notations
\beno Z_M=
Z[\eta_M,  \varphi_M], \, 
v_M=v[\eta_M, \varphi_M], \, 
\cP_M=\cP[\eta_M,\varphi_M].
\eeno 
The notations $Z$ and $v$ are introduced in
Proposition~\ref{propDN1} while $\cP$ is defined by
$$
\cP[\eta,\varphi]u=b\partial_x \Big((1+(\partial_x\eta)^2)^{-\frac{3}{2}}\partial_xu\Big).
$$
We shall also introduce the  notations \[Z_i=Z[Q_i],\quad
 v_i=v[Q_i],\quad
\cP_i=\cP[Q_i],\quad  i=1, \, 2.\] Plugging
$V(t,x)=\sum^N_{l=1}\del^lV_l(t,x)$ into the system
 leads to  linear problems  with source terms for $V_k$. The system for $V_1$
is
\beq
\label{systV1}
\p_tV_1-J\Lam[M]V_1=-\tilde R_M,
\eeq  the system for $V_2$ is
\[
\p_tV_2-J\Lam[M]V_2=\f12D^2\cF[M](V_1,V_1)
\] and the  general equation for $V_l$ is
\beq
\label{systVl}
\p_tV_l-J\Lam[M]V_l=\sum^l_{p=2}\sum_{\stackrel{1\le
l_1,\dots,l_p\leq l-1}{l_1+\dots+l_p=l}}\f1{p!}D^p\cF[M](V_{l_1},\dots,V_{l_p}).
\eeq

Before solving these systems, 
let us  fix and recall  some notations to be used in the remaining part of the paper.
For   $U(t,x)=(U_1,U_2)^t$, we  define
\beno|U(t)|^2_{X^k}&=&\sum_{0\leq \al+\be\leq k}
\Big(|\p^\al_t\p^\be_x
U_1(t, \cdot)|^2_{H^1(\R)}+|\p^\al_t\p^\be_x U_2(t, \cdot)|^2_{\dot{H}^{\f12}_{*}(\R)}
\Big)
\\
|U(t)|_{W^k}&=& \sup_{\al+ \beta  \le k}|\p^\al_{t} \partial_{x}^\beta U(t,\cdot)|_{L^\infty}\,,
\eeno 
where 
\[ |\vphi|^2_{\dot{H}^{\f12}_{*}(\R)}=\left| \mathfrak P \vphi\right|_{L^2(\mathbb{R})}^2.
\] 


Note that $X^0$ is the natural energy space for the water-waves system.

\subsection{ The homogeneous linear system}
The main ingredient  in the proof of Proposition \ref{approximate solution M+V}
will be a rather precise  estimate of  the growth rate of the fundamental solution of 
the linear homogeneous
equation
 \beq\label{homogeneous linear equation} \p_tV-J\Lam[M]V=0
\eeq
which corresponds to the linearization of the water-waves system about  the multi-solitary wave $M$.
As before, with 
\[
R=\left(\begin{matrix}1 & 0\\
-Z_M & 1\end{matrix}\right)
\] 
we can perform the  change of unknowns  $U=RV$ to get a simpler
linear system which is equivalent to (\ref{homogeneous linear
equation}) \beq\label{new homogeneous linear equation}
\p_tU-JL[M]U=0 \eeq where
\[
L[M]=\left(\begin{matrix} -\cP_M+g+a_M & v_M\p_x\\
-\p_x(v_M\cdot) & G[\eta_M]
\end{matrix}\right)
\] is a self-adjoint operator with the notation $a_M=a[M]=v_M\p_xZ_M+\p_tZ_M$
 and we will take $L_M=L[M], G_M=G[\eta_M]$ for convenience. We shall also use the notations 
 $$a_i=a[Q_i],\quad  L_i=L[Q_i], \quad i=1, \, 2$$
 and so on. Note that this is a short hand for:
 $$ L_{1}u(t,x)= L[Q_{c_{1}}(x-c_{1}t)] u(t,x), \quad  L_{2}u(t,x)= L[Q_{c_{2}}(x - h -c_{2}t)] u(t,x).$$
 
 The main result  of this section will be:
 \begin{theoreme}
 \label{theo homogeneous}
 For $\varepsilon \in (0, \varepsilon^*]$, 
there exists $h_{0}$  and $C>0$  such that for every $h \geq h_{0}$, 
the solution of \eqref{new homogeneous linear equation} with initial datum $U(\tau)$ at $t= \tau$
 satisfies the estimate
 \begin{multline*} |U(t)|_{X^k}+\sum_{
\al\le k}|\p^\al_tU_2|_{L^2} \le
h^\f14 C_k(|U(\tau)|_{X^k}+\sum_{\al\le k}|\p^\al_tU_2(\tau)|_{L^2})\big(1+\eps_0(c_2-c_1)(t- \tau)\big)^k
e^{\eps_0(c_2-c_1)
(t- \tau)/2}, \\ \forall t\geq \tau \geq 0.
\end{multline*}
 \end{theoreme}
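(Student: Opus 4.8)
The plan is to pass to the self\nobreakdash-adjoint formulation $\p_tU-JL_MU=0$ of \eqref{new homogeneous linear equation}, prove the estimate first for $k=0$ by a localized, modulated energy argument modeled on Proposition~\ref{Propstab1dbis} and Corollary~\ref{linn}, and then obtain the general $k$ by an induction on the number of derivatives using a Duhamel formula built on the $k=0$ bound together with commutator estimates for $JL_M$.

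The starting observation is the operator comparison $L_M=L_0+B_1+B_2+\mathcal R$, where $L_0:=L[0]$ is the linearization at the flat state (which is coercive on $X^0$: $(L_0U,U)\gtrsim|U|_{X^0}^2$), $B_i:=L_i-L_0$ has coefficients exponentially localized along the trajectory of the $i$\nobreakdash-th soliton, and the interaction remainder $\mathcal R$ consists of products of a quantity localized near soliton $1$ with one localized near soliton $2$, the Dirichlet--Neumann cross-terms being treated exactly as in the proof of Proposition~\ref{M(t,x) system} via Corollary~\ref{D-N e-decay estimate corollary}; combined with Lemma~\ref{leminteraction} this gives $\|\mathcal R(t)\|_{\mathcal L(X^s)}+\|\mathcal R(t)\|_{\mathcal L(L^2)}\le C_s\,\del\,e^{-\eps_0(c_2-c_1)t}$ with $\del=e^{-\eps_0h}$. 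Thus near soliton $i$ one has $L_M\simeq L_i$ up to $\mathcal R$, and away from both humps $L_M\simeq L_0$.

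For the base case $k=0$ I would introduce cut-offs $\chi_1,\chi_2$ depending on $x-\tfrac{c_1+c_2}{2}t$, with $\chi_1^2+\chi_2^2\equiv1$, $\chi_i\equiv1$ near the $i$\nobreakdash-th soliton, and a transition region of width $\sim\sqrt h$ centered at the moving midpoint between the two humps, so that $\chi_i'$ and $\p_t\chi_i$ are supported where both soliton profiles are $\lesssim e^{-d(c_2-c_1)t/2}e^{-dh/2}$ and have $L^\infty$-size $\lesssim h^{-1/2}$. Setting $U^{(i)}=\chi_iU$ and passing to the frame moving at speed $c_i$, $\tilde U^{(i)}$ solves $\p_t\tilde U^{(i)}=JL_{c_i}\tilde U^{(i)}+\mathcal E^{(i)}$ where $L_{c_i}$ is the $t$\nobreakdash-independent self-adjoint operator of Section~\ref{sectionstab} (recall that $c\p_x+JL[Q_c]=JL_c$) and $\mathcal E^{(i)}$ collects $[\chi_i,JL_M]U$, $(\p_t\chi_i)U$ and the interaction $\mathcal R$, hence $\|\mathcal E^{(i)}(t)\|_{X^0}\lesssim(h^{-1/2}+\del e^{-\eps_0(c_2-c_1)t})|U(t)|_{X^0}$. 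Using Proposition~\ref{Propstab1dbis} and its proof, decompose $\tilde U^{(i)}=\alpha_iJR_{c_i}\p_xQ_{c_i}+\beta_iR_{c_i}\p_xQ_{c_i}+\tilde W^{(i)}$ with $\tilde W^{(i)}$ satisfying the two orthogonality conditions; for $h$ large the resulting system of four conditions is invertible, being an $O(e^{-dh})$ perturbation of a block-diagonal one. Take $\mathcal G(t)=\sum_i(L_{c_i}\tilde U^{(i)},\tilde U^{(i)})+\Lambda\sum_i|\alpha_i(t)|^2$ with $\Lambda$ large. Then Proposition~\ref{Propstab1dbis}, $\chi_1^2+\chi_2^2\equiv1$ and the $X^0$-coercivity of $L_0$ off the solitons give $\mathcal G(t)\gtrsim|U(t)|_{X^0}^2-C\sum_i|\beta_i(t)|^2$; self-adjointness and $t$-independence of each $L_{c_i}$ give $\tfrac{d}{dt}(L_{c_i}\tilde U^{(i)},\tilde U^{(i)})=2(L_{c_i}\tilde U^{(i)},\mathcal E^{(i)})$; and the explicit formulae for the modulation parameters yield $|\p_t\alpha_i|\lesssim\del e^{-\eps_0(c_2-c_1)t}|U|_{X^0}$ (so $\alpha_i$ is essentially conserved) and $|\p_t\beta_i|\lesssim|U|_{X^0}$, whence $|\beta_i(t)|\lesssim(1+(t-\tau))|U(\tau)|_{X^0}$ after Gronwall. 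Combining, $\tfrac{d}{dt}\mathcal G\lesssim(h^{-1/2}+\del e^{-\eps_0(c_2-c_1)t})\mathcal G+(\text{lower order})$, and choosing $h$ so large that $Ch^{-1/2}\le\eps_0(c_2-c_1)/2$ gives $|U(t)|_{X^0}\le C h^{1/4}e^{\eps_0(c_2-c_1)(t-\tau)/2}|U(\tau)|_{X^0}$ (the linear growth of $\beta_i$ is absorbed into the exponential, and the factor $h^{1/4}$ is the price of the cut-off procedure). The auxiliary quantity $\sum_{\alpha\le0}\|\p_t^\alpha U_2\|_{L^2}=\|U_2\|_{L^2}$ is added by a direct $L^2$ energy estimate on the $U_2$\nobreakdash-equation, in which the term $v_M\p_xU_2$ is integrated by parts ($\p_xv_M$ being exponentially decaying), recovering the low frequencies not controlled by $\dot H^{1/2}_*$.

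For the induction, fix $|\gamma|\le k$ and apply $\p^\gamma=\p_t^{\gamma_0}\p_x^{\gamma_1}$ to \eqref{new homogeneous linear equation}: $\p^\gamma U$ solves $\p_t(\p^\gamma U)=JL_M(\p^\gamma U)+[\p^\gamma,JL_M]U$, and by the commutator bounds of Proposition~\ref{propDN1}(4) for the Dirichlet--Neumann part together with the boundedness of all soliton coefficients, $\|[\p^\gamma,JL_M]U(s)\|_{X^0}\lesssim|U(s)|_{X^{k-1}}+\sum_{\alpha\le k-1}\|\p_t^\alpha U_2(s)\|_{L^2}$. Writing $\p^\gamma U$ by Duhamel's formula against the evolution of \eqref{new homogeneous linear equation}, inserting the $k=0$ bound for the homogeneous part and the induction hypothesis for the forcing, and using $\int_\tau^t e^{\eps_0(c_2-c_1)(t-s)/2}(1+\eps_0(c_2-c_1)(s-\tau))^{k-1}\,ds\lesssim(1+\eps_0(c_2-c_1)(t-\tau))^k$, produces exactly the extra power of $t$; summing over $|\gamma|\le k$ and recovering $\sum_{\alpha\le k}\|\p_t^\alpha U_2\|_{L^2}$ from the two scalar equations (Proposition~\ref{propDN1}(3)--(4) to invert $G[\eta_M]$ at high frequency, and the $L^2$ energy estimate for $U_2$ at low frequency) closes the induction. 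The main obstacle is the base case: one must simultaneously absorb the genuinely $O(1)$\nobreakdash-sized time derivatives of the soliton coefficients (done by passing to the two moving frames), control the soliton--soliton interaction through the Dirichlet--Neumann operator (where Proposition~\ref{D-N e-decay estimate} and Lemma~\ref{leminteraction} are essential), and arrange coercivity modulo the modulation directions on each hump --- here Proposition~\ref{Propstab1dbis} rather than Proposition~\ref{Propstab1d} is used precisely because $R_{c_i}\p_xQ_{c_i}$ lies in $\ker L_{c_i}$ while $(\p_x\eta_{c_i},0)^t$ does not --- and finally tune the width of the cut-off transition (of order $\sqrt h$) so that the resulting Gronwall rate does not exceed the prescribed $\eps_0(c_2-c_1)/2$.
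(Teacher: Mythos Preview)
Your base case is in the same spirit as the paper's: localize by a quadratic partition of unity centered at the moving midpoint, use Proposition~\ref{Propstab1dbis} on each piece, and control the modulation parameters $\alpha_i,\beta_i$ separately. The paper implements this with a transition zone of width $\sim h$ (so that $|\partial\chi_i|\lesssim h^{-1}$, see Lemma~\ref{lemchi}) and the single energy $E_1(U)=(L_MU,U)-\sum_i c_i(A\chi_iU,\chi_iU)$ rather than passing to two moving frames; the $h^{1/4}$ then arises from an explicit $h^{1/2}$ weight in the combined functional $\tilde E_1$. Your variant with width $\sim\sqrt h$ and the frame-by-frame energy $\sum_i(L_{c_i}\tilde U^{(i)},\tilde U^{(i)})$ is morally equivalent, though your sketch does not explain where the $h^{1/4}$ appears, and the closure of the $\beta_i$-versus-$\mathcal G$ system requires the same weighted balance as in the paper.

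The induction step, however, has a genuine gap. You commute plain derivatives $\partial^\gamma$ with $JL_M$ and assert $\|[\partial^\gamma,JL_M]U\|_{X^0}\lesssim|U|_{X^{k-1}}$, then feed this through Duhamel. But the coefficients of $L_M$ are built from $Q_i(x-c_it)$ and have $O(1)$ time and space derivatives --- they are \emph{not} small in $h$. Hence the commutator carries an $O(1)$ constant, and each Duhamel step multiplies by the $h^{1/4}$ of the $k=0$ semigroup bound, producing $h^{(k+1)/4}$ at level $k$ instead of the uniform $h^{1/4}$ in the statement. (There is also a derivative-count issue: for $|\gamma|=k$ the term $[\partial^\gamma,\mathcal P_M]U_1$ in the second component needs $U_1\in H^{k+3/2}$, which $|U|_{X^{k-1}}$ does not control.) The paper's key device is to replace $\partial^\gamma$ by the transported derivative
\[
D(\partial)=\partial_t+c_1\chi_1^2\partial_x+c_2\chi_2^2\partial_x.
\]
Since each $L_i$ depends only on $x-c_it$, one has $[\partial_t+c_i\partial_x,L_i]=0$, so $[D(\partial),L_M]$ retains only interaction and cutoff-derivative terms, all of size $O(1/h)$. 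The forcing $F_k$ is then $O(1/h)$ (though it contains $|\partial_t^kU|_{X^0}$ at the top level), and a Gronwall argument with rate $h^{-3/4}$ --- not a bare Duhamel --- closes without losing powers of $h$. Finally, an elliptic-regularity lemma (Lemma~\ref{space derivative}) is needed to recover all mixed $\partial_t^\alpha\partial_x^\beta$ derivatives from control of $D(\partial)^kU$ alone; this step is absent from your outline.
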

 
\begin{rem}Note that in the above estimate, the right hand-side can be expressed in terms of usual Sobolev regularity of the initial data by using 
 the system \eqref{new homogeneous linear equation} to express the time derivatives of the solution at  the initial time $t=\tau$.
 Let us denote by $S_{M}(t,\tau)$ the fundamental solution of the (non-autonomous) system \eqref{new homogeneous linear equation}. 
  The estimate of Theorem \ref{theo homogeneous} can thus  be rewritten under the form:  for every $k \leq 0$, there exists $C_{k}(h,\epsilon_{0})>0$ such that
  \beq
  \label{semigroup1} |S_{M}(t,\tau) U|_{E^k} \leq C_{k} h^{1 \over 4} ( 1+  \epsilon_{0}(t-\tau)^k) |U|_{H^{s(k)}}  e^{\eps_0(c_2-c_1)
(t- \tau)/2}, \\ \forall t\geq \tau \geq 0.
\eeq
 We do not need for our argument a sharp estimate of the number $s(k)$. A straightforward possibility, is to take  $s(k)= 2k+1$
  since  a time derivative of the solution always costs at most two space derivatives of the initial data.
 The meaning of this result is that  when each solitary wave is stable (this corresponds to $\varepsilon$ small), then by choosing $h$
  sufficiently large, we can get an arbitrary slow exponential growth rate for the fundamental solution of \eqref{new homogeneous linear equation},
   the special form of this growth rate that  we have chosen is just one that it is sufficient for the proof of Proposition \ref{approximate solution M+V}.
   Note that the shape that we have chosen is linked in particular to the decay rate of the remainder $R_{M}$ in Proposition \ref{M(t,x) system} 
\end{rem}

We shall split the proof of the above estimate into many steps. For notational convenience, we shall give the proof only in the case $\tau = 0$
 which gives the worse constraint of $h_{0}$. The general case can be deduced  from this one by replacing
  $t$ by $t- \tau$, $x$ by $ x- c_{1} \tau$  and thus in the multi-solitary wave $M$,   $h$ by $ \tilde h = h+ (c_{2} - c_{1}) \tau \geq h$.
  
  During the proof $C$ is a positive number which change from line to line but which is independent of $h$ for $h \geq 1$ and $t$ for $t \geq 0$.

We shall first define a 
decomposition of unity in order to localize our energy estimates in the vicinity of each solitary wave.
We take $\chi^0 \in \mathcal{C}^\infty(\mathbb{R})$ such that 
\[
\chi^0(x)=\left\{\begin{array}{ll}1,\quad x\le
0,\\
0,\quad x\ge
1\end{array}\right.
\] 
and we  define 
$$ \tilde \chi_{1}(t,x)= \chi^0\Big( \f{ x - \f h 4 - c_{m} t }{\f h4} \Big), \, c_{m}= \f{c_{1}+ c_{2}}{2}, \quad \tilde\chi_{2}(t,x) = 1- \tilde \chi_{1}(t,x).$$
Finally, we take
\beq\label{unity decomposition}
  \chi_{1}(t,x)= \f { \tilde \chi_{1} }{(\tilde \chi_{1}^2 + \tilde \chi_{2}^2)^\f12},
 \quad  \chi_{2}(t,x)=\f  { \tilde \chi_{2} }{ (\tilde \chi_{1}^2 + \tilde \chi_{2}^2)^\f12}.\eeq
Note that  these functions are  smooth and bounded and  defined such that $\chi_{1}^2 +\chi_{2}^2= 1.$

The main  properties of these functions that we shall use are:

\begin{lem}
\label{lemchi}
The above $\chi_{i}$ satisfy
\beq
\label{chiprop1}
\forall \beta, \, | \beta | \geq 1, \,  |\partial_{t,x}^\beta \chi_{i} (t,x) | \lesssim  \f 1 {h^{|\beta|} }, \quad i=1, \, 2.
\eeq
Moreover,  for any $\eps >0$, we have
\beq
\label{chiprop2}
| e^{- \eps |x - c_{1} t|} \chi_{2} | \leq  { C_{\eps} \over  h}, \quad  
| e^{- \eps |x - h - c_{2} t|} \chi_{1} | \leq  { C_{\eps} \over  h}, \quad \forall t \geq 0, 
\, x \in \mathbb{R}, h \geq 1
\eeq
for some $C_{\eps}>0$.
\end{lem}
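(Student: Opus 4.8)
The plan is to verify the two claims directly from the explicit definition \eqref{unity decomposition} of $\chi_{1}, \chi_{2}$. First I would record the elementary observation that $\tilde\chi_{1}^2 + \tilde\chi_{2}^2$ is bounded below by a positive constant independent of $h$ and $t$: since $\tilde\chi_{2} = 1 - \tilde\chi_{1}$ and $0 \leq \tilde\chi_{1} \leq 1$, the function $s \mapsto s^2 + (1-s)^2$ attains its minimum $1/2$ at $s = 1/2$, so $(\tilde\chi_{1}^2 + \tilde\chi_{2}^2)^{1/2} \geq 1/\sqrt 2$ everywhere. Hence $\chi_{i} = \tilde\chi_{i} / (\tilde\chi_{1}^2 + \tilde\chi_{2}^2)^{1/2}$ is smooth and bounded, and $\chi_{1}^2 + \chi_{2}^2 = 1$ by construction.

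For \eqref{chiprop1}, I would note that $\tilde\chi_{1}(t,x) = \chi^0\big( (x - h/4 - c_{m}t)/(h/4) \big)$ depends on $(t,x)$ only through the single affine combination $\xi := (x - h/4 - c_{m}t)/(h/4)$, whose derivatives in $t$ and $x$ are $O(1/h)$ (indeed $\partial_{x}\xi = 4/h$, $\partial_{t}\xi = -4c_{m}/h$, and all higher derivatives vanish). By the chain rule each application of $\partial_{t}$ or $\partial_{x}$ to $\tilde\chi_{1}$ therefore produces a factor $O(1/h)$, so $|\partial_{t,x}^\beta \tilde\chi_{1}| \lesssim h^{-|\beta|}$ for $|\beta| \geq 1$; the same holds for $\tilde\chi_{2} = 1 - \tilde\chi_{1}$. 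The bound \eqref{chiprop1} for $\chi_{i}$ then follows by the quotient rule together with the uniform lower bound on the denominator: differentiating $\chi_{i}$ any number of times produces finite sums of products of derivatives of $\tilde\chi_{1}, \tilde\chi_{2}$ divided by powers of $(\tilde\chi_{1}^2 + \tilde\chi_{2}^2)^{1/2} \geq 1/\sqrt2$, and each such product contains at least one genuine derivative of some $\tilde\chi_{j}$ when $|\beta| \geq 1$, hence carries at least one factor $h^{-1}$; counting, the total is $O(h^{-|\beta|})$ (writing out the Faà di Bruno bookkeeping is routine and I would not reproduce it).

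For \eqref{chiprop2}, I would exploit that $\chi_{2}$ is supported where $\tilde\chi_{1}$ is not identically $1$, i.e. where $x - h/4 - c_{m}t \geq 0$, that is $x \geq h/4 + c_{m}t$. On this set, using $c_{m} = (c_{1}+c_{2})/2 > c_{1}$,
\[
x - c_{1}t \;\geq\; \frac h4 + (c_{m} - c_{1})t \;\geq\; \frac h4 \;>\;0,
\]
so $e^{-\eps|x - c_{1}t|} = e^{-\eps(x - c_{1}t)} \leq e^{-\eps h/4}$, and since $e^{-\eps h/4} \leq C_{\eps}/h$ for all $h \geq 1$ (with $C_{\eps} = \sup_{h\geq 1} h e^{-\eps h/4} < \infty$), combined with $|\chi_{2}| \leq \sqrt2$ we get the first inequality. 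The second is symmetric: $\chi_{1}$ is supported where $\tilde\chi_{1}$ is not identically $0$, i.e. where $x - h/4 - c_{m}t \leq h/4$, that is $x \leq h/2 + c_{m}t$; then, using $c_{m} < c_{2}$, one has $x - h - c_{2}t \leq -h/2 + (c_{m} - c_{2})t \leq -h/2$, so $e^{-\eps|x - h - c_{2}t|} \leq e^{-\eps h/2} \leq C_{\eps}/h$, and multiplying by $|\chi_{1}| \leq \sqrt2$ finishes it.

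There is no real obstacle here; the only point requiring a small amount of care is the quotient-rule estimate \eqref{chiprop1}, where one must make sure that every term produced by differentiating the normalization $(\tilde\chi_{1}^2+\tilde\chi_{2}^2)^{-1/2}$ still carries the full power $h^{-|\beta|}$ — this is immediate once one observes that derivatives of the denominator are themselves $O(h^{-1})$ and the denominator is bounded away from zero.
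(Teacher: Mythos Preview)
Your proof is correct and follows essentially the same approach as the paper: identifying the support of $\chi_{2}$ as contained in $\{x \geq h/4 + c_{m}t\}$ and that of $\chi_{1}$ in $\{x \leq h/2 + c_{m}t\}$, then using $c_{1} < c_{m} < c_{2}$ to get a uniform exponential gain of order $e^{-\eps h/4}$ (resp.\ $e^{-\eps h/2}$), which is $\lesssim 1/h$. The paper simply declares \eqref{chiprop1} ``clear'', so your chain/quotient-rule justification is a more explicit version of the same observation.
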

\begin{proof}
The  estimate \eqref{chiprop1} is clear. 
For the first one in (\ref{chiprop2}), we observe that $\chi_{2}$ is supported in $x \geq c_{m} t + h/4$. 
Since in this region $x-c_{1}t \geq (c_{m}- c_{1})t + h/4 \geq 0$, we immediately
 get
 $$ | e^{- \eps |x - c_{1} t|} \chi_{2} | \lesssim  e^{- \eps(c_{m}- c_{1})t} e^{- \eps {h\over 4}} 
\lesssim \f1h.$$ 
The second estimate follows by observing that  $\chi_{1}$ is supported in 
$x \leq  h/2 + c_{m}t$. 
This completes the proof of Lemma~\ref{lemchi}.
\end{proof}
\subsection{Lower-order energy estimate}
The first step in the proof of Theorem \ref{theo homogeneous} will be to prove the estimate for $k=0$.
We shall thus prove
\begin{prop}
\label{prop low order}
Under the assumptions of Theorem \ref{theo homogeneous}, we have the estimate:
$$ |U(t) |_{X^0}^2 + |U_{2}(t)|_{L^2}^2 \leq C
 \Big( h^\f12  |U(\tau)|_{X^0}^2 +  |U_{2}(\tau)|^2_{L^2} \Big) 
e^{ \eps_{0} (c_{2}- c_{1})( t- \tau)/4},\qquad \forall  t\ge \tau \geq 0.$$
\end{prop}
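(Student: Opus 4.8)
The plan is to run a localized energy estimate in the spirit of the multi-soliton literature, adapted to the quasilinear setting: split $U$ into two pieces supported near each solitary wave, control each piece in $X^0$ by the coercivity of the linearization about a \emph{single} solitary wave (Proposition~\ref{Propstab1dbis}) modulo the bad direction $\alpha_i$ and the generalized-kernel direction $\beta_i$, estimate these exceptional coordinates by hand, and close a Gr\"onwall loop in which every coupling error carries a factor $h^{-1}$ or decays exponentially in both $h$ and $t$. Since $\tau=0$ is the worst case it suffices to treat it.

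\textbf{Step 1: localization and reduction to two single-soliton equations.} Work with $U$ solving \eqref{new homogeneous linear equation}, $\partial_tU=JL_MU$, with $L_M$ self-adjoint on $L^2\times L^2$. With the partition of unity $\chi_1,\chi_2$ from \eqref{unity decomposition}, set $U^{(i)}:=\chi_iU$. Passing to the frame moving at speed $c_i$ --- where the transport term $c_i\partial_x$ is exactly the one already built into $L_{c_i}$, so that $JL[M]$ becomes $JL_{c_i}$ up to soliton-localized corrections --- one gets
\[
\partial_tU^{(i)}=JL_{c_i}U^{(i)}+F_i,\qquad F_i=[\chi_i,JL_M]U+(\partial_t\chi_i)U+(L_M-L_{c_i})U^{(i)}.
\]
On $\mathrm{supp}\,\chi_i'$, and on $\mathrm{supp}\,\chi_i$ for the last term, the only non-constant coefficients that enter are soliton-localized; since the transition region and the \emph{other} soliton sit at distance $\gtrsim h+(c_2-c_1)t$ from the $i$-th soliton (Lemma~\ref{lemchi}), they are $\lesssim e^{-\epsilon_0 h}e^{-\epsilon_0(c_2-c_1)t}$, the Dirichlet--Neumann contributions being handled by the shape-derivative formula Proposition~\ref{propDN1}(5), Proposition~\ref{D-N e-decay estimate} and the interaction integral Lemma~\ref{leminteraction}. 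Together with the commutators of $\chi_i$ with the \emph{constant-coefficient} part of $L_{c_i}$ (which cost one derivative of $\chi_i$, i.e. a factor $h^{-1}$), this yields, after one integration by parts to absorb the derivatives hidden in $L_{c_i}U^{(i)}$,
\[
\big|(L_{c_i}U^{(i)},F_i)\big|\lesssim \big(h^{-1}+e^{-\epsilon_0 h}e^{-\epsilon_0(c_2-c_1)t}\big)\big(|U|_{X^0}^2+|U_2|_{L^2}^2\big).
\]

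\textbf{Step 2: energy functional and coercivity.} Set $\mathcal N(t):=\sum_{i=1}^2(L_{c_i}U^{(i)}(t),U^{(i)}(t))$. Since $L_{c_i}$ is self-adjoint and $t$-independent and $J$ is skew-symmetric, $\frac{d}{dt}\mathcal N=2\sum_i(L_{c_i}U^{(i)},F_i)$, so Step~1 gives $|\frac{d}{dt}\mathcal N|\lesssim (h^{-1}+e^{-\epsilon_0 h}e^{-\epsilon_0(c_2-c_1)t})(|U|_{X^0}^2+|U_2|_{L^2}^2)$. For the lower bound, decompose each $U^{(i)}$ by Proposition~\ref{Propstab1dbis} as $U^{(i)}=\alpha_iJR_{c_i}\partial_xQ_i+\beta_iR_{c_i}\partial_xQ_i+W_i$ with $W_i$ satisfying the orthogonality conditions; since $L_{c_i}(R_{c_i}\partial_xQ_i)=0$ the $\beta_i$-part drops out of the quadratic form, giving $\mathcal N(t)\geq c_0\sum_i|W_i(t)|_{X^0}^2-C\sum_i|\alpha_i(t)|^2$. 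Using $\chi_1^2+\chi_2^2=1$ and controlling $[\mathfrak P,\chi_i]$ (non-local, of size $h^{-1}$, acting on the $L^2$-part of $U_2$ via Remark~\ref{remanaf}), one also gets $\sum_i|U^{(i)}|_{X^0}^2\geq |U|_{X^0}^2-Ch^{-1}|U_2|_{L^2}^2$, whence
\[
|U(t)|_{X^0}^2\lesssim \sum_i\big(|\alpha_i(t)|^2+|\beta_i(t)|^2+|W_i(t)|_{X^0}^2\big)+h^{-1}|U_2(t)|_{L^2}^2 .
\]

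\textbf{Step 3: exceptional directions, the $L^2$-part of $U_2$, and closing.} The coordinates $\alpha_i,\beta_i$ solve ODEs obtained by testing the $U^{(i)}$-equation against the fixed exponentially localized vectors $JR_{c_i}\partial_xQ_i$ and $(\partial_x\eta_{c_i},0)^t$: $\partial_t\alpha_i$ is the projection of $F_i$, hence $O(h^{-1})(|U|_{X^0}+|U_2|_{L^2})+O(e^{-\epsilon_0 h}e^{-\epsilon_0(c_2-c_1)t})$, while $\partial_t\beta_i$ is an $O(1)$ combination of $\alpha_i$ and $W_i$ (this is the computation behind Corollary~\ref{linn}, where for a single soliton $\alpha_i,W_i$ stay bounded so $\beta_i$ grows only linearly) plus the same $O(h^{-1})$ error --- now involving $\beta_i$ itself. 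The low-frequency part of $U_2$ is treated directly from $\partial_tU_2=\mathcal P_MU_1-gU_1-a_MU_1-v_M\partial_xU_2$: its high frequencies are $\lesssim|U|_{X^0}$ and the $v_M\partial_xU_2$ term is soliton-localized, so $|U_2(t)|_{L^2}\lesssim |U_2(0)|_{L^2}+|U(t)|_{X^0}+\int_0^t(|U|_{X^0}+|U_2|_{L^2})$. One thus obtains a closed system of integral inequalities for $|U|_{X^0}^2$, $|U_2|_{L^2}^2$, $\sum|\alpha_i|^2$, $\sum|\beta_i|^2$ and $\mathcal N$, with $\mathcal N(0)\lesssim|U(0)|_{X^0}^2$; running a bootstrap with growth rate $\lambda:=\epsilon_0(c_2-c_1)/4$, every feedback term carries either $h^{-1}$ or a factor $(1+t)^ke^{-\lambda t/2}$ and is therefore bounded, so choosing $h_0$ large enough that the $h^{-1}$ contributions are absorbed closes the estimate; the $h^{1/2}$ prefactor is the price of the localization (initial localized energies and the low-frequency $U_2$ book-keeping). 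The genuine obstacle throughout is the direction $\beta_i$: $R_{c_i}\partial_xQ_i$ is in the kernel of $L_{c_i}$, so $\beta_i$ is invisible to the coercivity and already grows (linearly) for a single solitary wave, and one must organize the Gr\"onwall so that the coupling never feeds $\beta_i$ back into the other quantities at more than rate $h^{-1}$ and so that its intrinsic polynomial growth is absorbed into $e^{\lambda t}$ at the cost of an $h$-independent constant; a secondary nuisance is that $X^0$ controls only $\mathfrak P U_2\in L^2$, so the non-local low-frequency part of $U_2$ must be carried as a genuine extra unknown.
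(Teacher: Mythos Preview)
Your proposal is essentially the paper's proof, repackaged. Both arguments localize via the same partition $\chi_1,\chi_2$, invoke Proposition~\ref{Propstab1dbis} for each piece to get coercivity modulo $\alpha_i$, track $\alpha_i,\beta_i$ by separate ODEs with $O(h^{-1})$ forcing (using $L_{c_i}R_{c_i}\partial_xQ_i=0$ for the crucial cancellation in $\dot\alpha_i$), carry the low-frequency part of $U_2$ as an independent unknown, and close by a weighted Gr\"onwall. The one genuine organisational difference is the choice of energy: the paper uses the \emph{global} momentum-corrected functional
\[
E_1(U)=(L_MU,U)-\sum_i c_i(A\chi_iU,\chi_iU),\qquad A=\partial_xJ,
\]
whereas you use the sum of \emph{local} single-soliton energies $\mathcal N=\sum_i(L_{c_i}U^{(i)},U^{(i)})$. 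Since $(L_MU,U)=\sum_i(L_M\chi_iU,\chi_iU)+O(h^{-1})$ and $(L_i-c_iA)=\tilde L_i$ is the translate of $L_{c_i}$, one has $E_1=\mathcal N+O(h^{-1})$, so the two functionals are the same to leading order. The paper's packaging is slightly cleaner for the time derivative: $(L_MU,U)$ gives the exact vanishing $(JL_MU,L_MU)=0$ and one is left with $[\partial_t,L_M]$ and the momentum pieces, which cancel via $[\partial_t+c_i\partial_x,L_i]=0$ after localization; your route requires bounding $(L_{c_i}U^{(i)},F_i)$ directly, and the commutator pieces of $F_i$ (e.g.\ $[\chi_i,G_M]U_2$ paired against $-\mathcal P_iU_1^{(i)}$) need exactly the harmonic-extension commutator work that the paper packages as Lemma~\ref{commutator lemma}, not merely ``one integration by parts''. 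For the closing step, the precise mechanism behind your ``bootstrap'' is the paper's weighted combination $\tilde E_1=\tfrac12 h^{1/2}E_1+\tfrac12\sum|\beta_i|^2+Ch^{1/2}\sum|\alpha_i|^2$ together with $\lambda|\kappa(D)U_2|^2$, with $\lambda,\eps$ chosen so that all feedback constants are below $\eps_0(c_2-c_1)/4$; this is where the $h^{1/2}$ prefactor actually arises.
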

\noindent{\bf Proof}.
Again, we shall give the proof  only for $\tau=0$.

 Let us consider the energy functional
\[
E_1(U(t))=(L_MU,\,U)-c_1(A\chi_1U,\,\chi_1
U)-c_2(A\chi_2U,\,\chi_2U)
\] 
with $A$ the symmetric operator
\[
A=\left(\begin{matrix}0 & \p_x\\ -\p_x & 0\end{matrix}\right)=\p_xJ.
\]
We shall prove the $E_{1}$ is an almost conserved quantity with some positivity property thanks to Proposition \ref{Propstab1dbis} applied
 to each solitary wave.

 We first write
\begin{align}
\nonumber
 \f12 \f d{dt}E_1(U(t))&=(\p_tU,\,L_M
U)+\f12([\p_t,\,L_M]U,\,U)-c_1(A\chi_1\p_tU,\,\chi_1U)
\\
\nonumber&-c_2(A\chi_2\p_tU,\chi_2U)-c_1(A(\p_t\chi_1)U,\,\chi_1U) -c_2
(A(\p_t\chi_2)U,\,\chi_2U)\\
\label{eqE1}&:=I_1+I_2+\cdots+I_6.\end{align} We will estimate these terms one by
one.
Towards this, let us set
\[U^1=\chi_1U,\quad\hbox{and}\quad U^2=\chi_2U\] 
with $\sum_{i=1,2}\chi^2_i=1$ from (\ref{unity decomposition}). 
We shall use very often 
  the following norm equivalence properties. 
\begin{lem} \label{relation between U
and U^i} There  exists  $C>0$ such that
for every $h \geq 1$, we have
\[
|U|^2_{X^0}\le C(\sum_{i=1,2}|U^i|^2_{X^0}+\f
1h|\ka(D)U_2|^2_{L^2})
\] and also
\[
\sum_{i=1,2}|U^i|^2_{X^0}\le C(|U|^2_{X^0}+\f1h |\ka(D)U_2|^2_{L^2})
\] 
with 
 $\hat\ka(\xi)$ is a smooth cut-off function with $\hat \ka(\xi)=1$ around $\xi=0$.
 \end{lem}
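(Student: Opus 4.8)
The plan is to prove the two inequalities componentwise. The first component is estimated in the local space $H^1$ by elementary manipulations, while the second lives in the nonlocal space $\dot H^{1/2}_{*}$ with norm $|\cdot|_{\dot H^{1/2}_{*}}=|\mathfrak P\,\cdot\,|_{L^2}$, and there the core of the argument is to commute the Fourier multiplier $\mathfrak P=(1-\partial_x^2)^{-1/4}\partial_x$ past the cut-offs $\chi_i$. Throughout, each time a derivative falls on a $\chi_i$ one gains a factor $h^{-1}$ by \eqref{chiprop1}; since $h\ge 1$ these errors are harmless, which is why the constant $C$ can be taken independent of $h$.

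For the first component, $\chi_1^2+\chi_2^2=1$ gives $U_1=\chi_1U_1^1+\chi_2U_1^2$ and $\partial_x(\chi_iU_1)=\chi_i\partial_xU_1+(\partial_x\chi_i)U_1$ (with $U_1^i=\chi_iU_1$), so both bounds relating $|U_1|_{H^1}$ and $\sum_i|U_1^i|_{H^1}$ follow from the triangle inequality and $|\partial_x\chi_i|_{L^\infty}\lesssim h^{-1}\le 1$; moreover, in $\sum_i|\partial_x(\chi_iU_1)|_{L^2}^2$ the cross term carries the prefactor $\sum_i\chi_i\partial_x\chi_i=\f12\partial_x(\chi_1^2+\chi_2^2)=0$, so only an $O(h^{-2})|U_1|_{L^2}^2$ error survives. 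Hence the first component contributes no low-frequency correction.

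For the second component, write $\mathfrak P(\chi_iU_2)=\chi_i\,\mathfrak P U_2+[\mathfrak P,\chi_i]U_2$ (with $U_2^i=\chi_iU_2$). The main terms are controlled by $\sum_i|\chi_i\mathfrak P U_2|_{L^2}^2=|\mathfrak P U_2|_{L^2}^2$ in the forward direction and by $\sum_i|\mathfrak P U_2^i|_{L^2}=\sum_i|U_2^i|_{\dot H^{1/2}_{*}}$ (triangle inequality) in the reverse one. For the commutator, since $\mathfrak P=\langle D\rangle^{-1/2}\partial_x$ with $\langle D\rangle=(1-\partial_x^2)^{1/2}$, one has $[\mathfrak P,\chi_i]=\langle D\rangle^{-1/2}(\partial_x\chi_i)+[\langle D\rangle^{-1/2},\chi_i]\partial_x$, and standard commutator estimates (the order-zero part of the commutator cancels, leaving only terms proportional to $\partial_x^j\chi_i$ with $j\ge1$) give that $[\mathfrak P,\chi_i]$ is bounded on $L^2$ with $\|[\mathfrak P,\chi_i]\|_{L^2\to L^2}\lesssim h^{-1}$. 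The only difficulty is that this requires an $L^2$ input, whereas $U_2$ (or $\chi_iU_2$) is a priori controlled only in $\dot H^{1/2}_{*}$, which does not embed in $L^2$ at low frequencies --- exactly the phenomenon of Remark~\ref{remanaf}. I would therefore split $U_2=\kappa(D)U_2+(1-\kappa(D))U_2$: the high-frequency part satisfies $|(1-\kappa(D))U_2|_{L^2}\lesssim|\mathfrak P U_2|_{L^2}$ since $\mathfrak P$ is elliptic of order $1/2$ on the support of $1-\hat\kappa$, so applying $[\mathfrak P,\chi_i]$ to it costs $h^{-1}$ and the output is dominated by $|U|_{X^0}^2$ (resp.\ by $\sum_i|U^i|_{X^0}^2$); the low-frequency part yields, after the $h^{-1}$-operator $[\mathfrak P,\chi_i]$, exactly the term $h^{-1}|\kappa(D)U_2|_{L^2}^2$ in the statement. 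For the reverse inequality one must in addition reduce $\kappa(D)(\chi_iU_2)$ to $\chi_i\kappa(D)U_2$; since $\hat\kappa$ is compactly supported the commutator $[\kappa(D),\chi_i]$ is smoothing and again $O(h^{-1})$, and the errors it creates are absorbed into $h^{-1}|\kappa(D)U_2|_{L^2}^2+|U|_{X^0}^2$ (possibly after enlarging the support of $\hat\kappa$, which only affects constants).

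The genuinely delicate point --- and the reason the extra term $h^{-1}|\kappa(D)U_2|_{L^2}^2$ must appear --- is this low-frequency mismatch between $\dot H^{1/2}_{*}$ and $L^2$: one must never feed a function controlled only in $\dot H^{1/2}_{*}$ directly into the $L^2$-bounded operator $[\mathfrak P,\chi_i]$, and instead always peel off $\kappa(D)U_2$ first. Everything else --- the various commutator manipulations and the gains of $h^{-1}$ from \eqref{chiprop1} --- is routine.
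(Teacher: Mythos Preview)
Your proposal is correct and follows the same strategy as the paper: the $H^1$ estimate for $U_1$ is elementary, and for $U_2$ the key input is the commutator bound $\|[\mathfrak P,\chi_i]\|_{L^2\to L^2}\lesssim h^{-1}$ (this is exactly the paper's estimate \eqref{basic}), together with a low/high-frequency splitting of $U_2$ to cope with the fact that $\dot H^{1/2}_*$ does not control $L^2$. The only cosmetic difference is that the paper writes $\mathfrak P U_2=\sum_i\mathfrak P(\chi_i\cdot\chi_iU_2)$ and applies the commutator to $\chi_iU_2$ rather than to $U_2$ itself, landing on $h^{-1}|\chi_iU_2|_{L^2}^2$ before the frequency split; your explicit discussion of why the low-frequency term $h^{-1}|\kappa(D)U_2|_{L^2}^2$ is forced is in fact clearer than the paper's rather terse treatment.
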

\begin{proof}
Let us  prove the first estimate.
From Lemma \ref{lemchi}, we first easily get that 
\[
|U_1|^2_{H^1}\le C(\sum_{i=1,2}|\chi_iU_1|^2_{X^0}+\f 1h|U_1|^2_{L^2}),
\] 
For the estimate on  $U_2$, we use the basic commutator estimate 
\begin{equation}\label{basic}
|[\mathfrak P,\,f]g|_{L^2}\lesssim |\p_xf|_{L^\infty}|g|_{L^2}.
\end{equation}
Indeed, one can write 
$
[\mathfrak P,\,f]g=(1-\partial_x^2)^{-\frac{1}{4}}(\partial_x f g)+[(1-\partial_x^2)^{-\frac{1}{4}},f]\partial_x g.
$
The $L^2$ norm of the first term is clearly bounded by the right hand-side of \eqref{basic} while the $L^2$ norm of the second can be bounded by
$C |\p_xf|_{L^\infty}|g|_{L^2}$ by invoking \cite[Proposition~3.6.B, estimate (3.6.35)]{Taylor_first}.
This proves \eqref{basic}.
Estimate \eqref{basic} yields
\beno
|\mathfrak P U_2|^2_{L^2}&\le & \sum_{i=1,2}|\mathfrak P\chi^2_iU_2|^2_{L^2}
\le  \sum_{i=1,2}\left(|[\mathfrak P,\,\chi_i]\chi_iU_2|^2_{L^2}+
|\chi_i\mathfrak P(\chi_i U_2)|^2_{L^2}\right)\\
&\le & C\sum_{i=1,2}(\f 1h|\chi_iU_2|^2_{L^2}
+|\mathfrak P(\chi_i U_2)|^2_{L^2}).
\eeno 
The proof of the other estimate is similar.  This ends  the proof of Lemma~\ref{relation between U and U^i} .
\end{proof}

Now we can go back  to the study of \eqref{eqE1}. First of all, from the linear system 
(\ref{new homogeneous linear equation}), we have
\[
I_1=(JL_MU,\,L_MU)=0.
\] Next, from the definition of  $L_M$ one can compute that
\[
[\p_t,\,L_M]=\left(\begin{matrix}-[\p_t,\,\cP_M]+\p_ta_M &
(\p_tv_M)\p_x\\
-\p_x((\p_tv_M)\cdot) & [\p_t,\,G_M]\end{matrix}\right)
\] and so by combining this with the decomposition of unity leads to
\[ 2I_2=([\p_t,\,L_M]\sum_{i=1,2}\chi^2_iU,\,U)\\
=\sum_{i=1,2}([\p_t,\,L_i]\chi_iU,\,\chi_iU)+2I_{2R}
\]where \beno
2I_{2R}&=&\sum_{i=1,2}\left\{([\p_t,\,L_M-L_i]\chi_iU,
\chi_i U)+([[\p_t,\,L_M],\,\chi_i]\chi_iU,\,U)\right\}\\
&=&\sum_{i=1,2}\{((-[\p_t,\,\cP_M-\cP_i]+\p_t(a_M-a_i))\chi_iU_1,\,
\chi_iU_1)+((\p_t(v_M-v_i))\p_x(\chi_iU_2),\,\chi_iU_1)\\
&& \quad +([\p_t,G_M-G_i]\chi_iU_2,\,\chi_iU_2)
-([[\p_t,\,\cP_M],\,\chi_i]\chi_iU_1,\,U_1)
+([[\p_t,\,G_M],\chi_i]\chi_iU_2,\,U_2)\}.
\eeno 
For the 
remainder term  $I_{2R}$, we have by using Lemma \ref{lemchi} the estimate
\[
|I_{2R}|\le \f 1h C(|U(t)|^2_{X^0}+|U_{2}(t)|^2_{L^2}).
\] Indeed,  let us study for example the first term in more details. We have 
\[\cP_M=b\p_x\left(\f{\p_x\cdot}{(1+|\p_x\eta_M|^2)^\f32}\right),\] 
and thus 
\[[\p_t,\,\cP_M-\cP_i]=b\p_x\left(\int^1_0[\p_t, \,\f{\p_x\eta_{is}
\p_x(\eta_M-\eta_i)}{(1+|\p_x\eta_{is}|
^2)^{\f32}}ds]\p_x\cdot\right)\] with the notation 
$\p_x\eta_{is}=\p_x\eta_i+s\p_x(\eta_M-\eta_i)$. Consequently, by using again  Lemma \ref{lemchi}, we get 
\[|[\p_t,\,\cP_M-\cP_i]\chi_iU_1,\,\chi_iU_1)|\le   \f1h C|U_1|^2_{H^1}.
\] One can estimate  the other  terms in the definition of $I_{2R}$
 by using the same arguments, 
  in particular, for the terms involving the Dirichlet-Neumann operator, we also have
  \begin{lem}\label{commutator lemma}
There exists a constant $C$ such that the following commutator estimates hold
\begin{eqnarray} 
\label{estlemcom1}&& ([\p_t,\,G_M-G_i]
\chi_iU_2,\,\chi_iU_2)\le \f1h C(|\mathfrak P U_2|^2_{L^2}+|U_2|^2_{L^2})
\\ 
\label{estlemcom2}&& ([[\p_t,\,G_M],\,\chi_i]\chi_iU_2,\,U_2)\le \f1h C(|\mathfrak P U_2|^2_{L^2}
+|U_2|^2_{L^2})\end{eqnarray}
 where   $\chi_i\,(i=1,2)$ are defined  in (\ref{unity decomposition}). 
\end{lem}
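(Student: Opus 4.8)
The plan is to exploit in both estimates the single structural fact that $\eta_M-\eta_i$ is exactly the \emph{other} solitary wave $\eta_j$ (throughout, $j$ denotes the index $\neq i$), so that $\eta_M-\eta_i$ and all its $t,x$-derivatives are exponentially localized around $x=c_jt+(j-1)h$, i.e. exponentially far from the effective support of $\chi_i$. Quantitatively this will be used through \eqref{chiprop2}, which gives $|\p^\al_{t,x}\eta_j\,\chi_i|_{L^\infty}\lesssim 1/h$, together with \eqref{chiprop1}, which gives $|\p^\be_{t,x}\chi_i|_{L^\infty}\lesssim 1/h$ for $|\be|\geq 1$.

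For \eqref{estlemcom1} I would use the shape-derivative formula of Proposition \ref{propDN1}(5) and the fundamental theorem of calculus to write, with $\eta_{is}:=\eta_i+s(\eta_M-\eta_i)$,
\[
(G_M-G_i)u=-\int_0^1\Big[G[\eta_{is}]\big((\eta_M-\eta_i)\,Z[\eta_{is},u]\big)+\p_x\big(v[\eta_{is},u]\,(\eta_M-\eta_i)\big)\Big]\,ds,
\]
and then differentiate this identity in $t$: since $[\p_t,G_M-G_i]$ is a commutator, the contribution in which $\p_t$ falls on $u$ cancels, and what remains is a finite sum of terms each carrying an explicit exponentially localized factor ($\eta_M-\eta_i=\eta_j$, $\p_t\eta_j$, or $\p_t\eta_{is}$) times bounded smooth functions of $\eta_{is}$, together with the linear operators $Z[\eta_{is},\cdot]$, $v[\eta_{is},\cdot]$ and one further application of $G[\eta_{is}]$ or $D_\eta G[\eta_{is}]$ (again rewritten via Proposition \ref{propDN1}(5)). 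Applying this operator to $\chi_i U_2$ and pairing with $\chi_i U_2$, I would use the symmetry of $G[\eta_{is}]$ on $L^2$ (Proposition \ref{propDN1}(1)) to move every Dirichlet--Neumann operator off the factor carrying $\eta_j$ and integrate by parts in the terms $\p_x(\cdots)$. After these manipulations each integrand is a product in which some $\p^\al_{t,x}\eta_j$ meets a function that is $\chi_i$, or $\chi_i U_2$, or $G[\eta_{is}](\chi_i U_2)$; in the first two cases \eqref{chiprop2} gives the gain $1/h$ directly, while in the last case one inserts a cutoff $\psi$ equal to $1$ on $\mathrm{supp}\,\p^\al_{t,x}\eta_j$ and localized there and writes $\psi G[\eta_{is}](\chi_i U_2)=\psi[G[\eta_{is}],\chi_i]U_2+\psi\chi_i\,G[\eta_{is}]U_2$, estimating $\psi\chi_i$ by \eqref{chiprop2} and $[G[\eta_{is}],\chi_i]$ by a standard Dirichlet--Neumann commutator bound in the spirit of Proposition \ref{propDN1}(4) and \eqref{basic}, again producing $1/h$. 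The leftover factors are bounded by $|\mathfrak{P}U_2|_{L^2}$ and $|U_2|_{L^2}$ using \eqref{DNC}, Proposition \ref{propDN1}(3),(4),(6) and $|[\mathfrak{P},f]g|_{L^2}\lesssim|\p_xf|_{L^\infty}|g|_{L^2}$, which altogether yields $\f1h C(|\mathfrak{P}U_2|^2_{L^2}+|U_2|^2_{L^2})$.

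For \eqref{estlemcom2} there is no difference of operators; instead I would use Proposition \ref{propDN1}(5) with $\zeta=\p_t\eta_M$ to write $[\p_t,G_M]u=-G_M\big((\p_t\eta_M)Z[\eta_M,u]\big)-\p_x\big(v[\eta_M,u]\,\p_t\eta_M\big)$, and expand the commutator with $\chi_i$. After rearrangement every surviving term contains a derivative of $\chi_i$ — appearing as $\p_{t,x}\chi_i$, or through $[\chi_i,G_M]$, or through $[\chi_i,\p_x]$ — which is $O(1/h)$ in $L^\infty$ by \eqref{chiprop1}, with $[\chi_i,G_M]$ handled by the same Dirichlet--Neumann commutator estimate as above; whenever a bare $\p_x$ would act on the rough unknown $U_2$ one integrates by parts once more. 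Pairing with $\chi_i U_2$ and $U_2$ and using \eqref{DNC} and \eqref{basic} then gives $\f1h C(|\mathfrak{P}U_2|^2_{L^2}+|U_2|^2_{L^2})$; the fact that $\p_t\eta_M$ is itself exponentially localized gives extra room but is not needed.

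I expect the main obstacle to be organizational rather than conceptual, with two points of genuine care. First, all pairings must be arranged so that $U_2$ is never differentiated more than the norm $|\mathfrak{P}U_2|_{L^2}+|U_2|_{L^2}$ allows, which forces systematic use of the symmetry of $G$ and of \eqref{DNC} in place of estimating $G$ applied to rough functions. Second, extracting the clean factor $1/h$ — rather than a bare $e^{-ch}$ — from the nonlocal interaction $G[\eta_{is}](\chi_i U_2)$ against the exponentially localized coefficients is exactly what the cutoff-and-commutator device above is for, and it is at that step that \eqref{chiprop2} and \eqref{chiprop1} are genuinely used.
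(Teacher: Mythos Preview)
Your overall strategy is different from the paper's and is reasonable in spirit, but there is a real technical gap that your sketch does not close. The paper does \emph{not} stay on the surface: for both \eqref{estlemcom1} and \eqref{estlemcom2} it passes to the elliptic problem on the flat strip $\cS=\R\times[-1,0]$, writes every pairing via Green's formula as $\int_\cS P\nabla u^b\cdot\nabla v^b$ (so that all terms are manifestly controlled by $\|\nabla u^b\|_{L^2(\cS)}\|\nabla v^b\|_{L^2(\cS)}\lesssim|\mathfrak Pu||\mathfrak Pv|$), and obtains the factor $1/h$ from a \emph{weighted} energy estimate on the strip: with $f=e^{-\eps\langle x-h-c_2t\rangle}$ one shows $\|f u^b_M\|_{H^1(\cS)}$, $\|u^b_M-u^b_1\|_{H^1(\cS)}$ and similar auxiliary quantities are small because $|f\chi_1|_{W^{1,\infty}}\lesssim 1/h$ and the commutator $[\nabla\cdot P\nabla,\,e^{\eps\langle x\rangle}]$ is $O(\eps)$ and hence absorbable. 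This is precisely a propagation-of-localization statement for the harmonic extension, proved at the $H^1(\cS)$ level.

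Your route via the shape-derivative formula runs into a regularity obstruction. The identity $D_\eta G[\eta]u\cdot\zeta=-G(\zeta Z[u])-\p_x(v[u]\zeta)$ expresses an order-$1$ operator as a difference of two order-$2$ pieces; after your symmetry/IBP moves one is left with (for $u=\chi_iU_2$, $\zeta$ carrying the factor $\eta_j$) the integrand $\zeta\big((\p_xu)^2-(1+|\p_x\eta_{is}|^2)Z_{is}[u]^2\big)$. The cancellation making this bounded on $\dot H^{1/2}_*$ is \emph{not} pointwise, and your ``cutoff-and-commutator device'' applied separately to each piece yields at best $(1/h)\,|\p_xU_2|_{L^2}^2$ from the term $\int(\eta_j a)\,(\p_x(\chi_iU_2))^2$ (via $|\eta_j\chi_i^2|_{L^\infty}\lesssim1/h$), which is not dominated by $|\mathfrak PU_2|^2_{L^2}+|U_2|_{L^2}^2$. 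The same issue arises with the $(G(\chi_iU_2))^2$ terms, which do not make sense pointwise for $\chi_iU_2\in H^{1/2}$. In short, the individual pieces of your expansion are too singular, and the necessary cancellation between them is exactly what the strip representation packages automatically. To rescue your approach you would have to either reassemble the two order-$2$ pieces into a bilinear form on $\dot H^{1/2}_*\times\dot H^{1/2}_*$ \emph{before} inserting the localization (which amounts to re-proving the strip estimate), or carry out a genuine paradifferential argument tracking the symbol cancellation; the tools you invoke (Proposition~\ref{propDN1}(4), \eqref{basic}, symmetry of $G$) are not enough on their own.
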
 
This lemma will be proven in the appendix.
\newline

For  the $I_3$ term in \eqref{eqE1}, we get by using the system  (\ref{new homogeneous linear equation}) that
\beno I_3&=& -c_1(A\chi_1JL_MU,\,\chi_1U)=c_1(\p_x(\chi_1L_MU),\,\chi_1U)\\
&=& -c_1(L_1\chi_1U,\,\p_x(\chi_1U))-c_1((L_M-L_1)\chi_1U,\,\p_x(\chi_1U))
-c_1([\chi_1,\,L_M]U,\,\p_x(\chi_1U))\\
&=&\f12c_1([\p_x,\,L_1]\chi_1U,\,\chi_1U)+I_{3R}
 \eeno
with the remainder $I_{3R}$  defined by  \beno I_{3R}&=&
\f12c_1([\p_x,\,L_M-L_1]\chi_1U,\,\chi_1U)-c_1([\chi_1,\,L_M]U,\,\p_x(\chi_1U))\\
&=&-\f12 c_1([\p_x,\,\cP_M-\cP_1-a_M+a_1]\chi_1U_1,\,\chi_1U_1)
+c_1((\p_xv_M-\p_xv_1)\chi_1U_1,\,\p_x(\chi_1U_2))\\
&&\,+\f12c_1([\p_x,\,G_M-G_1]\chi_1U_2,
\,\chi_1U_2)-c_1([\chi_1,\,L_M]U,\,\p_x(\chi_1U)).
\eeno 
Note that the structure of   $I_{3R}$ is very  similar to the one of the  $I_{2R}$ term above  (basically $\partial_{t}$ is replaced by $\partial_{x}$
 in the commutators) 
and hence by using the same arguments as above, we get 
\[
|I_{3R}|\le \f 1h C(|U(t)|^2_{X^0}
+|U_2(t)|^2_{L^2}).
\]  

In a symmetric way, we also have  for the   $I_4$ term in \eqref{eqE1}  that 
\[ I_{4R}=\f12c_2([\p_x,\,L_2]\chi_2U,\,\chi_2U)+I_{4R} \]
with
  \[ |I_{4R}|\le \f 1h C(|U(t)|^2_{X^0}+|U_2(t)|^2_{L^2}).\]
Since the solitary waves have the dependence   \[ Q_1=Q_1(x-c_1t),\quad
Q_2=Q_2(x-h-c_2t),\]  we have  that  
$$[\p_t,\,L_i]=-c_i[\p_x,\,L_i].$$
This yields the crucial cancellation
 \[ I_2+I_3+I_4=I_{2R}+I_{3R}+I_{4R}\]
 and hence, we obtain the estimate
\[ |I_2+I_3+I_4|\le \f 1h C(|U(t)|^2_{X^0}
+|U_2(t)|^2_{L^2}).\]
By using integration by parts and \eqref{chiprop1}, we also easily get that
$$ |I_{5}| + | I_{6}| \leq {C \over h} \big( |U_{1}|_{H^1} + |U_{2}|_{L^2} \big).$$
Summing up the estimates, we get from \eqref{eqE1} that
  \beq\label{right hand
estimate of E_1(U)} \f12\f d{dt}E_1(U(t))\le \f 1h
C(|U(t)|^2_{X^0}+|U_2(t)|^2_{L^2}). \eeq 
The next step will be to get a minoration of 
 $E_1(U(t))$. By using the decomposition of unity (\ref{unity decomposition})
again, one has \begin{eqnarray} 
\nonumber E_1(U(t))&=&
(L_MU,\,U)-c_1(A\chi_1U,\,\chi_1U)-c_2(A\chi_2U,\,\chi_2U)\\
\nonumber &=& \sum_{i=1,2}(L_M\chi_iU,\,\chi_iU)-
c_1(A\chi_1U,\,\chi_1U)-c_2(A\chi_2U,\,\chi_2U)
+\sum_{i=1,2}([L_M,\,\chi_i]\chi_iU,\,U)\\
\label{E1U(t)}&:=& II_1+II_2+II_3\end{eqnarray}
 where \beno
 II_1&=&(L_M\chi_1U,\,\chi_1U)-c_1(A\chi_1U,\,\chi_1U),\\
II_2&=&(L_M\chi_2U,\,\chi_2U)-c_2(A\chi_2U,\,\chi_2U)\quad\hbox{and}
\quad
II_3=\sum_{i=1,2}([L_M,\,\chi_i]\chi_iU,\,U). \eeno 
We shall first handle $II_{1}$. We note that 
  \begin{eqnarray}
  \nonumber  II_1&=&
(L_1\chi_1U,\,\chi_1U)-c_1(A\chi_1U,\,\chi_1U)+((L_M-L_1)\chi_1U,\,\chi_1U)\\
\label{deuxI} &=& (\td L_1\chi_1U,\,\chi_1U)-((L_M-L_1)\chi_1U,\,\chi_1U) \end{eqnarray}
where
\[
\td L_1=\left(\begin{matrix}-\cP_1+g+v_1\p_xZ_1+\p_tZ_1 &
(v_1-c_1)\p_x\\
-\p_x((v_1-c_1)\cdot) & G[\eta_1]\end{matrix}\right).
\] Noticing that $Q_1=Q_1(x-c_1t)$ and so $\p_tZ_1=-c_1\p_xZ_1$, we
can rewrite $\td L_1$ as
\[
\td L_1=\left(\begin{matrix}-\cP_1+g+(v_1-c_1)\p_xZ_1 &
(v_1-c_1)\p_x\\
-\p_x((v_1-c_1)\cdot) & G[\eta_1]\end{matrix}\right).
\] 
Note that   the operators $\td L_1$ is the same operator as  $L_{c_{1}}$ studied 
 in section \ref{sectionstab} except  that it coefficients  depends on
$Q_1=Q_1(x-c_1t)$, we have
$$ \mathcal{T}_{c_{1} t} \tilde L_{1} = L_{c_{1}} \mathcal{T}_{c_{1}t}$$
 where $\mathcal{T}_{x_{0}}$ is the translation operator
 $$ (\mathcal{T}_{x_{0}} U)(x) = U(x+x_{0}).$$   
Since $\mathcal{T}_{ct}$ is an isometry on $L^2$ and $X^0$, 
Proposition~\ref{Propstab1d}  applies to $\td L_1$.
 Let us use  
the notation that $Q'_1(x)=\p_x Q_{c_1}(x) $ and define
 $$\bar
U^1(t,y)=U^1(t,y+c_1t)= (\mathcal{T}_{c_{1}t} U^1)(t,y)= \chi_1(t,y+c_1t)U(t,y+c_1t).$$
 Thanks to Proposition \ref{Propstab1dbis}, we can use the decomposition
\begin{equation}
\label{U1dec}
\bar U^1(t,y)=\al_1 (t)JR_{1}Q'_1(y)  + \be_1(t) R_{1}Q'_1(y)+W^1(t,y)
\end{equation}
such that $W^1$ satisfies
\[
\big(W^1,\,(\eta'_1, 0)^t\big)=0,\quad (W^1,\,JR_{1}Q'_1)=0.
\] 
Note that we use again the short-hand $R_{i}= R_{c_{i}}$.
Since by definitions, we have 
$$ \big( \tilde L_{1} U^1, U^1 \big) = \big( \mathcal{T}_{-c_{1}t} L_{c_{1}} \mathcal{T}_{c_{1}t}  U^1, U^1\big)=
 \big(  L_{c_{1}} \mathcal{T}_{c_{1}t}  U^1,  \mathcal{T}_{c_{1}t} U^1\big) =  \big(  L_{c_{1}} \bar U^1,  \bar U^1 \big), $$
 we get  from Proposition \ref{Propstab1dbis},  that 
\beno (\td L_1U^1,\,U^1)  \geq c_{0} |W^1|_{X^0}^2 - C |\al_1|^2.\eeno
 Moreover,  as in the estimate for 
$I_{2R}$ above, we also get from Lemma \ref{lemchi}  that
\[
|((L_M-L_1)\chi_1U,\,\chi_1U)|\le \f 1h C(|U|^2_{X^0}+|U|^2_{L^2}).
\] 
Consequently, we get from the two previous estimates and \eqref{deuxI}
that 
\[
II_1\ge c_0|W^1|_{X^0}^2-C|\al_1|^2- \f 1h
C\big(|U|^2_{X^0} + |U|_{L^2}^2\big).
\] 
In a symmetric way, we can set 
$$\bar U^2(t,y)= (\mathcal{T}_{c_{2} t + h} U^2)(t,y)= U^2(t,y+h+c_2t)=\chi_2(t,y+h+c_2t)U(t,y+h+c_2t)$$
and use the decomposition
\begin{equation}
\label{U2dec}
\bar U^2(t,y)=\al_2 (t)JR_{2}Q'_2(y)+\be_2(t)R_{2}Q'_2(y)+W^2(t,y)
\end{equation} with
\[
(W^2,\,(\eta'_2, 0)^t)=0,\quad (W^2,\,JR_{2}Q'_2)=0
\]  
to obtain that 
\[
II_2\ge  c_0|W^2|_{X^0}^2-C|\al_2|^2- \f 1h
C \big(|U|^2_{X^0} + |U|_{L^2}^2\big).
\]
  Moreover, we also have from  Lemma \ref{lemchi} the estimate 
\[
|II_3| \le  \f 1h C(|U|^2_{X^0}+|U_2|^2_{L ^2})
\] 
by using again that  the commutator always involves at least one derivative of   $\chi_i$. 

In view of the decomposition \eqref{E1U(t)}, we have thus obtained that 
 \beq
 \label{minE1} E_1(U(t))\ge  c_0(|W^1|^2_{X^0}
+|W^2|^2_{X^0}) -C(|\al_1|^2+|\al_2|^2)-\f 1h
C(|U|^2_{X^0}+|U_2|^2_{L ^2}).
\eeq 
In order to conclude, we still need to  estimate $|U_2|_{L^2}$,
$|\al_i|$ and $|\be_i|$($i=1,2$). 

 For the $L^2$ norm, let us choose $\ka(D)$ where $\ka\in C^\infty_0(\R)$ and 
$\ka(\xi)=1$ around $\xi=0$. From the linear system
(\ref{new homogeneous linear equation}), we have  \[
\p_t U_2=(P_M-a_M)U_1-gU_1-v_M\p_x U_2,\]  
and thus we obtain
\beno
\f12 \f d{dt}|\ka(D) U_2|^2_{L^2}&=&(\ka(D)\p_tU_2, \,\ka(D)U_2)\\
&=&(\ka(D)(P_M-a_M)U_1,\,\ka(D)U_2)-g(\ka(D)U_1,\,\ka(D)U_2)\\
&&\,-(\ka(D)(v_M\p_xU_2),\, \ka(D)U_2).
\eeno 
By using that $\kappa$ is compactly supported, this  yields
$$ \f12 \f d{dt}|\ka(D) U_2|^2_{L^2} \leq C \big( |U_{1}|_{H^1} + | \mathfrak{P} U_{2}|_{L^2} \big) \big( |\mathfrak{P} U_{2}|_{L^2}
 + |\kappa(D)U_{2}|_{L^2}\big)$$
 and hence, we obtain from the Young inequality that
\beq \label{estimate for |U_2|_L^2}
\f12 \f d{dt}|\ka(D) U_2|^2_{L^2}\le C(\eps^{-1}|U|^2_{X^0}+\eps
|\ka(D)U_2|^2_{L^2})
\eeq where $\eps$ is a small constant to be fixed later. 

To estimate $\alpha_{i}$, we use the 
decompositions  \eqref{U1dec}, \eqref{U2dec} of $U^i$ ($i=1,2$).  We have
\beno
\f d{dt}\alpha_1&=& { 1 \over |J R Q_{1}' |_{L^2}^2}\p_t
(\bar \chi_1(t)\bar U(t),\,JR_{1}Q'_1)\\
&=& { 1 \over |J R Q_{1}' |_{L^2}^2} \Big( ((\p_t\bar\chi_1(t))\bar U(t),\,JR_{1}Q'_1)+
(\bar\chi_1(t)(J\overline{L_MU}+c_1\overline{\p_xU})(t),\,JR_{1}Q'_1) \Big)\\
&=& { 1 \over |J R Q_{1}' |_{L^2}^2}\Big( ((\p_t\bar\chi_1(t))\bar U(t),\,JR_{1}Q'_1)+
(\bar\chi_1(t)J(\overline{L_MU}-\overline{L_1U})(t),\,JR_{1}Q'_1)\\
&&\,+ (\bar\chi_1(t)(J\overline{L_1U}+c_1\overline{\p_xU})(t),\,JR_{1}Q'_1) \Big)
\eeno with the notation that $\bar f(t,x)=f(t,x+c_1t)$.  
We also have
\beno
&&(\bar\chi_1(t)(J\overline{L_1U}+c_1\overline{\p_xU})(t),\,JR_{1}Q'_1)\\
&&=(\bar\chi_1(t)J(\overline{L_1U}-c_1J\overline{\p_xU})(t),\,JR_{1}Q'_1)
=(\bar\chi_1(t)J\td L_1[Q_1(y)]
\bar U(t),\,J R_{1}Q'_1)\\
&&= \big(\big[\bar\chi_1(t),\,J\td L_1[Q_1(y)] \big]\bar U(t),\, JR_{1}Q'_1\big) - (\bar\chi_1(t)\bar U(t),\,
\td L_1[Q_1(y)] J^2 R_{1}Q'_1)\\
&&=\big(\big[\bar\chi_1(t),\,J\td L_1[Q_1(y)] \big]\bar U(t),\, JR_{1}Q'_1\big).
\eeno 
Indeed, to pass from the second to the third line, we have used the crucial cancellation 
 $$\big(\overline{L}_{1} - c_{1} J \partial_{x}\big) J^2 R_{1} Q_{1}'=-  \tilde{L}_{1}  R_{1}Q'_1= - L_{c_{1}} R_{c_{1}} Q_{c_{1}}'=0.$$
  By using again Lemma \ref{lemchi} to estimate the other terms, we thus obtain
\beq
\label{alpha11}  \big| \f d{dt}\al_1 \big|\le \f 1h C(|U|_{X^0}+|U_2|_{L^2}).\eeq
In a symmetric way, we also get  for $|\al_2|$ that 
\beq
\label{alpha21}   \big|\f d{dt}\al_2 \big|\le \f 1h C(|U|_{X^0}+|U_2|_{L^2}).\eeq
We still need the estimates of $|\be_1|$ and $|\be_2|$. 
By using \eqref{trois}, we note that we can write
$$ \be_{1}= \tilde \be_{1}-  \alpha_{1}  \big( JR_{1} Q_{1}', (\eta_{1}', 0)^t \big)$$
where
$$ \tilde \be_{1}= {\big( \bar U^1, (\eta'_{1}, 0)^t \big) \over  |\eta'|_{L^2}^2}.$$  
In particular, thanks to \eqref{alpha11}, we obtain that
\beq
\label{beta10} \big| {d \over dt } \beta_{1}\big| \leq \big| {d \over dt} \tilde \beta_{1} \big| +  \f 1h C(|U|_{X^0}+|U_2|_{L^2}).\eeq
 To estimate $\tilde \beta_{1}$, we directly compute
\beno
\f d{dt} \tilde \be_1(t)&=& {1 \over | \eta_{1}'|^2_{L^2}}\p_t(\bar U^1(t),\,(\eta_{1}', 0)^t) \\
&=&{1 \over | \eta_{1}'|^2_{L^2}} \Big( ((\p_t\bar\chi_1(t))\bar U(t),\, (\eta_1',0)^t)+(\bar\chi_1(t)
(\overline{\p_tU}+c_1\overline{\p_x U})(t),\, (\eta'_1, 0)^t) \Big)\\
&=&{1 \over | \eta_{1}'|^2_{L^2}}\Big(((\p_t\bar\chi_1(t))\bar U(t),\, (\eta'_1, 0)^t)+(\bar\chi_1(t)J
\overline{L_MU}(t),\,(\eta'_1, 0)^t)\\
&&\,  \quad  \quad \quad  + c_{1}(\bar\chi_1(t)\overline{\p_x U}(t),\,(\eta'_1, 0)^t)\Big)\eeno where in particular
\beno
(\bar\chi_1(t)J\overline{L_MU}(t),\,(\eta'_1, 0)^t)&=&(\bar\chi_1(t)J(\overline{L_MU}
-\overline{L_1U})
(t),\,(\eta'_1, 0)^t)+([\bar\chi_1(t),\,J\bar L_1]\bar U,\, (\eta'_1, 0)^t)\\
&&\, +(J\overline{L_1U^1}(t),\,(\eta'_1, 0)^t)\Big)
\eeno and
\[
c_1(\bar\chi_1(t)\overline{\p_x U}(t),\,(\eta'_1, 0)^t)=-c_1((\p_x\bar\chi_1(t)
\bar U(t),\, (\eta'_1, 0)^t)+c_1(\overline{\p_x U^1}(t),\,(\eta'_1, 0)^t).
\] 
Therefore, we get \beno
\big|\f d{dt}\tilde \be_1(t)\big|
& \leq & C \big| \big(  J \tilde{L}_{1} \overline{U}^1, (\eta_{1}', 0) \big)\big|
 +  \f 1h C(|U|_{X^0}+|U_2|_{L^2}).
\eeno 
To conclude, we  use the decomposition \eqref{U1dec}, 
 since $\tilde{L}_{1} R_{1} Q_{1}'= 0$, we have
$$ \big|  J \tilde{L}_{1} \overline{U}^1, (\eta_{1}', 0) \big)\big| \leq  C \big( |\alpha_{1}| +  |W^1|_{X^0}\big)$$
and hence, we get from  \eqref{beta10}  that 
\beq
\label{beta11}
\big|\f d{dt}\be_1(t)\big|
 \leq   C\Big( \f 1h (|U|_{X^0}+|U_2|_{L^2}) + | \al_{1}| + |W^1|_{X^0}\Big).
\eeq
Similarly we have
\beq
\label{beta21}
\big| \f d{dt}\be_2 (t) \big|\le C\left(\f 1h(|U|_{X^0}+|U_2|_{L^2})
+|\al_2|+|W^2|_{X^0}\right).\eeq

From \eqref{alpha11}, \eqref{alpha21}, \eqref{beta11}, \eqref{beta21},  we finally obtain
\begin{align}
\label{alpha1}
& {d \over dt}  | \alpha_{i}|^2  \leq C | \alpha _{i}|  \f 1h(|U|_{X^0}+|U_2|_{L^2}), \quad i=1, \, 2 \\
\label{beta1}
&{d \over dt}  | \beta_{i}|^2 \leq    C | \beta_{i}|\Big( \f 1h (|U|_{X^0}+|U_2|_{L^2}) + | \al_{i}| + |W^i|_{X^0}\Big), \quad i= 1, \, 2.
\end{align}
To combine \eqref{alpha1}, \eqref{beta1},  \eqref{right hand estimate of E_1(U)},  \eqref{estimate for |U_2|_L^2} and  \eqref{minE1} in 
 an efficient way, 
we define a weighted energy
 \begin{equation}
 \label{E1tildedef}\td E_1(U(t))=\f 12 h^\f12 E_1(U(t))+\f12(
|\be_1(t)|^2+|\be_2(t)|^2)+Ch^\f12(|\al_1(t)|^2 +|\al_2(t)|^2)
\end{equation}
From \eqref{right hand estimate of E_1(U)} and \eqref{estimate for |U_2|_L^2}, \eqref{alpha1}, \eqref{beta1}, we obtain 
 \beno
&&\f d{dt} \td E_1(U(t))\\
&&\le h^{-\f14} C\big[h^{-\f14}
(|U|^2_{X^0}+|\ka(D)U_2|^2_{L^2})+h^{-\f34} (|\be_1|+|\be_2|)
(|U|_{X^0}+|U_2|_{L^2})\\
&&\qquad +h^\f14(|\be_1|+|\be_2|)(|\al_1|+|\al_2|+|W^1|_{X^0}+|W^2|_{X^0})
+h^{-\f14}(|\al_1|+|\al_2|)(|U|_{X^0}+|U_2|_{L^2})
\big]\\
&&\le h^{-\f14}C\big[h^\f12(|W^1|^2_{X^0}+|W^2|^2_{X^0})
+(|\be_1|^2+|\be_2|^2)+h^{\f12}(|\al_1|^2+|\al_2|^2)+h^{-\f14}|U_2|^2_{L^2})\big].
\eeno
 To get the last line, we have used the decompositions of  $U^1,\,U^2$, \eqref{U1dec}, \eqref{U2dec} and
 Lemma  \ref{relation between U and U^i}. Now, let us define
\[
F(t)=h^\f12(|W^1|^2_{X^0}+|W^2|^2_{X^0})
+(|\be_1|^2+|\be_2|^2) + h^\f12( |\alpha_{1}|^2 + |\alpha_{2}|^2 ) 
\] and integrate   the estimate for $\td E_1(U(t))$  with respect
 to time from $0$ to $t$,  we obtain
\[
\td E_1(U(t))\le \tilde E_1(U(0))+h^{-\f14} C\int^t_0(F(s)+h^{-\f14}|\ka(D)U_2(s)|^2_{L^2})
ds,\]
by using  that $|U_2|_{L^2}\le C(|\ka(D)U_2|_{L^2}+|U|_{X^0})$.
 On the other hand,  from the definition of $\td E_1(U)$ in  \eqref{E1tildedef} and the estimate \eqref{minE1},  we  also have
\beno
\td E_1(U(t))&\ge& \f12h^{\f12}c_0(|W^1|^2_{X^0}+|W^2|^2_{X^0})
+\f12(|\be_1|^2+|\be_2|^2)\\
&&\quad+h^{\f12}c_0(|\al_1|^2+|\al_2|^2)-h^{-\f12}C(\sum_{i=1,2}|U^i|^2_{X^0}
+|\ka(D)U_2|^2_{L^2})\\
&\ge& \bar c_0 F(t)-h^{-\f12}C|\ka(D)U_2|^2_{L^2}
\eeno
for some $\overline{c}_{0}>0$.  This yields 
\beq\label{estimate for F(t)}
F(t)-h^{-\f12}C|\ka(D)U_2(t)|^2_{L^2}\le h^\f12
C|U(0)|^2_{X^0}+h^{-\f14}
C\int^t_0(F(s)+h^{-\f14}|\ka(D)U_2(s)|^2_{L^2})ds. \eeq
By  taking  the integral of (\ref{estimate for |U_2|_L^2})
with respect to time, we also have  \beq\label{estimate for |U_2|_L^2
again} |\ka(D)U_2(t)|^2_{L^2}(t)\le
|U_2(0)|^2_{L^2}+C\int^t_0(\eps^{-1}F(s)+\eps|\ka(D)U_2(s)|^2_{L^2})ds \eeq where the
constant $\eps$  will  be fixed later. We take the sum of
(\ref{estimate for F(t)}) and (\ref{estimate for |U_2|_L^2 again})
multiplied with some  $\lam>0$ that will be also chosen later  to get that \beno
&&F(t)+(\lam-h^{-\f12}C)|\ka(D)U_2(t)|^2_{L^2}\\
&&\le h^\f12C|U(0)|^2_{X^0}+\lam|U_2(0)|^2_{L^2}+Ch^{-\f14}
\int^t_0(F(s)+h^{-\f14}|\ka(D)U_2(s)|
^2_{L^2})ds\\
&&\quad +\lam C\int^t_0(\eps^{-1}F(s)+\eps|\ka(D)U_2(s)|^2_{L^2})ds.
\eeno 
Let us choose   $\lam$ and $\eps$  in order to satisfy the
following conditions
\[\lam-h^{-\f12}C\ge \lam/2,\quad Ch^{-\f14}\le \ga/4,\quad
h^{-\f14}\le \lam/2,\quad \lam C\eps^{-1}\le \ga/4\quad
\hbox{and}\quad \eps^2\le \lam/2,\]where $\ga=\eps_0(c_2-c_1)$ comes
from Proposition~\ref{M(t,x) system}. One  can choose for example
$\lam=\f{\ga^2}{64C^2}$,  $\eps=\f{\sqrt{\lam}}2$ and $h$ large enough to get our final energy  estimate
\[
F(t)+\f \lam 2|\ka(D)U_2(t)|^2_{L^2}\le h^{\f12}C|U(0)|^2_{X^0}
+\lam|U_2(0)|^2_{L^2}+\f{\ga}4\int^t_0(F(s)+{\lambda \over 2 }|\ka(D)U_2(s)|
^2_{L^2})ds.
\] Applying Gronwall's inequality, we get 
\[
F(t)+\f \lam 2|\ka(D)U_2(t)|^2_{L^2}\le (h^\f12 C|U(0)|^2_{X^0}
+\lam|U_2(0)|^2_{L^2})e^{\ga t/4},\quad\hbox{for any}\quad t\ge 0.
\] 
From the definition of $F(t)$ and Lemma \ref{relation between U and U^i},
 we get that there exist constants $c_0$ and
$\bar c_0$ such that
\beno
F(t)&=& h^\f12(|W^1|^2_{X^0}+|W^2|^2_{X^0})
+(|\be_1|^2+|\be_2|^2) +  h^\f12( |\alpha_{1}|^2 + |\alpha_{2}|^2 )\\
&\ge &c_0\sum_{i=1,2}|U^i(t)|^2_{X^0}\ge\big( 1 -  \f{C}h \big)\bar c_0|U(t)|^2_{X^0}-
\f {c_0}h|\ka(D)U_2(t)|^2_{L^2}.
\eeno 
By choosing $h$ such that  
 $\f{c_0}h <\f\lam 4$, we find
\[
F(t)\ge \f12\bar c_0 |U|^2_{X^0}-\f\lam 4|\ka(D)U_2|^2_{L^2}.
\] Summing up the estimates above, we finally obtain
\[
\f{\bar c_0}2|U|^2_{X^0}+\f\lam 4|\ka(D)U_2|^2_{L^2}\le (h^\f12
C|U(0)|^2_{X^0} +\lam|U_2(0)|^2_{L^2})e^{\eps_0(c_2-c_1)
t/4},\quad\hbox{for any }\quad t\ge 0,
\]
that is to say
\[
|U|^2_{X^0}+|U_2|^2_{L^2}\le C(h^\f12|U(0)|^2_{X^0}+|U_2(0)|^2_{L^2})e^{\eps_0
(c_2-c_1)t/4},\qquad t\ge 0.
\]
This ends the proof of Proposition \ref{prop low order}.

\subsection{Proof of  Theorem \ref{theo homogeneous}}

We shall prove Theorem \ref{theo homogeneous} by induction. The $k=0$
case was already obtained in Proposition \ref{prop low order}. For the sake of clarity,
before the general induction argument, we shall explain the proof for the $k=1$ energy estimate.

\noindent{\bf  Order-1 energy estimates}.
Note that since the coefficients in the linear system (\ref{new homogeneous linear
 equation}) depend on $t$ and $x$, neither $\partial_{t}$ nor $\partial_{x}$ has a nice commutation property with the equation.
  We shall  thus use the operator
\beq\label{definition of D}
D(\p)=\p_t+c_1\chi^2_1\p_x+c_2\chi^2_2\p_x
\eeq
in order to take derivatives of the equation. This will take into account the fact that the solitary waves depend on
 $ x- c_{i} t$.

By applying this operator on both sides of system (\ref{new homogeneous linear equation}), 
we 
obtain   for $D(\p)U$ the system:
\beq\label{eqn for DU}
\p_tD(\p)U-JL_MD(\p)U=[\p_t,\,D(\p)]U-J[L_M,\,D(\p)]U.
\eeq A further computation shows that
\[
[\p_t,\,D(\p)]U=c_1(\p_t\chi^2_1)\p_xU+c_2(\p_t\chi^2_2)\p_xU,\] and
\beq\label{operator S}\begin{split}
J[L_M,\,D(\p)]U=&J[L_M,\,\p_t]U+J\sum_{i=1,2}c_i[L_M,\,\chi^2_i\p_x]U\\
=&J\sum_{i=1,2}\big(\chi^2_i[L_M-L_i,\,\p_t]U
+\chi^2_i[L_i,\,\p_t]U +c_i[L_M,\,\chi^2_i]\p_xU+c_i\chi^2_i[L_M
-L_i,\,\p_x]U\\
&\, +\chi^2_i[L_i,\,c_i\p_x]U\big)\\
=& J\sum_{i=1,2}\big(\chi^2_i[L_M-L_i,\,\p_t]U
+c_i[L_M,\,\chi^2_i]\p_xU +c_i\chi^2_i[L_M
-L_i,\,\p_x]U\big):= JSU.
\end{split}\eeq
 The crucial fact that we have used in the above computation is the cancellation:
\[
J\sum_{i=1,2}\chi^2_i[L_i,\,\p_t]U+J\sum_{i=1,2}\chi^2_i[L_i,\,c_i\p_x]U=
J\sum_{i=1,2}\chi^2_i[L_i,\,\p_t+c_i\p_x]U=0
\] 
 We  can thus  rewrite the system
(\ref{eqn for DU}) for $D(\p)U$ as \beq\label{eqn for DU new}
\p_tD(\p)U=JL_MD(\p)U-JSU+[\p_t,\,D(\p)]U:=JL_MD(\p)U+F_1(U)\eeq where we will
take $F_1(U)=-JSU+[\p_t,\,D(\p)]U$ as the source term. 

  From Proposition \ref{prop low order}, we have for the fundamental solution $S_{M}(t,\tau)$
  of the linear equation \eqref{new homogeneous linear equation} the estimate  
 \[
\|S_{M}(t,\tau)\|_{X^0\cap L^2\rightarrow X^0\cap L^2}\le h^\f14 C e^
{\eps_0(c_2-c_1)(t - \tau)/4},\qquad \forall t \geq \tau \geq 0.\]   
Consequently,  by using  Duhamel's Formula, we can rewrite \eqref{eqn for DU} as 
\[D(\p)U(t)=S_{M}(t,0)( D(\partial)U)(0)+ \int^t_0  S_{M}(t, \tau) F_1(\tau)d\tau,\] so we get 
\beq\label{estimate DU 1}\begin{split}&|D(\p)U(t)|_{X^0}+|D(\p)U_2|_{L^2}\\
& \le h^\f14C\big[e^
{\eps_0(c_2-c_1)t/4}(|U(0)|_{X^1}+\sum_{\al=0,1}|\p^\al_tU_2(0)|_{L^2})
+\int^t_0  e^
{\eps_0(c_2-c_1)(t-\tau)/4}(|F_1(\tau)|_{X^0}+|F_1(\tau)|_{L^2})d\tau\big].
\end{split}\eeq 
We still need to  estimate  the source term $F_{1}$. 
We shall first use the elliptic regularity of the leading spatial operators in \eqref{new homogeneous linear equation}
 to prove that time derivatives control higher order space derivatives.
 
 For any norm $\| \cdot\|$ on $x$ dependent vectors, we use the notation
 $$ \| \langle \partial_{t}\rangle^k U \|=  \sum_{0 \leq l \leq k} \| \partial_{t}^l U\|.$$
\begin{lem}\label{space derivative} 
Any smooth solution of  (\ref{new 
homogeneous linear equation}) satisfies the following a priori estimates:
\beq
\label{elliptic1}
 \forall l \geq 0, \, m \geq 0, \, \exists C_{k,l}, \, \quad \big|\partial_{t}^l \big( U_{1}, U_{2}) \big|_{H^{m+{5\over 2}} \times H^{m+2}}
 \leq C_{l,m}   | \langle \partial_{t}\rangle^{l+1} (U_{1}, U_{2}) |_{H^{m+1} \times H^{m+{1 \over 2} } }.
\eeq
\end{lem}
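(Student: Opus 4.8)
\emph{Plan.} The statement is a purely elliptic a priori estimate. Since $U=(U_1,U_2)^t$ solves \eqref{new homogeneous linear equation}, with $JL_M=\begin{pmatrix}-\p_x(v_M\cdot)&G_M\\ \cP_M-g-a_M&-v_M\p_x\end{pmatrix}$, the two scalar equations read
\begin{align*}
G_MU_2&=\p_tU_1+\p_x(v_MU_1),\\
\cP_MU_1&=\p_tU_2+(g+a_M)U_1+v_M\p_xU_2 .
\end{align*}
Here $\cP_M=b\p_x\big((1+(\p_x\eta_M)^2)^{-3/2}\p_x\cdot\big)$ is a uniformly elliptic second order operator (since $b>0$ and $(1+(\p_x\eta_M)^2)^{-3/2}$ is bounded below), and $G_M=G[\eta_M]$ is elliptic of order one; hence one has the elliptic estimates $\|w\|_{H^{s+2}}\lesssim\|\cP_Mw\|_{H^s}+\|w\|_{L^2}$ and $\|w\|_{H^{s+1}}\lesssim\|G_Mw\|_{H^s}+\|w\|_{L^2}$ (the last one being the standard elliptic regularity for the Dirichlet-Neumann operator, cf. Proposition~\ref{propDN1} and \cite{L}). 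Also note that all the coefficients $v_M,a_M,(1+(\p_x\eta_M)^2)^{-3/2}$ and their time derivatives are built from the solitary wave profiles and are therefore bounded in every $C^k(\R)$, uniformly in $t\ge0$, so multiplications by them and commutators with them cost nothing. The plan is to prove the estimate by induction on $l$, each step being a short alternating elliptic bootstrap between the two equations above.

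\emph{Base case $l=0$.} Starting from $U_1,\p_tU_1\in H^{m+1}$ and $U_2,\p_tU_2\in H^{m+1/2}$, the first equation gives $G_MU_2\in H^{m}$, hence $U_2\in H^{m+1}$; inserting this and $\p_tU_2\in H^{m+1/2}$ into the second equation gives $\cP_MU_1\in H^{m}$, hence $U_1\in H^{m+2}$; feeding $U_1\in H^{m+2}$ back into the first equation gives $G_MU_2\in H^{m+1}$, hence $U_2\in H^{m+2}$; and finally the second equation now yields $\cP_MU_1\in H^{m+1/2}$, hence $U_1\in H^{m+5/2}$. At every step the zeroth order term needed to close the elliptic estimate is absorbed into $|\langle\p_t\rangle(U_1,U_2)|_{H^{m+1}\times H^{m+1/2}}$, which gives the claimed bound for $l=0$.

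\emph{Inductive step.} Assume the estimate holds for $l-1$ and all $m$. Applying $\p_t^l$ to the two scalar equations produces
\begin{align*}
G_M\p_t^lU_2&=\p_t^{l+1}U_1+\p_x(v_M\p_t^lU_1)+G_2^l,\\
\cP_M\p_t^lU_1&=\p_t^{l+1}U_2+(g+a_M)\p_t^lU_1+v_M\p_x\p_t^lU_2+G_1^l,
\end{align*}
where $G_1^l$ (coming from $[\p_t^l,\cP_M]U_1$, $[\p_t^l,g+a_M]U_1$, $[\p_t^l,v_M\p_x]U_2$) involves only $\p_t^jU_1$ with at most two space derivatives and $\p_t^jU_2$ with at most one, for $j<l$; and $G_2^l=[\p_t^l,\p_x(v_M\cdot)]U_1-[\p_t^l,G_M]U_2$. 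The only delicate term is $[\p_t^l,G_M]U_2$: expanding $\p_t^l(G[\eta_M]U_2)$ by the chain rule for $\eta\mapsto G[\eta]$ and using the shape-derivative bound of Proposition~\ref{propDN1}(6) together with $|\mathfrak Pw|_{H^{m+3/2}}\lesssim\|w\|_{H^{m+2}}$, one gets $\big\|[\p_t^l,G_M]U_2\big\|_{H^{m+1}}\lesssim\sum_{j<l}\|\p_t^jU_2\|_{H^{m+2}}$. By the induction hypothesis applied with the \emph{same} $m$ to each $j<l$, one has $\p_t^jU_1\in H^{m+5/2}$ and $\p_t^jU_2\in H^{m+2}$ with bounds controlled by $|\langle\p_t\rangle^{l+1}(U_1,U_2)|_{H^{m+1}\times H^{m+1/2}}$; this makes $G_1^l\in H^{m+1/2}$ and $G_2^l\in H^{m+1}$, with the same bound. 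Running the four-step bootstrap of the base case on $(\p_t^lU_1,\p_t^lU_2)$ — now with source terms $\p_t^{l+1}U_1,\p_t^{l+1}U_2,G_1^l,G_2^l$ in place of $\p_tU_1,\p_tU_2$ — gives successively $\p_t^lU_2\in H^{m+1}$, $\p_t^lU_1\in H^{m+2}$, $\p_t^lU_2\in H^{m+2}$, $\p_t^lU_1\in H^{m+5/2}$, all with the claimed bound; this closes the induction.

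Finally, the constants depend only on $l,m$ and on finitely many $C^k$ norms of the solitary-wave profiles, hence can be taken independent of $t$ and of $h\ge1$, and the whole argument is legitimate for smooth solutions since it merely propagates a priori finite quantities. The main point to watch is the order bookkeeping in the commutator $[\p_t^l,G_M]$: it is essential that $G_M$ gains exactly one derivative and that the estimate of Proposition~\ref{propDN1}(6) is phrased through $\mathfrak P$ (which corresponds to a half-derivative gain on $U_2$), for otherwise the bootstrap would lose half a derivative on the $U_2$-component.
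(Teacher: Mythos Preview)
Your proof is correct and follows essentially the same approach as the paper's: rewrite \eqref{new homogeneous linear equation} as the coupled pair $G_MU_2=\p_tU_1+\p_x(v_MU_1)$, $\cP_MU_1=\p_tU_2+(g+a_M)U_1+v_M\p_xU_2$, exploit the ellipticity of $\cP_M$ (order $2$) and $G_M$ (order $1$, via the symbol expansion $G[\eta_M]=|D_x|+R$ with $R$ of order zero), and induct on $l$ using Proposition~\ref{propDN1}(6) to bound $[\p_t^l,G_M]U_2$. The only stylistic difference is that for the base case the paper couples the two elliptic estimates and closes them simultaneously by the interpolation inequalities $|U_1|_{H^{m+2}}\le\delta|U_1|_{H^{m+5/2}}+C_\delta|U_1|_{L^2}$ and $|U_2|_{H^{m+3/2}}\le\delta|U_2|_{H^{m+2}}+C_\delta|U_2|_{L^2}$, whereas you run an explicit four-step alternating bootstrap; both arrive at the same bound. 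One minor point of phrasing: your induction hypothesis should be stated as ``the estimate holds for all $l'\le l-1$'' (strong induction), since your commutator bound invokes it for every $j<l$, not just $j=l-1$.
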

 
 \begin{proof} 
This is an elliptic regularity result. Let us start with the case $l=0$. We first  rewrite \eqref{new homogeneous linear equation} under the form:
\beq
\label{systel1}\left\{\begin{array}{ll} & G_MU_2 = \partial_{t} U_{1} + \partial_{x}(v_{M} U_{1}),\\
 & \cP_M U_{1}=  \partial_{t} U_{2} + (a_M+g)U_1 + v_M\p_xU_2.
\end{array} \right.
\eeq
The operator $\cP_{M}$ is an elliptic operator of order two, therefore, by classical elliptic regularity results, we immediately obtain from the
 second line of the above system
$$  |U_{1} |_{H^{m+ {5 \over 2 }}} \leq C_{m} \Big(  |\partial_{t} U_{2}|_{H^{m+{1 \over 2}}} +  |\partial_{x} U_{2} |_{H^{m+ {1 \over 2 }}} + |U_{1}|_{H^m} \Big)$$
and thus by the  interpolation inequality
$$ |U_{1}|_{H^m} \leq \delta   |U_{1} |_{H^{m+ {5 \over 2 }}} + C_{\delta} |U_{1}|_{L^2},$$
 we actually obtain
\beq
\label{estel1} |U_{1} |_{H^{m+ {5 \over 2 }}} \leq C_{m} \Big( |\partial_{t} U_{2}|_{H^{m+{1 \over 2}}} + |U_{2} |_{m+ {3 \over 2 }} + |U_{1}|_{L^2} \Big).
\eeq
In a similar way, since the surface $\eta_{M}$ is smooth, the Dirichlet-Neumann operator $G[ \eta_{M}]$
 is an elliptic operator of order one. Actually,  we have
 \beq
 \label{symbolep}
 G[\eta_{M}]=|D_{x}| + R(t,x,D_{x})
 \eeq 
where $R$ is a pseudo differential operator of order zero. We refer to \cite{L}, \cite{Taylor} for the proof.
Note that we are in dimension one thus the principal symbol is very simple.
Consequently, we also get from the first equation of \eqref{systel1}
\beq
\label{estel2}
|U_{2}|_{H^{m+2}} \leq C_{m}\Big(|\partial_{t} U_{1}|_{H^{m+1}} + |U_{1}|_{H^{m+2}} + |U_{2}|_{L^2}\Big).
\eeq
By using again that
$$|U_{1}|_{H^{m+2}}  \leq \delta  |U_{1} |_{H^{m+ {5 \over 2 }}} + C_{\delta} |U_{1}|_{L^2}, \quad  |U_{2} |_{m+ {3 \over 2 }}  \leq \delta |U_{2}|_{H^{m+2}}
 + C_{\delta} |U_{2}|_{L^2},$$
 we get from \eqref{estel1}, \eqref{estel2} by choosing $\delta$ sufficiently small that
 $$ |U_{1} |_{H^{m+ {5 \over 2 }}} + |U_{2}|_{H^{m+2}} \leq C_{m} \Big( |\partial_{t} U_{1}|_{H^{m+1}} +   |\partial_{t} U_{2}|_{H^{m+{1 \over 2}}} + |U|_{L^2}
  \Big).$$
  We have thus proven \eqref{elliptic1} for $l=0$.
  
  We then proceed by induction on $l$. Assume that the result is proven for $l-1$ time derivatives.
  Let us apply $\partial_{t}^l$ to \eqref{systel1}.
   From the second line, we get
   $$  \mathcal{P}_{M} \partial_{t}^l U_{1}= \partial_{t}^{l+1} U_{2}  + \partial_{t}^l \big( (a_{M} + g ) U_{1} \big) + \partial_{t}^l \big( v_{M}
    \partial_{x} U_{2} \big) - [\partial_{t}^l, \mathcal{P}_{M} ] U_{1} := F^1$$
   and we observe that the right hand-side satisfies the estimate
   $$ |F^1|_{H^{m+{1 \over 2}}} \leq C_{l,m}\Big( |\partial_{t}^{l+1} U_{2}|_{H^{m+{1 \over 2}}} + | \langle \partial_{t}\rangle^l U_{2}|_{H^{m+{3 \over 2}}}
    + | \langle \partial_{t} \rangle^{l-1} U_{1} |_{H^{m+{5\over 2}}} \Big)$$
     and thus we get by elliptic regularity that
   \beq
   \label{estel3} | \partial_{t}^l U_{1}|_{H^{m+{5\over 2}}} \leq C_{l,m} \Big( |\partial_{t}^{l+1} U_{2}|_{H^{m+{1 \over 2}}} + | \langle \partial_{t} \rangle^l U_{2}|_{H^{m+{3 \over 2}}}
    + | \langle \partial_{t} \rangle^{l-1} U_{1} |_{H^{m+{5\over 2}}} \Big).
    \eeq
    In a similar way, for the first line of \eqref{systel1}, we get
    $$G_{M} \partial_{t}^lU_{2} = \partial_{t}^{l+1} U_{1} +  \partial_{t}^l \partial_{x}(v_{M} U_{1} ) - [\partial_{t}^l,  G_{M}] U_{2}  := F^2.$$
     By using Proposition \ref{propDN1} (6) to compute the commutator $[\partial_{t}^l,  G_{M}] U_{2} $, we obtain for the right hand side the estimate
     $$ |F^2|_{H^{m+1} } \leq C_{l,m} \Big( | \partial_{t}^{l+1} U_{1} |_{H^{m+1}} +  | \langle \partial_{t} \rangle^l U_{1} |_{H^{m+2}} + | \langle \partial_{t}
      \rangle^{l-1} U_{2} |_{H^{m+2}}\Big).$$
      Consequently, from the ellipticity of the Dirichlet-Neumann operator, we also get
      \beq
      \label{estel4}
      |\partial_{t}^l U_{2}|_{H^{m+2}} \leq C_{l,m}\Big(|   \partial_{t}^{l+1} U_{1} |_{H^{m+1}} + | \langle \partial_{t} \rangle^l U_{1} |_{H^{m+2}} + | \langle \partial_{t}
      \rangle^{l-1} U_{2} |_{H^{m+2}}\Big) .
      \eeq
      By combining \eqref{estel3} and \eqref{estel4} we get
     \begin{multline*} | \partial_{t}^l (U_{1}, U_{2})|_{H^{m+{5\over 2}} \times H^{m+2}} \leq C_{l,m} \Big(
      |\partial_{t}^{l+1}(U_{1}, U_{2})|_{H^{m+1} \times H^{m+{1 \over 2 } }} \\+ |  \partial_{t}^{l}(U_{1}, U_{2})|_{H^{m+2} \times H^{m+{3 \over 2 }} }
       + |\langle \partial_{t} \rangle^{l-1} (U_{1}, U_{2} )|_{H^{m+{5\over 2}} \times H^{m+2}}\Big).
       \end{multline*}
       To conclude, it suffices to use the interpolation inequality
       $$ |  \partial_{t}^{l}(U_{1}, U_{2})|_{H^{m+2} \times H^{m+{3 \over 2 }} } \leq \delta   |  \partial_{t}^{l}(U_{1}, U_{2})|_{H^{m+{5\over 2 }} \times H^{m+2} }
        + C_{\delta}|  \partial_{t}^{l} U|_{L^2} $$
 and the induction assumption. This ends the proof of Lemma \ref{space derivative}.
 \end{proof}
 As a consequence of Lemma \ref{space derivative}, we get the following  inequalities that we will use many times:
 \begin{itemize}
 \item  By using the lemma with $l=0$, $m=0$, we have
 $$
  | \partial_{x} U |_{H^{3 \over 2} \times H^1} \leq C \big(  | \langle \partial_{t} \rangle U |_{H^1 \times H^{1 \over 2} } \big)
 $$ 
 and hence, since, we can use again that
 \beq
 \label{interpolbis} | \partial_{x} U |_{H^{1} \times H^{1 \over 2}} \leq  \epsilon | \partial_{x} U|_{H^{3 \over 2} \times H^1}  + C_{\epsilon} |U|_{L^2}\eeq
 we get that
 \beq
 \label{estimate for order-1 space derivative}
 |\partial_{x} U |_{X^0} \leq \epsilon ( |  \partial_{t}  U |_{X^0}+ |\partial_{t} U_{2}|_{L^2 }) + C_{\epsilon}(
  |U|_{X^0} + |U|_{L^2} \big)
 \eeq
 for any $\epsilon >0$.
 \item  We can also use Lemma  \ref{space derivative}, with $l=\alpha$, $m= \beta -1$ for  any $\alpha$ and $\beta$ such that 
  $\alpha + \beta = k$ and $\beta \geq 1$. We obtain
  $$ | \langle \partial_{t}\rangle^\alpha U |_{ H^{\beta + { 3 \over 2 }} \times  H^{\beta  + 1}}  \leq C_{k} | \langle \partial_{t}  \rangle^{\alpha + 1} U |_{H^{\beta }
  \times H^{\beta  -  {1 \over 2}}}$$
  and  we can  iterate the process  to obtain
$$ | \partial_{t}^\alpha \partial_{x}^\beta U|_{H^{3 \over 2} \times H^1} \leq  | \langle \partial_{t}\rangle^\alpha U |_{ H^{\beta + { 3 \over 2 }} \times  H^{\beta  + 1}}
  \leq  C_{k}   | \langle \partial_{t} \rangle^{ k} U  |_{H^{ 1} \times H^{1\over 2 } }.$$  
  Thanks to \eqref{interpolbis}, we thus obtain 
  \beq
  \label{estimate for high-order space derivative}
   | \partial_{t}^\alpha \partial_{x}^\beta U|_{X^0} \leq \epsilon \big( | \partial_{k} U|_{X^0}+ |\partial_{t}^k U_{2}|_{L^2} \big)
   + C_{\epsilon}  \big(  |U|_{X^{k-1}} + | \langle \partial_{t} \rangle^{k-1} U_{2}|_{L^2} \big)\eeq
    for $\alpha + \beta = k$,  $\beta \geq 1.$
 \end{itemize}

\bigskip
We can come back to the estimate for the source term $F_1(U(t))$ in the right hand side of
 \eqref{estimate DU 1}.  By using the expression of $F_1(U(t))$ in 
 (\ref{operator S}), we get 
\beno
|F_1(U)|_{X^0}&\le& |JSU|_{X^0}+|[\p_t,\,D(\p)]U|_{X^0}\\
&\le & |J\sum_{i=1,2}\chi^2_i[L_M-L_i,\p_t]U|_{X^0}+|J\sum_{i=1,2}c_i[L_M,\,
\chi^2_i] \p_xU|_{X^0}\\
&&\,+|J\sum_{i=1,2}c_i\chi^2_i[L_M-L_i,\,\p_x]U|_{X^0}+|[\p_t,\,D(\p)]U|_{X^0}.
\eeno  By using again  Lemma \ref{commutator lemma}  and Lemma \ref{lemchi} as before, 
we obtain 
\[|F_1(U)|_{X^0}\le \f 1h C(|(\p_xU_2,\,\p^2_xU_1)^t|_{X^0}+|\p_xU|_{X^0}+|U|
_{X^0}),\] and hence, by using  Lemma \ref{space derivative}, we find 
\[|F_1(U)|_{X^0}\le \f 1h C(|\p_tU|_{X^0}+|U|
_{X^0}+|\p_tU_2|_{L^2}+|U_2|_{L^2}).\]  
In a similar way, we also have 
\[|F_1(U)|_{L^2}\le \f 1h C(|\p_tU|_{X^0}+|U|
_{X^0}+|\p_tU_2|_{L^2}+|U_2|_{L^2}).\]
Going back to (\ref{estimate DU 1}), we obtain
 \beno &&|D(\p)U(t)|_{X^0}+|D(\p)U_2(t)|_{L^2}\\
&&\le h^\f14Ce^
{\eps_0(c_2-c_1)t/4}(|U(0)|_{X^1}+|U_2(0)|_{\td H^1})\\
&&\qquad+h^{-\f34} C\int^t_0  e^
{\eps_0(c_2-c_1)(t-\tau)/4}(|\p_tU(\tau)|_{X^0}+|U(\tau)|
_{X^0}+|\p_tU_2(\tau)|_{L^2}+|U_2(\tau)|_{L^2})d\tau.
\eeno
It remains to deduce an estimate of the time and space derivatives from the previous estimate
that gives only a control of the $D(\partial)$ derivatives of $U$. We first observe that 
\beno
&&|D(\p)U(t)|_{X^0}+|D(\p)U_2(t)|_{L^2}\\
&&\ge |\p_tU(t)|_{X^0}+|\p_tU_2(t)|_{L^2}
-\sum_{i=1,2}c_i
(|\chi^2_i\p_x U(t)|_{X^0}+|\chi^2_i\p_x U_2(t)|_{L^2})\\
&&\ge |\p_tU(t)|_{X^0}+|\p_tU_2(t)|_{L^2}-C(|\p_xU|_{X^0}+|\p_xU|_{L^2}).
\eeno 
We know from (\ref{estimate for order-1 space derivative}) that
\[|D(\p)U(t)|_{X^0}+|D(\p)U_2(t)|_{L^2}
\ge  |\p_tU(t)|_{X^0}+|\p_tU_2(t)|_{L^2}-C(|U(t)|_{X^0}+|U_2(t)|_{L^2}).\]
We have thus proven that 
\beq\label{estimate DU 2}
\begin{split}
& |\p_tU(t)|_{X^0}+|\p_tU_2(t)|_{L^2}-C(|U(t)|_{X^0}+|U_2(t)|_{L^2})\\
&\le  h^\f14Ce^
{\eps_0(c_2-c_1)t/4}(|U(0)|_{X^1}+|U_2(0)|_{L^2} + | \partial_{t} U_{2}(0) |_{L^2})\\
&\,+h^{-\f34} C\int^t_0  e^
{\eps_0(c_2-c_1)(t-\tau)/4}(|\p_tU(\tau)|_{X^0}+|U(\tau)|
_{X^0}+|U_2(\tau)|_{L^2}+|\p_tU_2(\tau)|_{L^2})d\tau.\end{split}
\eeq  
From   Proposition \ref{prop low order}, we have 
\[|U(\tau)|_{X^0}+|U_2(\tau)|_{L^2}\le h^\f14 C(|U(0)|_{X^0}+|U_2(0)|_{L^2})e^
{\eps_0(c_2-c_1)\tau/4},\qquad\hbox{for}\quad \tau\in [0,\,t],\]
and therefore
\[
h^{-\f34}C\int^t_0 e^
{\eps_0(c_2-c_1)(t-\tau)/4}(|U(\tau)|_{X^0}+|U_2(\tau)|_{L^2})d\tau
\le t\,h^{-\f12}C(|U(0)|_{X^0}
+|U_2(0)|_{L^2})e^
{\eps_0(c_2-c_1)t/4},
\] 
consequently, we finally deduce from  (\ref{estimate DU 2}) that 
\beno
|\p_tU(t)|_{X^0}+|\p_tU_2(t)|_{L^2}
&\le& h^\f14C(1+h^{-\f34}t)e^
{\eps_0(c_2-c_1)t/4}(|U(0)|_{X^1}+|U_2(0)|_{L^2} + |\partial_{t} U_2(0)|_{L^2}  )\\
&&\, +h^{-\f34} C\int^t_0  e^
{\eps_0(c_2-c_1)(t-\tau)/4}(|\p_tU(\tau)|_{X^0}+|\p_tU_2(\tau)|_{L^2})d\tau.
 \eeno
 Note that we could avoid the additional algebraic growth but that we do not need to refine.
 
 The  Gronwall's inequality yields that  there exists a constant $C_1$ such that
\[|\p_tU(t)|_{X^0}+|\p_tU_2(t)|_{L^2}\le h^\f14 C_1(1+h^{-\f34} t)e^{\eps_0(c_2-c_1)
t/4+h^{-\f34}C_1 t}(|U(0)|_{X^1}+\sum_{\al=0,1}|\p^\al_tU_2(0)|_{L^2}).\]
This yields the desired estimate for the time derivative by  taking $h$ large enough.
 We  finally deduce  from (\ref
{estimate for order-1 space derivative}) that the same  estimate also holds for 
$|\p_xU(t)|_{X^0}$. This ends the proof of  the order-1 energy estimate.

\noindent{\bf Higher-order energy estimates}. We will do this by an induction
argument. First of all, assume that we already have the estimates for $|U|_{X^{k-1}}$ 
and
$|\ka(D)\p^\al_tU_2|_{L^2}$ with $k\ge 2$ and $\al\le k-1$:
\beno
&&|U|_{X^{k-1}}+\sum_{\al\le k-1}|\p^\al_t U_2|_{L^2}\\
&&\le  
h^\f14 C_{k-1}(|U(0)|^2_{X^{k-1}}+\sum_{\al\le k-1}|\p^\al_tU_2(0)|^2_{L^2})
(1+h^{-\f34} t)^{k-1}e^{\eps_0(c_2-c_1)
t/4+h^{-\f34}C_1 t}.
\eeno As  previously, 
 we get start with the estimate of  $|D^k(\p)U|_{X^0}$.
One can write  the system solved by  $D^k(\p)U$ as
\beq\label{eqn for D^kU}
\p_tD^k(\p)U=JL_MD(\p)^kU+F_k(U)
\eeq where 
\[F_k(U)=\sum^{k-1}_{i=0}D(\p)^i\big( J[D(\p),\,L_M]+[\p_t,\,D(\p)]\big)D(\p)^{k-1-i}
U\] will be considered as the
 source term.  

From the Duhamel formula, we find 
\[
D(\p)^kU(t)=S_{M}(t,0)D(\p)^kU(0)+\int^t_0 S_{M}(t,\tau)F_k(U(\tau))d\tau
\] 
and therefore, we again obtain from Proposition \ref{prop low order} that                                     
\beq\label{estimate D^kU 1}\begin{split}&|D(\p)^kU(t)|_{X^0\cap L^2}\\
& \le h^\f14Ce^
{\eps_0(c_2-c_1)t/4}(|U(0)|_{X^k}+\sum_{\al\le k-1}|\p^\al_tU_2(0)|_{L^2})+h^\f14C
\int^t_0 e^{\eps_0(c_2-c_1)(t-\tau)/4}|F_k(\tau)|_{X^0\cap L^2}d\tau.
\end{split}\eeq 
It remains  to estimate  the source term $F_k(t)$.
We have 
\beno
&&|F_k(U)|_{X^0}\\
&&\le \sum^{k-1}_{i=0}\big(|JD(\p)^i[D(\p),\,L_M]D(\p)^{k-1-i}U|_{X^0}
+|D(\p)^i[\p_t,\,D(\p)]D(\p)^{k-1-i}U|_{X^0}\big)\\
&&\le \sum^{k-1}_{i=0}\sum_{j=1,2}
\big[|JD(\p)^i\chi^2_j[L_M-L_j,\p_t]D(\p)^{k-1-i}U|_{X^0}
+c_j|JD(\p)^i[L_M,\,\chi^2_j] \p_xD(\p)^{k-1-i}U|_{X^0}\\
&&\quad+c_j|JD(\p)^i\chi^2_j[L_M-L_j,\,\p_x]D(\p)^{k-1-i}U|_{X^0}\big]
+\sum^{k-1}_{i=0}|D(\p)^{i}[\p_t,\,D(\p)]D(\p)^{k-1-i}U|_{X^0}.
\eeno 
Consequently, by using again the same arguments to estimate the commutators
 and the observation (already used   in the order-1 estimate) that  the commutator 
$[D(\p),\,L_M]=S$ (computed in  (\ref{operator S})),  acts like 
$\f1h L_M$, we  
 can get   that
\[|F_k(U)|_{X^0}+|F_k(U)|_{L^2}\le \f 1h C_k\sum_{\stackrel{\al+\be=k}{\al\le k-1}}
(|(\p^\al_t\p^\be_xU_2,\,\p^\al_t\p^{\be+1}_xU_1)^t|_{X^0} +|\p^\al_t\p^\be_x
U|_{X^0})+\f 1h C|U|_{X^{k-1}},\] and so by using Lemma \ref{space derivative}
again  we arrive at  
\[|F_k(U)|_{X^0}+|F_k(U)|_{L^2}\le \f 1hC_k(|\p^k_t U|_{X^0}+|U|_{X^{k-1}}+
\sum_{\al\le k}|\p^\al_tU_2|_{L^2}).\] Going back to (\ref{estimate D^kU 1}) one has
\beno &&|D(\p)^kU(t)|_{X^0}+|D(\p)^kU_2(t)|_{L^2}\\
&& \le h^\f14Ce^
{\eps_0(c_2-c_1)t/4}(|U(0)|_{X^k}+\sum_{\al\le k}|\p^\al_tU_2(0)|_{L^2}))\\
&&\,+h^{-\f34} C_k\int^t_0  e^
{\eps_0(c_2-c_1)(t-\tau)/4}(|\p^k_t U(\tau)|_{X^0}+|U(\tau)|_{X^{k-1}}+
\sum_{\al\le k}|\p^\al_tU_2(\tau)|_{L^2})d\tau.
\eeno For the left-hand side, we observe that 
\[|D(\p)^kU(t)|_{X^0}+|D(\p)^kU_2(t)|_{L^2}\ge |\p^k_tU(t)|_{X^0}+|\p^k_tU_2(t)|_{L^2}-C(\sum_{\stackrel{\al+\be\le k}
{\al\le k-1}}|\p^\al_t\p^\be_x U(t)|_{X^0}+|U(t)|_{X^{k-1}}),\] which together with
(\ref{estimate for high-order space derivative}) allows to get 
\[ |D(\p)^kU(t)|_{X^0}+|D(\p)^kU_2(t)|_{L^2}
\ge |\p^k_t U(t)|_{X^0}+|\p^k_tU_2|_{L^2}-C(|U(t)|_{X^{k-1}}+\sum_{\al\le k-1}
|\p^\al_tU_2|_{L^2}).\]
Consequently, by using the induction assumption to estimate  $|U|_{X^{k-1}}+
\sum_{\al\le k-1}|\p^\al_tU_2|_{L^2}$, we find
\beno
|\p^k_t U(t)|_{X^0}+|\p^k_tU_2|_{L^2}&\le&
 h^\f14 C_{k-1}(1+h^{-\f34} t)^{k-1}e^{\eps_0(c_2-c_1)
t/4+h^{-\f34}C_1 t}(|U(0)|_{X^k}+\sum_{\al\le k}|\p^\al_tU_2(0)|_{L^2})\\
&&\,+h^{-\f34} C_k\int^t_0  e^
{\eps_0(c_2-c_1)(t-\tau)/4}(|\p^k_t U(\tau)|_{X^0}+|\p^k_tU_2(\tau)|_{L^2})d\tau. 
\eeno Using  the Gronwall's inequality as before yields
\[
|\p^k_t U(t)|_{X^0}+|\p^k_tU_2|_{L^2}\le h^\f14 C_k(|U(0)|_{X^k}+ \sum_{\alpha \leq k} \partial_{t}^\alpha |U_2(0)|)
(1+h^{-\f34}t)^k
e^{\eps_0(c_2-c_1)
t/4+h^{-\f34}C_1 t}.
\] 
Finally, by  using again Lemma \ref{space derivative} and  by taking $h$ large enough 
such that $h^{-\f34}C_1\le\eps_0(c_2-c_1)/4$  we  conclude that
 \[
|U(t)|_{X^k}+\sum_{
\al\le k}|\p^\al_tU_2|_{L^2} \le
h^\f14 C_k(|U(0)|_{X^k}+ \sum_{\alpha \leq k }|\partial_{t}^\alpha U_2(0)|)(1+\eps_0(c_2-c_1)t)^k
e^{\eps_0(c_2-c_1)t/2},\quad \forall t\ge 0. \]  This completes the proof of Theorem \ref{theo homogeneous}.

\begin{rem}
\label{remback}
Note that in the proof of Theorem \ref{theo homogeneous}, the only information about the positions of the solitary waves
 are in the interaction estimates given in  Lemma \ref{lemchi}. These  localization estimates  are  true
 on any interval of time for which  the distance between the centers of the solitary waves is bigger than $h$ which is considered
  as a large parameter. 
 We can use this remark to get an estimate for $S_{M}(t, \tau)$ for $0 \leq t \leq \tau.$
  Indeed by using again the reversibility symmetry of the water waves system
  and  the symmetries of the solitary waves of Theorem \ref{theoOS},  consider $U(t,x)$
   the solution of  \eqref{new homogeneous linear equation}  with initial data at $t=\tau$, then 
   $$ \tilde U (t,x)= \big( U_{1}(\tau-t, -x) ,  -U_{2}( \tau - t,  -x) \big)^t$$ 
   is still a solution  on $[0, \tau]$ of \eqref{new homogeneous linear equation} with
    $ M(t,x)= Q_{1}(x-c_{1} t )+ Q_{2}(x-c_{2} t - h)$
   replaced by 
    $$\tilde M(t,x)= Q_{1}(x + c_{1} \tau - c_{1}t ) + Q_{2}(x + c_{2} \tau + h - c_{2} t )$$
    and with initial data for $\tilde U$ at $t=0$. Consequently, we see that on $[0,\tau]$, 
     the center of the solitary waves are located at $ x= x_{1}= -c_{1} |t - \tau|$ and
      $x = x_{2}=- c_{2}|t- \tau | - h$. Consequently, the slow solitary wave $Q_{1}$
       is now located on the right. Nevertheless, we observe that  for $t \in [0, \tau]$, we still have 
       $$ x_{1} - x_{2} \geq (c_{2} - c_{1}) | t - \tau| + h \geq h$$
       and thus the solitary waves are still at least at distance $h$ uniformly in $\tau$. 
       Consequently, we  still get as in   Theorem \ref{theo homogeneous}   
       that
       $$  |\tilde{U}(t) |_{E^k} \leq h^{1 \over 4} C_{k} |\tilde{U}(0) |_{H^{s(k)}}\big( 1+ \epsilon_{0}(c_{2}- c_{1}) t^k \big) e^{\epsilon_{0}t \over 2}, \quad \forall t \in [0, \tau].$$
       This yields in the original time variable
       $$  |U(t) |_{E^k} \leq h^{1 \over 4} C_{k} |U(\tau) |_{H^{s(k)}}\big( 1+ \epsilon_{0}(c_{2}- c_{1}) (\tau - t)^k \big) e^{\epsilon_{0}(\tau - t ) \over 2}, \quad \forall t \in [0, \tau].$$
       By combining with Theorem \ref{theo homogeneous}, we thus obtain that the fundamental solution $S_{M}(t,\tau) $
        of system \eqref{new homogeneous linear equation} enjoys the estimate
        \beq
        \label{semigroup2}
          | S_{M}(t, \tau)U |_{E^k} \leq h^{1 \over 4} C_{k} |U|_{H^{s(k)}}\big( 1+ \epsilon_{0}(c_{2}- c_{1}) |t  - \tau |^k) \big) e^{\epsilon_{0}| t  - \tau |\over 2}, \quad \forall t, \, \tau \geq 0.
          \eeq
 \end{rem}

\subsection{Proof of Proposition~\ref{approximate solution M+V}:
Construction of the approximate solution} Now we can go back to the
study of the  linear systems solved  by  $V_l$ ($1\le l\le N$) 
\eqref{systV1}, \eqref{systVl} in order to prove Proposition \ref{approximate solution M+V}. 

We first note  from the fact that
 \eqref{homogeneous linear equation} and \eqref{new homogeneous linear equation}
 are equivalent via the transformation  $U(t)=RV(t)$, with $R$ invertible, we 
 get from Theorem  \ref{theo homogeneous} and \eqref{semigroup1}
 \beq
 \label{SM1}
| S_{M}^\Lambda (t,\tau) V |_{E^k}\le h^\f14
C_k(\epsilon_{0})|V|_{H^{s(k)}}(1+ |t- \tau|^k ) 
e^{\eps_0(c_2-c_1)|t- \tau|/2}, \quad \forall t, \,  \tau \geq 0
\eeq 
where $S_{M}^\Lambda(t, \tau)$ is the fundamental solution of   the  system \eqref{homogeneous linear equation}.

 Let us go  back to the
construction of the approximate solution $V(t,x)=\sum^N_{l=1}\del^lV_l(t,x)$ with $\del=e^{-\eps_0h}$.
 For 
$V_1(t)$, we have to solve 
\[
\p_tV_1-J\Lam[M]V_1=-\tilde R_M,
\] where the right hand side satisfies  (see \ref{estimate R_M}) the estimate 
\[
 |\td R_M|_{E^k}\le C_ke^{-\eps_0(c_2-c_1)t},\quad \forall t\ge
 0,k\in\N.
\] 
We choose the solution 
\[V_1(t,x)=-\int^\infty_tS_{M}^\Lambda(t, \tau)\td R_M(\tau)d\tau.\]
From the estimate \eqref{SM1} of the fundamental solution, $V_{1}$ is well-defined and satisfies the estimate 
 \beno |V_1(t)|_{E^k}&\le&
h^\f14
C_k \int^\infty_t(1+ | t - \tau|)^k
e^{\eps_0(c_2-c_1)(\tau-t)/2}  \sum_{l=0}^k   \| \partial_{t}^l\td R_M(\tau)\|_{H^{s(k -l)}}d\tau\\
&\le & h^\f14
C_k \int^\infty_t(1+ | t - \tau|)^k
e^{\eps_0(c_2-c_1)(\tau-t)/2}e^{-\eps_0
(c_2-c_1)\tau}d\tau\\
&\le& h^{\f14}C_{k, 1}(\epsilon_{0})e^{-\eps_0(c_2-c_1)t},\quad
\hbox{for}\quad t\ge 0.
 \eeno
  For the general case of $V_l$ ($2\le l\le N$), we have to solve
\begin{equation}
\label{eqVl}
\p_tV_l-J\Lam[M]V_l=\sum^l_{p=2}\sum_{\stackrel{1\le
l_1,\dots,l_p\le
N}{l_1+\dots+l_p=l}}\f1{p!}D^p\cF[M](V_{l_1},\dots,V_{l_p}):=R_l(t,x).
\end{equation}
We shall use an induction argument. Let us  assume that we
already  have already built   $V_j$, $1\le
j\le l-1$ that satisfy the estimate 
\[
|V_j(t)|_{E^k}\le
h^{\f{2j-1}4}C_{k,j}(\eps_0)e^{-j\eps_0(c_2-c_1)t},\quad
\forall  t\ge 0.
\] We get for the right-hand side of  \eqref{eqVl} that
\beno |R_l(t,x)|_{E^k}&=&\big|\sum^l_{p=2}\sum_{\stackrel{1\le
l_1,\dots,l_p\le
N}{l_1+\dots+l_p=l}}\f1{p!}D^p\cF[M](V_{l_1},\dots,V_{l_p})\big|_{E^k}\\
&\le&
h^\f{2l-1}4C_{k,l}(\eps_0)
e^{-l\eps_0(c_2-c_1)t}. \eeno Taking
\[V_l(t,x)=-\int^\infty_tS_{M}^\Lambda(t, \tau)R_l(\tau)d\tau\] as a solution,
we  get thanks to \eqref{SM1}
 \beno |V_l(t)|_{E^k}&\le &h^\f{2l-1}4C_{k,l}(\eps_0)
\int^\infty_t(1+\eps_0(c_2-c_1)(\tau-t))^ke^{\eps_0(c_2-c_1)(\tau-t)/2}
e^{-l\eps_0(c_2-c_1)\tau}
d\tau\\
&\le& h^\f{2l-1}4
C_{k,l}(\eps_0)
e^{-l\eps_0(c_2-c_1)t},\quad \forall   t\ge0. \eeno 
This ends the proof of  Proposition~\ref{approximate solution M+V}.
\ef
\section{The nonlinear problem}\label{section end of the proof}
After the study of the approximate solution $U^a$ of water-wave
system, we need to consider the remainder solution $U^R=U-U^a$ where
$U$ is the solution of the water-wave system (\ref{simple form for WW system})
  and $U^R$ satisfies
\beq\label{eqn for U^R} \left\{\begin{array}{ll}
\p_tU^R=\cF(U^a+U^R)-\cF(U^a)-R_{ap},\quad t>0,\\
U^R(0) \hbox{ to be fixed later}
\end{array}\right. \eeq
\begin{prop}\label{existence for U^R} Let $m\ge 2$, $U^a\in
W^{m+s}_{[0,\infty)}$ and $R_{ap}\in X^{m+3}_{[0,\infty)}$. There
exists a solution $U^R=(\eta^R,\,\varphi^R)^t\in
L^\infty([0,\infty),\,H^{m+4}\times H^{m+\f72})$ for (\ref{eqn for
U^R}) with a fixed initial value $U^R(0)$ such that
\[H-\|\eta^a\|_{L^\infty}-\|\eta^R\|_{L^\infty} >0\] and
\[
|U^R(t)|_{H^{m+4}\times H^{m+\f72}}\le
C_{N,m}h^{\frac{2N+1}{4}}\del^{N+1}e^{-(N+1)\eps_0(c_2-c_1)t},\quad\hbox{for any}\quad
t\in [0,\infty).
\] 
\end{prop}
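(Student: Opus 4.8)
The plan is to mirror the structure used for Theorem~\ref{Instability Thm} in Section~\ref{section2D}: we build an approximating sequence $\{U^{R,n}\}$ by solving \eqref{eqn for U^R} backwards in time from an increasing sequence of times $T_n \to +\infty$ with zero data at $t=T_n$, derive a uniform-in-$n$ a~priori energy estimate on $[0,T_n]$, pass to the limit by a compactness argument, and check the decay. First I would set up the local Cauchy theory: for fixed $n$, the system \eqref{eqn for U^R} with $U^{R,n}(T_n)=0$ has a unique local-in-time smooth solution by the quasilinear well-posedness theory for the water-waves system used throughout (as in \cite{RT}, \cite{Alazard-Burq-Zuily}, and with the same reversibility trick $\tilde U(\tau,\tilde x)=(U_1(T_n-\tau,-\tilde x),-U_2(T_n-\tau,-\tilde x))^t$ that converts the backward problem into a forward one).

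The core is a high-order energy estimate of exactly the form of Proposition~\ref{estimate for U^n}: for a smooth solution of \eqref{eqn for U^R} on $[T,T_n]$ satisfying a nondegeneracy condition $H-\|\eta^a\|_{L^\infty}-\|\eta^{R}\|_{L^\infty}\ge c_0>0$, one has, for $m\ge 2$ and $s$ large enough,
\[
\|U^R(t)\|^2_{X^{m+3}}\le \omega\big(\|R_{ap}\|_{X^{m+3}_{t,T_n}}+\|U^a\|_{W^{m+s}_{t,T_n}}+\|U^R\|_{X^{m+3}_{t,T_n}}\big)\Big[\|R_{ap}\|^2_{X^{m+3}_{t,T_n}}+\int^{T_n}_t\big(\|U^R(\tau)\|^2_{X^{m+3}}+\|R_{ap}(\tau)\|^2_{X^{m+3}}\big)d\tau\Big],
\]
with $\omega$ continuous increasing. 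This is obtained exactly as in \cite[Theorem~7.1]{RT}: the quasilinear hyperbolic energy method applied to the water-waves system linearized around $U^a$, using the good-unknown change of variables, the Dirichlet-Neumann estimates of Proposition~\ref{propDN1}, and the fact that $U^a$ is a smooth, bounded (via Theorem~\ref{theoOS} and Proposition~\ref{approximate solution M+V}) approximate solution with source term $R_{ap}$ decaying like $h^{(2N+1)/4}\delta^{N+1}e^{-(N+1)\eps_0(c_2-c_1)t}$. The only difference with \cite{RT} is that $U^a=M+V$ involves the two-soliton $M$ rather than a single solitary wave, but $M$ is still smooth, bounded, exponentially localized around its two centers, so all the estimates go through verbatim.

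Next I would convert this into the existence-and-decay statement by a bootstrap on $T^*:=\inf\{T\in[0,T_n]: \forall t\in[T,T_n],\ \|U^R(t)\|_{X^{m+3}}\le 1,\ H-\|\eta^a\|_{L^\infty}-\|\eta^{R}(t)\|_{L^\infty}\ge c_0\}$. On $[T^*,T_n]$ the a~priori estimate reduces to
\[
\|U^R(t)\|^2_{X^{m+3}}\le \omega\big(C+C_{N,m}h^{(2N+1)/4}\delta^{N+1}\big)\Big(\int^{T_n}_t\|U^R(\tau)\|^2_{X^{m+3}}d\tau + C_{N,m}h^{(2N+1)/2}\delta^{2(N+1)}e^{-2(N+1)\eps_0(c_2-c_1)t}\Big),
\]
and choosing $N$ large enough so that $2(N+1)\eps_0(c_2-c_1)>\omega(C)+2$, then $\delta=e^{-\eps_0 h}$ small enough (i.e. $h$ large) so that $\omega$ evaluated at the perturbed argument still satisfies the strict inequality, a Gronwall argument on $t\mapsto -e^{\omega(\cdot)t}\int_t^{T_n}\|U^R\|^2$ yields $\int^{T_n}_t\|U^R(\tau)\|^2_{X^{m+3}}d\tau\le C_{N,m}h^{(2N+1)/2}\delta^{2(N+1)}e^{-2(N+1)\eps_0(c_2-c_1)t}$ and hence $\|U^R(t)\|^2_{X^{m+3}}\le C_{N,m}h^{(2N+1)/2}\delta^{2(N+1)}e^{-2(N+1)\eps_0(c_2-c_1)t}$ on $[T^*,T_n]$. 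Taking $h$ still larger so that this bound is $<1$ and keeps the nondegeneracy strict forces $T^*\le 0$, so $U^{R,n}$ extends to all of $[0,T_n]$ with this decay, uniformly in $n$. Finally, a standard compactness argument — cutting off $\tilde U^{R,n}(t)=\psi(t/T_n)U^{R,n}(t)$, using the uniform $X^{m+3}$ bound plus the bound on $\partial_t\tilde U^{R,n}$ from the equation, extracting a subsequence converging in $C_{loc}(\R_+;H^{m+3}_{loc}\times H^{m+5/2}_{loc})$ — produces $U^R\in L^\infty([0,\infty);H^{m+4}\times H^{m+7/2})$ solving \eqref{eqn for U^R} with the claimed exponential decay (the loss from $m+3$ to $m+4$ as in Step~1 of the proof of Theorem~\ref{Instability Thm}, by taking $m$ slightly larger), and the nondegeneracy $H-\|\eta^a\|_{L^\infty}-\|\eta^R\|_{L^\infty}>0$. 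The initial value $U^R(0)$ is whatever the limit provides; it is not prescribed. \emph{The main obstacle} is the a~priori estimate of Proposition~\ref{estimate for U^n}'s analogue: one must check that the quasilinear energy method of \cite[Theorem~7.1]{RT}, developed around a single solitary wave, applies with $U^a=M+V$ the two-soliton approximate solution — this is where one uses that $M$ and all its $t,x$-derivatives are bounded and that the interaction terms between the two solitons are harmless (they are already absorbed into the smooth, localized structure of $M$ and the remainder $R_{ap}$), so that the structure of $\omega$ and the power of $\delta$ are preserved; everything else is a routine transcription of the corresponding steps in Section~\ref{section2D}.
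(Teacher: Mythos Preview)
The proposal is correct and follows exactly the approach the paper itself indicates: the paper's proof of Proposition~\ref{existence for U^R} is simply ``use the same arguments as in the proof of Theorem~\ref{Instability Thm}'', and you have faithfully transcribed those arguments (backward-in-time approximating sequence, the a~priori estimate from \cite[Theorem~7.1]{RT}, bootstrap with $N$ then $\delta$ chosen appropriately, Gronwall, and compactness), correctly noting that the only new point is that $U^a=M+V$ replaces $Q+\delta U^a$ without affecting the quasilinear energy estimate.
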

\begin{proof}
The proof is left to the reader, it suffices to use the same arguments as in the proof 
 of Theorem~\ref{Instability Thm}. 
 \end{proof}
\begin{proof}[End of the proof of Theorem~\ref{multi-soliton theorem}]
Since we
have already shown the global existence of the remainder solution $U^R$,
we know that there exits a (semi-) global solution
$U(t,x)=U^a(t,x)+U^R(t,x)=M(t,x)+V(t,x)+U^R(t,x)$ to the water-wave
system (\ref{water-wave system}).  It only remains to describe
 the asymptotic behavior of $U(t)$ when $t$ tends to $+\infty$.
Since 
\[U(t)=M(t,x)+\sum^N_{l=1}\del^lV_l(t)+U^R(t)\] 
with $\del=e^{-\eps_0h}$ and with $V_l$ and $U^R$ 
 that  satisfy the estimates from
Proposition~\ref{approximate solution M+V} and Proposition~\ref{existence for U^R}, we have
$$ 
|V_l(t)|_{E^s}\le
h^\f{2l-1}4C_{N,s}e^{-l\eps_0(c_2-c_1)t},\quad
|U^R(t)|_{X^{s+3}}\le
h^{\frac{2N+1}{4}}C_{N,s}\del^{N+1}e^{-(N+1)\eps_0(c_2-c_1)t}. 
$$
This gives in particular 
\[|V_l(t)|_{H^s}\le h^\f{2l-1}4C_{N,s}e^{-l\eps_0(c_2-c_1)t},\quad
|U^R(t)|_{H^s}\le h^{\frac{2N+1}{4}}C_{N,s}\del^{N+1}e^{-(N+1)\eps_0(c_2-c_1)t}\]
for any $t\in [0,\infty)$.  We thus get that
\[\lim_{t\rightarrow +\infty}|U(t)-M(t)|_{H^s}=0.\] 
This ends the proof of Theorem \ref{multi-soliton theorem}. 
\end{proof}
\section{Appendix:  Proof  of  Lemma \ref{commutator lemma}}
\subsection{Proof of \eqref{estlemcom1}} We shall  only  prove the case $i=1$, the 
  case $i=2$ can then be obtained by symmetric arguments.  Let us recall that as in \eqref{defDNbis}, \eqref{elliptic problem-new},  the Dirichlet-Neumann operator
$G[\eta]$  can be defined by:
\[G[\eta]u=\p^P_n u^b|_{z=0}\] 
where $u^b$ satisfies the elliptic system on the flat strip $\cS=\R\times [-1,0]$
\beq\label{u^b eqn}
\left\{\begin{array}{ll}
\na_{x,z}\cdot P\na_{x,z}u^b=0,\quad \hbox{in} \quad \cS\\
u^b|_{z=0}=u,\,\qquad \p^P_n u^b|_{z=-1}=0. 
\end{array}\right.
\eeq with the notations
\[
P=P[\eta]=\left(\begin{matrix} H+\eta & -(z+1)\p_x\eta\\
-(z+1)\p_x\eta & \f{1+(z+1)^2(\p_x\eta)^2}{H+\eta}
\end{matrix}\right)\] and  $\p^P_n={\bf n}\cdot P\na_{x,z}$ where ${\bf n}= -e_{z}$ is
 the outward unit normal to the boundary 
 $z=-1$.  Note that  $\p^P_n\vert_{z=-1}=  - {1 \over H +  \eta} \partial_{z}.$

One can see by the Green's Formula that
\[(G[\eta] u,v)=\int_{\cS} P\na_{x,z} u^b\cdot \na_{x,z} v^b\] and so we have
\[([\p_t,\,G[\eta]]u,\,u)=\int_{\cS}\p_tP\na_{x,z}u^b\cdot\na_{x,z}u^b
-2\int_{\cS}P\na_{x,z}((\p_tu)^b-\p_tu^b)\cdot\na_{x,z}u^b.
\] In the following we shall  use the notations ($j=M,1,2$)
\[G[\eta_j]=G_j,\quad P[\eta_j]=P_j\quad
\hbox{and}\quad u^b[\eta_j]=u^b_j.\] 
  We  also  set   $u=\chi_1U_2$ for the sake
 of convenience.  Then we can write
\beq\label{commutator 1}\begin{split}
&([\p_t,\,G_M-G_1]u,\,u)\\
&=\int_{\cS}\p_t(P_M-P_1)\na_{x,z}u^b_M\cdot \na_{x,z}u^b_M +\int_{\cS}\p_tP_1
\na_{x,z}(u^b_M-u^b_1)\cdot\na_{x,z} u^b_M\\
&\quad +\int_{\cS}\p_tP_1\na_{x,z}u^b_1\cdot \na_{x,z}(u^b_M-u^b_1)
-2\int_{\cS}(P_M-P_1)\na_{x,z}u^b_M\cdot\na_{x,z}((\p_tu)^b_M-\p_tu^b_M)\\
&\quad -2\int_{\cS}P_1\na_{x,z}[(\p_tu)^b_M-\p_tu^b_M-(\p_tu)^b_1+\p_tu^b_1]
\cdot
\na_{x,z}u^b_M\\
&\quad -2\int_{\cS}P_1\na_{x,z}((\p_tu)^b_1-\p_tu^b_1)\cdot\na_{x,z}
(u^b_M-u^b_1).\end{split}\eeq
Let us recall that the solitary wave  $\eta_2$ satisfies the exponential decay 
estimate
\[ |\p^\al_x\eta_2(x-h-c_2t)|\le C_\al e^{-d (1+|x-h-c_2t|^2)^\f12}\quad
\hbox{for}\quad x,t\in\R
\]
Consequently, we shall use the weight
 $f$ defined by 
 $$f(t,x)=e^{-\eps(1+|x-h-c_2t|^2)^\f12}$$
  where $\eps \in [0, d]$ will be chosen sufficiently small. 
  
  From \eqref{commutator 1}, we first get the estimate
\begin{multline}
\label{commutator 2 bis}
 \big| ([\p_t,\,G_M-G_1]u,\,u) \big| \leq C
   \Big( \| f  \nabla_{x,z} u^b_{M} \|_{L^2(\mathcal{S})}  +   \|\nabla_{x, z} (u^b_{M} -u^b_{1} ) \|_{L^2(\mathcal{S})}
\\
  +  \| \nabla_{x,z} \big((\partial_{t} u)^b_{M} - \partial_{t} u^b_{M} -  (\partial_{t} u)^b_{1} + \partial_{t} u^b_{1} \big) \|_{L^2(\mathcal{S})}
  \Big) 
  \cdot   \Big(  \|\nabla_{x,z} u^b_{M} \|_{L^2(\mathcal{S})} + \|\nabla_{x, z} u^b_{1} \|_{L^2(\mathcal{S})} 
  \\
  + \| \nabla_{x,z} \big(  (\partial_{t} u)^b_{M} - \partial_{t} u^b_{M} \big)  \|_{L^2(\mathcal{S})} +   \| \nabla_{x,z} \big(  (\partial_{t} u)^b_{1} - \partial_{t} u^b_{1} \big)  \|_{L^2(\mathcal{S})}
 \Big).
\end{multline}  
From the elliptic problem \eqref{u^b eqn}, we first get the estimates (this follows for example by using the decomposition \eqref{decDN} of the solution, 
 we refer to \cite{AL}, \cite{RT} for example)
\beq
\label{comubm1est}
 \| \nabla_{x,z} u^b_{i} \|_{L^2(\mathcal{S})} \leq C | \mathfrak{P} u |_{L^2(\mathbb{R}^2)}
 \leq 
  C (|\mathfrak P U_2|^2_{L^2}+|U_2|^2
_{L^2}), \quad i= M, \, 1,
 \eeq
 where in the last step we used \eqref{basic}.
  To conclude, 
 we still  need the estimates for 
$ f  u^b_M$, $u^b_M-u^b_1$, $(\p_tu)^b_j-\p_tu^b_j$ 
and $(\p_tu)^b_M-\p_tu^b_M-(\p_tu)^b_1+\p_tu^b_1$ with $j=M,1$. We shall  deal with 
them one by one. 

\noindent{\bf 1)  Estimate for $e^{-\eps(1+|x-h-c_2t|^2)^\f12} u^b_M$}. 
As 
in the proof of Proposition~\ref{D-N e-decay estimate}, we get that  
$e^{-\eps(1+|x-h-c_2t|^2)^\f12} u^b_M$ solves the elliptic equation
\[
\left\{\begin{array}{ll}
\na_{x,z}\cdot P_M\na_{x,z}(f u^b_M)=
-[f ,\,\na_{x,z}\cdot P_M\na_{x,z}]u^b_M,\quad\hbox{in}
\quad \cS,\\
f u^b_M|_{z=0}=f u,\quad
\p^{P_M}_n(f u^b_M)|_{z=-1}=-[f ,\,\p^{P_M}_n]u^b_M|_{z=-1}= 0
\end{array}\right.
\] 
We introduce the decomposition  
$$fu^b_M
= m (z, |D|)(fu)+v$$ where
 $m(z, |D|)$ is the Fourier multiplier ${ \mbox{cosh }(|D|(z+1)) \over \mbox{cosh } |D| }.$ 
 We get for  $v$  the 
system
\[
\left\{\begin{array}{ll}
\na_{x,z}\cdot P_M\na_{x,z}v=-\na_{x,z}\cdot P_M\na_{x,z}( m (f u ) )
-[f ,\,\na_{x,z}\cdot P_M\na_{x,z}]u^b_M,
\quad\hbox{in}
\quad \cS,\\
v|_{z=0}=0,\quad
\p_{z}v|_{z=-1}= 0.
\end{array}\right.\]
 From an energy estimate (as in the proof of Proposition~\ref{D-N e-decay estimate}), we obtain that
\begin{multline}
\label{comenergie1}
  \| \nabla_{x,z} v \|_{L^2(\mathcal{S})}^2   \leq  C \Big( \| \nabla_{x, z}(m(z,|D|) (fu) ) \|_{L^2(\mathcal{S})}     \| \nabla_{x,z} v \|_{L^2(\mathcal{S})}
   \\+  ( \eps^2 \|f u^b_{M} \|_{L^2(\mathcal{S})}  + \eps \| \nabla_{x,z} (f u^{b}_{M}) \|_{L^2(\mathcal{S})} ) ( \|v\|_{L^2(\mathcal{S})} + \| \nabla_{x,z} v \|_{L^2(\mathcal{S})}
   ) \Big)
   \end{multline}
   where in particular we have use the fact that
   $$ | \nabla_{x,z}  f| \lesssim \eps f.$$
Next, we can use the Poincar\'e inequality  in the strip $\mathcal{S}$ which yields
$$\|v\|_{L^2(\mathcal{S})}\le C\|\na_{x,z}v\|_{L^2(\mathcal{S})}$$
 and the  fact that $u = \chi_{1} U_{2}$ which yields thanks to \eqref{basic} and Lemma~\ref{lemchi}
$$ \|m(z,|D|)(f u)\|_{L^2(\mathcal{S})}+\|\na_{x,z}(m(z,|D|)(f u))
\|_{L^2(\mathcal{S})}\le \f 1h C(|\mathfrak P U_2|_{L^2}+|U_2|_{L^2})
$$ 
 to obtain from \eqref{comenergie1} by  taking $\eps$ small enough  that
 \[
 \|v\|_{L^2}^2 + \|\na_{x,z}v\|^2_{L^2}\le \f1h C (|\mathfrak P U_2|^2_{L^2}+|U_2|^2
_{L^2}).\] 
This yields 
\begin{equation}
\label{comfuBM}
\|\na_{x,z}(fu^b_M)\|^2_{L^2}+\|fu^b_M\|^2_{L^2}\le \f1h C (|\mathfrak P U_2|^2_{L^2}
+|U_2|^2_{L^2}).
\end{equation}

\noindent{\bf 2)  Estimate for $u^b_M-u^b_1$}.  Let us set 
 $${\bf g}=-(P_M-P_1)\na_{x,z}
u^b_M, $$
  we get that  $u^b_M-u^b_1$  solves  
\[
\left\{\begin{array}{ll}
\na_{x,z}\cdot P_1\na_{x,z}(u^b_M-u^b_1)=
\na_{x,z}\cdot{\bf g},\quad\hbox{in}
\quad \cS,\\
u^b_M-u^b_1|_{z=0}=0,\quad
\partial_{z}(u^b_M-u^b_1)|_{z=-1}= 0 
\end{array}\right.
\] 
The standard energy estimate  for this  problem yields 
\[\|\na_{x,z}(u^b_M-u^b_1)\|^2_{L^2}\le C\|{\bf g}\|^2_2\le C(\|fu^b_M\|^2_{L^2}+
\|\na_{x,z}(fu^b_M)\|^2_{L^2}),\] which results (combined with the Poincar\'e inequality and \eqref{comfuBM}) in 
\[\|u^b_M-u^b_1\|^2_{L^2}+
\|\na_{x,z}(u^b_M-u^b_1)\|^2_{L^2}\le \f1h C(|\mathfrak P
 U_2|^2_{L^2}+|U_2|^2_{L^2}).\]

\noindent{\bf 3)  Estimate for $(\p_tu)^b_j-\p_tu^b_j$}. 
 As previously, we can set  
${\bf g}_j=\p_t P_j\na_{x,z} u^b_j$ with $j=M,1$,  to get that $(\p_tu)^b_j-\p_tu^b_j$ satisfies
the system 
\[
\left\{\begin{array}{ll}
\na_{x,z}\cdot P_j\na_{x,z}((\p_tu)^b_j-\p_tu^b_j)=
\na_{x,z}\cdot{\bf g}_j,\quad\hbox{in}
\quad \cS,\\
(\p_tu)^b_j-\p_tu^b_j|_{z=0}=0,\quad
\p^{P_M}_n((\p_tu)^b_j-\p_tu^b_j)|_{z=-1}=-{\bf e_z}\cdot {\bf g}_j|_{z=-1}= 0
\end{array}\right.
\] 
 and we obtain that 
\begin{equation}
\label{comestubM10}\|(\p_tu)^b_j-\p_tu^b_j\|^2_{L^2}+\|\na_{x,z}((\p_tu)^b_j-\p_tu^b_j)\|^2_{L^2}\le
C\|{\bf g}_j\|^2_{L^2}\le C(|\mathfrak P U_2|^2_{L^2}
+| U_2|^2_{L^2})
\end{equation}
thanks to \eqref{comubm1est}.

\noindent{\bf 4)  Estimate for $(\p_tu)^b_M-\p_tu^b_M-(\p_tu)^b_1+\p_tu^b_1$}. We 
write $v=(\p_tu)^b_M-\p_tu^b_M-(\p_tu)^b_1+\p_tu^b_1$ here and we know from 3) 
that
\[
\left\{\begin{array}{ll}
\na_{x,z}\cdot P_1\na_{x,z}v=
\na_{x,z}\cdot[{\bf g}_M-{\bf g}_1-(P_M-P_1)\na_{x,z}((\p_tu)^b_M-\p_tu^b_M)],
\quad\hbox{in}
\quad \cS,\\
v|_{z=0}=0,\quad
\p^{P_M}_nv|_{z=-1}=-{\bf e_z}\cdot [{\bf g}_M-{\bf g}_1-(P_M-P_1)\na_{x,z}
((\p_tu)^b_M-\p_tu^b_M)]|_{z=-1}
\end{array}\right.
\] 
From a basic energy estimate, we obtain 
\begin{eqnarray}
\nonumber\|\na_{x,z} v\|_2&\le& C(\|(P_M-P_1)\na_{x,z}((\p_tu)^b_M-\p_tu^b_M)\|_{L^2}
+\|\p_t(P_M-P_1)\na_{x,z}u^b_M\|_{L^2}\\
\nonumber&&\, +\|\p_tP_1\na_{x,z} (u^b_M-u^b_1)\|_{L^2}\\
 \label{estcomubM3}&\le & C[\|f((\p_tu)^b_M-\p_tu^b_M)\|_{L^2}
+\|\na_{x,z}(f(\p_tu)^b_M-\p_tu^b_M)\|_{L^2}
+\|fu^b_M\|_{L^2}\\
\nonumber &&\,+\|\na_{x,z}(fu^b_M)\|_{L^2}+\|\na_{x,z}(u^b_M-u^b_1)\|_{L^2}]
\end{eqnarray} where  we use again $f=e^{-\eps(1+|x-h-c_2t|^2)^\f12}$.
 To conclude, we need to estimate  $w=f((\p_tu)^b_M-\p_tu^b_M)$.    We observe that 
  $w$ solves 
\[
\left\{\begin{array}{ll}
\na_{x,z}\cdot P_M\na_{x,z}w=-[f,\,\na_{x,z}\cdot P_M\na_{x,z}]((\p_tu)^b_M
-\p_tu^b_M)-f\na_{x,z}\cdot \p_tP_M\na_{x,z} u^b_M,
\quad\hbox{in}
\quad \cS,\\
w|_{z=0}=0,\quad
\partial_{z} w |_{z=-1}= 0. 
\end{array}\right.
\]  
We can proceed as in step 1) to obtain that 
\[
\|w\|^2_{L^2} + \|\na_{x,z} w \|^2_{L^2}\le C\|f  \p_tP_M\na_{x,z}u^b_M\|^2_{L^2}
 +  \f 1h C(|\mathfrak P U_2|^2_{L^2}+|U_2|^2_{L^2}) \leq  \f 1h C(|\mathfrak P U_2|^2_{L^2}+|U_2|^2_{L^2}) \] 
 thanks to \eqref{comfuBM}.
We can thus also obtain by combining the last estimate, \eqref{estcomubM3} and  the estimates of step 1) and step 2) that  
\[\|v\|^2_{L^2}+\|\na_{x,z} v\|^2_{L^2}\le \f1h C(|\mathfrak PU_2|^2_{L^2}+|U_2|^2
_{L^2}).\] 
Gathering all the previous estimates, we finally obtain \eqref{estlemcom1}.

\subsection{Proof of \eqref{estlemcom2}} Since 
\beno
([[\p_t,\,G_M],\,\chi_1]u,\,v)&=& ([\p_t,\,G_M]\chi_1u,\,v)-([\p_t,\,G_M]u,\,\chi_1v)\\
&=& \p_t(G_M\chi_1u,\,v)-(G_M\chi_1u,\,\p_tv)-(G_M\p_t(\chi_1u),\,v)
-\p_t(\chi_1G_Mu,\,v)\\
&&\,+((\p_t\chi_1)G_Mu,\,v)+(\chi_1G_Mu,\,\p_tv)+(\chi_1G_M\p_tu,\,v),
\eeno we get by using   the  Green's Formula on the flat strip $\cS$  that 
\beno
 ([[\p_t,\,G_M],\,\chi_1]u,\,v)&=&\p_t\int_{\cS} P_M\na_{x,z}(\chi_1u)^b\cdot
\na_{x,z}v^\dagger-\p_t\int_{\cS}\chi_1P_M\na_{x,z}u^b\cdot \na_{x,z}v^\dagger\\
&&\, -\p_t\int_{\cS}(\na_{x,z}\chi_1)\cdot P_M\na_{x,z}u^b\cdot v^\dagger
-\int_{\cS}P_M\na_{x,z}(\chi_1 u )^b\cdot\na_{x,z}(\p_tv)^\dagger\\
&&\,  +\int_{\cS}\chi_1P_M\na_{x,z}u^b\cdot\na_{x,z}(\p_tv)^\dagger
+\int_{\cS}(\na_{x,z}\chi_1)\cdot P_M\na_{x,z}u^b\,(\p_tv)^\dagger\\
&&\, -\int_{\cS}P_M\na_{x,z}(\p_t(\chi_1u))^b\cdot \na_{x,z}v^\dagger
+\int_{\cS}(\p_t\chi_1)P_M\na_{x,z}u^b\cdot\na_{x,z}v^\dagger\\
&&\, +\int_{\cS}\na_{x,z}(\p_t\chi_1)\cdot P_M\na_{x,z}u^b\,v^\dagger
+\int_{\cS}\chi_1P_M\na_{x,z}(\p_tu)^b\cdot\na_{x,z}v^\dagger\\
&&\,  +\int_{\cS}(\na_{x,z}\chi_1)\cdot P_M\na_{x,z}(\p_tu)^b\, v^\dagger
\eeno where $u^b$ is $u^b_M$ are satisfying (\ref{u^b eqn}) with $\eta_M$,
  and $v^\dagger=\chi(z|D|)v$ with 
$\chi$ a smooth compactly supported  cut-off function  such that $\chi(0)= 1$. One can rewrite the above as
\beq\label{commutator 2}
\begin{split}
& ([[\p_t,\,G_M],\,\chi_1]u,\,v)\\
&=\int_{\cS}[\p_tP_M\na_{x,z}(\chi_1u)^b\cdot\na_{x,z}
v^\dagger-\p_tP_M\chi_1\na_{x,z}u^b\cdot\na_{x,z}v^\dagger-(\na_{x,z}\chi_1)
\cdot\p_tP_M\na_{x,z}u^b\,v^\dagger]\\
&\quad +\int_{\cS}[P_M\na_{x,z}(\p_t(\chi_1u)^b-(\p_t(\chi_1u))^b)\cdot
\na_{x,z}v^\dagger-\chi_1P_M\na_{x,z}((\p_tu)^b-\p_tu^b)\cdot \na_{x,z}
v^\dagger\\
&\quad -(\na_{x,z}\chi_1)\cdot P_M\na_{x,z}((\p_tu)^b-\p_tu^b)\,v^\dagger]\\
&:= A_1+A_2
\end{split}\eeq while noticing  that $\p_tv^\dagger=(\p_tv)^\dagger$.  We will deal with 
$A_1$ first. We have
\beno
A_1&=&\int_{\cS}\p_tP_M\na_{x,z}((\chi_1u)^b-\chi_1u^b)\cdot \na_{x,z}v^\dagger
+\int_{\cS}\p_tP_M(\na_{x,z}\chi_1)u^b\cdot\na_{x,z}v^\dagger\\
&&  \,-\int_{\cS}(\na_{x,z}\chi_1)\cdot \p_tP_M\na_{x,z}u^b\,v^\dagger,
\eeno 
 Since 
$(\chi_1u)^b-\chi_1u^b$ solves   the equation
\[\left\{\begin{array}{ll}
 \na_{x,z}\cdot P_M\na_{X,z}((\chi_1u)^b-\chi_1u^b)=[\chi_1,\,\na_{x,z}\cdot P_M
\na_{x,z}]u^b,\quad \hbox{in}\quad \cS,\\
(\chi_1u)^b-\chi_1u^b|_{z=0}=0\quad \p^{P_M}_n((\chi_1u)^b-\chi_1u^b)|_{z=-1}
=[\chi_1,\,\p^{P_M}_n]u^b_{z=-1}= 0, 
\end{array}\right.
\] 
by using the definition of $\chi_1$ and (\ref{chiprop1}),  we again obtain from an energy estimate that 
\begin{equation}
\label{comestubM5}\|\na_{x,z} ((\chi_1u)^b-\chi_1u^b)\|_{L^2}\le \f1h C(|\mathfrak P u|_{L^2}
+|u|_{L^2}).\eeq By  using  again (\ref{chiprop1}), we thus obtain
\[
A_1\le \f 1h C(|\mathfrak P u|^2_{L^2}+|\mathfrak P v|^2_{L^2}+|u|^2_{L^2}
+|v|^2_{L^2}),
\] where we used the fact that $\|\na_{x,z} v^\dagger\|_2\le C|\mathfrak P v|_{L^2}$.
On the other hand, one can rewrite
\beno
A_2&=& \int_{\cS}P_M\na_{x,z}\left(\p_t(\chi_1u)^b-(\p_t(\chi_1u))^b-\chi_1\p_t
u^b+\chi_1(\p_tu)^b\right)\cdot\na_{x,z}v^\dagger\\
&&\, +\int_{\cS}P_M(\na_{x,z}\chi_1)\cdot\left[(\p_tu^b-(\p_tu)^b)\na_{x,z}
v^\dagger-\na_{x,z}(\p_tu^b-(\p_tu)^b)\,v^\dagger\right].
\eeno We already have the estimate for $\p_tu^b-(\p_tu)^b$ from step 3) of  the  last subsection, it only remains to estimate   $w=\p_t(\chi_1u)^b
-(\p_t(\chi_1u))^b-\chi_1\p_tu^b+\chi_1(\p_tu)^b$. We get for $w$ the equation  in $\mathcal{S}$
\[ \left.\begin{array}{ll}\begin{aligned}
 \na_{x,z}\cdot P_M\na_{x,z}w=&-\na_{x,z}\cdot\p_tP_M\na_{x,z}(\chi_1u)^b+[\chi_1,\,
\na_{x,z}\cdot P_M\na_{x,z}](\p_tu^b-(\p_tu)^b)\\
 &\, +\chi_1\na_{x,z}\cdot\p_tP_M\na_{x,z}u^b
\end{aligned}
\end{array}
\right.
\]
that we can rewrite as
\[ \left.\begin{array}{ll}\begin{aligned}
 \na_{x,z}\cdot P_M\na_{x,z}w=&-\na_{x,z}\cdot\p_tP_M\na_{x,z}\big((\chi_1u)^b- \chi_{1} u^b \big)+[\chi_1,\,
\na_{x,z}\cdot P_M\na_{x,z}](\p_tu^b-(\p_tu)^b)\\
 &\, +[\chi_1, \na_{x,z}\cdot\p_tP_M\na_{x,z} ]u^b
\end{aligned}
\end{array}
\right.
\]
with the boundary conditions
\[  w|_{z=0}=0,\quad \p^{P_M}_nw|_{z=-1}= 0.
\] 
We thus get the estimate
$$  \| w \|_{L^2 }+ \| \nabla_{x, z} w \|_{L^2} \leq C \Big(  \|  \nabla_{x,z }\big((\chi_{1} u)^b - \chi_{1} u^b \big) \|_{L^2}
 + {1 \over h} \| \partial_{t} u^b - (\partial_{t} u)^b \|_{H^1} + { 1\over h} \|u^b \|_{H^1}\Big).$$
 Consequently, by using \eqref{comestubM5}, \eqref{comestubM10} and \eqref{comubm1est}, we obtain 
\[\|w\|_{L^2}+\|\na_{x,z}w\|_{L^2}\le \f 1h  (|\mathfrak P u|_{L^2}+|u|_{L^2})\]
 and hence
 $$ | A_{2}| \leq  \f 1h  (|\mathfrak P u|_{L^2}+|u|_{L^2})( |\mathfrak P v|_{L^2}+|v|_{L^2}).$$ 
 Gathering the estimates for $A_{1}$ and $A_{2}$, we 
 end the proof  by substituting  
$u=\chi_1U_2$ and $v=U_2$ into the estimates.
\ef

\bigskip

\bigskip

{\bf Aknowledgements}. 

The second author thanks P. Rapha\"el who introduced him  to the subject during a 
session of the  ANR ArDyPitEq (JC09-437086) in Toulouse.
He also wishes to thank the organizers of this session P. Gravejat and M. Maris. 

This work was partially supported by the ERC grant Dispeq.


\begin{thebibliography}{9999}

\bibitem{Alazard-Burq-Zuily2} T. Alazard,  N.  Burq and C.   Zuily. {\it  On the water-wave equations with surface tension.}
 Duke Math. J.  158  (2011),  no. 3, 413--499
\bibitem{Alazard-Burq-Zuily} T. Alazard, N. Burq and C. Zuily, {\it On the Cauchy problem for gravity water-waves}, preprint 2012.
%
\bibitem{AK} C.J. Amick and K. Kirchgassner, {\it A theory of solitary water-waves in the presence of surface tension},  Arch. Ration.
Mech. Analysis, 105 (1989), 1--49
\bibitem{AL} B. Alvarez-Samaniego, D. Lannes, 
{\it Large time existence for 3D water-waves and asymptotics}, Invent. Math. 171 (2008), 485-541.
\bibitem{Beyer-Gunther}K. Beyer and M.   G\"unther. {\it On the Cauchy problem for a capillary drop. I. Irrotational
 motion.} Math. Methods Appl. Sci.  21  (1998),  no. 12, 1149--1183
\bibitem{Buffoni}B. Buffoni.  {\it Existence and conditional energetic stability of capillary-gravity
 solitary water waves by minimisation.}
 Arch. Ration. Mech. Anal.  173  (2004),  no. 1, 25--68
 \bibitem{Combet} V. Combet. {\it Construction and characterization of solutions converging to solitons
 for supercritical gKdV equations.}
 Differential Integral Equations  23  (2010),  no. 5-6, 513--568.
\bibitem{Cote-LeCoz} R. C\^ote, S. Le Coz,  {\it High-speed excited multi-solitons in nonlinear Schr\"odinger equations}, 
J. Math. Pures Appl. (9) 96 (2011), 135-166.
\bibitem{Cote-Martel-Merle} R. C\^ote, Y. Martel, F. Merle, 
{\it Construction of multi-soliton solutions for the L2-supercritical gKdV and NLS equations}, Rev. Mat. Iberoam. 27 (2011), 273-302.
\bibitem{Duyckaerts-Merle} T. Duyckaerts and F. Merle. {\it Dynamic of threshold solutions for energy-critical NLS.}
 Geom. Funct. Anal.  18  (2009),  no. 6, 1787--1840.
\bibitem{Germain-Masmoudi-Shatah1}P. Germain, N. Masmoudi and J. Shatah, {\it Global solutions for the gravity water waves equation in dimension 3}, Ann. Math.
175(2012), 691-754.
\bibitem{Germain-Masmoudi-Shatah2}P. Germain, N. Masmoudi and J. Shatah, {\it Global existence for capillary water waves}, Preprint 2012.
\bibitem{Iooss-Kirchgassner} G. Iooss and K. Kirchg\"assner. 
{\it Water waves for small surface tension: an approach via normal form}. Proc. Roy. Soc. Edinb. A 122 (1992), 267-299.
\bibitem{Grenier}E. Grenier. {\it On the nonlinear instability of Euler and Prandtl equations},
 Comm. Pure  Appl. Math.  53  (2000),  no. 9, 1067--1091
\bibitem{GSS}
M. Grillakis, J. Shatah,   and  W. Strauss.
{\it Stability theory of solitary waves in the presence of symmetry II},
J. Funct. Anal. 94 (1990), 308--348.
\bibitem{Krieger-Martel-Raphael}
J. Krieger, Y. Martel and P. Raphael, 
{\it Two-soliton solutions to the three-dimensional gravitational Hartree equation}, 
Comm. Pure Appl. Math. 62 (2009), 1501--1550.
\bibitem{L} D.Lannes, {\it Well-posedness of the water-wave equations}, J. Am. Math. Soc. 18 (2005), 605--654. 
\bibitem{Martel} Y. Martel, {\it Asymptotic $N$-soliton-like solutions for the subcritical and critical generalized KdV equations}, Am. J. Math. 127 (2005) 1103--1140. 
\bibitem{Martel-Merle} Y.  Martel and F.  Merle,  {\it Multi solitary waves for nonlinear Schr\"odinger equations},
Ann. Inst. H. Poincar\'e Anal. Non Lin\'eaire,  23 (2006), 849--864.
\bibitem{Merle} F. Merle,
{\it Construction of solutions with exactly $k$ blow-up points for the Schr\"odinger equation with critical nonlinearity}, Comm. Maths. Phys. 129 (1990) 223--240.
\bibitem{Mielke}
A.~Mielke, {\it On the energetic stability of solitary water waves}, Phil. Trans. R. Soc. Lond. A 360 (2002), 2337-2358.
\bibitem{Mizu}
T.~Mizumachi, {\it $N$-soliton states of the Fermi-Pasta-Ulam lattices}, SIAM J. Math. Analysis, 43 (2011), 2170Ð2210.
\bibitem{Pego-Sun} R. L. Pego and S.-M. Sun {\it Asymptotic linear stability of solitary water-waves}, preprint,  arXiv:1009.0494.
\bibitem{RT} F. Rousset and N. Tzvetkov, {\it Transverse instability
of the line solitary water-waves},  Invent. Math. 184 (2011), 257-388.
\bibitem{RT2} F. Rousset and N. Tzvetkov, {\it Stability and instability
of the KdV solitary wave under the KP-I flow}, Comm. Math. Phys. 313(2012) 155-173.
\bibitem{Schweizer} B. Schweizer. {\it On the three-dimensional Euler equations with a free boundary subject
 to surface tension.}
 Ann. Inst. H. PoincarŽ Anal. Non LinŽaire  22  (2005),  no. 6, 753--781
 \bibitem{Taylor_first} M.~Taylor,
 {\it Pseudodifferential Operators and Nonlinear PDE}, Bitkh\"auser 1991.
 %
 \bibitem{Taylor} M.~Taylor, {\it Partial Differential Equations II}, Springer 1997.
\bibitem{Wu1}
S. Wu, {\it Almost global wellposedness of the 2-D full water wave problem.} Invent. Math.  177  (2009),  no. 1, 45--135.
\bibitem{Wu2}
S. Wu. {\it  Global wellposedness of the 3-D full water wave problem}.  Invent. Math.  184  (2011),  no. 1, 125--220
\bibitem{Zakharov} V.~Zakharov, {\it Stability of periodic waves of finite amplitude on the surface of a 
deep fluid}, J. Appl. Mech. Tech. Phys. 9 (1968), 190-194.


\end{thebibliography}
\end{document}